
\documentclass[a4paper,12pt]{book}
\usepackage{amsmath,amssymb,amsbsy,amsfonts,amsthm,latexsym,
           amsopn,amstext,amsxtra,euscript,amscd,amsthm,graphicx, color} 

\usepackage[pdf]{pstricks}
\usepackage{fancyhdr}
\usepackage{tikz,xtab}
\usepackage[T1]{fontenc} 

\newlength{\plarg}
\setlength{\plarg}{14cm}
\newlength{\glarg}
\setlength{\glarg}{17cm} 

\usepackage{amsfonts}
\usepackage{latexsym}
\usepackage{mathrsfs}
\usepackage[margin=3cm]{geometry}
\usepackage{amsmath, amsthm, amssymb, amsfonts, latexsym, epsfig, palatino}
\usepackage{ragged2e} 
\usepackage{mathrsfs}
\usepackage{verbatim}
\pagestyle{fancy}
\fancyhf{}
\fancyhead[LE,RO]{\thepage}
\fancyhead[RE]{\slshape \nouppercase \rightmark}
\fancyhead[LO]{\slshape \nouppercase \leftmark}

\newcommand{\R}{{\mathbb R}}
\newcommand{\C}{{\mathbb C}}
\newcommand{\N}{{\mathbb N}}
\newcommand{\Z}{{\mathbb Z}}






\begin{document}

\newtheorem{lem}{Lemma}
\newtheorem{lemma}[lem]{Lemma}

\newtheorem{prop}{Proposition}
\newtheorem{proposition}[prop]{Proposition}

\newtheorem{thm}{Theorem}
\newtheorem{theorem}[thm]{Theorem}
\newtheorem*{theorem*}{Theorem}
\newtheorem{exemple}[subsection]{Example}

\newtheorem{cor}{Corollary}
\newtheorem{corollary}[cor]{Corollary}

\newtheorem{conj}{Conjecture}
\newtheorem{conjecture}[conj]{Conjecture}

\newtheorem{defi}{Definition}
\newtheorem{definition}[defi]{Definition}

\newtheorem{exam}{Example}
\newtheorem{example}[exam]{Example}
\newtheorem{remark.}[subsection]{Remark}
\newtheorem{prob}{Problem}
\newtheorem{problem}[prob]{Problem}

\newtheorem{ques}{Question}
\newtheorem{question}[ques]{Question}


\def\mand{\qquad\mbox{and}\qquad}

\def\scr{\scriptstyle}
\def\\{\cr}
\def\({\left(}
\def\){\right)}
\def\[{\left[}
\def\]{\right]}
\def\<{\langle}
\def\>{\rangle}
\def\fl#1{\left\lfloor#1\right\rfloor}
\def\rf#1{\left\lceil#1\right\rceil}

\def\cA{{\mathcal A}}
\def\cB{{\mathcal B}}
\def\cC{{\mathcal C}}
\def\cE{{\mathcal E}}
\def\cF{{\mathcal F}}
\def\cI{{\mathcal I}}
\def\cL{{\mathcal L}}
\def\cM{{\mathcal M}}
\def\cN{{\mathcal N}}
\def\cR{{\mathcal R}}
\def\cS{{\mathcal S}}
\def\cP{{\mathcal P}}
\def\cQ{{\mathcal Q}}
\def\cT{{\mathcal T}}
\def\cX{{\mathcal X}}
\def\N{{\mathbb N}}
\def\Z{{\mathbb Z}}
\def\K{{\mathbb K}}
\def\Q{{\mathbb Q}}
\def\R{{\mathbb R}}
\def\C{{\mathbb C}}
\def\eps{\varepsilon}

\newcommand{\comm}[1]{\marginpar{\fbox{#1}}}
\def\xxx{\vskip5pt\hrule\vskip5pt}
\def\yyy{\vskip5pt\hrule\vskip2pt\hrule\vskip5pt}



\thispagestyle{empty}

\begin{titlepage}
\begin{minipage}{\plarg}

{\includegraphics[scale=0.13]{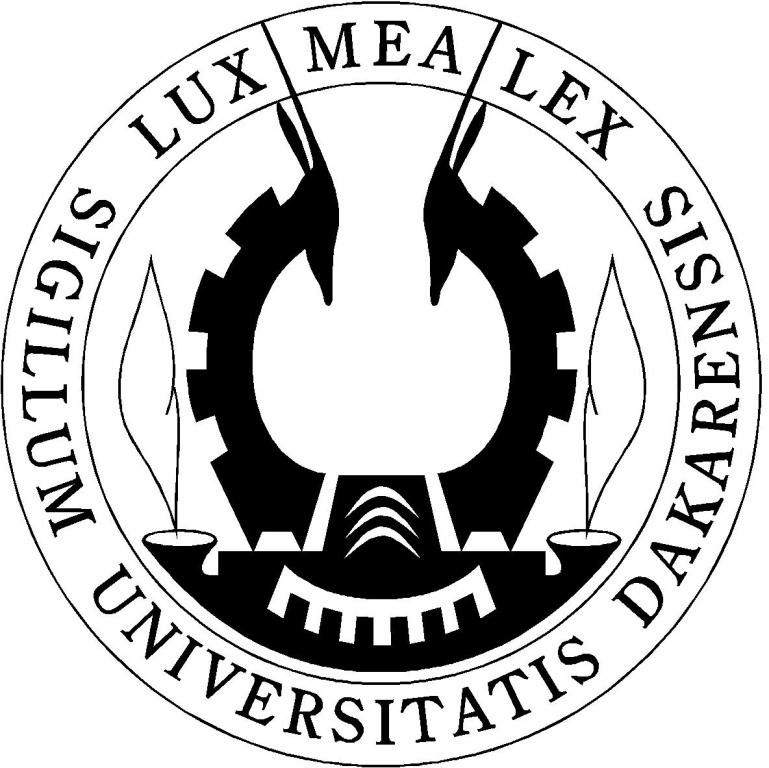} \space\space\space\space\space\space\space\space\space\space
\space\space\space\space\space\space\space\space\space\space\space\space\space\space \space\space\space\space\space\space\space\space\space\space\space\space\space\space\space\space
\space\space\space\space\space\space\space\space\space\space\space\space\space\space\space\space\space\space\space\space\space\space
\space\space\space\space\space\space\space\space\space\space\space\space\space\space\space\space\space\space\space\space
 \includegraphics[scale=0.22]{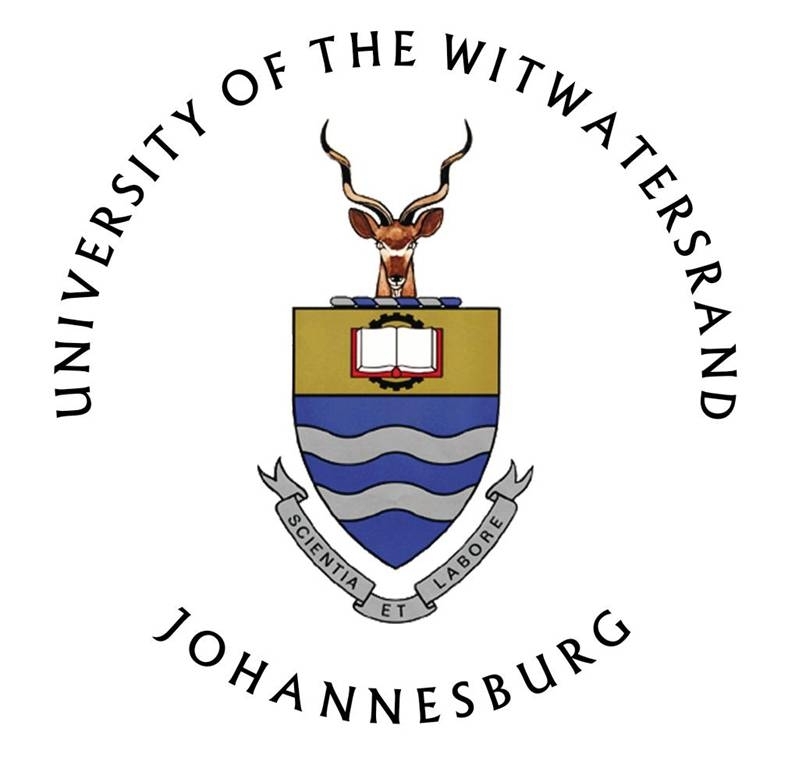}}
\vspace{2mm}
{\rule{\plarg}{1pt}} \vspace{3mm}
\end{minipage}

\begin{center}
\begin{minipage}{\plarg}
\centering
{\huge\bfseries Diophantine Equations with Arithmetic Functions and Binary Recurrences Sequences }\\ \vspace{1cm}
by \\ \vspace{2mm}
{\Large\bfseries Bernadette Faye}\\ \vspace{1cm}
A Thesis submitted to the Faculty of Science, University of the Witwatersrand and to the University Cheikh Anta Diop of Dakar(UCAD) in fulfillment of the requirements for a Dual-degree for Doctor in Philosophy in Mathematics\\ \vspace{4mm}

{\rule{\plarg}{1pt}} \vspace{1mm}

\vspace{1cm}

\centering 
Supervisors
\\ \vspace{2mm}
\begin{tabular}{p{5cm}p{5cm}p{4cm}}
\vfill Pr. Florian Luca,   & \vfill  Univ. of Witwatersrand, Johannesburg, South Africa.\\
\vfill Pr. Djiby  Sow,   & \vfill  Univ. Cheikh Anta Diop, Dakar, Senegal.\\
\end{tabular}
\end{minipage}
\end{center}

\vspace*{1.5cm}
\center{November 6th, 2017}
\end{titlepage}
\frontmatter

\justify
\pagenumbering{arabic}
%
\chapter*{Declaration}

By submitting this dissertation electronically, I declare that the entirety of the work contained therein is my own, original work, that I am the sole author thereof, that reproduction and publication thereof by Wits University or University Cheikh Anta Diop of Dakar will not infringe any third party rights and that I have not previously in its entirety or in part submitted it for obtaining any qualification.

\vspace*{1.5cm}

\begin{tabular}{lllll}
  Signature: & \includegraphics[scale=0.18]{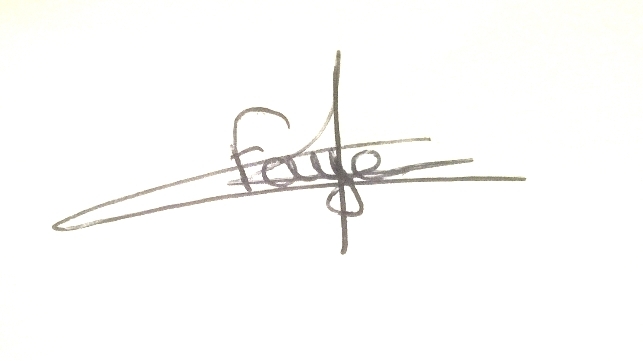}& & Date: & November 06th, 2017.
   
\end{tabular}
\medskip
\medskip

\vspace*{5.5cm}

{\flushleft{Copyright @ 2017 Wits University \\  All right reserved}}
\chapter*{Acknowledgments}


Cette Th\'ese de Doctorat, bien qu'\'etant un accomplissement acad\'emique, est aussi un chapitre important de ma vie. Durant ces quatres derni\'eres ann\'ees, j'ai exp\'eriment\'e les moments de ma vie les plus important avec des hauts et des bas. J'aurai jamais p\^u imaginer le nombre de personnes avec qui j'aurai chemin\'e durant ces moments; j'ai crois\'e le chemin de beaucoup de personnes que j'aimerai remercier par leurs noms.

Tout D'abord, je remercie chaleureusement  mes encadreurs de Th\'ese, Professeur Florian Luca et Professeur Djiby Sow, pour leur encouragement constant dans mes travaux de recherche et leur disponibilit\'e \`a m'apporter leur soutien dans ma vie professionelle et personnelle. Ils m'ont beaucoup soutenu et encourg\'e \`a faire des voyages de recherches, diff\'erents visites et conf\'erences pour pr\'esenter mes travaux, ce qui m'a permis de rencontrer un  nombre d'experts dans mon domaine et me faire un r\'eseau de collaborateurs. J'ai beaucoup appris de vous durant ces ann\'ees de th\'ese. Que Dieu vous b\'enisse!

Je suis \'egalement tr\'es reconnaissante au d\'epartement de math\'ematique et d'informatique de l'universit\'e Cheikh Anta Diop de Dakar esp\'ecialement \`a tous les membres du laboratoire d'Alg\'ebre, de Cryptologie, de G\'eometrie Alg\'ebrique et Applications(LACGAA). Un remerciement sp\'ecial aux Professeurs Mamadou Sanghar\'e, Hamidou Dathe, Omar Diankha, Thi\'ecoumna Gu\'eye, Dr. Barry,  Dr. Amadou Lamine Fall, Dr. Abdoul Aziz Ciss. Je remercie de mani\'ere special le Dr. Amadou Tall qui m'a mis en rapport avec le Pr. Luca depuis AIMS-Senegal pour l'encadrement de mon m\'emoire de Master, qui a aboutit \`a l'encadrement durant ma th\'ese de doctorat. A mes camarades de promotion en \'etude doctorale: Nafissatou Diarra, Mbouye Khady Diagne, Ignace Aristide Milend.

I am also extremely thankful for the support of the school of Mathematics at the University of Witwatersrand in Johannesburg. A special Thanks to Professor Bruce Watson who facilitated the dual degree agreement between the two universities for my doctoral studies at Wits, when he was the head of the school. A special thanks to the people from the school of maths, special thank to Pr. Augustine Munagui, Dr. Bertin Zinsou. Special thanks to Jyoti, Safia and Phyllis for their lovely and constant assistance. Thanks to my peers in the graduate program: Jean Juste, Francis, Marie Chantal.

A special thanks to the African Institute for Mathematical Sciences(AIMS), for their support through the AIMS-Alumni Small Research Grant(AASRG) whithin the framework of the AIMS Research for Africa Project. It allowed me to attend great conferences and work at the AIMS-Senegal Centre during the years of my Ph.D. A special thanks to Pr. Dr.  Mouhamed Moustapha Fall, the AIMS-Senegal Humboldt Chair Holder who always encourages me to cultivate my gifts and talents to achieve my goals and dreams. 

I could not have completed this degree without the support of the  Organization for Women in Science for Developing countries(OWSD) and Sida (Swedish International Development Cooperation Agency) for a scholarship during my Ph.D. studies at the university of Witwatersrand. I attended many conferences presenting my research founding and I met many great people by the help of this Award. This have contribute to my career development and has impact my professional and personal life in a great way. I have been able to built a network with great scientists working in my field of interest. Thanks to Tonya Blowers and Tanja Bole who always responded to my emails and requested in a prompt and professional way.

I am grateful to Pr. Pieter Moree at the Max Planck Institute for Mathematics for his unconditional support during my Ph.D. He invited me several times at the MPIM to conduct research with him and introduced me to the interesting subject of the {\it Discriminator}! Thanks Pr. Dr.  Peter Stollmann at the university of TU Chemnitz in Germany, for your invitation to research visits at your department, funded by means of the German Academic Exchange Service(DAAD) in collaboration with AIMS-Senegal.

For my closest friends, I am so grateful and very happy that they are ever-present
at such key moments in my life, this being no exception. To Lucienne Tine and Daba Diouf, my dears, sweet darling friends, thank you for reminding me "be who you are and be that perfectly well."

Because of my family, I am the women who stands here today. To my brothers Martin, Daniel and Thomas, to my sister Martine Monique, thank you for believing in me and for putting a smile on my face when I most need it. 

To my mother Th\'er\`ese and my Dad Francois, thank you for always being there, every step of the way! thank you for inspiring me to aim high and to also laugh at myself along the way. To all my uncles and aunts, my cousins...
\medskip

\begin{flushleft}
Mii Gueureume Mbine Ndou!\\
Dieureudieuf!
\end{flushleft}

\chapter*{Dedication}

\begin{center}
\textit{To my Family and friends.}
\end{center}

\chapter*{Abstract}

This thesis is about the study of Diophantine equations involving binary recurrent sequences with arithmetic functions. Various Diophantine problems are investigated and new results are found out of this study. Firstly, we study several questions concerning the intersection between two classes of non-degenerate binary recurrence sequences and provide, whenever possible, effective bounds on the largest member of this intersection. Our main study concerns Diophantine equations of the form  $\varphi(|au_n |)=|bv_m|,$ where $\varphi$ is the Euler totient function, $\{u_n\}_{n\geq 0}$ and $\{v_m\}_{m\geq 0}$ are two non-degenerate binary recurrence sequences and $a,b$ some positive integers. More precisely, we study problems involving members of the recurrent sequences being rep-digits, Lehmer numbers, whose Euler's function remain in the same sequence. We particularly study the case when $\{u_n\}_{n\geq 0}$ is the Fibonacci sequence $\{F_n\}_{n\geq 0}$, the Lucas sequences $\{L_n\}_{n\geq 0}$ or the Pell sequence $\{P_n\}_{n\geq 0}$ and its companion $\{Q_n\}_{n\geq 0}$. Secondly, we look of  Lehmer's conjecture on some recurrence sequences. Recall that a composite number $N$ is said to be Lehmer if $\varphi(N)\mid N-1$. We prove that there is no Lehmer number neither in the Lucas sequence $\{L_n\}_{n\geq 0}$ nor in the Pell sequence $\{P_n\}_{n\geq 0}$. The main tools used in this thesis are lower bounds for linear forms in logarithms of algebraic numbers, the so-called Baker-Davenport reduction method, continued fractions, elementary estimates from the theory of prime numbers and sieve methods.

\medskip

   \chapter*{Notation}

\begin{tabular}{lrl}
   $p,q,r$ & & \quad \hbox{prime numbers}  \\
   $\gcd(a,b)$ &  & \quad \hbox{greatest common divisor of $a$ and $b$}  \\
   $\log$ &  & \quad \hbox{natural logarithm}  \\
   $\nu_p(n)$ &  & \quad \hbox{exponent of $p$ in the factorization of $n$}  \\
   $P(\ell)$ &  & \quad \hbox{the largest prime factor of $\ell$ with the convention that $P(\pm 1)=1$}\\
   $\(\frac{a}{p}\)$ &  & \quad \hbox{Legendre symbol of $a$ modulo $p$}  \\
   $F_n$ &  & \quad \hbox{$n$-th Fibonacci number}  \\
   $L_n$ &  & \quad \hbox{$n$-th Lucas number}  \\
   $P_n$ &  & \quad \hbox{$n$-th Pell number}  \\
   $u_n(r,s)$ &  & \quad \hbox{fundamental Lucas sequence}  \\
   $v_n(r,s)$ &  & \quad \hbox{companion Lucas sequence}  \\
   $\sigma(n)$ &  & \quad \hbox{sum of divisors of $n$}  \\
   $\Omega(n)$ &  & \quad \hbox{number of prime power factors of $n$}  \\
   $\omega(n)$ &  & \quad \hbox{number of distinct prime factors of $n$}  \\
   $\tau(n)$  &  & \quad \hbox{number of divisors of $n$, including $1$ and $n$}  \\
   $\varphi(n)$ &  & \quad \hbox{Euler totient function of $n$}  \\
   $\phi$   &  & \quad \hbox{Golden Ratio}  \\
   $|A|$ &  & \quad cardinal of the set A  \\
   $\square$ & & \quad a square number \\
   $\eta$ &  & \quad \hbox{algebraic number}  \\
   $h(\eta)$ &  & \quad \hbox{logarithm height of $\eta$}  \\
   $\mathbb{Q}$ &  & \quad \hbox{field of rational numbers}  \\
   $\mathbb{K}$ &  & \quad \hbox{number field over $\mathbb{Q}$}  \\

\end{tabular}

\tableofcontents
\mainmatter


\chapter{Introduction}

Diophantine equations are one of the oldest subjects in number theory. They have been first studied by the \textit{Greek} mathematician \textit{Diophantus of Alexandria} during the third century. By definition, a Diophantine equation is a polynomial equation of the form 

\begin{equation}
\label{eqi:intro}
P(x_1 ,\ldots, x_n)=0.
\end{equation}
What is of interest is to find all its integer solutions, that is all the $n-$uplets  $(x_1 ,\ldots, x_n)$ in $\mathbb{Z}^n$ which satisfy equation \eqref{eqi:intro}.

Historically, one of the first Diophantine equation is the equation $x^2+y^2=z^2.$ This arises from the problem of finding all the rectangular triangles whose sides have integer lengths. Such triples $(x,y,z)$ are called Pythagorean triples. Some Pythagorean triples are $(3,4,5),(5,12,13),(8,15,17)$ but these are not all. All Pythagorean triples can be obtained as follows: if $(x,y,z)$ is a solution, then $(x/z,y/z)$ is a rational solution. We have then $(x/z)^2+(y/z)^2=1$, namely $(x/z, y/z)$ is the unit circle and has rational coordinates. Using the parametrization of the circle  $\cos\theta = \frac{1-t^2}{1+t^2}$ and $\sin\theta = \frac{2t}{1+t^2}$ where $t=\tan(\theta/2)$, the rational values of $t$ give all the solutions of the equation. Another example is the linear Diophantine equation $ax + by = c$ where $a,b,c$ are fixed integers and $x,y$ are integer unknowns.

Given a Diophantine equation, the fundamental problem is to study is whether solutions exist. If they exist one would like to know how many there are and how to find all of them. Certain Diophantine equations have no solutions in non zero integers like the \textit{Fermat} equation, $ x^n + y^n = z^n$ with $n\geq 3.$ 

The study of Diophantine equations helped to develop many tools in modern number theory. For example, for the proof of \textit{Fermat's Last Theorem}, many tools from algebraic geometry, elliptic curves, algebraic number theory, etc. were developed.

Among the $23$ problems posed by Hilbert in $1900$, the $10^{th}$ Problem concerned Diophantine equations. Hilbert asked if there is a universal method for solving all Diophantine equations. Here we reformulate it: 

\begin{center}
\textit{"Given a Diophantine equation with any number of unknown
quantities and with rational integral numerical coefficients: To
devise a process according to which it can be determined by a
finite number of operations whether the equation is solvable in
integers".}
\end{center}

In $1970$, Y. Matiyasevich gave a negative solution to Hilbert's $10$th Problem. His result is the following.

\begin{theorem}[Y. Matiyasevich] There is no algorithm which, for a given arbitrary Diophantine equation, would tell whether the equation has an integer solution or not.
\end{theorem}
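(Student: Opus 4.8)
The plan is to recognize that this undecidability statement cannot be proved by examining any single equation; instead it follows from pinning down the exact computational strength of the \emph{entire family} of Diophantine equations. First I would translate from equations to sets: call a set $S \subseteq \mathbb{N}^k$ \emph{Diophantine} if there is a polynomial $P$ with integer coefficients such that $\mathbf{a} \in S$ if and only if the equation $P(\mathbf{a}, x_1, \dots, x_m) = 0$ admits a solution in nonnegative integers $x_1, \dots, x_m$. The real target is then the MRDP theorem: the Diophantine sets are exactly the \emph{recursively enumerable} (r.e.) sets. Granting this characterization, the stated theorem is immediate from classical computability theory, since the halting problem supplies an r.e. set that fails to be recursive, and the polynomial representing it yields a uniform family of equations whose solvability no algorithm can decide.

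One inclusion is routine. To see that every Diophantine set is r.e., one simply dovetails through all tuples $(\mathbf{a}, x_1, \dots, x_m)$ and outputs $\mathbf{a}$ whenever $P$ vanishes. The substantial direction is the converse, that every r.e. set is Diophantine, and here I would follow the historical route in two stages. The Davis--Putnam--Robinson theorem reduces the task by proving that every r.e. set is \emph{exponential Diophantine}, i.e.\ definable by an equation in which the operation $n \mapsto a^n$ is also permitted. What then remains is to eliminate exponentiation, that is, to show that the graph $\{(a,b,c) : c = a^b\}$ is already an ordinary Diophantine set.

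The hard part — the heart of Matiyasevich's contribution — is precisely this last elimination, and it is here that binary recurrence sequences enter in a way that resonates with the rest of this thesis. Julia Robinson had shown that exponentiation becomes Diophantine the moment one exhibits a single Diophantine relation $R(u,v)$ of \emph{exponential growth}, meaning $R(u,v)$ forces $v \le u^u$ yet for every $k$ some solution satisfies $v > u^k$. Matiyasevich produced such a relation from the Fibonacci numbers, proving that the graph $\{(u,v) : v = F_{2u}\}$ is Diophantine. The ingredients are the characterization of consecutive Fibonacci numbers as the nonnegative solutions of $x^2 - xy - y^2 = \pm 1$, together with the divisibility law $F_m \mid F_n \iff m \mid n$ and its refinement $F_m^2 \mid F_{mk} \iff F_m \mid k$. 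Since $F_{2u}$ grows like $\phi^{2u}$, this relation meets Robinson's hypothesis, and combining it with the Davis--Putnam--Robinson reduction shows that every r.e. set is Diophantine. I expect the principal technical obstacle to be exactly the packaging of these recurrence identities — the Pell-type equation, the index-divisibility properties, and the auxiliary congruences — into one polynomial system that captures $v = F_{2u}$ without smuggling uncontrolled exponential growth back in.

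Finally, to extract the claimed non-existence of an algorithm, I would instantiate the MRDP theorem on a fixed undecidable set. Taking $K = \{e : \text{the } e\text{-th Turing machine halts on input } e\}$, which is r.e.\ but not recursive, the theorem produces a polynomial $P$ with $e \in K \iff \exists\, x_1, \dots, x_m \ P(e, x_1, \dots, x_m) = 0$. A hypothetical decision procedure for solvability, applied to $P(e, \mathbf{x}) = 0$ for each input $e$, would decide membership in $K$, a contradiction; hence no such procedure exists. The harmless passage between nonnegative-integer and integer solvability — which is what the statement literally asks about — is handled by Lagrange's four-square theorem, so the conclusion transfers verbatim to equations sought solvable over $\mathbb{Z}$.
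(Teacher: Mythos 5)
The paper states this theorem purely as background in the introduction and gives no proof of it, so there is nothing internal to compare your argument against. Your outline is the standard historical route — reformulating the problem in terms of Diophantine sets, reducing to the MRDP characterization of the recursively enumerable sets, splitting that into the Davis--Putnam--Robinson reduction to exponential Diophantine sets plus the elimination of exponentiation via a Diophantine relation of exponential growth, and finally instantiating on the halting set $K$ with Lagrange's four-square theorem to pass from $\mathbb{N}$ to $\mathbb{Z}$ — and every individual claim you make along the way is accurate, including the characterization of consecutive Fibonacci numbers by $x^2-xy-y^2=\pm 1$, the divisibility law $F_m\mid F_n\iff m\mid n$, and its refinement $F_m^2\mid F_{mk}\iff F_m\mid k$. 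The one thing to be clear-eyed about is that what you have written is a roadmap rather than a proof: the entire mathematical content of Matiyasevich's theorem lives in the step you defer, namely assembling those recurrence identities into an explicit polynomial system defining $\{(u,v):v=F_{2u}\}$, and in the Davis--Putnam--Robinson theorem itself, which you invoke as a black box. As a statement of how the theorem is proved and why the undecidability conclusion follows, your account is correct and appropriately identifies where the difficulty is concentrated; it is pleasantly in the spirit of this thesis that the decisive ingredient is a binary recurrence sequence.
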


\begin{remark.}
For rational solutions, the analog of Hilbert's 10th problem is not yet solved. That is, the question whether there exists an algorithm to decide if a Diophantine equation has a \textit{rational} solution or not is still open.
\end{remark.}

Since there is no general method to solve Diophantine equations, some techniques were found to solve particular families of Diophantine equations. Many great mathematicians like \textit{Pierre Fermat}, \textit{Leonhard Euler}, \textit{Joseph Louis Lagrange} and \textit{Henri Poincar\'e} have interesting work on the subject. Many tools have also been developed such as transcendental number theory and computational number theory.
\medskip
\medskip

In this thesis, we study certain Diophantine equations involving arithmetic functions and binary recurrence sequences.

\section{Motivation and Overview}
Many problems in number theory may be reduced to finding the intersection of two sequences of positive integers. The heuristic is that the finiteness of this intersection should depend on how quickly the two sequences grow. During his life time, P. Erd\H{o}s and his collaborators devoted a lot of work to the study of the intersection of two arithmetic functions. In that line of research, we might add the recurrence sequences. 

In the last years, many papers have been published concerning Diophantine equations of the form 

\begin{eqnarray}
u_n &=& v_m, \quad\hbox{or}\label{eqi:1}\\
& & \nonumber\\
\varphi(|au_n |) &= & |bv_m|, \label{eqi:2}
\end{eqnarray} 
where $\{u_n\}_{n\geq 0}$ and $\{v_m\}_{m\geq 0}$ are two non-degenerate binary recurrence sequences, $m\geq 0, n\geq 0$ and $a,b$ are fixed positive integers. We refer to the papers \cite{BD}, \cite{KP}, \cite{VM}, \cite{FLl1}.

Considering the Diophantine equation of the form \eqref{eqi:2}, one can see that on the one-hand the Euler function $\varphi$ is a multiplicative function so it behaves well with respect to the multiplicative properties of the integers while on the other-hand the recurrence sequence has some additive properties. So, the study of the intersection between the multiplicative and the additive structure makes such equations interesting. 

In $1978$, M. Mignotte (see \cite{MM1} and \cite{MM2}) proved that the equation (\ref{eqi:1}) has only finitely many solutions that are effectively computable under certain conditions. For example, if $\{u_n\}_{n\geq 0}$  and $\{v_m\}_{m\geq 0}$ are two non degenerate binary recurrence sequences whose characteristic equation has real roots, then it suffices that the logarithm of the absolute values of the largest roots to be linearly independent over $\mathbb{Q}.$
In $1981,$ M\'aty\'as \cite{FM} gave a criterion for determining whether two second order linear recurrence sequences have nonempty intersection.

Later in $2002$, F. Luca studied Diophantine equations of form \eqref{eqi:2} (see \cite{FL2}), involving the Euler totient function of binary recurrent sequences  and proved that if $\{u_n \}_{n\geq 0}$ and $\{v_m \}_{m\geq 0}$ are two non-degenerate binary recurrent sequences of integers such that $\{v_m\}_{m\geq 0}$ satisfies some technical assumptions, then the Diophantine equation \eqref{eqi:2} has only finitely many effectively computable positive integer solutions $(m, n).$ Though, for two given binary recurrent sequences, it is in general difficult to find all such solutions. Furthermore, since these results are ineffective,  the determination of all the solutions is a challenge.

Our goal in this thesis is to continue this line of research by solving effectively certain equations of the form \eqref{eqi:1} and (\ref{eqi:2}).
\medskip

The material presented in this thesis covers all the results from the following  journal papers:

\begin{enumerate}

\item[\cite{FLT}]  B. Faye, F. Luca, A. Tall {\it On the equation $\phi(5^m -1) = 5^n-1$} Korean Journal of Math. Soc. {\bf 52} (2015) No. 2, 513-514. 

\item[\cite{BL}]  B. Faye, F. Luca, {\it On the equation $\phi(X^m -1) = X^n-1$} International Journal of Number Theory, {\bf 11}, No. 5, (2015) 1691-1700. 

\item[\cite{JBLT}]  B. Faye, Jhon J. Bravo, F. Luca, A. Tall  {\it Repdigits as Euler functions of Lucas numbers} An. St. Math. Univ. Ovidius Constanta {\bf 24}(2) (2016) 105-126. 

\item[\cite{BL2}]  B. Faye, F. Luca, {\it Pell and Pell Lucas numbers with only one distinct digit} Ann. Math. Informaticae, {\bf 45} (2015) 55-60. 

\item[\cite{FL1}]  B. Faye, F. Luca, {\it Pell Numbers whose Euler Function is a Pell Number} Publications de l'Institut Math\'ematique nouvelle s\'erie (Beograd), {\bf 101} (2015) 231-245. 

\item[\cite{FL}]  B. Faye, F. Luca, {\it Lucas Numbers with Lehmer Property} {\it Mathematical Reports,\/} {\bf 19(69)}, 1(2017), 121-125. 

\item[\cite{FL02}]  B. Faye, F. Luca, {\it Pell Numbers with the Lehmer property}, Afrika Matematika, {\bf 28(1-2)} (2017), 291-294. 

\end{enumerate}

\section{Some background and Diophantine Problems}
In this section, we give an overview of the different Diophantine problems  which have been studied in this thesis. All these problems have been treated in papers which are either published or have been submitted for publication.

\subsection*{\small Pell numbers whose Euler function is a Pell number}

In \cite{G}, it is shown that $1,~2$, and $3$ are the only Fibonacci numbers whose Euler function is also a Fibonacci number, while in \cite{Lu2}, Luca found all the Fibonacci numbers whose Euler function is a power of $2$. In \cite{LuSt}, Luca and St\u anic\u a found all the Pell numbers whose Euler function is a power of $2$. In \cite{Lu1}, we proved a more general result which contains the results of \cite{Lu2} and \cite{LuSt} as particular cases. Namely, consider the Lucas sequence $\{u_n\}_{n\ge 0}$, with $u_0=0,~u_1=1$ and
$$
u_{n+2}=r u_{n+1}+s u_n\qquad {\text{\rm for~all}}\qquad n\ge 0,
$$
where $s\in \{\pm 1\}$ and $r\ne 0$ is an integer. We proved that there are finitely many terms of this sequence which their Euler's function are powers of $2$.
\medskip

In the same direction, we have investigated in this thesis, the solutions of the Diophantine equation $$\varphi(P_n)=P_m,$$ where $\{P_n\}_{n\geq 0}$ is the Pell sequence given by $P_0=0, P_1=1$ and $P_{n+2}=2P_{n+1}+P_n$ for all $n\geq0.$ In others words, we are interested in knowing which are the terms of the Pell sequence whose Euler's function are also the terms of the Pell sequence.

In Chapter \textcolor{red}{3}, we effectively solve the above equation. We prove the following

\begin{theorem*}[Chapter \textcolor{red}{3}, Theorem \textcolor{red}{16}]
The only solutions in positive integers $(n,m)$ of the equation
\begin{equation*}
\varphi(P_n)=P_m
\end{equation*}
are $(n,m)=(1,1),(2,1).$
\end{theorem*}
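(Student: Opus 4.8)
The plan is to show that the equation $\varphi(P_n) = P_m$ forces $n$ to be small by extracting arithmetic information from the factorization of $P_n$ and comparing the $2$-adic valuation and the growth rate of both sides. Recall that the Pell sequence has the Binet formula $P_n = (\gamma^n - \delta^n)/(\gamma - \delta)$ where $\gamma = 1 + \sqrt2$ and $\delta = 1 - \sqrt2$, so $P_n$ grows like $\gamma^n/(2\sqrt2)$ with $\gamma = 1+\sqrt2 \approx 2.414$. Since $\varphi(P_n) < P_n$ always, any solution must have $m < n$; more precisely, because $\varphi(N)/N = \prod_{p \mid N}(1 - 1/p)$ can only be bounded away from $1$ when $N$ has small prime factors, I would first establish that $\varphi(P_n)$ and $P_n$ have comparable size unless $P_n$ is divisible by many small primes. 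The key quantitative input is that $\varphi(P_n) = P_m \geq P_{n-k}$ for a controlled gap $k$, and $P_{n-k}/P_n \approx \gamma^{-k}$, so the ratio $\varphi(P_n)/P_n$ is forced to be at least roughly $\gamma^{-k}$, which in turn bounds how composite $P_n$ can be.

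The main structural tool I would use is the theory of primitive divisors. First I would recall the standard divisibility properties of the Pell numbers: $P_d \mid P_n$ whenever $d \mid n$, the formula $\gcd(P_m, P_n) = P_{\gcd(m,n)}$, and the appearance of primitive prime divisors (by the Bilu--Hanrot--Voutier theorem, $P_n$ has a primitive prime factor for all $n > 2$). A primitive prime $p$ of $P_n$ satisfies $p \equiv \pm 1 \pmod n$ (since the rank of apparition of $p$ is exactly $n$, and $n \mid p - \left(\tfrac{2}{p}\right)$), so each such prime contributes a factor $(1 - 1/p)$ to $\varphi(P_n)/P_n$ that is very close to $1$ when $n$ is large. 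I would then set up the equation $\varphi(P_n) = P_m$ and use the formula $\varphi(P_n) = P_n \prod_{p \mid P_n}(1 - 1/p)$ together with the parity/$2$-adic structure: the $2$-adic valuations of $P_n$ and of $P_m$ are controlled by $n, m \bmod$ small numbers (Pell numbers are even iff the index is even, and $\nu_2(P_n)$ follows a simple pattern), which gives a congruence constraint linking $m$ and $n$.

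Next I would convert the problem into a linear form in logarithms. Taking the Binet approximations $P_n \approx \gamma^n/(2\sqrt2)$ and $P_m \approx \gamma^m/(2\sqrt2)$, the equation $\varphi(P_n) = P_m$ yields an approximate relation of the shape $\gamma^{n-m} \approx \prod_{p \mid P_n}(1 - 1/p)^{-1} \cdot (\text{small correction})$, and after taking logarithms this becomes a linear form in $\log\gamma$ and the logarithms of the primes dividing $P_n$. The strategy is to bound the number and size of these primes (using that $\omega(P_n)$ and the contribution of each small prime are constrained by the requirement that $\varphi(P_n)/P_n$ stays close to $\gamma^{m-n}$), thereby obtaining an upper bound on $n$ via a lower bound for the linear form (Baker's theory / the Matveev estimate). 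I expect this estimation to give an explicit but large bound on $n$.

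The hard part will be two-fold. First, controlling the multiplicative defect $\varphi(P_n)/P_n$ precisely enough: I must show that for large $n$ the product $\prod_{p \mid P_n}(1 - 1/p)$ is too close to $1$ (because primitive primes are $\equiv \pm 1 \pmod n$ and hence large) to allow $\varphi(P_n)$ to land exactly on a smaller Pell number $P_m$, unless $m$ is close to $n$ — but then the size comparison fails. Balancing these two forces is the crux. Second, after the linear-forms-in-logarithms step produces an astronomically large upper bound for $n$, I would reduce it to a computationally feasible range using the Baker--Davenport reduction method (continued fractions / the LLL-based lemma mentioned in the introduction), and finally verify by direct computation that the only surviving solutions in the small range are $(n,m) = (1,1)$ and $(2,1)$. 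The delicate interplay between the analytic size estimate and the arithmetic constraint on prime factors is where the real work lies; the transcendence and reduction steps, while technical, are by now fairly routine.
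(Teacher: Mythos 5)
Your overall framing---compare the sizes of $P_m$ and $P_n$, use the fact that primitive prime factors of $P_d$ are $\equiv\pm1\pmod d$ and hence large, and exploit the $2$-adic valuation---matches the opening moves of the paper. But the engine you propose for finishing, namely converting $\varphi(P_n)=P_m$ into a linear form in logarithms, bounding $n$ by Matveev's theorem, and then reducing via Baker--Davenport, does not work for this equation, and the paper does not use it. The obstruction is that the relevant ``linear form'' is
$\Lambda=(n-m)\log\gamma+\sum_{i=1}^{k}\log\bigl(1-1/q_i\bigr)$, where $q_1,\dots,q_k$ are \emph{all} the distinct prime factors of $P_n$. The number $k$ of logarithms is not bounded a priori; in fact the paper proves that any solution with $n>2$ forces $k\ge 416$ (via the divisibility $2^k\mid\varphi(P_n)=P_m$ combined with McDaniel's theorem and $\nu_2(P_m)=\nu_2(m)$, which yields $n>m\ge 2^{416}$). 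With $k\ge 416$ terms whose heights can be as large as $\log q_i\asymp n$, Matveev's lower bound is of the shape $\exp(-C\,30^{k}k^{4.5}\prod_i A_i\log B)$, which is astronomically weaker than the upper bound $|\Lambda|\ll\gamma^{-2m}$ coming from the Binet formula; no contradiction or upper bound on $n$ can be extracted. Your plan also presumes the argument ends with a finite computational check over a large range after reduction, which is not what happens: the only computation is for $n\le 100$, and all larger $n$ are excluded by pure contradiction.

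What the paper actually does after establishing $k\ge 416$ and $n\ge 2^{416}$ is entirely elementary and sieve-theoretic. It bounds $\ell=n-m$ by $\log\log\log n/\log\alpha+1.1$, then runs an inductive argument on the ordered prime factors $q_1<\cdots<q_k$ of $P_n$ (the scheme of Luca--Nicolae) to show $3^k>n/6$. When $n$ is odd, every prime factor of $P_n$ is $\equiv 1\pmod 4$ by reducing $Q_n^2-8P_n^2=-4$, so $4^k\mid m$, and $4^k\le n<6\cdot 3^{k\log 4/\log 3}$ forces $n<5621$, a contradiction. When $n$ is even, one writes $n=2^sn_1$ and bounds $s\le 3$ and the smallest odd prime factor of $n_1$ using the sums $S_d=\sum_{z(p)=d}1/(p-1)$, estimated via the Montgomery--Vaughan large sieve, together with the bound $e_p\le(p+1)\log\alpha/(2\log p)$ on the exponent of $p$ in $P_{z(p)}$ and primitive-divisor counts for $Q_d$; every branch ends in a contradiction. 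If you want to salvage your write-up, you need to replace the transcendence step with an argument of this kind; as it stands the central step of your proof cannot be executed.
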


\section*{\small Repdigits and Lucas sequences}

Let $\{F_n\}_{n\ge 0}$ and $\{L_n\}_{n\ge 0}$ be the sequence of Fibonacci and Lucas numbers given by $F_0=0,~F_1=1$ and $L_0=2,~L_1=1$ and recurrences
$$
F_{n+2}=F_{n+1}+F_n\quad {\text{\rm and}}\quad L_{n+2}=L_{n+1}+L_n\quad {\text{\rm for~all}}\quad n\ge 0.
$$
In \cite{Lu4}, it was shown that the largest solution of the Diophantine equation 
\begin{equation}
\label{eqi:4}
\varphi(F_n)=d\left(\frac{10^m-1}{9}\right),\qquad d\in \{1,\ldots,9\}
\end{equation} is obtained when $n=11.$ Numbers of the form $d(10^m-1)/9$ are called {\it repdigits} in base $10$, since their base $10$ representation is the string ${\underbrace{dd\cdots d}_{m~{\text{\rm times}}}}$. Here, we look at Diophantine equation \eqref{eqi:4} with $F_n$ replaced by $L_n$:
\begin{equation}
\label{eqi:5}
\varphi(L_n)=d\left(\frac{10^m-1}{9}\right),\qquad d\in \{1,\ldots,9\}.
\end{equation}

Altrough we did not completely solve the problem, we obtained some interesting properties of the solutions of the above equation. These results are the main result in \cite{JBLT}, which is presented in Chapter \textcolor{red}{5}. We prove the following.
\begin{theorem*}[Chapter \textcolor{red}{5}, Theorem \textcolor{red}{19}]
Assume that $n>6$ is such that equation \eqref{eqi:5} holds with some $d$. Then
\begin{itemize}
\item $d=8$;
\item $m$ is even;
\item $n=p$ or $p^2$, where $p^3\mid 10^{p-1}-1$.
\item $10^9<p<10^{111}$.
\end{itemize}
\end{theorem*}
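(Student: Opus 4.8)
The plan is to combine analytic size estimates, a $2$-adic congruence analysis, the theory of primitive prime divisors of Lucas sequences, and finally lower bounds for linear forms in logarithms together with a computational reduction. Throughout I use the Binet formula $L_n=\phi^n+\psi^n$ with $\phi=(1+\sqrt5)/2$ and $\psi=(1-\sqrt5)/2$, so that $L_n=\phi^n+O(\phi^{-n})$, and I abbreviate the base-ten repunit as $R_m=(10^m-1)/9$, so that the equation reads $\varphi(L_n)=d\,R_m$. First I would pin down the size of $m$ relative to $n$. Since $1\le d\le 9$ and $R_m\asymp 10^m$, while $\varphi(N)\gg N/\log\log N$, comparing $\varphi(L_n)\asymp\phi^n$ (up to a $\log\log$ factor) with $d\,R_m$ forces $m=(\log\phi/\log 10)\,n+O(\log n)$. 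This linear relation is exactly what the linear-form-in-logarithms estimate will later exploit.

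Next I would determine $d$. Because $L_n\ge 3$ for $n>1$, the value $\varphi(L_n)$ is even whereas $R_m$ is odd, so $d$ is even. To upgrade this to $d=8$ I would run a $2$-adic computation: one checks that $\nu_2(L_n)\le 2$ for every $n$, so the power of two in $\varphi(L_n)$ is governed almost entirely by the odd prime factors of $L_n$, each contributing a factor $q-1$. Since $R_m$ is odd we have $\nu_2(\varphi(L_n))=\nu_2(d)\le 3$, which caps the number of admissible odd prime factors of $L_n$ and restricts their residues modulo $4$ and $8$; showing that the $2$-content is in fact exactly $\nu_2(\varphi(L_n))=3$ then forces $\nu_2(d)=3$, and the only even digit with $8\mid d$ is $d=8$. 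For the parity of $m$ I would argue by a congruence modulo $11$: since $10\equiv-1\pmod{11}$ one has $11\mid R_m$ precisely when $m$ is even, so (as $\gcd(8,11)=1$) it suffices to decide the residue of $\varphi(L_n)$ modulo $11$, which is controlled once the admissible prime factors of $L_n$ have been restricted by the previous step.

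The core of the argument, and the main obstacle, is pinning down the shape $n=p$ or $n=p^2$ together with the Wieferich-type condition $p^3\mid 10^{p-1}-1$. Here I would invoke the primitive divisor theorem for Lucas sequences (Carmichael, Bilu--Hanrot--Voutier): every sufficiently large index $k\mid n$ contributes a primitive prime divisor to $L_n$, and each such prime injects a distinct factor into $\varphi(L_n)=8R_m$. Since $8R_m$ is very rigidly factored and the $2$-adic cap already limits $\omega(L_n)$, I would bound $\omega(n)$ and eliminate every composite shape except $p$ and $p^2$. The cube condition should then drop out of a lifting-the-exponent comparison: matching $\nu_p$ on the two sides of $\varphi(L_n)=8R_m$, using $m\asymp n$ and $\operatorname{ord}_p(10)\mid m$, forces $\nu_p(10^{p-1}-1)\ge 3$, i.e. $p^3\mid 10^{p-1}-1$.

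Finally, the numerical window $10^9<p<10^{111}$ comes from two opposite directions. The upper bound $p<10^{111}$ follows from a Matveev-type lower bound applied to the linear form $\lvert n\log\phi-m\log 10-\log(8/9)+\cdots\rvert$ attached to $\varphi(L_n)=8R_m$, combined with the size relation from the first paragraph; a Baker--Davenport reduction would sharpen the resulting bound. The lower bound $p>10^9$ is obtained by sieving: no prime below $10^9$ satisfies the rare congruence $p^3\mid 10^{p-1}-1$. I expect the delicate part to be the factorization control of paragraph three—forcing $\nu_2(\varphi(L_n))$ to be exactly $3$, extracting the residue modulo $11$, and simultaneously bounding $\omega(n)$ down to the two shapes—since once the shape and the congruence are in hand the linear-forms bound and the final sieving are comparatively mechanical.
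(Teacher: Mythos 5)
Your overall architecture (a $2$-adic cap on $\nu_2(\varphi(L_n))$, primitive divisors, Matveev plus a reduction step, and a final Wieferich-type computation) matches the paper's, but two of your key steps have genuine gaps. First, the parity of $m$. Your plan is to detect $11\mid (10^m-1)/9$ by controlling $\varphi(L_n)$ modulo $11$; but to conclude that $m$ is even along these lines you would need to prove $11\mid\varphi(L_n)$, i.e.\ that some prime factor $q$ of $L_n$ satisfies $q\equiv 1\pmod{11}$ (or $11^2\mid L_n$), and nothing in the restrictions you derive gives this. The paper's argument is different in kind: after establishing that $L_n=p_1^{\alpha_1}p_2^{\alpha_2}p_3^{\alpha_3}$ with exactly one exponent, say $\alpha_3$, even, it shows $p_3\equiv 3\pmod 8$ and $p_3\equiv 4\pmod 5$; since $\alpha_3\ge 2$, the prime $p_3$ itself divides $\varphi(L_n)=8(10^m-1)/9$, so $10^m\equiv 1\pmod{p_3}$, and then $\left(\tfrac{10}{p_3}\right)=\left(\tfrac{2}{p_3}\right)\left(\tfrac{5}{p_3}\right)=-1$ forces $m$ to be even. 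This needs the full structural information on the $p_i$, not a congruence for the repunit.

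Second, and more seriously, primitive divisors alone do not ``eliminate every composite shape except $p$ and $p^2$.'' The case $n=pq$ with distinct odd primes $p<q$ is perfectly consistent with $\omega(L_n)=3$, since $L_{pq}=L_p\cdot L_q\cdot\bigl(L_{pq}/(L_pL_q)\bigr)$ is a product of three pairwise coprime factors, each of which may well be a prime power; so the $2$-adic cap and the primitive divisor count do not exclude it. Ruling out $n=pq$ is the hardest part of the paper's proof: it takes two applications of Matveev's theorem, to the linear forms $\bigl|1-(8/9)\cdot 10^m\alpha^{-n}\bigr|$ and $\bigl|1-(8(L_p-1)/(9L_p))\cdot 10^m\alpha^{-n}\bigr|$, to get $p$ and then $q$ bounded polynomially in $\log n$, a Baker--Davenport reduction to bring both below $250$, and finally the observation that $p^2\mid 10^{p-1}-1$ or $q^2\mid 10^{q-1}-1$ must hold while no prime in $[5,250]$ has this property. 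Without this step your claim that $n\in\{p,p^2\}$ is unsupported. (Your sketch of $d=8$ also leaves the exclusion of $d=4$, i.e.\ of $\nu_2(\varphi(L_n))=2$, to an unstated mod~$4$/mod~$5$ analysis of the repunit; the remaining pieces --- the condition $p^3\mid 10^{p-1}-1$ via $p^3\mid 10^m-1$ together with a size contradiction when $p\mid m$, and the window $10^9<p<10^{111}$ --- are essentially as in the paper.)
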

\medskip

Furthermore, we investigated the terms of the Pell sequence $\{P_n\}_{n\geq 0}$ and its companions $\{Q_n\}_{n\geq 0}$ given by $Q_0=2, Q_1=2$ and $Q_{n+2}=2Q_{n+1}+Q_n$ for all $n\geq0$, which are repdigits. This leads us to solve the equations

\begin{equation}
\label{eqi:P}
P_n=a\left(\frac{10^m-1}{9}\right) \quad {\text{\rm for some}}\quad a\in \{1,2,\ldots, 9\}
\end{equation}
and 

\begin{equation}
\label{eqi:Q}
Q_n=a\left(\frac{10^m-1}{9}\right) \quad {\text{\rm for some}}\quad a\in \{1,2,\ldots,9\}.
\end{equation}

One can see that this problem leads to a Diophantine equation of the form \eqref{eqi:1}. In fact, a straightforward use of the theory of linear form in logarithms gives some very large bounds on $\max\{m,n\}$, which then can be reduced either by using the LLL \cite{lll} algorithm or by using a procedure originally discovered by Baker and Davenport \cite{BD} and improved by Dujella and Peth\H o \cite{DP}.

In our case, we do not use linear forms in logarithms. We prove in an elementary way that the solutions of the equations \eqref{eqi:P} and \eqref{eqi:Q} are respectively $n=0,1,2,3$ and $n=0,1,2.$ Theses results are the main results of \cite{BL2} and are presented in Chapter \textcolor{red}{5}.

\subsection*{\small American Mathematical Monthly problem}

Problem $10626$ from the {\it American Mathematical Monthly} \cite{A} asks to find all positive integer solutions $(m,n)$ of the Diophantine equation
\begin{equation}
\label{eq:3i}
\varphi(5^m-1)=5^n-1.
\end{equation}
To our knowledge, no solution was ever received to this problem. In this thesis, we prove the following result.

\begin{theorem*}[Chapter \textcolor{red}{4}, Theorem \textcolor{red}{17}]
\label{thm:2}
Equation \eqref{eq:3i} has no positive integer solution $(m,n)$.
\end{theorem*}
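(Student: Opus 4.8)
The plan is to show that $\varphi(5^m-1) = 5^n-1$ forces strong divisibility constraints that cannot be met. The key observation is a $2$-adic valuation argument. Write $N = 5^m-1$. Since $5 \equiv 1 \pmod 4$, we have $4 \mid 5^m-1$, and more precisely by the lifting-the-exponent lemma $\nu_2(5^m-1) = \nu_2(5-1)+\nu_2(5+1)+\nu_2(m)-1 = 2 + \nu_2(m)$ when $m$ is even, while $\nu_2(5^m-1)=2$ when $m$ is odd. So $N$ is divisible by $4$ but the exact power of $2$ is controlled by $m$. Meanwhile the right-hand side $5^n-1$ also satisfies $\nu_2(5^n-1)=2+\nu_2(n)$ or $2$ by the same formula.

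First I would analyze the power of $2$ dividing each side of $\varphi(N)=5^n-1$. The totient $\varphi(5^m-1)$ picks up a factor $p-1$ for each odd prime $p \mid N$ and a factor $2^{a-1}$ from the $2$-adic part $2^a \| N$. Every odd prime $p$ dividing $5^m-1$ contributes an even factor $p-1$ to $\varphi(N)$, so $\varphi(N)$ is divisible by $2^{\omega_{\mathrm{odd}}(N)}$ where $\omega_{\mathrm{odd}}$ counts odd prime divisors; combined with the contribution from the $2$-part this typically makes $\nu_2(\varphi(N))$ large. On the other side $\nu_2(5^n-1) = 2+\nu_2(n)$ is comparatively small unless $n$ is highly even. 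The strategy is to compare these valuations: I expect that $\varphi(5^m-1)$ is divisible by a much higher power of $2$ than $5^n-1$ can be, unless $m$ is very small, yielding a contradiction for all but finitely many $m$.

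Next I would handle the interplay of sizes. Taking the equation $\varphi(5^m-1)=5^n-1$ and using the standard lower bound $\varphi(N) \gg N/\log\log N$ together with $N = 5^m-1 \approx 5^m$ and $5^n-1 \approx 5^n$, one deduces that $n$ and $m$ are comparable, roughly $n = m - o(m)$. This pins down $n$ closely in terms of $m$ and limits how many prime factors $5^m-1$ may have (since too many odd prime factors would force $\varphi$ to drop too far below $N$, contradicting $n \approx m$). In particular $5^m-1$ can have only a bounded, small number of odd prime factors, so its factorization is quite rigid.

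The hard part will be converting these valuation and size constraints into a clean contradiction covering all $m \ge 1$. I anticipate the main obstacle is the case analysis on the parity and $2$-adic valuation of $m$: one must rule out configurations where $5^m-1$ has exactly the few odd prime factors permitted by the size bound while simultaneously matching $\nu_2(5^n-1)$ to $\nu_2(\varphi(5^m-1))$. I would close the argument by combining the congruence $\varphi(5^m-1)+1 \equiv 0 \pmod 5$ (so $5 \mid \varphi(5^m-1)+1$, forcing some prime $p \mid 5^m-1$ with $5 \mid p-1$, i.e. $p \equiv 1 \pmod 5$) with the divisibility structure of $5^m-1$, and verifying the small remaining cases directly by computation. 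The expected conclusion is that no $m$ produces a consistent $n$, so equation \eqref{eq:3i} has no positive integer solution.
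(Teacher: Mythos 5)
Your proposal has two genuine gaps, one of which is a sign error on the single most important observation. You write that $\varphi(5^m-1)+1\equiv 0\pmod 5$ ``forces some prime $p\mid 5^m-1$ with $p\equiv 1\pmod 5$''. It is exactly the opposite: since $\varphi(5^m-1)=5^n-1$ is \emph{not} divisible by $5$, no prime $p\equiv 1\pmod 5$ can divide $5^m-1$, for otherwise $5\mid p-1\mid \varphi(5^m-1)=5^n-1$, which is absurd. This prohibition is the engine of the paper's proof: combined with the Cunningham-project factorizations of $5^d-1$ it shows that $\nu_2(m)\le 4$ and that every odd prime divisor of $m$ lies in $\{17,41,71,103,223,257\}\cup\{q>512\}$, and it is precisely this sparseness of the admissible divisors of $m$ that makes all the later estimates close.

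The second gap is your claim that the size comparison forces $5^m-1$ to have ``only a bounded, small number of odd prime factors''. That is false, and the true constraint points the other way. Since $n<m$, one has $\prod_{p\mid 5^m-1}\left(1+\frac{1}{p-1}\right)=\frac{5^m-1}{5^n-1}>5^{m-n}\ge 5$, so $5^m-1$ must accumulate \emph{many} prime factors (a large number of huge prime factors costs essentially nothing in $\varphi(N)/N$; indeed $\omega(5^m-1)\ge \tau(m)-O(1)$ by the primitive divisor theorem, which is unbounded). Likewise your $2$-adic comparison only yields $\omega(5^m-1)\le 2+\nu_2(n)\le 2+\log_2 n$, which is perfectly consistent with, say, $m$ prime, so it cannot by itself produce a contradiction; the paper uses this valuation idea only in the preliminary step of ruling out $\gcd(m,n)=1$. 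The actual argument then requires substantially more machinery than you budget for: a large-sieve (Montgomery--Vaughan) bound on $\sum_{z(p)=d}1/p$ for each divisor $d$ of $m$, Ljunggren's theorem on $(x^n-1)/(x-1)=\square$ to show that every prime $q<10^4$ dividing $m$ also divides $n$ (hence divides $k=m-n$), an iterative bootstrap that pins down $k=2$, and a primitive-divisor argument forcing $k>2$; the contradiction between the last two steps ends the proof. As written, your outline would not close.
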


In \cite{B}, it was shown that if $b\ge 2$ is a fixed integer, 
then the equation
\begin{equation}
\label{eq:b}
\varphi\left(x\frac{b^m-1}{b-1}\right)=y\frac{b^n-1}{b-1}\qquad x,y\in \{1,\ldots,b-1\}
\end{equation}
has only finitely many positive integer solutions $(x,y,m,n)$. That is, there are only finitely many repdigits in base $b$ whose Euler function is also a repdigit in base $b$. Taking $b=5$, with $x=y=4$, it follows that equation \eqref{eq:3i} has only finitely many positive integer solutions $(m,n)$. 

In \cite{BL}, our main result improve the result of \cite{B} for certain values of $x,y$. 

\begin{theorem*}[Chapter \textcolor{red}{4}, Theorem \textcolor{red}{18}]
Each one of the two equations 
\begin{equation}
\label{eq:b1}
\varphi(X^m-1)=X^n-1\qquad and\qquad \varphi\left(\frac{X^m-1}{X-1}\right)=\frac{X^n-1}{X-1}
\end{equation}
has only finitely many positive integer solutions $(X,m,n)$ with the exception $m=n=1$ case in which any positive integer $X$ leads to a solution of the second equation above. Aside from the above mentioned exceptions, all solutions have $X<e^{e^{8000}}$. 
\end{theorem*}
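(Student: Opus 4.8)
The plan is to establish that each equation in \eqref{eq:b1} has only finitely many solutions $(X,m,n)$ by deriving strong structural constraints on a hypothetical large solution, and then converting these into a linear form in logarithms whose bound, via Baker's method, forces $X$ to be bounded. I will focus on the first equation $\varphi(X^m-1)=X^n-1$; the second is analogous after replacing $X^m-1$ by $(X^m-1)/(X-1)$.

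First I would reduce to the case $m>1$, since $m=1$ gives $\varphi(X-1)=X^n-1$, which has only finitely many solutions because $\varphi(X-1)\le X-1<X^n-1$ once $n\ge 2$, forcing $n=1$ and then $\varphi(X-1)=X-1$, impossible for $X-1>1$. Assuming $m\ge 2$, the central observation is a congruence constraint: since $X\equiv 1\pmod{X-1}$, every prime $p\mid X^m-1$ that is of the form $p\equiv 1\pmod{?}$ contributes factors of $X-1$ (and more generally of $X^d-1$ for $d\mid m$) to $\varphi(X^m-1)$. The key step is to exploit that $X^n-1=\varphi(X^m-1)$ is divisible by $\prod_{p\mid X^m-1}(p-1)$ up to the factor coming from prime powers. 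Because each prime factor $p$ of $X^m-1$ satisfies $p\equiv 1\pmod{\mathrm{ord}_p(X)}$ and $\mathrm{ord}_p(X)\mid m$, one accumulates many small prime factors (notably $2$) in $\varphi(X^m-1)$, and I would extract a lower bound $\nu_2(\varphi(X^m-1))\ge \omega(X^m-1)-O(1)$ against an upper bound for $\nu_2(X^n-1)$, forcing $\omega(X^m-1)$ to be bounded.

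The heart of the argument is then a comparison of sizes. One has the elementary estimate $\varphi(X^m-1)/(X^m-1)\gg 1/\log\log(X^m-1)$, so that $X^n-1=\varphi(X^m-1)$ gives
\begin{equation}
\label{eqi:sizeprop}
X^n-1 \le X^m-1 \qquad\text{hence}\qquad n\le m,
\end{equation}
while from the lower bound on Euler's function one gets $X^{m-n}\ll \log\log(X^m)$, which already confines $m-n$ once $X$ is large. Combining this with the prime-factor count from the previous step, I would pin down the shape of $X^m-1$: it must be built from very few primes, each appearing with controlled multiplicity, and this is precisely the situation where a linear form in logarithms of the form $\Lambda = n\log X - \log(\text{small product of primes})$ can be estimated. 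Applying the Matveev-style lower bound for $|\Lambda|$ against the trivial upper bound coming from $X^n-1 = \varphi(X^m-1)$ produces an inequality $\log X \ll (\log X)$ with explicit constants, which upon unwinding yields the bound $X<e^{e^{8000}}$.

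The hard part, as always in this circle of problems, is the main obstacle of controlling $\omega(X^m-1)$ and the $2$-adic (and small-prime) valuations simultaneously: one must rule out the scenario where $X^m-1$ has enough prime factors to make $\varphi(X^m-1)$ land back on the sparse sequence $X^n-1$ without a contradiction. Managing the interplay between the additive structure of $X^n-1$ and the multiplicative structure of $\varphi$ — exactly the tension highlighted in the overview — is where the delicate sieve-type estimates and the careful bookkeeping of primes $p\equiv 1\pmod d$ for $d\mid m$ enter, and it is this combinatorial control over the factorization, rather than the final application of Baker's theorem, that carries the real difficulty.
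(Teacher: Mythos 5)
There is a genuine gap, and it occurs at the two load-bearing steps of your outline. First, the claim that comparing $\nu_2(\varphi(X^m-1))$ with $\nu_2(X^n-1)$ forces $\omega(X^m-1)$ to be bounded fails because $X$ is itself a variable: if $X\equiv 1\pmod{2^t}$ then $\nu_2(X^n-1)\ge t$, so the divisibility $2^{\omega(X^m-1)-O(1)}\mid X^n-1$ only yields $\omega(X^m-1)\ll n\log X$, which is essentially the trivial bound. (This $2$-adic trick does work elsewhere in the thesis, e.g.\ for $\varphi(L_n)=d(10^m-1)/9$, precisely because there the right-hand side has absolutely bounded $2$-adic valuation; here it does not.) Consequently the premise for your Baker step --- that $X^m-1$ is ``built from very few primes'' --- is unsupported, and the proposed linear form would in any case involve an unbounded number of terms of unbounded height. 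Your closing inequality ``$\log X\ll \log X$'' is vacuous and cannot produce any bound on $X$, let alone the doubly exponential one in the statement.

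The paper's proof uses no linear forms in logarithms at all. Starting from $X^{k}<\prod_{p\mid X^m-1}\left(1+\frac{1}{p-1}\right)$ with $k=m-n$, it groups primes by the order $d=z(p)$ of $X$ modulo $p$ (a divisor of $m$), bounds each $S_d=\sum_{z(p)=d}1/p$ by $O\!\left(\frac{\log\log d}{\varphi(d)}\right)$ using the Montgomery--Vaughan large sieve and Abel summation, and converts the resulting sum over divisors of $m$ into a sum over primes $q\mid m$ via a sub-multiplicativity trick, obtaining $\log(k\log X)\le 60\sum_{q\mid m,\,q\ge 5}\frac{\log\log q}{q}+25$. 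The primes $q\mid m$ are then split according to whether $q\mid n$ (in which case $q\mid k$, so their contribution is controlled by Rosser--Schoenfeld bounds in terms of $k$) or $q\nmid n$ (in which case the Primitive Divisor Theorem forces $2^{u-1}\le \nu_q(X^n-1)$ for $u$ such primes in a dyadic range, so there are few of them and their total contribution is $O(\log\log X)$). This closes to $\log\log X<179(\log\log k)^2-1.5\log k+450$, and maximizing the function $N(x)=179x^2-1.5e^x+450$ gives $\log\log X<8000$. The essential ideas missing from your proposal are exactly this large-sieve control of $\sum_{z(p)=d}1/p$, the divisor-to-prime reduction, and the dichotomy on primes of $m$ according to divisibility of $n$; without them there is no mechanism to produce the bound.
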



\subsubsection*{On Lehmer's Conjecture}

A composite positive integer $n$ is \textit{Lehmer}  if $\varphi(n)$ divides $n-1$. Lehmer \cite{Leh} conjectured that there is no such integer. To this day, the conjecture remains open. Counterexamples to Lehmer's conjecture have been dubbed {\it Lehmer numbers}.

Several people worked on getting larger and larger lower bounds on a potential Lehmer number. Lehmer himself proved that if $N$ is Lehmer, then $\omega(N)\geq 7$. This has been improved by Cohen and Hagis \cite{coh} to $\omega(N)\geq 14.$ The current record $\omega(N)\ge 15$ is due to Renze \cite{joh}. If in addition $3\mid N$, then $\omega(N)\geq 40\cdot 10^{6}$ and $N>10^{36\cdot 10^{7}}.$

Not succeeding in proving that there are no Lehmer numbers, some researchers have settled for the more modest goal of proving that there are no Lehmer numbers in certain interesting subsequences of positive integers. In $2007$, F. Luca \cite{L} proved that there is no Lehmer number in the Fibonacci sequence.  In \cite{GL}, it is shown that there is no Lehmer number in the sequence of Cullen numbers $\{C_n\}_{n\ge 1}$ of general term $C_n=n2^n+1$, while in \cite{Dajune} the same conclusion is shown to hold for generalized Cullen numbers. In \cite{F}, it is shown that there is no Lehmer number of the form $(g^n-1)/(g-1)$ for any $n\ge 1$ and integer $g \in [2,1000]$. In Chapter \textcolor{red}{6}, we adapt the method from \cite{L} to prove the following theorems:

\begin{theorem*}[Chapter \textcolor{red}{6}, Theorem \textcolor{red}{22}]
\label{thm:9}
There is no Lehmer number in the Lucas sequence $\{L_n\}_{n\geq 0}.$
\end{theorem*}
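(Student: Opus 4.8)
The plan is to show that if $L_n$ is a Lehmer number, then $\varphi(L_n)\mid L_n-1$ forces $L_n$ to have many prime factors, and then to derive a contradiction by bounding $n$. Recall that a Lehmer number $N$ is composite with $\varphi(N)\mid N-1$; since $\varphi(N)$ is even for $N>2$, such an $N$ must be odd and squarefree, and in fact $N=p_1\cdots p_k$ with $\omega(N)=k\ge 15$ by the result of Renze quoted above. So first I would reduce to the case where $L_n$ is odd and squarefree with at least $15$ distinct prime factors; this immediately shows $n$ is large, because $L_n$ must exceed the product of the first $15$ odd primes.

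The heart of the argument, adapting Luca's method from \cite{L}, is to exploit two sources of information about the prime factors $p$ of $L_n$. On the one hand, for each prime $p\mid L_n$ the rank of apparition of $p$ in the Lucas/Fibonacci sequence forces a divisibility relation between $p\pm 1$ and the index $n$: writing $z(p)$ for the index of apparition, one has $z(p)\mid n$ (with the appropriate parity condition distinguishing $L_n$ from $F_n$), and $z(p)\mid p-\left(\frac{5}{p}\right)$, so that $p\equiv \pm 1\pmod{z(p)}$ in a controlled way. On the other hand, the Lehmer condition $\varphi(L_n)\mid L_n-1$ together with $L_n\equiv (-1)^n\pmod{\text{small moduli}}$ yields strong congruence constraints. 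Combining these, one shows that each prime factor $p$ of $L_n$ satisfies $p\equiv 1\pmod{d}$ for a large divisor $d$ of $n$, which in turn bounds $\omega(L_n)$ from above in terms of $\log n$ via the estimate $\prod_{p\mid L_n}(1-1/p)=\varphi(L_n)/L_n$ and the near-equality $\varphi(L_n)/L_n\approx 1/L_n$ forced by the Lehmer divisibility.

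Concretely, the Lehmer relation gives $L_n-1=\varphi(L_n)\cdot K$ for a positive integer $K$, and since $\varphi(L_n)=L_n\prod_{p\mid L_n}(1-1/p)$ one gets $K=\prod_{p\mid L_n}(1-1/p)^{-1}\cdot(1-1/L_n)$, so $K<\prod_{p\mid L_n}(1-1/p)^{-1}$. Because every prime factor $p$ of $L_n$ is $\equiv\pm1\pmod{n/\gcd}$ and hence large (at least of size comparable to a divisor of $n$), the product $\prod(1-1/p)^{-1}$ is close to $1$, which pins $K$ down to a very small set of values; one then analyzes each possibility for $K$ (using $K=2^a\cdot\dots$ and the $2$-adic valuations of $L_n-1$ and $\varphi(L_n)$) and shows none can hold. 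The key quantitative input is a lower bound on the prime factors: using $z(p)\mid n$ and $p\equiv\pm1\pmod{z(p)}$ one shows $\sum_{p\mid L_n}1/p$ is small, forcing $\omega(L_n)$ to be small — contradicting $\omega(L_n)\ge 15$ unless $n$ itself is tiny, and the tiny cases are cleared by direct computation.

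The hard part will be making the sieve-type estimate on $\sum_{p\mid L_n}1/p$ genuinely small enough to contradict the lower bound $\omega(L_n)\ge 15$: this requires carefully separating the prime factors of $L_n$ according to the size of their index of apparition $z(p)$ relative to $n$, controlling how many primes can share a small $z(p)$, and handling the $2$-adic and $3$-adic valuations that govern the possible values of $K$. I expect the delicate bookkeeping to lie in balancing the contribution of the few small primes (which could have small $z(p)$ and thus be genuinely small) against the requirement that $\varphi(L_n)/L_n$ be as close to $1/L_n$ as the Lehmer condition demands; the primitive divisor theorem (Carmichael's theorem for Lucas sequences) will be the essential tool guaranteeing that large indices contribute new large primes, and the final contradiction should emerge from combining these size constraints with the explicit lower bounds on $\omega(N)$ for Lehmer numbers.
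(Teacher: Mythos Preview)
Your plan has the right general shape (Renze's bound $\omega(L_n)\ge 15$, primitive divisors, a sieve estimate on $\sum_{p\mid L_n}1/(p-1)$), but it is missing the specific structural input that makes the argument close, and it contains a misstatement.

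First, the line ``the near-equality $\varphi(L_n)/L_n\approx 1/L_n$ forced by the Lehmer divisibility'' is wrong: the Lehmer condition gives $\varphi(L_n)=(L_n-1)/K$ for some integer $K\ge 2$, so $\varphi(L_n)/L_n$ is bounded \emph{below} (not small), and the useful inequality is $L_n/\varphi(L_n)>2$, i.e.\ $\sum_{p\mid L_n}\frac{1}{p-1}>\log 2$. Your plan to show this sum is small enough to force $\omega(L_n)$ below $15$ will not work as stated: the sieve estimate alone gives roughly $\sum_{d\mid n}\frac{c\log\log d}{d}$, which is not small unless you can independently bound $\tau(n)$.

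This is exactly the missing idea. The paper's proof hinges on the explicit factorizations of $L_n-1$ (Lemma~\ref{eq1:fibluc}(iii),(viii)): for $n$ even one has $L_n-1=L_{n/2}^2\pm\{1,3\}$, which is never divisible by $4$ and so kills $2^{15}\mid L_n-1$; for $n\equiv 3\pmod 4$ one has $L_n-1=L_{(n+1)/2}L_{(n-1)/2}$, and since no Lucas number is divisible by $8$ this again contradicts $2^{15}\mid L_n-1$. One is left with $n\equiv 1\pmod 4$, where $L_n-1=5F_{(n-1)/2}F_{(n+1)/2}$. Now the primitive-divisor argument gives $p_1^{\tau(n/p_1)}\mid\varphi(L_n)\mid L_n-1$, and because $(n\pm 1)/2$ is coprime to $p_1$ the $p_1$-adic valuation of $F_{(n\pm 1)/2}$ is at most the Wall--Sun--Sun exponent $e_{p_1}$. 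This yields the crucial bound $\tau(n)\le (p_1+1)\log\alpha/\log p_1$. Only \emph{after} this does the sieve estimate bite: combining it with $\sum 1/(p-1)>\log 2$ bounds $p_1<1800$, whence $e_{p_1}=1$ (McIntosh--Roettger), so $n=p_1$ is prime, and then the sieve inequality fails for $p_1\ge 97$. Without the $L_n-1$ factorizations and the resulting bound on $\tau(n)$, your sketch does not reach a contradiction.
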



\begin{theorem*}[Chapter \textcolor{red}{6}, Theorem \textcolor{red}{23}]
\label{thm:9}
There is no Lehmer number in the Pell sequence $\{P_n\}_{n\geq 0}.$
\end{theorem*}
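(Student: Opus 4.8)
The plan is to adapt the method Luca used for the Fibonacci sequence (reference \cite{L}) to the Pell sequence. Suppose, for contradiction, that $P_n$ is a Lehmer number, so that $P_n$ is composite and $\varphi(P_n)\mid P_n-1$. The first step is to exploit the divisibility numerically: since $\varphi(P_n)\mid P_n-1$ and $\varphi(P_n)<P_n$, we may write $P_n-1=K\varphi(P_n)$ for some positive integer $K\ge 2$ (the case $K=1$ would force $P_n$ prime). Because $\varphi(N)/N=\prod_{p\mid N}(1-1/p)$, controlling $K$ amounts to controlling the small prime factors and the number of distinct prime factors $\omega(P_n)$ of $P_n$. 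So I would first derive, from the known record lower bounds on $\omega(N)$ for Lehmer numbers quoted in the introduction (namely $\omega(N)\ge 15$, and $\omega(N)\ge 40\cdot 10^6$ if $3\mid N$), a lower bound forcing $n$ to be large and forcing $P_n$ to have many prime factors.

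Next I would bring in the arithmetic structure specific to the Pell sequence. The key facts are the appearance (rank of apparition) properties: for a prime $p$, the index of the first Pell number divisible by $p$ divides $p-\left(\tfrac{2}{p}\right)$ (the analogue of the Fibonacci entry-point law, coming from $P_n=\tfrac{(1+\sqrt2)^n-(1-\sqrt2)^n}{2\sqrt2}$ and the factorization of the discriminant $8$). This lets me relate the prime factors of $P_n$ to divisors of $n$ and to primes $p$ with $p\equiv\pm1\pmod 8$. In particular, primitive prime factors of $P_d$ for $d\mid n$ contribute, and each such $p$ satisfies $p\equiv 1\pmod d$ (up to the Legendre symbol correction), so $d\mid p-1$ and hence $d\mid \varphi(P_n)$. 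Combining the divisibility $\varphi(P_n)\mid P_n-1$ with $P_n\equiv 0\pmod{\text{(primitive factors)}}$ produces strong congic constraints; the standard trick is to show $n$ itself must divide $P_n-1$ or a closely related quantity, which is impossible for $n$ large because $n\mid\varphi(P_n)$ while $\gcd(n,P_n-1)$ stays bounded.

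The quantitative heart of the argument is a squeeze between the multiplicative size of $\varphi(P_n)$ and the additive size of $P_n$. Using $P_n=\tfrac{\gamma^n-\delta^n}{2\sqrt2}$ with $\gamma=1+\sqrt2$, one has $P_n\asymp \gamma^n$, so $\log P_n\approx n\log\gamma$. On the multiplicative side, the bound $\varphi(N)\gg N/\log\log N$ together with $K=(P_n-1)/\varphi(P_n)$ gives $K\ll \log\log P_n\ll \log n$. I would then count prime factors: writing $\omega=\omega(P_n)$, the relation $2^{\omega-1}\mid \varphi(P_n)\mid P_n-1$ forces $2^{\omega}\le P_n$, i.e. $\omega\le n\log\gamma/\log 2$, giving an upper bound on $\omega$. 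Playing this against the lower bounds on $\omega$ for Lehmer numbers, and against the many primitive divisors guaranteed by Carmichael's primitive divisor theorem for Lucas sequences (each $P_d$, $d\mid n$, $d$ not too small, having a primitive prime factor), yields a contradiction once $n$ exceeds an explicit small bound. The residual small cases are then cleared by direct verification that no composite $P_n$ with small $n$ satisfies $\varphi(P_n)\mid P_n-1$.

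I expect the main obstacle to be the bookkeeping in the squeeze argument: making the comparison between $\omega(P_n)$, the power of $2$ dividing $\varphi(P_n)$, and the size of $P_n$ fully rigorous requires careful control of the $2$-adic and small-prime-adic valuations of $P_n$ and of the Legendre-symbol corrections $\left(\tfrac{2}{p}\right)$ in the entry-point law. In the Fibonacci case these corrections involve $\left(\tfrac{5}{p}\right)$ and residues mod $5$; here the modulus is $8$, so the case analysis on $n\bmod 8$ and on $2\mid n$ versus $2\nmid n$ (recalling $2\mid P_n$ iff $2\mid n$, since $P_n$ is even exactly when $n$ is even) must be handled separately. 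The delicate point is ensuring that enough distinct primes $p\equiv 1\pmod{d}$ divide $\varphi(P_n)$ to overwhelm the bound $\varphi(P_n)\le (P_n-1)/K$; once that count is established the contradiction is immediate, but extracting it uniformly in $n$ is where the real work lies.
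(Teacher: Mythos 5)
Your overall strategy (adapt Luca's Fibonacci argument, invoke $\omega(N)\ge 15$ for Lehmer numbers, use primitive divisors and $2$-adic valuations of Pell numbers) points in the right direction, but the quantitative engine that is supposed to produce the contradiction is missing, and the one you propose does not work. You suggest deriving $2^{\omega}\le P_n$, hence $\omega\le n\log\gamma/\log 2$, and ``playing this against'' the lower bound $\omega\ge 15$: an upper bound on $\omega$ that grows linearly in $n$ can never contradict a constant lower bound, so no contradiction emerges for large $n$. Likewise the claim that $n\mid\varphi(P_n)$ while $\gcd(n,P_n-1)$ stays bounded is unsupported: primitive prime factors of $P_d$ only satisfy $p\equiv\pm1\pmod d$, and nothing you write bounds $\gcd(n,P_n-1)$.

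The paper instead squeezes $n$ between two bounds in terms of $K:=\omega(P_n)$. First, a Lehmer number is odd and squarefree, and $\nu_2(P_n)=\nu_2(n)$ forces $n$ odd; reducing $Q_n^2-8P_n^2=-4$ modulo any prime $q\mid P_n$ shows $q\equiv 1\pmod 4$, so $2^{2K}\mid\varphi(P_n)\mid P_n-1$. The identity $P_n-1=P_{(n-\epsilon)/2}Q_{(n+\epsilon)/2}$ together with $\nu_2(Q_m)=1$ and $\nu_2(P_m)=\nu_2(m)$ then transfers this power of $2$ to the index: $2^{2K-1}$ divides $(n\pm 1)/2$, whence $n+1\ge 2^{2K}$. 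In the other direction, Pomerance's bound $N<K^{2^K}$ for Lehmer numbers (Theorem 4 of \cite{CP}) combined with $P_n\ge 2^{n/2}$ gives $2^K\log K>n/3$, i.e.\ an upper bound on $n$ of order $2^K\log K$. Since $4^K$ dominates $2^K\log K$ and $K\ge 15$, the two bounds are incompatible, and small $n$ are checked by computer. Your sketch contains none of the three decisive ingredients --- the congruence $q\equiv 1\pmod 4$ for every $q\mid P_n$ (you only invoke $p\equiv\pm1\pmod 8$ from the entry-point law, which is not what is needed), the transfer of the $2$-adic valuation from $P_n-1$ to $n\pm1$ via the factorization of $P_n-1$, and Pomerance's upper bound --- and without them the argument does not close.
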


\section{Organization of the Thesis}

Our thesis consists of six chapters. This chapter, as the title suggests, gives a general introduction and the main motivation of this thesis, together with a description of the different Diophantine problems which have been studied. From Chapter \textcolor{red}{3} to Chapter \textcolor{red}{6} , each chapter contains new results concerning Diophantine equations with arithmetic function and some given binary recurrence sequences. 

In Chapter \textcolor{red}{2}, we  give the preliminary tools and main results that will be used in this work. We start by reminding some definitions and properties of binary recurrence sequences and arithmetic functions with the main emphasis of the Euler's function. In Section \textcolor{red}{\ref{sec4}}, we recall results on the Primitive Divisor Theorem of members of Lucas sequences. Chapter \textcolor{red}{2} concludes with a result due to Matveev \cite{matveev} which gives a general lower bound for linear forms in logarithms of algebraic numbers(Section \textcolor{red}{\ref{sec21}}) and results from Diophantine properties of continued fractions(Section \textcolor{red}{\ref{sec22}}).

In Chapter \textcolor{red}{3}, we investigate Pell numbers whose Euler function is also a Pell number. 

In Chapter \textcolor{red}{4}, we solve the Diophantine equations \eqref{eq:3i} and \eqref{eq:b1}. 

In Chapter \textcolor{red}{5}, we use some tools and results from Diophantine approximations and linear form in logarithms of algebraic numbers, continued fractions and sieve methods to solve Theorem \textcolor{red}{17}, Theorem \textcolor{red}{18} and Diophantine equations \eqref{eqi:P} and \eqref{eqi:Q}.

In Chapter \textcolor{red}{6}, we prove that the Lehmer Conjecture holds for the Lucas sequence $\{L_n\}_{n\ge 0}$ and the Pell sequence $\{P_n\}_{n\ge 0}$. Namely, none of these sequences contains a Lehmer number.



\chapter{Preliminary results}

In this chapter, we give some preliminaries that will be useful in this thesis. In Section \textcolor{red}{\ref{sec1}}, we discuss some basic definitions and properties of arithmetic functions, in Section \textcolor{red}{\ref{sec2}}, the Euler function and related results. In Section \textcolor{red}{\ref{sec3}}, we study the arithmetic of binary recurrence sequences.  In Section \textcolor{red}{\ref{sec4}}, we recall the Primitive Divisor Theorem for members of Lucas sequences. We conclude this chapter with results from linear forms in logarithms of algebraic numbers in Section \textcolor{red}{\ref{sec21}} and properties of continued fractions in Section \textcolor{red}{\ref{sec22}}. 

\section{Arithmetic Functions}
\label{sec1}
An arithmetic function is a function defined on the set of natural numbers $\mathbb{N}$  with real or complex values. It can be also defined as a sequence $\{a(n)\}_{n\geq 1}.$ An active part of number theory consists, in some way, of the study of these functions.
\medskip

%

The usual number theoretic examples of arithmetic functions are $\tau(n)$, $\omega(n)$ and $\Omega(n)$ for the number of divisors of $n$ including $1$, the number of distinct prime factors of $n$ and the number of prime power factors of $n$, respectively. These functions can be described in terms of the prime factorization of $n$ as in the following proposition:

\begin{proposition}
Suppose that $n>1$, has the prime factorization 

$$n=\prod_{j=1}^{m}p_j^{k_j}.$$

Then,

$$\tau(n)=\prod_{j=1}^{m}(k_j+1), \quad \omega(n)=m, \quad \Omega(n)=\sum_{j=1}^{m}k_j.$$
\end{proposition}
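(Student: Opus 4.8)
The plan is to prove the three formulas directly from the given prime factorization $n=\prod_{j=1}^{m}p_j^{k_j}$ by appealing to the fundamental theorem of arithmetic and elementary counting. The essential observation underlying all three identities is that a divisor of $n$ is uniquely determined by its own prime factorization, and by unique factorization every positive divisor $d$ of $n$ has the form $d=\prod_{j=1}^{m}p_j^{e_j}$ where each exponent $e_j$ is an integer satisfying $0\le e_j\le k_j$, and conversely every such choice of exponents yields a distinct divisor of $n$. This sets up a bijection between the set of divisors of $n$ and the Cartesian product of exponent ranges $\{0,1,\ldots,k_1\}\times\cdots\times\{0,1,\ldots,k_m\}$, which is the combinatorial heart of the argument.

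For $\tau(n)$, I would count this Cartesian product directly: each exponent $e_j$ can independently take any of the $k_j+1$ values in $\{0,1,\ldots,k_j\}$, so the multiplication principle gives exactly $\prod_{j=1}^{m}(k_j+1)$ divisors. For $\omega(n)$, which counts the distinct primes dividing $n$, the formula is essentially immediate from the statement of the factorization: since the $p_j$ are (by the convention of writing a prime factorization) pairwise distinct primes and each $k_j\ge 1$, the distinct prime divisors of $n$ are precisely $p_1,\ldots,p_m$, so $\omega(n)=m$. For $\Omega(n)$, which counts prime factors with multiplicity, I would note that the factorization $n=\prod_{j=1}^{m}p_j^{k_j}$ writes $n$ as a product in which the prime $p_j$ appears exactly $k_j$ times, so the total number of prime factors counted with multiplicity is $\sum_{j=1}^{m}k_j$.

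Two small points deserve care rather than genuine difficulty. First, one should confirm that the three sets being counted ($e_j$ ranges) are truly independent, i.e.\ that no two distinct exponent vectors produce the same integer; this is exactly where uniqueness of factorization is invoked, and it is the only place the argument relies on anything beyond counting. Second, one should remark that $1$ (corresponding to all $e_j=0$) and $n$ itself (corresponding to $e_j=k_j$ for all $j$) are both included among the divisors counted by $\tau(n)$, consistent with the notational convention stated earlier in the excerpt.

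I do not anticipate a serious obstacle here: the proposition is foundational and its proof is purely a matter of organizing the bijection cleanly and quoting unique factorization at the right moment. If anything, the only thing to watch is presentational rigor, namely making the bijection between divisors and exponent tuples explicit so that the product formula for $\tau(n)$ is seen to be a genuine application of the multiplication principle rather than an assertion. Once that bijection is stated, all three formulas follow in a single short paragraph each.
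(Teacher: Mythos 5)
Your proposal is correct and follows essentially the same route as the paper's own proof: the formulas for $\omega(n)$ and $\Omega(n)$ are read off from the definitions, and $\tau(n)$ is obtained by observing that the divisors of $n$ are exactly the products $\prod_{j=1}^{m}p_j^{e_j}$ with $0\le e_j\le k_j$ and counting the exponent choices. Your version merely makes the bijection and the appeal to unique factorization more explicit than the paper does, which is a matter of presentation rather than substance.
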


\begin{proof}
The expression of $\omega(n)$ and $\Omega(n)$ follow from their definition. Divisors of $n$ have the form $\prod_{j=1}^{m}p_j^{r_j}$ where for each $j$, the possible values of $r_j$ are $0,1,\ldots,k_j.$ This gives the expression of $\tau(n).$
\end{proof}
\medskip

\begin{definition}
An arithmetic function $a$, such that $a(1)=1$,  is said to be 
\begin{itemize}
\item completely multiplicative if $a(mn)=a(m)a(n) \quad \hbox{for all m, n;}$

\item multiplicative if $a(mn)=a(m)a(n)\quad \hbox{when $(m,n)=1.$}$
\end{itemize}
\end{definition}
 
\begin{example}
\begin{enumerate}
\item For any $s$, $a(n)=n^s$ is completely multiplicative.
\item $\tau$ is multiplicative.
\item Neither $\omega$ nor $\Omega$ is multiplicative.
\end{enumerate}
\end{example}

\section{The Euler Totient $\varphi$ }
\label{sec2}
\begin{definition}
For a positive integer $n$, the Euler totient function $\varphi(n)$ counts the number of positive integers $m\leq n$ which are coprime to $n$, that is:
$$
\varphi(n)=\Big|\{1\leq m\leq n : (m,n)=1\}\Big|.
$$
\end{definition}
Clearly, if $n$ is a prime number, then $\varphi(n)=n-1.$ Further, let $\mathbb{Z}/n\mathbb{Z}$ be the set of congruence classes $a\pmod{n}$. This set is also a ring and its invertible elements form a group whose cardinality is $\varphi(n)$. Lagrange's theorem from group theory tells us that the order of every element in a finite group is a divisor of the order of the group. In this particular case, this theorem implies that
$$
a^{\varphi(n)}\equiv 1 \pmod n
$$
holds for all integers $a$ coprime to $n$. The above relation is known as the Euler Theorem.

\begin{theorem}
The Euler totient function $\varphi(n)$ is multiplicative, i.e
$$
\varphi(mn)=\varphi(m)\varphi(n) \quad\hbox{for all $m\geq 1$, $n\geq 1$ and $(m,n)=1.$}
$$
\end{theorem}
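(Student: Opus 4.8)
The plan is to prove multiplicativity of $\varphi$ directly from its definition as a counting function, using the structure of residues modulo $mn$ when $\gcd(m,n)=1$. The key observation is the Chinese Remainder Theorem: when $m$ and $n$ are coprime, the map sending a residue class $a \pmod{mn}$ to the pair $(a \bmod m, \ a \bmod n)$ is a bijection between $\mathbb{Z}/mn\mathbb{Z}$ and $\mathbb{Z}/m\mathbb{Z} \times \mathbb{Z}/n\mathbb{Z}$.

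First I would recall the definition $\varphi(n) = \big|\{1 \le a \le n : \gcd(a,n)=1\}\big|$, so that $\varphi(mn)$ counts residues modulo $mn$ that are coprime to $mn$. The crux is to translate the coprimality condition across the CRT bijection. Concretely, I would show that $\gcd(a,mn)=1$ if and only if both $\gcd(a,m)=1$ and $\gcd(a,n)=1$; this follows because a prime dividing $mn$ divides either $m$ or $n$ (using coprimality of $m$ and $n$ so that no prime divides both). Hence, under the bijection $a \mapsto (a \bmod m,\ a \bmod n)$, the invertible class $a \pmod{mn}$ corresponds exactly to a pair where the first coordinate is coprime to $m$ and the second is coprime to $n$.

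The heart of the argument is then a counting step: since the correspondence is a bijection and it carries the set of residues coprime to $mn$ onto the product of the set of residues coprime to $m$ with the set of residues coprime to $n$, the cardinalities multiply. That is,
$$
\varphi(mn) = \big|\{a \bmod m : \gcd(a,m)=1\}\big| \cdot \big|\{b \bmod n : \gcd(b,n)=1\}\big| = \varphi(m)\varphi(n).
$$
Alternatively, I could phrase the whole thing group-theoretically: the ring isomorphism $\mathbb{Z}/mn\mathbb{Z} \cong \mathbb{Z}/m\mathbb{Z} \times \mathbb{Z}/n\mathbb{Z}$ restricts to a group isomorphism on unit groups, and since $\varphi$ counts exactly the units of these rings, the orders satisfy $\varphi(mn)=\varphi(m)\varphi(n)$.

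The main obstacle is establishing the CRT bijection itself and verifying that coprimality is preserved in both directions; everything else is bookkeeping. I expect the cleanest route is to take the ring isomorphism $\mathbb{Z}/mn\mathbb{Z} \cong \mathbb{Z}/m\mathbb{Z}\times\mathbb{Z}/n\mathbb{Z}$ as known (it is standard and the group-of-units viewpoint is already invoked in the text's discussion of Euler's theorem), and then the only real content is the equivalence $\gcd(a,mn)=1 \iff \gcd(a,m)=1 \text{ and } \gcd(a,n)=1$, which rests squarely on the hypothesis $\gcd(m,n)=1$. If the CRT is not assumed available, one would instead prove the bijection from scratch via B\'ezout's identity, which is where the coprimality of $m$ and $n$ is genuinely used to guarantee surjectivity of the residue map.
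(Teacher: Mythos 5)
Your proposal is correct and follows essentially the same route as the paper: both arguments reduce $\varphi(mn)=\varphi(m)\varphi(n)$ to exhibiting a bijection between the residues coprime to $mn$ and the pairs of residues coprime to $m$ and to $n$ respectively. In fact your write-up is the more complete of the two, since the paper only defines the three sets $U$, $V$, $W$ and asserts that $|W|=|U\times V|$ without constructing the correspondence, whereas you supply the Chinese Remainder Theorem bijection and verify that coprimality transfers correctly in both directions.
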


\begin{proof} If $m=1$ and $n=1$, then the result holds. Suppose now that $m>1$ and $n>1.$ We denote by

$$U=\{u: 1\leq u\leq m, (u,m)=1 \},$$
$$V=\{v: 1\leq v\leq n, (v,n)=1 \},$$
$$W=\{w: 1\leq w\leq m\times n, (w,m\times n)=1 \}.$$

Then, we obtain that $|U|=\varphi(m),$ $|V|=\varphi(n)$  and  $|W|=\varphi(m\times n).$ We now show that the set $W$ has as many elements as the set $U\times V=\{(u,v): u\in U, v\in V\}$, then the theorem follows.
\medskip

%
\end{proof}

\begin{theorem}
If $ n=p_1^{\alpha_1}p_2^{\alpha_2}\ldots p_{k}^{\alpha_k}$ , where $p_1,\ldots,p_k$ are distinct primes and $\alpha_1,\ldots,\alpha_{k}$ are positive integers, then
$$
\varphi(n)=\prod p_i^{\alpha_i-1}(p_i-1)=p_1^{\alpha_1-1}(p_1-1)\ldots p_k^{\alpha_k-1}(p_k-1),
$$
and
$$
\sigma(n)= \left(\frac{p_1^{\alpha_1+1}-1}{p_1-1}\right)\ldots\left(\frac{p_k^{\alpha_k+1}-1}{p_k-1}\right),
$$
where $\sigma$ is the sum of divisors of $n$.
\end{theorem}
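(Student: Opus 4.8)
The plan is to prove both formulas by reducing to the case of prime powers and then invoking multiplicativity, which is already established in the previous theorem for $\varphi$ and which follows for $\sigma$ by the same counting or bijection argument. First I would record the two prime-power base cases. For $\varphi$, I claim that $\varphi(p^\alpha)=p^{\alpha-1}(p-1)$. To see this, note that among the integers $1,2,\ldots,p^\alpha$, the only ones \emph{not} coprime to $p^\alpha$ are the multiples of $p$, namely $p,2p,\ldots,p^{\alpha-1}\cdot p$, of which there are exactly $p^{\alpha-1}$. Subtracting, $\varphi(p^\alpha)=p^\alpha-p^{\alpha-1}=p^{\alpha-1}(p-1)$. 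For $\sigma$, the divisors of $p^\alpha$ are precisely $1,p,p^2,\ldots,p^\alpha$, so $\sigma(p^\alpha)=1+p+\cdots+p^\alpha=\dfrac{p^{\alpha+1}-1}{p-1}$ by the finite geometric series formula.

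Next I would assemble the general case from the prime-power case using multiplicativity. Writing $n=p_1^{\alpha_1}\cdots p_k^{\alpha_k}$ with the $p_i$ distinct, the factors $p_i^{\alpha_i}$ are pairwise coprime. Applying the multiplicativity of $\varphi$ (the preceding theorem) inductively over the $k$ factors gives
$$
\varphi(n)=\prod_{i=1}^{k}\varphi(p_i^{\alpha_i})=\prod_{i=1}^{k}p_i^{\alpha_i-1}(p_i-1),
$$
which is exactly the asserted product. For $\sigma$ I would first observe that $\sigma$ is itself multiplicative: if $(a,b)=1$ then every divisor of $ab$ factors uniquely as a product of a divisor of $a$ and a divisor of $b$, so the sum of divisors factors as a product, giving $\sigma(ab)=\sigma(a)\sigma(b)$. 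Then the same inductive step yields
$$
\sigma(n)=\prod_{i=1}^{k}\sigma(p_i^{\alpha_i})=\prod_{i=1}^{k}\frac{p_i^{\alpha_i+1}-1}{p_i-1}.
$$

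I expect the only genuine point requiring care to be the multiplicativity of $\sigma$, since multiplicativity of $\varphi$ is quoted from the earlier theorem but the corresponding statement for $\sigma$ is not stated in the excerpt. The clean way to handle this is the divisor-factorization bijection: the map $(d_1,d_2)\mapsto d_1 d_2$ from (divisors of $a$) $\times$ (divisors of $b$) to (divisors of $ab$) is a bijection when $\gcd(a,b)=1$, and summing over both sides and factoring the double sum as a product of two sums gives the multiplicative identity. Everything else is routine: the prime-power computations are elementary, and the passage from two coprime factors to $k$ of them is a straightforward induction on $k$. I would therefore spend the bulk of the written argument on the prime-power base cases and the $\sigma$-multiplicativity bijection, and treat the inductive assembly briefly.
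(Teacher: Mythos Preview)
Your proposal is correct and follows the standard textbook route: establish the prime-power base cases $\varphi(p^\alpha)=p^{\alpha-1}(p-1)$ and $\sigma(p^\alpha)=(p^{\alpha+1}-1)/(p-1)$, then appeal to multiplicativity (quoted from the previous theorem for $\varphi$, supplied via the divisor-bijection argument for $\sigma$) and induct over the $k$ coprime prime-power factors. There is no gap.

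As for comparison: the paper actually states this theorem without proof, moving directly to a numerical example. So there is no ``paper's own proof'' to compare against; your argument simply fills in the omitted standard verification, and the approach you chose is exactly the one any reader would supply.
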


\medskip

\begin{exemple}.
\newline
\begin{itemize}
\item $\varphi(68) = \varphi(2^2.17) = (2^{2-1}(2-1))(17-1) = 2.16 = 32.$
\item $\sigma(68)=  2 + 17 = 126.$
\end{itemize}
\end{exemple}

The next lemma from \cite{Lu3} is useful in order to obtain an upper bound on the sum appearing in the right--hand side of \eqref{eq1:Sn}.

\begin{lemma}
\label{lem1:useful}
We have
$$
\sum_{d\mid n} \frac{\log d}{d}<\left(\sum_{p\mid n} \frac{\log p}{p-1}\right) \frac{n}{\varphi(n)}.
$$
\end{lemma}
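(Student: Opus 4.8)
The plan is to expand $\log d$ over the prime powers dividing $d$ and then interchange the order of summation. The key elementary identity is that for any $d\ge 1$,
$$
\log d=\sum_{\substack{p^k\mid d\\ k\ge 1}}\log p,
$$
which holds because $\nu_p(d)$ counts exactly the integers $k\ge 1$ with $p^k\mid d$, so that $\log d=\sum_{p\mid d}\nu_p(d)\log p$ collapses into the displayed sum over prime powers.

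First I would substitute this into the left-hand side and swap the two sums, using that $p^k\mid d\mid n$ forces $p^k\mid n$:
$$
\sum_{d\mid n}\frac{\log d}{d}=\sum_{p^k\mid n}\log p\sum_{\substack{d\mid n\\ p^k\mid d}}\frac1d.
$$
Next, for a fixed prime power $p^k\mid n$, I would reindex the inner sum by writing $d=p^k t$; since $d\mid n$ and $p^k\mid d$, this is equivalent to $t\mid n/p^k$, giving
$$
\sum_{\substack{d\mid n\\ p^k\mid d}}\frac1d=\frac{1}{p^k}\sum_{t\mid n/p^k}\frac1t=\frac{1}{p^k}\cdot\frac{\sigma(n/p^k)}{n/p^k}.
$$

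The two bounds that finish the argument are both elementary. For the inner factor I would use that for any divisor $M$ of $n$,
$$
\frac{\sigma(M)}{M}=\prod_{p^a\| M}\frac{1-p^{-(a+1)}}{1-p^{-1}}<\prod_{p\mid M}\frac{1}{1-p^{-1}}=\frac{M}{\varphi(M)}\le\frac{n}{\varphi(n)},
$$
where I invoke the product formulas for $\sigma$ and $\varphi$ recorded earlier, and the last inequality holds because $M\mid n$ means the primes dividing $M$ form a subset of those dividing $n$ and each factor $p/(p-1)$ exceeds $1$. This pulls $n/\varphi(n)$ out of the sum, leaving $\sum_{p^k\mid n}(\log p)/p^k$, which I would bound by completing the geometric series: $\sum_{k=1}^{\nu_p(n)}p^{-k}<\sum_{k\ge 1}p^{-k}=1/(p-1)$, so that $\sum_{p^k\mid n}(\log p)/p^k<\sum_{p\mid n}(\log p)/(p-1)$. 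Chaining these estimates yields the claim.

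I expect the only real care to be bookkeeping: justifying the interchange of summation and the reindexing $d=p^k t$, and arranging the two successive estimates so that the final inequality comes out strict. Each reduction is individually strict for $n>1$, which is the implicit hypothesis here (for $n=1$ both sides vanish). No input beyond the multiplicative formulas for $\sigma$ and $\varphi$ is needed.
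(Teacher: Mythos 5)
Your argument is correct: the expansion $\log d=\sum_{p^k\mid d}\log p$, the interchange of summation, the reindexing $d=p^kt$ of the inner sum, and the two estimates $\sigma(M)/M<M/\varphi(M)\le n/\varphi(n)$ and $\sum_{k=1}^{\nu_p(n)}p^{-k}<1/(p-1)$ all check out and chain together to give exactly the stated bound (for $n>1$; as you note, both sides vanish at $n=1$). The thesis does not actually prove this lemma --- it only quotes it from \cite{Lu3} --- but your proof is the standard one for this estimate, so there is nothing of substance to compare.
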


\medskip

\section{Linearly Recurrence Sequences}
\label{sec3}
Linear recurrence sequences have interesting properties and played a central role in number theory. The arithmetic of these sequences have been studied by Fran\c{c}ois Edouard Anatole Lucas (1842-1891). We shall  reference some theorems from the multitude of results  that had been proved over recent years.
\medskip

\begin{definition}
In general, a linear recurrence sequence $\{u_n\}_{n\geq}$ of order $k$ is

$$u_{n+k} = c_{k-1}u_{n+k-1} +c_{k-2}u_{n+k-2} +\ldots  +c_0u_n,~~~~~~~~~~~~~~~~\text{$n\geq 0$}$$
with $c_0\neq 0$, for $n\geq k$, where $c_0,c_1,\dots,c_{k-1}$ are constants. The values $u_0, \ldots, u_{k-1}$ are not all zero.  
\end{definition}

Linear recurrence sequences of order  $2$ are called \textit{binary} and the ones of order $3$ \textit{ternary}. For a linear recurrence sequence of order $k$, the $k$ initial values determine all others elements of the sequence.
\medskip

\begin{exemple}
For  $k=3$, let $u_0=u_1= 1,$ $u_2 = 2$ and 

$$u_{n+3}=3u_{n+2}+ 2u_{n+1}+u_{n} ~~~~~~~~~~~~~~~~~~~~~~~~~~~\text{for $n\geq 0$}$$
is a linear recurrence sequence.
\end{exemple}

\medskip

\begin{definition}
The characteristic polynomial of the linear recurrence of recurrence
$$
u_{n+k} = c_{k-1}u_{n+k-1} +c_{k-2}u_{n+k-2} +\ldots  +c_0u_n,~~~~~~~~~~~~~~~~\text{$n\geq 0$}
$$
is the polynomial,
$$
f(x)= x^{k} - c_{k-1}x^{k-1} - \ldots - c_{1}x - c_0.
$$
\end{definition}

We assume that this polynomial has distinct roots $\alpha_1,\alpha_2,\ldots,\alpha_s$ i.e.,

$$f(X)=\prod_{i=1}^{s}\(X-\alpha_i\)^{\beta_i}$$ 
of multiplicities $\beta_1,\ldots, \beta_s$ respectively. 

\medskip

It is well known from the theory of linearly recurrence sequences that for all $n$, there exist uniquely determined polynomials $g_i \in \mathbb{Q}(u_0,\ldots, c_0,\ldots,c_k, \alpha_1,\ldots, \alpha_k)[x]$ of degree less than $\beta_i(i=1,\ldots,s)$ such that

\begin{equation}
\label{eq1:f(x)}
f_n(x)=\sum_{i=1}^sg_i(n)\alpha_i^n,\quad \hbox{for $n\geq 0$}
\end{equation}

In this thesis, we consider only integer recurrence sequences, namely recurrence sequences whose coefficients and initial values are integers i.e $s=k$(all the roots of $f(x)$ are distinct). Hence, $g_i(n)$ is an algebraic number for all $i = 1,\ldots, k$ and $n\in \mathbb{Z}.$ Thus, we have the following result.


\begin{theorem}
\label{th1:poly} 
Assume that $f(X)\in\mathbb{Z}[X]$ has distinct roots. Then there exist constants $\gamma_1,\ldots,\gamma_k \in \mathbb{K}=\mathbb{Q}(\alpha_1,\ldots,\alpha_k)$ such that the formula 
$$u_n=\sum_{i=1}^{k}\gamma_i\alpha_i^{n} \quad \hbox{holds for all $n\geq 0.$}$$
\end{theorem}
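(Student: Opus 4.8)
The plan is to exhibit the constants $\gamma_1,\ldots,\gamma_k$ explicitly as the unique solution of the linear system imposed by the first $k$ initial values, and then to propagate the resulting identity to all $n$ by means of the recurrence itself.

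First I would observe that each of the $k$ sequences $\{\alpha_i^n\}_{n\geq 0}$ individually satisfies the recurrence. Indeed, since $\alpha_i$ is a root of the characteristic polynomial $f$, we have $\alpha_i^k=c_{k-1}\alpha_i^{k-1}+\cdots+c_0$, and multiplying through by $\alpha_i^n$ yields $\alpha_i^{n+k}=c_{k-1}\alpha_i^{n+k-1}+\cdots+c_0\alpha_i^n$. Because the recurrence is linear, any $\mathbb{K}$-linear combination $\sum_{i=1}^k\gamma_i\alpha_i^n$ inherits this property and satisfies the same recurrence.

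Next I would determine the $\gamma_i$ by requiring that the combination reproduce the $k$ initial terms, that is, by solving
$$u_j=\sum_{i=1}^k\gamma_i\alpha_i^j,\qquad j=0,1,\ldots,k-1.$$
This is a square linear system whose coefficient matrix is the Vandermonde matrix $(\alpha_i^j)_{0\leq j\leq k-1,\,1\leq i\leq k}$, with determinant $\prod_{1\leq i<\ell\leq k}(\alpha_\ell-\alpha_i)$. The crucial point, and precisely where the hypothesis that $f$ has \emph{distinct} roots enters, is that this determinant is nonzero; hence the system admits a unique solution $(\gamma_1,\ldots,\gamma_k)$. Since all matrix entries $\alpha_i$ lie in $\mathbb{K}$ and all right-hand sides satisfy $u_j\in\mathbb{Z}\subseteq\mathbb{K}$, Cramer's rule guarantees $\gamma_i\in\mathbb{K}$ for each $i$, as required.

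Finally I would close the argument by induction on $n$. Writing $w_n=\sum_{i=1}^k\gamma_i\alpha_i^n$, we have $w_j=u_j$ for $0\leq j\leq k-1$ by construction, and both $\{u_n\}_{n\geq 0}$ and $\{w_n\}_{n\geq 0}$ satisfy the same order-$k$ recurrence. Since a solution of a linear recurrence of order $k$ is uniquely determined by its first $k$ values, it follows that $w_n=u_n$ for all $n\geq 0$. The main obstacle is the invertibility step: the entire argument hinges on the nonvanishing of the Vandermonde determinant, which is exactly what the distinctness of the roots $\alpha_1,\ldots,\alpha_k$ supplies.
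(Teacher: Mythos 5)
Your proof is correct and complete. Note that the paper itself does not prove this theorem at all --- it merely defers to a citation (``Conf \cite{dio}, page 8'') --- so your argument actually supplies the details the paper omits. What you give is the standard argument: each $\{\alpha_i^n\}_{n\ge 0}$ satisfies the recurrence because $f(\alpha_i)=0$, the coefficients $\gamma_i$ are pinned down by the $k\times k$ Vandermonde system coming from the initial values (invertible precisely because the roots are distinct, with $\gamma_i\in\mathbb{K}$ by Cramer's rule), and induction via the recurrence propagates the identity from $n=0,\ldots,k-1$ to all $n$. This is exactly the textbook route the cited reference takes, and nothing further is needed.
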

\begin{proof}
Conf \cite{dio}, page $8.$
\end{proof}

A geometric sequence is a simple example of linear recurrence sequences. It is defined by 
\begin{center}
$$
\left\{
\begin{array}{ll}
u_0 = a,\\
u_{n+1} = cu_n.
\end{array}
\right.
$$
\end{center}

The constant ratio $\(u_{n+1}/{u_n}\)=c$ is called the {\it common ratio} of the sequence. 

%

\subsection{Lucas sequence}

Let $(r,s)\in\mathbb{N}^2$ such that $(r,s)=1$ and $r^2+4s\neq 0.$ 

\begin{definition}
A Lucas sequence $u_n(r,s)$ is a binary recurrence sequence that satisfies the recurrence sequence
$u_0=0$, $u_1=1.$ Its characteristic polynomial is of the form
\begin{equation*}
x^2 -rx -s = 0.
\end{equation*}
\end{definition}
Clearly, $u_n$ is an integer for all $n\geq0$. Let $\alpha_1$ and  $\alpha_2 $ denote the two roots of its characteristic polynomial and its \textit{discriminant} $\Delta = r^2 + 4s \neq0$. Then  the roots are
$$
\alpha_1=\frac{r + \sqrt{\Delta}}{2} \quad {\rm and}\quad \alpha_2=\frac{r - \sqrt{\Delta}}{2}.
$$

\begin{remark.}
If $(r,s)= 1$, $u_0=0$, and $u_1=1$ then $\{u_n\}_{n\geq 0}$ is called a \textit{Lucas sequence} of the \textit{first kind}.
\end{remark.}

\subsubsection*{Binet's Formula}
Binet's formula is a particular case of Theorem \ref{th1:poly} . Its named after the mathematician \textit{Jacques Phillipe Marie Binet} who found first the similar formula for the \textit{Fibonacci sequence}. For a Lucas sequence $\{u_n\}_{n\geq 0}$, Theorem \ref{th1:poly} tells us that the corresponding Binet formula is

\begin{equation}
\label{eq1:binet}
u_{n}= \gamma_1\alpha_1^n + \gamma_2\alpha_2^n \quad ~~for~~ all~~ n\geq 0. 
\end{equation}

The values of  $\gamma_1$ and $\gamma_2$ are found by using the formula \eqref{eq1:binet}, when $n = 0, 1$. Hence, one obtains the system 

$$\gamma_1+\gamma_2=u_0=0\quad\quad \gamma_1\alpha_1+\gamma_2\alpha_2=u_1=1.$$

Solving it, we get that $\gamma_1=\sqrt{\Delta}$ and $\gamma_1=-1/\sqrt{\Delta}$. Since $\sqrt{\Delta}=(\alpha_1-\alpha_2)$, we can write 
 
    \begin{equation}
    \label{binet1}
     u_n=\frac{\alpha_1^n - \alpha_2^n}{\alpha_1 -\alpha_2} \quad~~for~~all ~~ n\geq0,
    \end{equation}
    
The companion Lucas-sequence $\{v_n\}_{n\geq 0}$ of $\{u_n\}_{n\geq 0}$ has the following Binet formula
    \begin{equation}
    \label{binet2}
     v_n= \alpha_1^n + \alpha_2^n \quad ~~for~~all ~~ n\geq0.
    \end{equation}

\begin{exemple}.
\medskip
\begin{itemize}
\item $u(1,-3):\{0, 1, 1, 4, 7, 19, 40, 97, 217, 508, 1159, 2683, 6160, 14209, \ldots,\}$
\item $v(1,-3):\{2, 1, 7, 10, 31, 61, 154, 337, 799, 1810, 4207, 9637, 22258,\ldots,\}$
\end{itemize}
\end{exemple}

\begin{definition}
A Lucas sequence, as defined by formula \eqref{eq1:binet}, is said to be nondegenerate if $\gamma_1\gamma_2\alpha_1\alpha_2\neq 0$ and $\alpha_1/\alpha_2$ is not a root of unity.
\end{definition}

\medskip
There are several relations among the Lucas sequence $\{u_n\}_{n\geq0}$ and its companion sequence $\{v_n\}_{n\geq0}$ which can be proved using their Binet formulas. Here, we recall some of those relations that will be useful throughout this thesis.
\medskip
 
\begin{theorem}
\label{eq1:binet1}
Let $\{u_n\}_{n\geq0}$ be a Lucas sequence. Then, the following holds:
\begin{itemize}
\item $ u_{2n}=u_nv_n$ where $v_n$ is its companion sequence.
\item $(u_m,u_n)=u_{(m,n)}$ for all  positive integers $m,n$. Consequently, the integers $u_n$ and $u_m$ are \textit{relatively prime} when $n$ and $m$ are \textit{relatively prime}.
\item If $n\mid m$, then $u_n \mid u_m$.
\end{itemize}
\end{theorem}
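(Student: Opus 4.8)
The plan is to prove the three assertions in order of increasing difficulty, using throughout the Binet formulas \eqref{binet1} and \eqref{binet2} together with the Vi\`ete relations $\alpha_1+\alpha_2=r$ and $\alpha_1\alpha_2=-s$ coming from the characteristic polynomial $x^2-rx-s$. The first identity is immediate: multiplying the two Binet formulas gives
$$u_n v_n=\frac{\alpha_1^n-\alpha_2^n}{\alpha_1-\alpha_2}\,(\alpha_1^n+\alpha_2^n)=\frac{\alpha_1^{2n}-\alpha_2^{2n}}{\alpha_1-\alpha_2}=u_{2n}.$$

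For the third assertion, I would write $m=nk$ and factor the difference of $k$-th powers:
$$u_{nk}=\frac{(\alpha_1^n)^k-(\alpha_2^n)^k}{\alpha_1-\alpha_2}=\frac{\alpha_1^n-\alpha_2^n}{\alpha_1-\alpha_2}\sum_{j=0}^{k-1}(\alpha_1^n)^{k-1-j}(\alpha_2^n)^j=u_n\sum_{j=0}^{k-1}(\alpha_1^n)^{k-1-j}(\alpha_2^n)^j.$$
The sum on the right is symmetric in $\alpha_1^n$ and $\alpha_2^n$, hence a polynomial with integer coefficients in the elementary symmetric quantities $\alpha_1^n+\alpha_2^n=v_n$ and $(\alpha_1\alpha_2)^n=(-s)^n$, both of which are integers; thus the cofactor is an integer and $u_n\mid u_m$. (Equivalently one could run an induction on $k$ via the addition formula below.)

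The main work, and the point I expect to be the main obstacle, is the gcd identity in the second assertion, which I would establish by mimicking the Euclidean algorithm. First I would prove the addition formula
$$u_{m+n}=u_m u_{n+1}+s\,u_{m-1}u_n,$$
directly from \eqref{binet1} or by a short induction on $n$. Next I need two coprimality facts: from $u_n\equiv r^{n-1}\pmod{s}$ (an easy induction using $u_{k+2}\equiv r u_{k+1}\pmod{s}$) together with the hypothesis $\gcd(r,s)=1$, I obtain $\gcd(u_n,s)=1$ for all $n\geq 1$; and from $u_{n+1}=r u_n+s u_{n-1}$ combined with $\gcd(u_n,s)=1$, a descent down to $\gcd(u_1,u_0)=1$ gives $\gcd(u_{n+1},u_n)=1$. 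Reducing the addition formula modulo $u_n$ then yields $u_{m+n}\equiv u_m u_{n+1}\pmod{u_n}$, whence
$$\gcd(u_{m+n},u_n)=\gcd(u_m u_{n+1},u_n)=\gcd(u_m,u_n),$$
the last equality using $\gcd(u_{n+1},u_n)=1$. This reduction $\gcd(u_{m+n},u_n)=\gcd(u_m,u_n)$ is precisely the Euclidean move $\gcd(u_m,u_n)=\gcd(u_{m-n},u_n)$ for $m>n$; iterating it along the steps of the Euclidean algorithm on the pair $(m,n)$, and using the third assertion to dispose of the terminating case in which one index divides the other, I arrive at $\gcd(u_m,u_n)=u_{\gcd(m,n)}$. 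The coprimality consequence follows since $u_1=1$. The delicate points to get right are the bookkeeping of the Euclidean induction and the essential use of $\gcd(r,s)=1$, without which $s$ could share a common factor with $u_n$ and break the reduction step.
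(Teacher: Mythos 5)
Your proof is correct and complete. Note that the paper states this theorem as a classical preliminary and supplies no proof of its own, so there is nothing to compare against; your argument --- Binet's formula for $u_{2n}=u_nv_n$, the symmetric-function factorization of $(\alpha_1^n)^k-(\alpha_2^n)^k$ for the divisibility of indices, and the Euclidean descent based on the addition formula $u_{m+n}=u_mu_{n+1}+s\,u_{m-1}u_n$ together with $\gcd(u_n,s)=\gcd(u_{n+1},u_n)=1$ --- is the standard one, and every step checks out, including the essential use of the hypothesis $\gcd(r,s)=1$.
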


\medskip


\begin{prop}
Assume that $p\nmid s$ is odd and $e=(\frac{\Delta}{p}).$ The following holds:
\begin{itemize}
\item If $p\mid \Delta=r^2+4s$, then $p\mid u_p.$
\item If $p\nmid\Delta$ and $\alpha\in \mathbb{Q}$ then $p\mid u_{p-1}$.
\item If $p\nmid\Delta$ and $\alpha\notin \mathbb{Q}$ then $p\mid u_{p-e}$.
\end{itemize}
\end{prop}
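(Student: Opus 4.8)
The strategy is to reduce the characteristic polynomial $x^2-rx-s$ modulo $p$ and read off the divisibility of $u_n$ from the behaviour of its roots in the finite fields $\mathbb{F}_p$ and $\mathbb{F}_{p^2}$. The key observation is that the Binet identity \eqref{binet1}, rewritten as $(\alpha_1-\alpha_2)u_n=\alpha_1^{n}-\alpha_2^{n}$, is an \emph{integer} identity holding in the ring $\mathbb{Z}[\alpha_1,\alpha_2]$, so it may be reduced modulo any prime ideal $\mathfrak{p}$ lying above $p$. Since $\alpha_1\alpha_2=-s$ and $p\nmid s$, both roots reduce to nonzero elements of $\overline{\mathbb{F}_p}$, and the image of $\alpha_1-\alpha_2$ equals $\sqrt{\Delta}$, which is nonzero precisely when $p\nmid\Delta$. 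I would also read the hypotheses ``$\alpha\in\mathbb{Q}$'' and ``$\alpha\notin\mathbb{Q}$'' as describing whether the reduced polynomial splits over $\mathbb{F}_p$, i.e.\ whether $\Delta$ is a quadratic residue modulo $p$ (so $e=1$) or not (so $e=-1$).

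For the first bullet, expanding $\alpha_{1,2}=(r\pm\sqrt{\Delta})/2$ by the binomial theorem and cancelling the even powers of $\sqrt{\Delta}$ yields the identity
\[
2^{\,n-1}u_n=\sum_{j\ \text{odd},\ 1\le j\le n}\binom{n}{j}\,r^{\,n-j}\,\Delta^{(j-1)/2}.
\]
Taking $n=p$ and using $\binom{p}{j}\equiv 0\pmod p$ for $1\le j\le p-1$, only the term $j=p$ survives, so $2^{p-1}u_p\equiv\Delta^{(p-1)/2}\pmod p$. As $p$ is odd, $2^{p-1}\equiv 1$, whence $u_p\equiv\Delta^{(p-1)/2}\pmod p$. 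If $p\mid\Delta$ the right-hand side is $0$, giving $p\mid u_p$; if $p\nmid\Delta$, Euler's criterion gives the useful by-product $u_p\equiv\left(\frac{\Delta}{p}\right)=e\pmod p$.

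For the second and third bullets, assume $p\nmid\Delta$, so the two roots are distinct modulo $p$. If $\Delta$ is a square mod $p$ (the case $e=1$), then $\alpha_1,\alpha_2\in\mathbb{F}_p^{\times}$ and Fermat's little theorem gives $\alpha_1^{\,p-1}=\alpha_2^{\,p-1}=1$; substituting into the reduced Binet identity gives $(\alpha_1-\alpha_2)u_{p-1}\equiv 0$, and since $\alpha_1-\alpha_2\not\equiv 0$ we conclude $p\mid u_{p-1}=u_{p-e}$. If $\Delta$ is a non-residue (the case $e=-1$), the roots lie in $\mathbb{F}_{p^2}\setminus\mathbb{F}_p$ and are interchanged by the Frobenius $x\mapsto x^{p}$, so $\alpha_1^{\,p}=\alpha_2$ and $\alpha_2^{\,p}=\alpha_1$. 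Hence $\alpha_1^{\,p+1}=\alpha_1\alpha_2=-s=\alpha_2^{\,p+1}$, and the reduced Binet identity gives $(\alpha_1-\alpha_2)u_{p+1}\equiv 0$, whence $p\mid u_{p+1}=u_{p-e}$.

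The two finite-field computations are routine; the step that needs care is the very first reduction in the irreducible case, namely justifying that the integer identity $(\alpha_1-\alpha_2)u_n=\alpha_1^{n}-\alpha_2^{n}$ may be pushed into $R/\mathfrak{p}\cong\mathbb{F}_{p^2}$, that the image of the rational integer $u_n$ there is simply $u_n\bmod p$, and that one may cancel the nonzero factor $\alpha_1-\alpha_2$. Once this bookkeeping is in place, all three cases follow from Fermat's little theorem and the description of Frobenius on a quadratic extension; the only hypotheses used are that $p$ is odd (to invert $2$ and apply Euler's criterion) and $p\nmid s$ (to keep both roots nonzero).
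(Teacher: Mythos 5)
The paper states this proposition in its preliminaries without any proof (it is a classical fact about Lucas sequences), so there is no in-paper argument to compare against. Your proof is correct and complete: the binomial identity $2^{n-1}u_n=\sum_{j\ \mathrm{odd}}\binom{n}{j}r^{n-j}\Delta^{(j-1)/2}$ with $n=p$ gives $u_p\equiv\Delta^{(p-1)/2}\pmod p$ and hence the first bullet, and the split/inert dichotomy for $x^2-rx-s$ over $\mathbb{F}_p$ together with Fermat and Frobenius gives the other two; the hypotheses $p$ odd and $p\nmid s$ enter exactly where you say they do. One small remark: the prime-ideal bookkeeping you flag for the inert case can be avoided entirely by staying with the binomial identity, since $\binom{p+1}{j}\equiv 0\pmod p$ for $2\le j\le p-1$ yields $2^{p}u_{p+1}\equiv r^{p}+r\Delta^{(p-1)/2}\equiv r\left(1+\left(\frac{\Delta}{p}\right)\right)\pmod p$, which vanishes when $e=-1$; this keeps the whole proposition at the level of integer congruences.
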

\medskip

\begin{definition}
For a prime $p$, let $z(p)$ be the order of appearance of $p$ in $\{u_n\}_{n\ge 0}$; i.e., the minimal positive integer $k$ such that $p\mid u_k$.
\end{definition}
\medskip

\begin{prop}
\begin{itemize}
\item If $p\mid u_n$, then $z(p)\mid n$.
\item If $p\nmid \Delta$, then $p\equiv \pm 1\pmod{z(p)}. $
\item If $p\mid s$, then $z(p)=p$.
\end{itemize}
\end{prop}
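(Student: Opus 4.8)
The backbone of all three items is the strong divisibility property recorded in Theorem~\ref{eq1:binet1}, namely $\gcd(u_m,u_n)=u_{\gcd(m,n)}$, together with the congruence information supplied by the preceding proposition. The plan is to prove the first item directly from the gcd identity, and then to obtain the second and third items by feeding the preceding proposition into the first. For the first item, write $z=z(p)$ and suppose $p\mid u_n$ with $n\ge 1$ (the case $n=0$ is trivial since $u_0=0$). Set $d=\gcd(z,n)$. Since $p$ divides both $u_z$ and $u_n$, it divides $\gcd(u_z,u_n)=u_d$ by Theorem~\ref{eq1:binet1}. Now $d$ is a positive integer with $p\mid u_d$, so minimality of $z$ forces $d\ge z$; but $d\mid z$ gives $d\le z$, hence $d=z$ and therefore $z\mid n$. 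This is the only item whose proof is genuinely self-contained, and I would establish it first.

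For the second item I would assume $p$ odd and $p\nmid\Delta$, so that $e=\left(\frac{\Delta}{p}\right)\in\{\pm1\}$ is defined (note $p\mid s$ is excluded here, since that would make $z(p)$ undefined, as explained below). The two nondegenerate cases of the preceding proposition combine into the single statement $p\mid u_{p-e}$: when the roots are rational, $\Delta$ is a perfect square and $e=1$, giving $p\mid u_{p-1}$; otherwise $p\mid u_{p-e}$ directly. Applying the first item with $n=p-e$ yields $z(p)\mid p-e$, which is exactly $p\equiv e\equiv\pm1\pmod{z(p)}$.

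For the third item the natural argument runs through the discriminant rather than through $s$. Indeed, if $p\mid s$ then $\gcd(r,s)=1$ forces $p\nmid r$, and the recurrence gives $u_n\equiv r^{n-1}\not\equiv 0\pmod p$ for every $n\ge 1$, so $z(p)$ would not exist; the intended hypothesis is therefore $p\mid\Delta$, which, combined with $\gcd(r,s)=1$, automatically yields $p\nmid s$. Granting $p\mid\Delta$ and $p$ odd, the first bullet of the preceding proposition gives $p\mid u_p$, so the first item yields $z(p)\mid p$, i.e. $z(p)\in\{1,p\}$; since $z(p)=1$ would force the impossible $p\mid u_1=1$, we conclude $z(p)=p$. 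The prime $p=2$, which is excluded from the preceding proposition, would be checked directly.

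I expect the main obstacle to be bookkeeping rather than anything conceptual: pinning down the precise hypotheses (oddness of $p$, the status of the even prime $p=2$, and the discriminant reading of the third item) under which both the gcd identity of Theorem~\ref{eq1:binet1} and the preceding proposition are valid. Once those hypotheses are fixed, each of the three items is a one-line consequence of the first item.
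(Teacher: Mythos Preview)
The paper states this proposition without proof (it appears in the preliminaries chapter as a standard fact), so there is no authorial argument to compare against. Your proof is correct and proceeds along the expected lines: the gcd identity of Theorem~\ref{eq1:binet1} gives the first item, and the remaining two follow by feeding the preceding proposition into the first.

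You are also right to flag the third item. With the paper's conventions ($u_0=0$, $u_1=1$, $u_{n+2}=ru_{n+1}+su_n$, $\gcd(r,s)=1$), the hypothesis $p\mid s$ forces $p\nmid r$ and then $u_n\equiv r^{\,n-1}\pmod p$ for all $n\ge 1$, so no positive index has $p\mid u_n$ and $z(p)$ is undefined. The statement as printed is therefore a slip; the intended hypothesis is $p\mid\Delta$ (with $p$ odd and $p\nmid s$), and your argument for that corrected version---$p\mid u_p$ from the preceding proposition, hence $z(p)\mid p$, hence $z(p)=p$ since $u_1=1$---is exactly the standard one.
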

\medskip

\subsection{The Fibonacci sequence}

\begin{definition}
The Fibonacci sequence is a Lucas sequence defined by
\begin{center}
$$
\left\{
\begin{array}{ll}
F_0= 0\\
F_1= 1 \\
F_{n+2} = F_{n+1} + F_n~~~~~ for ~~ n\geq 0.
\end{array}
\right.
$$
\end{center}
\end{definition}
\medskip

The polynomial $f(x) = x^2-x-1$ is the characteristic polynomial for $F_n$ whose roots are $\alpha=(1+\sqrt{5})/2$ and $\beta=(1-\sqrt{5})/2$. For the Fibonacci numbers, the quotient of successive term  is not constant as it is the case for geometric sequences. Indeed, Johannes Kepler found that the quotient converges and it corresponds to $\phi:=\alpha=(1+\sqrt{5})/2$, the \textit{Golden Ratio}. The following theorems, which correspond to \eqref{binet1} and \eqref{binet2} respectively, give the Binet formula of the Fibonacci sequence $\{F_n\}_{n\ge 0}$ and its companion $\{L_n\}_{n\ge 0}$ in terms of the Golden Ratio. 

\medskip

\begin{theorem}
\label{eq1:fibo}
If $F_n$ is the $n^{th}$ Fibonacci number,
$$F_n=\frac{\alpha^n - \beta^{n}}{\alpha -\beta}= \frac{1}{\sqrt{5}}(\alpha^n - \beta^{n})\quad\text{for $n\geq 0$}.$$
\end{theorem}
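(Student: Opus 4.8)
The plan is to recognize that the Fibonacci sequence is precisely the Lucas sequence $u_n(r,s)$ with $r=s=1$, whose characteristic polynomial $x^2-x-1$ has discriminant $\Delta=r^2+4s=5\neq 0$ and hence two distinct roots $\alpha=(1+\sqrt5)/2$ and $\beta=(1-\sqrt5)/2$. From this vantage point the first equality is nothing but the general Binet formula \eqref{binet1} specialized to this case, which in turn is a consequence of Theorem~\ref{th1:poly}. To keep the argument self-contained, however, I would instead establish the identity directly by induction on $n$ and separately reconcile the two displayed closed forms.

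First I would dispatch the second equality by a one-line computation: since $\alpha-\beta=\frac{1+\sqrt5}{2}-\frac{1-\sqrt5}{2}=\sqrt5$, we immediately get $\frac{\alpha^n-\beta^n}{\alpha-\beta}=\frac{1}{\sqrt5}(\alpha^n-\beta^n)$. Thus the two closed forms coincide, and it suffices to prove either one, say $F_n=\frac{\alpha^n-\beta^n}{\alpha-\beta}$.

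Next I would prove this by induction on $n$. For the base cases, at $n=0$ the right-hand side equals $\frac{1-1}{\alpha-\beta}=0=F_0$, and at $n=1$ it equals $\frac{\alpha-\beta}{\alpha-\beta}=1=F_1$. For the inductive step, assume the formula holds at $n$ and at $n+1$. The crucial observation is that $\alpha$ and $\beta$ each satisfy $x^2=x+1$, so that $\alpha^{n+2}=\alpha^{n+1}+\alpha^n$ and $\beta^{n+2}=\beta^{n+1}+\beta^n$. Subtracting these and dividing by $\alpha-\beta$ yields $\frac{\alpha^{n+2}-\beta^{n+2}}{\alpha-\beta}=\frac{\alpha^{n+1}-\beta^{n+1}}{\alpha-\beta}+\frac{\alpha^n-\beta^n}{\alpha-\beta}=F_{n+1}+F_n=F_{n+2}$, which closes the induction.

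There is essentially no hard step in this argument; the only point demanding care is the verification that $\Delta=5\neq0$, which guarantees $\alpha\neq\beta$ so that division by $\alpha-\beta$ is legitimate and the two roots are genuinely distinct. This is exactly the hypothesis under which Theorem~\ref{th1:poly} applies, so the two routes are consistent. I would record the relation $\alpha^2=\alpha+1$ (and its analogue for $\beta$) explicitly, since it is the engine that converts the additive Fibonacci recurrence into the multiplicative behaviour of the powers $\alpha^n$ and $\beta^n$.
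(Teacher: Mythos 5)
Your proof is correct, but it takes a different route from the paper. The paper treats this theorem as an immediate specialization of the general Binet formula \eqref{binet1}: it invokes Theorem~\ref{th1:poly} to assert the existence of constants $\gamma_1,\gamma_2$ with $u_n=\gamma_1\alpha^n+\gamma_2\beta^n$, determines them from the initial conditions $u_0=0$, $u_1=1$ by solving the $2\times 2$ linear system (obtaining $\gamma_1=1/\sqrt{\Delta}$, $\gamma_2=-1/\sqrt{\Delta}$ with $\Delta=r^2+4s$, here $\Delta=5$), and then simply reads off the Fibonacci case. You instead verify the closed form directly by induction, using the relation $\alpha^2=\alpha+1$ (and likewise for $\beta$) to propagate the recurrence through the powers. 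Your argument is more elementary and self-contained --- it needs no appeal to the general theory of linear recurrences and makes the mechanism (the characteristic equation turning the additive recurrence into multiplicative behaviour of $\alpha^n$, $\beta^n$) completely explicit --- whereas the paper's approach buys uniformity: the same two-line computation of $\gamma_1,\gamma_2$ delivers Binet formulas for every Lucas sequence $u_n(r,s)$ at once, which is what the rest of the thesis actually uses. Your observation that $\Delta=5\neq 0$ is the hypothesis making both routes legitimate (distinct roots, so division by $\alpha-\beta$ is allowed) is accurate and worth keeping.
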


\begin{theorem}
\label{eq1:Lucas}
We have
$$ 
L_n=\alpha^n-\beta^{n}~~~~~~~~~~~~\text{for $n\geq 0$}.
$$
\end{theorem}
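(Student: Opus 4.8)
The plan is to establish the stated closed form for $L_n$ by exactly the route used for the Fibonacci numbers in Theorem~\ref{eq1:fibo}, specialising the general Binet expansion. The Lucas numbers obey the same recurrence $L_{n+2}=L_{n+1}+L_n$, so they share the characteristic polynomial $x^2-x-1$, whose roots are $\alpha=(1+\sqrt5)/2$ and $\beta=(1-\sqrt5)/2$. By Theorem~\ref{th1:poly} there are constants $\gamma_1,\gamma_2\in\K=\Q(\sqrt5)$ with $L_n=\gamma_1\alpha^n+\gamma_2\beta^n$ for all $n\ge 0$, and the task reduces to pinning down $\gamma_1,\gamma_2$ from the initial data $L_0=2$ and $L_1=1$.

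First I would impose $n=0$ and $n=1$ to obtain a $2\times 2$ linear system in $\gamma_1,\gamma_2$, and solve it using the Vieta relations $\alpha+\beta=1$ and $\alpha-\beta=\sqrt5$; this determines the coefficients and yields the claimed formula after a routine simplification. Equivalently, the identity is just the companion-sequence instance of the general relation~\eqref{binet2} applied to the pair of roots $\alpha,\beta$: once one checks that $\{L_n\}_{n\ge 0}$ is genuinely the companion sequence of the Fibonacci sequence --- by comparing the two initial values --- the stated closed form follows directly from~\eqref{binet2}.

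I expect essentially no serious obstacle here, since the argument is a direct specialisation of machinery already in place. The only points needing care are the correct Vieta relations for $\alpha,\beta$ and the bookkeeping of the two initial conditions; in particular the value $L_0=2$ is what separates $\{L_n\}_{n\ge 0}$ from the first-kind sequence and fixes both constants. As an alternative that sidesteps the linear system, one can argue by induction on $n$: verify the formula at $n=0$ and $n=1$, then propagate it with the recurrence, using $\alpha^2=\alpha+1$ and $\beta^2=\beta+1$ to carry out the inductive step.
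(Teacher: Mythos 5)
Your approach is the same as the paper's: the result is just the companion-sequence instance of \eqref{binet2}, obtained by specialising Theorem \ref{th1:poly} to the characteristic polynomial $x^2-x-1$ and fixing the constants from $L_0=2$, $L_1=1$ (the induction variant using $\alpha^2=\alpha+1$, $\beta^2=\beta+1$ is an equally valid alternative). One caution, though: if you actually solve the $2\times 2$ system you describe, you get $\gamma_1(\alpha-\beta)=1-2\beta=\sqrt5$, hence $\gamma_1=\gamma_2=1$ and $L_n=\alpha^n+\beta^n$ — not the formula $\alpha^n-\beta^n$ as printed in the statement, which already fails at $n=0$ since $\alpha^0-\beta^0=0\neq 2$. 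So your method is sound and "yields the claimed formula" only after correcting the sign; the theorem as stated carries a typo (the minus should be a plus, in agreement with \eqref{binet2}), and your write-up should say so explicitly rather than assert that the computation reproduces the displayed identity.
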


%


The sequence of Fibonacci numbers can also be extended to negative indices $n$ by rewriting the recurrence by 
$$
F_{n-2} = F_{n} - F_{n-1}.
$$
Using Theorems \ref{eq1:fibo} and \ref{eq1:Lucas}, we have the following Lemma.
\begin{lemma}
\label{eq1:fibluc}
\begin{enumerate}
\item[(i)] $F_{-n}=(-1)^{n+1}F_n$ and $L_{-n}=(-1)^nL_n;$
\item[(ii)] $2F_{m+n}=F_mL_n+L_nF_m$ and $2L_{m+n}=5F_mF_n+L_mL_n;$
\item [(iii)]$L_{2n}=L_n^2+2(-1)^{n+1};$
\item[(iv)] $L_n^2-5F_n^2=4(-1)^n;$
\item[(v)] Let $p>5$ be a prime number. If $(\frac{5}{p})=1$ then $p\mid F_{p-1}.$ Otherwise $p \mid F_{p+1}.$ 
\item[(vi)] If $m\mid n$ and $\frac{n}{m} $ is odd, then $L_m\mid L_n$
\item[(vii)] Let $p$ and $n$ be positive integers such that $p$ is odd prime. Then $(L_p,F_n)>2$ if and only if $p\mid n $ and $n/p$ is even.
\item[(viii)] 
\begin{equation}
\label{eq33:3}
L_n-1=\left\{\begin{matrix}
5F_{(n+1)/2} F_{(n-1)/2} & {\text{\rm if}} & n\equiv 1\pmod 4;\\
L_{(n+1)/2} L_{(n-1)/2} & {\text{\rm if}} & n\equiv 3\pmod 4.
\end{matrix}\right.
\end{equation}
\end{enumerate}
\end{lemma}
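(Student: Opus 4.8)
The plan is to derive items (i)--(iv) and (viii) directly from the Binet formulas of Theorems \ref{eq1:fibo} and \ref{eq1:Lucas}, and to obtain items (v)--(vii) from the arithmetic of the order of appearance together with the elementary divisibility properties already recorded in Theorem \ref{eq1:binet1}. Throughout I would use the relations $\alpha+\beta=1$, $\alpha\beta=-1$ and $\alpha-\beta=\sqrt5$ for the roots of $x^2-x-1$. For (i), since $\alpha\beta=-1$ we have $\alpha^{-1}=-\beta$ and $\beta^{-1}=-\alpha$, so $\alpha^{-n}=(-1)^n\beta^n$ and $\beta^{-n}=(-1)^n\alpha^n$; substituting into the Binet formulas yields $F_{-n}=(-1)^{n+1}F_n$ and $L_{-n}=(-1)^nL_n$ at once. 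For (ii) I would expand the right-hand sides $F_mL_n+L_mF_n$ and $5F_mF_n+L_mL_n$ via Binet, whereupon the cross terms cancel and one is left with $2F_{m+n}$ and $2L_{m+n}$ respectively. Items (iii) and (iv) follow by squaring, using $L_n^2=(\alpha^n+\beta^n)^2=L_{2n}+2(-1)^n$ and $5F_n^2=(\alpha^n-\beta^n)^2=L_{2n}-2(-1)^n$, whence $L_{2n}=L_n^2+2(-1)^{n+1}$ and $L_n^2-5F_n^2=4(-1)^n$.

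For (viii) I set $a=(n+1)/2$ and $b=(n-1)/2$, so that $a+b=n$ and $a-b=1$; a short Binet computation gives $5F_aF_b=L_n-(-1)^b$ and $L_aL_b=L_n+(-1)^b$, and the two cases separate according to the parity of $b$, which is even exactly when $n\equiv 1\pmod 4$ and odd exactly when $n\equiv 3\pmod 4$. For (v) I would specialize the earlier proposition on the order of appearance of a Lucas sequence to the Fibonacci case, where the discriminant is $\Delta=5$ and $e=\left(\frac{5}{p}\right)$; that proposition gives $p\mid F_{p-e}$, which is precisely the stated dichotomy for $p>5$. For (vi), writing $n=mk$ with $k$ odd and $L_n=(\alpha^m)^k+(\beta^m)^k$, the factorization of $x^k+y^k$ by $x+y$ for odd $k$ shows that $L_m=\alpha^m+\beta^m$ divides $L_n$ among algebraic integers; since the quotient is symmetric in $\alpha,\beta$ it is a rational integer, so $L_m\mid L_n$ in $\Z$.

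The main obstacle is (vii). First I would record that $\gcd(F_n,L_n)\mid 2$ for every $n$, which is immediate from (iv). Let $q$ be an odd prime dividing $L_p$. From the doubling identity $F_{2p}=F_pL_p$ of Theorem \ref{eq1:binet1} we get $q\mid F_{2p}$, so $z(q)\mid 2p$; on the other hand $q\nmid F_p$, since $q$ is odd and $\gcd(F_p,L_p)\mid 2$, so $z(q)\nmid p$. As the only divisors of $2p$ not dividing $p$ are $2$ and $2p$, and $z(q)=2$ would force $q\mid F_2=1$, we conclude $z(q)=2p$. Hence $q\mid F_n$ if and only if $2p\mid n$, that is, $p\mid n$ with $n/p$ even, which yields the ``only if'' direction. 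For the converse, if $2p\mid n$ then $F_{2p}\mid F_n$ and $L_p\mid F_{2p}$, so $L_p\mid F_n$ and therefore $(L_p,F_n)=L_p\geq 3>2$.

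The delicate points in (vii) are the ones to watch. One must exclude $q=5$, which is handled by checking $5\nmid L_p$ for all $p$ (the residues of $L_n$ modulo $5$ are periodic and never vanish), so that the order-of-appearance theory applies with $q\nmid\Delta$. One must also treat the prime $2$ separately: for $p>3$ the number $L_p$ is odd, so $(L_p,F_n)>2$ already forces an odd common prime factor, whereas for $p=3$ one has $L_3=4$ and the claim reduces to $4\mid F_n\iff 6\mid n$, which is again the condition $2p\mid n$. Assembling these cases completes the argument for (vii) and hence the lemma.
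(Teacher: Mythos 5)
Your proposal is correct, and it is worth noting that the paper itself offers no argument for this lemma at all: its ``proof'' is only a pointer to the references \cite{A}, \cite{B}, \cite{FL2}. So there is no in-paper route to compare against; what you have written is the standard self-contained derivation, and it checks out. Items (i)--(iv) and (viii) follow exactly as you say from Binet with $\alpha\beta=-1$ and $\alpha-\beta=\sqrt5$ (for (viii) the identities $5F_aF_b=L_{a+b}-(-1)^bL_{a-b}$ and $L_aL_b=L_{a+b}+(-1)^bL_{a-b}$ with $a-b=1$ give the two cases according to the parity of $(n-1)/2$). You also silently repair the typo in the statement of (ii), where $F_mL_n+L_nF_m$ should read $F_mL_n+L_mF_n$; your expansion proves the corrected identity, which is the one actually used later. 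Item (v) is indeed just the specialization of the paper's proposition $p\mid u_{p-e}$ with $\Delta=5$, $e=\left(\frac{5}{p}\right)$, and (vi) is the $x^k+y^k$ factorization with the Galois-symmetry observation that makes the quotient a rational integer. Your treatment of (vii) is the delicate part and is handled correctly: $\gcd(F_m,L_m)\mid 2$ from (iv), hence $z(q)=2p$ for every odd prime $q\mid L_p$, giving the equivalence $q\mid F_n\iff 2p\mid n$; and you correctly isolate the two boundary cases, namely that $L_p$ is odd for $p\ne 3$ (so a gcd exceeding $2$ forces an odd common prime) and that $p=3$ reduces to $4\mid F_n\iff 6\mid n$. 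The only cosmetic remark is that excluding $q=5$ is not strictly necessary for the implication ``$q\mid F_n\Rightarrow z(q)\mid n$'', but it costs nothing and keeps you safely within the hypotheses of the quoted proposition.
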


\begin{proof} : See. \cite{A}, \cite{B}, \cite{FL2}.
\end{proof}


Many Diophantine equations involving Fibonacci and Lucas numbers being squares, perfect powers of the larger exponents of some others integers were proved over the past years.  Here we give few of them. 

\begin{lemma}[Bugeaud, Luca, Mignotte and Siksek, \cite{BMS} and \cite{BLMS} ]
\label{lem1:BLMS}
The equation $L_n=y^k$ with some $k\ge 1$ implies that $n\in \{1,3\}$. Furthermore, the only solutions of the equation $L_n=q^a y^k$ for some prime $q<1087$ and integers $a> 0,~k\ge 2$ have
$n\in \{0,2,3,4,5,6,7,8,9,11,13,17\}$.
\end{lemma}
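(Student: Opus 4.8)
The plan is to prove the statement in two parts, corresponding to the two distinct assertions of the lemma. The first assertion, that $L_n=y^k$ forces $n\in\{1,3\}$, is the claim that the only perfect powers in the Lucas sequence are $L_1=1$ and $L_4=7$... but here one reads $n\in\{1,3\}$ giving $L_1=1$ and $L_3=4=2^2$. This is a statement about integral points and I would not attempt an elementary proof; instead I would reduce it to an application of the theory of linear forms in logarithms combined with the modular method. Concretely, for fixed small $k$ (say $k=2$), one recognizes $L_n=\square$ as a finite problem solvable by congruence and elementary arguments using Lemma \ref{eq1:fibluc}(iv), namely $L_n^2-5F_n^2=4(-1)^n$; for $k\ge 3$ I would invoke the Frey curve machinery and level-lowering exactly as in the cited work of Bugeaud, Luca, Mignotte and Siksek. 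Since the statement is quoted with an explicit citation to \cite{BMS} and \cite{BLMS}, the honest proof strategy is to defer to those papers, so my proposal for the first part is essentially to state that it follows from their classification of perfect powers in Lucas sequences.

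For the second, more refined assertion concerning $L_n=q^a y^k$ with $q<1087$ prime, $a>0$, and $k\ge 2$, the plan is again to follow the modular approach but now tracking the extra prime factor $q$. First I would set up the associated Diophantine equation and attach a Frey--Hellegouarch elliptic curve to a putative solution, reducing modulo the relevant Galois representations; the presence of the prescribed prime $q$ below the bound $1087$ is precisely what makes the list of exceptional $n$ finite and explicitly computable. The key steps, in order, would be: (i) use the Binet formula of Theorem \ref{eq1:Lucas}, $L_n=\alpha^n-\beta^n$ (with the convention matching the paper), together with the divisibility and gcd properties in Theorem \ref{eq1:binet1} and Lemma \ref{eq1:fibluc}, to constrain the possible shapes of $n$; (ii) for each residue class of $n$ handle the equation via the Frey curve and level-lowering; and (iii) for the finitely many surviving small cases, verify directly which $n$ yield a value of the prescribed form, producing the list $n\in\{0,2,3,4,5,6,7,8,9,11,13,17\}$.

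The main obstacle I anticipate is that an elementary, self-contained proof is essentially out of reach: both parts rest on deep tools (lower bounds for linear forms in logarithms of algebraic numbers, as recalled later in Section~\ref{sec21} via Matveev's theorem, and the modularity/level-lowering apparatus), and carrying these out from scratch would reconstruct the entire cited papers. Therefore my realistic proof plan is structural rather than computational: reduce the general equation to finitely many explicit subcases using the arithmetic of Lucas sequences, and then appeal to the established classification in \cite{BMS} and \cite{BLMS} for the hard transcendental and modular input. The genuinely hard part, which I would not reprove, is the finiteness and effective bounding supplied by those methods; the elementary sieving of small cases to extract the final explicit list is then routine and can be checked by direct computation.

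\begin{proof}
This is precisely the content of the classification of perfect powers and of $q$-smooth-times-power values in the Lucas sequence established by Bugeaud, Luca, Mignotte and Siksek in \cite{BMS} and \cite{BLMS}, to which we refer for the full argument combining lower bounds for linear forms in logarithms with the modular method.
\end{proof}
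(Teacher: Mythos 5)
Your proposal matches the paper's treatment exactly: the lemma is quoted as an external result with no proof given in the thesis, and both you and the author simply defer to the classification of Bugeaud, Luca, Mignotte and Siksek in \cite{BMS} and \cite{BLMS}, whose proofs rest on the modular method combined with linear forms in logarithms. (Minor aside: the exponent condition in the first assertion should morally read $k\ge 2$, since $k=1$ is vacuous; the intended content is that $L_1=1$ and $L_3=4$ are the only perfect powers, as you correctly identify.)
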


We also recall a result about square-classes of members of Lucas sequences due to McDaniel and Ribenboim (see \cite{RM}).

\begin{lemma}
\label{lem1:RM}
If $L_mL_n=\square$ with $n>m\ge 0$, then $(m,n)=(1,3),~(0,6)$ or $(m,3m)$ with $3\nmid m$ odd.
\end{lemma}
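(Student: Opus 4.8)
The plan is to prove that if $L_mL_n=\square$ with $n>m\ge 0$, then the pair $(m,n)$ is one of those listed. First I would reduce the problem to a finite collection of congruence classes for $m$ and $n$ modulo a small period. The key tool is the standard fact that the square-class of $L_k$ (i.e. the squarefree part of $L_k$) depends only on $k$ modulo a fixed period. This follows from the multiplicativity properties and the identities in Lemma \ref{eq1:fibluc}, together with the behavior of $L_k$ under the divisibility relation $L_a\mid L_b$ when $b/a$ is odd (part (vi)). Since $L_mL_n$ is a square exactly when $L_m$ and $L_n$ lie in the same square-class, the problem becomes: determine all pairs $(m,n)$ whose Lucas numbers share a squarefree part.

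The main computational engine would be a careful analysis of small prime factors. I would pick a prime $p$ (for instance $p=2$, $p=3$, or a prime dividing a small Lucas number such as $L_4=7$, $L_5=11$) and examine the exponent $\nu_p(L_k)$ as a function of $k$ via the order of appearance $z(p)$ and the propositions on $z(p)$ from Section \ref{sec3}. For $L_mL_n$ to be a square, every prime must appear to an even total power, so $\nu_p(L_m)+\nu_p(L_n)$ must be even for all $p$; reading this off for several well-chosen primes forces $m$ and $n$ into a few residue classes modulo the relevant periods. The residues $2\pmod{3}$ behavior (since $2\mid L_k$ iff $3\mid k$) and the structure of $L_k\pmod 4$ are especially constraining, because $2$ appears in $L_k$ to exactly the first power whenever it appears at all, which pins down the parities of $m$ and $n$ relative to $3$.

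After narrowing to finitely many congruence classes, I would invoke the deep input already available: Lemma \ref{lem1:BLMS}, which classifies when $L_n=q^a y^k$ for small primes $q$. In each surviving class one writes $L_m$ and $L_n$ in terms of a common squarefree kernel and reduces the equation $L_mL_n=\square$ to an equation asserting that a single Lucas number (or a product governed by one of the identities in Lemma \ref{eq1:fibluc}) is itself a square or a square times a controlled small factor. At that point the finitely many exceptional solutions $(1,3)$, $(0,6)$, and the infinite family $(m,3m)$ with $3\nmid m$ odd emerge: the family arises precisely because $L_m\mid L_{3m}$ with odd quotient (part (vi)) and the identity $L_{3m}/L_m=L_m^2-3(-1)^m$ becomes a perfect square multiple of $L_m$ under the stated parity and non-divisibility conditions.

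The hard part will be the bookkeeping in the congruence/square-class analysis: ensuring that \emph{every} prime constraint has been accounted for so that no spurious extra solution slips through, and verifying that the family $(m,3m)$ with $3\nmid m$ odd genuinely yields squares while neighboring cases (e.g. $3\mid m$, or $m$ even) do not. Controlling the $2$-adic and $p$-adic valuations simultaneously, rather than one prime at a time, is the delicate step; I expect the cleanest route is to combine the valuation analysis for a single auxiliary prime with the square-classification of Lemma \ref{lem1:BLMS} to close each case, following the original argument of McDaniel and Ribenboim in \cite{RM}.
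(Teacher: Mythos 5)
First, a point of comparison: the paper does not prove this statement at all. Lemma \ref{lem1:RM} is quoted as known input from McDaniel and Ribenboim \cite{RM}, so there is no internal proof to match against, and what you are sketching is a reproof of a cited research-level theorem. Judged on its own terms, your plan rests on a false premise. You base the entire reduction on ``the standard fact that the square-class of $L_k$ (i.e.\ the squarefree part of $L_k$) depends only on $k$ modulo a fixed period.'' No such fact holds, and it cannot hold: if the squarefree kernel of $L_k$ were periodic, every residue class would supply infinitely many pairs $(m,n)$ with $L_mL_n=\square$, which is precisely what the lemma denies. What is periodic is $L_k$ modulo a fixed integer, which controls $\nu_p(L_k)$ for one fixed prime $p$ at a time; it says nothing about the full squarefree part, since $L_k$ acquires a new primitive prime divisor for every $k>6$ by Lemma \ref{lem1:Car}, so the kernels of $L_k$ and $L_{k+T}$ are generically distinct for any $T$. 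The actual McDaniel--Ribenboim argument proceeds instead through $\gcd(L_m,L_n)$ computations, the factorization $L_{3m}=L_m\bigl(L_m^2-3(-1)^m\bigr)$, and a reduction to the individual equations $L_k=\square$, $2\square$, $3\square$, $6\square$, which are settled by elementary congruence and Jacobi-symbol descents --- not by a periodic classification of square classes, and not by Lemma \ref{lem1:BLMS}, which is a much later modular-methods result about arbitrary perfect powers and is not needed here.

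Second, you have the direction of the statement backwards at the end. You propose to verify that the family $(m,3m)$ with $3\nmid m$ odd ``genuinely yields squares,'' asserting that $L_{3m}/L_m=L_m^2-3(-1)^m$ becomes a perfect square multiple of $L_m$ under those hypotheses. For odd $m$ one has $L_mL_{3m}=L_m^2\bigl(L_m^2+3\bigr)$, which is a square if and only if $L_m^2+3$ is, i.e.\ only for $L_m=1$, $m=1$; already $m=5$ gives $L_5L_{15}=11\cdot 1364=15004$, not a square. The lemma is only a necessary condition --- it lists the pairs the method cannot exclude --- so there is nothing to ``verify'' beyond $(1,3)$ and $(0,6)$, and an argument that tried to prove the whole family consists of squares would be proving something false. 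As written, the proposal is a plan whose load-bearing step (periodicity of square classes) fails, whose remaining steps are explicitly deferred, and whose treatment of the exceptional family misstates what must be shown; it does not constitute a proof.
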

\medskip

Before we end this section, we recall well-known results on the sequences $\{P_n\}_{n\geq 0}$ and $\{Q_n\}_{n\geq 0}$ which can be easily proved using formulas \eqref{binet1} and \eqref{binet2}.
\begin{lemma}
\label{lem1:PQ}
The relation
 $$Q_n^2 - 8P_n^2=4(-1)^n$$
holds for all $n\ge 0$.
\end{lemma}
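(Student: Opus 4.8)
The statement to prove is the identity
\[
Q_n^2 - 8P_n^2 = 4(-1)^n
\]
for all $n \ge 0$, where $\{P_n\}$ is the Pell sequence and $\{Q_n\}$ is its companion. The plan is to prove this directly from the Binet formulas, exactly as the lemma's preamble suggests (``which can be easily proved using formulas \eqref{binet1} and \eqref{binet2}''). For the Pell sequences the recurrence is $x_{n+2} = 2x_{n+1} + x_n$, so the characteristic polynomial is $x^2 - 2x - 1 = 0$ with discriminant $\Delta = 4 + 4 = 8$ and roots $\alpha_1 = 1 + \sqrt{2}$ and $\alpha_2 = 1 - \sqrt{2}$. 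The first thing I would record is the two numerical facts that drive everything: the product $\alpha_1 \alpha_2 = -1$ and the difference $\alpha_1 - \alpha_2 = 2\sqrt 2$, so that $(\alpha_1 - \alpha_2)^2 = 8 = \Delta$.

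Next I would write down the two Binet formulas specialized to this case. By \eqref{binet1} the Pell numbers satisfy
\[
P_n = \frac{\alpha_1^n - \alpha_2^n}{\alpha_1 - \alpha_2},
\]
and by \eqref{binet2} the companion numbers satisfy $Q_n = \alpha_1^n + \alpha_2^n$. (One should note the initial-value convention: the excerpt writes $Q_0 = 2,\ Q_1 = 2$, but the companion Binet formula forces $Q_1 = \alpha_1 + \alpha_2 = 2$ and $Q_0 = 2$, so this is consistent; the ``$Q_1=2$'' in the text is a typo for the value $\alpha_1+\alpha_2=2$, and in any case the proof uses only the Binet form.) Then I would compute the two squares separately:
\[
Q_n^2 = \alpha_1^{2n} + 2(\alpha_1\alpha_2)^n + \alpha_2^{2n},
\qquad
8 P_n^2 = \frac{8\left(\alpha_1^{2n} - 2(\alpha_1\alpha_2)^n + \alpha_2^{2n}\right)}{(\alpha_1 - \alpha_2)^2}.
\]

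The key simplification is that $(\alpha_1 - \alpha_2)^2 = 8$, so the factor of $8$ in the numerator of $8P_n^2$ cancels the denominator exactly, giving $8P_n^2 = \alpha_1^{2n} - 2(\alpha_1\alpha_2)^n + \alpha_2^{2n}$. Subtracting, the $\alpha_1^{2n}$ and $\alpha_2^{2n}$ terms cancel and I am left with
\[
Q_n^2 - 8P_n^2 = 2(\alpha_1\alpha_2)^n - \bigl(-2(\alpha_1\alpha_2)^n\bigr) = 4(\alpha_1\alpha_2)^n.
\]
Finally, substituting $\alpha_1\alpha_2 = -1$ yields $4(-1)^n$, which is the claim. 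There is essentially no obstacle here: the only things to get right are the two structural constants $\alpha_1\alpha_2 = -1$ and $(\alpha_1-\alpha_2)^2 = 8$, and the entire argument is a one-line algebraic cancellation once the Binet formulas are in hand. As an alternative (useful as a sanity check) one could prove it by induction: verify the base case $Q_0^2 - 8P_0^2 = 4 - 0 = 4 = 4(-1)^0$, and then show the quantity $Q_n^2 - 8P_n^2$ negates at each step using the recurrences $Q_{n+1} = 2Q_n + Q_{n-1}$ and $P_{n+1} = 2P_n + P_{n-1}$ together with the cross term identity; but the Binet computation is cleaner and is what I would present.
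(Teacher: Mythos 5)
Your proof is correct and follows exactly the route the paper indicates (the lemma is stated there without proof, with the remark that it ``can be easily proved using formulas \eqref{binet1} and \eqref{binet2}''): squaring the two Binet expressions and using $\alpha_1\alpha_2=-1$ and $(\alpha_1-\alpha_2)^2=8$ gives the identity in one line. The only minor slip is your aside calling $Q_1=2$ a typo --- it is not, since $Q_1=\alpha_1+\alpha_2=2$ --- but this has no bearing on the argument.
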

\begin{lemma}
\label{lem1:orderof2}
The relations
\begin{itemize}
\item[(i)] $\nu_2(Q_n)=1$,
\item[(ii)] $\nu_2(P_n)=\nu_2(n)$
\item[(iii)]
\begin{equation} 
\label{eq3:RelP}
P_n-1=\left\{ \begin{matrix}
P_{(n-1)/2}Q_{(n+1)/2} & {\text{if}} & n\equiv 1\pmod 4;\\
P_{(n+1)/2}Q_{(n-1)/2} & {\text{if}} & n\equiv 3\pmod 4.
\end{matrix}\right.
\end{equation}
\end{itemize}
hold for all positive integers $n$.
\end{lemma}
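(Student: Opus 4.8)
The plan is to read off all three relations from the Binet formulas \eqref{binet1} and \eqref{binet2} specialized to the Pell pair. Writing $\gamma=1+\sqrt2$ and $\delta=1-\sqrt2$ for the roots of $x^2-2x-1$, we have $P_n=(\gamma^n-\delta^n)/(2\sqrt2)$ and $Q_n=\gamma^n+\delta^n$, together with the identities $\gamma\delta=-1$, $\gamma-\delta=2\sqrt2$, and $\gamma+\delta=2$, which will be used repeatedly.

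For part (i), I would argue purely from the recurrence $Q_{n+2}=2Q_{n+1}+Q_n$ reduced modulo $4$. Since $Q_0=Q_1=2$, and since $2Q_{n+1}\equiv0\pmod 4$ whenever $Q_{n+1}\equiv2\pmod4$, a two-step induction gives $Q_{n+2}\equiv Q_n\pmod4$, hence $Q_n\equiv2\pmod4$ for every $n\ge0$; this is exactly $\nu_2(Q_n)=1$. (Alternatively, one can deduce the same fact from Lemma \ref{lem1:PQ}, since $Q_n^2=8P_n^2+4(-1)^n\equiv4\pmod 8$ forces $Q_n/2$ to be odd.)

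For part (ii), I would first settle the base case that $P_m$ is odd for odd $m$: reducing $P_{n+2}=2P_{n+1}+P_n$ modulo $2$ gives $P_{n+2}\equiv P_n\pmod2$, so $P_n\equiv P_{n\bmod 2}\pmod2$ and $\nu_2(P_m)=0$ for odd $m$. Then, writing $n=2^k m$ with $m$ odd, I would induct on $k$ using the doubling identity $P_{2\ell}=P_\ell Q_\ell$ (the case $u_{2n}=u_nv_n$ of Theorem \ref{eq1:binet1}). Combined with part (i), this gives $\nu_2(P_{2\ell})=\nu_2(P_\ell)+\nu_2(Q_\ell)=\nu_2(P_\ell)+1$, whence $\nu_2(P_{2^k m})=k=\nu_2(n)$.

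Part (iii) is the computational heart, and here I would substitute directly into the Binet formulas. For odd $n$, set $a$ and $b$ equal to $(n-1)/2$ and $(n+1)/2$ in the order dictated by the congruence class, so that $a+b=n$ and $|a-b|=1$. Expanding $P_aQ_b=(\gamma^a-\delta^a)(\gamma^b+\delta^b)/(2\sqrt2)$ produces the main term $(\gamma^{a+b}-\delta^{a+b})/(2\sqrt2)=P_n$ together with a cross term $\gamma^a\delta^b-\gamma^b\delta^a$, which factors as $\pm(\gamma\delta)^{\min(a,b)}(\gamma-\delta)$ because $|a-b|=1$; dividing by $2\sqrt2$ this collapses to $\pm1$. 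The only real bookkeeping — and the step most prone to sign errors — is tracking the parity of $\min(a,b)=(n-1)/2$ together with the sign coming from whether $a<b$ or $a>b$: when $n\equiv1\pmod4$ one has $(n-1)/2$ even with $a<b$, and when $n\equiv3\pmod4$ one has $(n-1)/2$ odd with $a>b$, and in both cases the contribution works out to $-1$, giving $P_aQ_b=P_n-1$. I expect no genuine obstacle beyond this careful tracking of the exponent parities; once the Binet formulas are in hand the entire lemma is elementary.
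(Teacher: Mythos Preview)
Your proof is correct and matches the paper's approach: the paper states these relations without proof, merely noting they ``can be easily proved using formulas \eqref{binet1} and \eqref{binet2}'', and your argument does precisely this, carrying out the Binet-formula computations (with the recurrence-modulo-$4$ shortcut for (i) and (ii)) in the expected way. Your sign bookkeeping in part (iii) is accurate.
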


\section{The Primitive Divisor Theorem}
\label{sec4}
In this section, we recall the Primitive Divisor Theorem of members for Lucas sequences.
In fact, the Primitive Divisor Theorem is an extension of Zygmondy's theorem.

\begin{definition}
Let $u_n$ be a Lucas sequence. The integer $p$, with $p\nmid \Delta$ is said to be a primitive divisor for $u_n$ if $p$ divide $ u_n$ and $p$ not divide $u_m$ for all $1<m<n$. In other words, a prime factor $p$ of $u_n$ such that $z(p)=n$ is a primitive prime.
\end{definition}
\medskip


The existence of primitive divisors is an old problem which has been completely solved by Yuri Bilu, Guillaume Hanrot and Paul M Voutier \cite{bilu}. Here we state it.

\begin{theorem}
For $n>30$, the $n$th term $u_n$ of any Lucas sequence has a primitive divisor.
\end{theorem}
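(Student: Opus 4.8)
The plan is to reduce the existence of a primitive divisor to a lower bound on a single ``cyclotomic value'' attached to $u_n$, and then to control that value by transcendence methods. Writing $\alpha=\alpha_1$ and $\beta=\alpha_2$ for the roots of the characteristic polynomial, one has the factorization
$$
u_n=\frac{\alpha^n-\beta^n}{\alpha-\beta}=\prod_{\substack{d\mid n\\ d>1}}\Phi_d(\alpha,\beta),
$$
where $\Phi_d(\alpha,\beta)$ is the homogenized $d$-th cyclotomic polynomial evaluated at the pair $(\alpha,\beta)$, a homogeneous form of degree $\varphi(d)$ that takes a rational integer value since $\alpha+\beta$ and $\alpha\beta$ are integers. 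First I would establish the standard dichotomy: a prime $p$ dividing $\Phi_n(\alpha,\beta)$ is a primitive divisor of $u_n$ unless $p$ is the largest prime factor of $n$, in which case it divides $\Phi_n(\alpha,\beta)$ only to the first power. Consequently $u_n$ fails to have a primitive divisor precisely when $|\Phi_n(\alpha,\beta)|$ equals $1$ or a prime factor of $n$; in every such ``bad'' case $|\Phi_n(\alpha,\beta)|\le n$.

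The heart of the argument is therefore to prove that $|\Phi_n(\alpha,\beta)|$ eventually exceeds $n$. When $\alpha$ and $\beta$ are real this is easy, since $\log|\Phi_n(\alpha,\beta)|$ is then, up to a small error, $\varphi(n)\log|\alpha|$, which grows far faster than $\log n$. The delicate case is that of complex conjugate roots with $|\alpha|=|\beta|$, where $|\Phi_n(\alpha,\beta)|$ can be small: here a lower bound amounts to bounding from below the distance of $(\alpha/\beta)^n$ from $1$, that is, a nonzero linear form in logarithms of algebraic numbers. Applying Matveev's lower bound \cite{matveev} yields an effective constant $n_0$ with $|\Phi_n(\alpha,\beta)|>n$ for all $n>n_0$, so that every such $u_n$ carries a primitive divisor. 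This single step disposes of all but finitely many $n$, uniformly over all Lucas sequences.

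It then remains to treat the finitely many exceptional values $n\le n_0$. For each fixed $n$, the condition that $u_n$ lack a primitive divisor becomes an equation of the shape $\Phi_n(\alpha,\beta)=\pm1$ or $\Phi_n(\alpha,\beta)=\pm p$ with $p\mid n$; expressed in terms of the parameters $(r,s)$ defining the sequence, this is a Thue equation, or a closely related superelliptic equation, in two unknowns. I would solve these by the standard machinery for such equations, combining the Baker--Davenport reduction \cite{BD} with a direct search, and record that the only genuine exceptions lie in the range $n\le 30$, while every term with $n>30$ admits a primitive divisor.

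The hard part is unmistakably the passage from the enormous numerical bound $n_0$ delivered by linear forms in logarithms down to a range one can actually enumerate: for moderate $n$ the cyclotomic equations correspond to integral points on curves of increasing genus, and handling them uniformly --- rather than one sequence at a time --- requires a careful parametrization of the offending pairs $(\alpha,\beta)$ together with substantial computation. Closing this gap is exactly the achievement of Bilu, Hanrot and Voutier \cite{bilu}, and I would follow their treatment for the final enumeration.
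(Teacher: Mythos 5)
This statement is not proved in the thesis at all: it is the Bilu--Hanrot--Voutier primitive divisor theorem, quoted verbatim with a citation to \cite{bilu}, so there is no internal proof to compare yours against. Measured against the actual proof in the literature, your outline has the right architecture: the factorization $u_n=\prod_{d\mid n,\,d>1}\Phi_d(\alpha,\beta)$, the dichotomy that a non-primitive prime divisor of $\Phi_n(\alpha,\beta)$ must be the largest prime factor of $n$ and divides it to the first power (so that a defective $u_n$ forces $|\Phi_n(\alpha,\beta)|$ to be at most a prime factor of $n$), the easy real case via $\log|\Phi_n(\alpha,\beta)|\sim\varphi(n)\log|\alpha|$, the hard complex-conjugate case via lower bounds for $|(\alpha/\beta)^n-1|$, and the reduction of each fixed exceptional $n$ to Thue-type equations in the parameters $(r,s)$. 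This is indeed how Carmichael handled real roots and how Stewart, Voutier and finally Bilu--Hanrot--Voutier handled the rest.

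The genuine gap is that your proposal is not a proof but an annotated citation: the entire content of the theorem --- obtaining an $n_0$ small enough to enumerate, uniformly parametrizing the defective pairs $(\alpha,\beta)$ for each $n\le n_0$, and actually solving the resulting Thue equations to verify that no exceptions survive past $n=30$ --- is precisely what you defer to \cite{bilu} in your final paragraph. A direct application of Matveev-type bounds in the complex case gives an astronomically large $n_0$ (the published argument needs a sharp linear form in \emph{two} logarithms to get down to $n_0=30030$, followed by a heavy computation), and without that descent the statement ``every exception has $n\le 30$'' is not established by anything you wrote. So as a self-contained argument the proposal does not prove the theorem; as a description of the published proof it is accurate, and it is no less complete than the thesis itself, which simply states the result and cites its source.
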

\medskip

Carmichael proved the theorem in the case when the roots of the characteristic polynomial of $u_n$ are real. In particular he proved the following results.
 
\begin{lemma}[Carmichael \cite{Car}] \label{lem2:Car}
$F_n$ has a primitive divisor for all $n\geq 12.$
\end{lemma}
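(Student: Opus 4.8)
The plan is to reformulate the claim through the \emph{order of appearance}: by the definition above, a prime $p\ne 5$ is a primitive divisor of $F_n$ exactly when $z(p)=n$, so I must produce, for each $n\ge 12$, a prime $p$ with $p\mid F_n$ and $z(p)=n$. The quickest route in the present context is to split on the size of $n$. For $n>30$ the Primitive Divisor Theorem stated just above (valid for every Lucas sequence) already furnishes a primitive divisor, so only the finitely many indices $12\le n\le 30$ remain, and these I would settle by writing down the factorizations of $F_{12},\dots,F_{30}$ and checking in each case that some prime factor $p$ satisfies $z(p)=n$. This finite check in fact exposes the genuine boundary: $F_{12}=144=2^4\cdot 3^2$ with $z(2)=3$ and $z(3)=4$, so $F_{12}$ has \emph{no} primitive divisor and the assertion is correct precisely from $n\ge 13$ onward.

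For a self-contained argument in the spirit of Carmichael I would instead use the cyclotomic factorization. With $\alpha=(1+\sqrt5)/2$, $\beta=(1-\sqrt5)/2$ and $\zeta_n=e^{2\pi i/n}$, set
$$\Phi_n=\prod_{\substack{1\le k\le n\\ \gcd(k,n)=1}}\left(\alpha-\zeta_n^{\,k}\beta\right),$$
which is a rational integer satisfying $\alpha^n-\beta^n=\prod_{d\mid n}\Phi_d$, hence $F_n=\prod_{d\mid n,\,d>1}\Phi_d$. A prime $p\ne 5$ then divides $F_n$ primitively iff $p\mid\Phi_n$ and $p\nmid n$. The first key step is to control the \emph{intrinsic} (non-primitive) prime divisors of $\Phi_n$. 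Using $\nu_p(F_d)>0\iff z(p)\mid d$ together with the lifting-the-exponent relation $\nu_p(F_d)=\nu_p(F_{z(p)})+\nu_p(d/z(p))$ for $z(p)\mid d$ (with the usual corrections at $p=2,5$), a Möbius computation of $\nu_p(\Phi_n)=\sum_{d\mid n}\mu(n/d)\,\nu_p(F_d)$ shows that a prime $p$ with $z(p)<n$ divides $\Phi_n$ if and only if $n/z(p)$ is a positive power of $p$, and then $\nu_p(\Phi_n)=1$. Combined with $p\equiv\pm 1\pmod{z(p)}$ (the proposition above), this forces such a $p$ to be the \emph{largest} prime factor of $n$; hence at most one intrinsic prime occurs and the imprimitive part of $\Phi_n$ divides $P(n)\le n$.

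The second key step is a size estimate. From $\alpha\beta=-1$ and $\alpha^2+\beta^2=3$ one gets $|\alpha-\zeta_n^{\,k}\beta|^2=3+2\cos(2\pi k/n)$, whence $|\Phi_n|=\prod_{\gcd(k,n)=1}\sqrt{3+2\cos(2\pi k/n)}$; since $|\beta/\alpha|=\alpha^{-2}<1$ this grows like $\alpha^{\varphi(n)}$. Once I make this lower bound explicit and verify $|\Phi_n|>P(n)$ for every $n$ above a small threshold, the previous step forces the primitive part of $\Phi_n$ to exceed $1$, i.e.\ a primitive divisor of $F_n$ exists; the remaining small $n$ are dispatched by direct factorization as above. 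I expect the main obstacle to lie in the intrinsic-divisor analysis: pinning down exactly to what power an intrinsic prime may divide $\Phi_n$ and correctly handling the exceptional primes $p=2$ and $p=5$, since it is precisely these low-index coincidences that produce the true exceptional set $n\in\{1,2,6,12\}$.
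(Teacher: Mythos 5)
Your proposal is sound, and the paper itself offers no proof to compare against: the lemma is simply cited to Carmichael's 1913 paper. Both of your routes are legitimate. The first (invoke the Bilu--Hanrot--Voutier theorem, stated earlier in the chapter for $n>30$, and factor $F_{12},\dots,F_{30}$ by hand) is the cheapest way to certify the lemma in context; the second is a faithful sketch of Carmichael's actual argument via the homogeneous cyclotomic factors $\Phi_d$, with the two standard pillars correctly identified (the imprimitive part of $\Phi_n$ is at most the largest prime factor of $n$, and $|\Phi_n|$ grows like $\alpha^{\varphi(n)}$, so it eventually dominates). Your most valuable observation is that the finite check refutes the lemma as literally stated: $F_{12}=144=2^4\cdot 3^2$, and since $z(2)=3$ and $z(3)=4$, the index $12$ is a genuine exception. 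Carmichael's theorem asserts the existence of a primitive divisor of $F_n$ precisely for $n\notin\{1,2,6,12\}$, so the hypothesis should read $n\ge 13$ (or $n>12$); the companion Lemma \ref{lem1:Car} for $L_n$ correctly excludes its exceptional index $n=6$, and the same care is needed here. The only caveat on your second route is the one you flag yourself: the lifting-the-exponent computation of $\nu_p(\Phi_n)$ needs the usual special handling at $p=2$ (and $p=5$ for the Fibonacci discriminant), and that is exactly where the exceptional indices $1,2,6,12$ emerge, so those details cannot be waved away if you want the self-contained version.
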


\begin{lemma}[Carmichael \cite{Car}] \label{lem1:Car}
$L_n$ has a primitive divisor for all $n\ne 6$, while $L_6=2\times 3^2$, and $2\mid L_3,~3\mid L_2$.   
\end{lemma}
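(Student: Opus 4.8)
The plan is to deduce primitive divisors of the companion sequence $\{L_n\}$ from primitive divisors of the first-kind Fibonacci sequence $\{F_n\}$, for which Carmichael's result (Lemma \ref{lem2:Car}) is available. The bridge is the identity $F_{2n}=F_nL_n$ coming from Theorem \ref{eq1:binet1} (the relation $u_{2n}=u_nv_n$), which lets me write $L_n=F_{2n}/F_n$ and thereby locate the new prime factors of $L_n$ among the primitive prime divisors of $F_{2n}$.

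The main claim to establish is: if $p$ is a prime with $z(p)=2n$, then $p$ is a primitive divisor of $L_n$. First, $p\mid F_{2n}=F_nL_n$; since $z(p)=2n\nmid n$ we have $p\nmid F_n$, hence $p\mid L_n$. Next, suppose $p\mid L_m$ for some $1\le m<n$. Then $p\mid F_mL_m=F_{2m}$, so by the property $z(p)\mid 2m$, giving $2n\mid 2m$ and thus $n\mid m$, which is impossible for $0<m<n$. Therefore $p\nmid L_m$ for all $1\le m<n$, in particular on the range $1<m<n$ of the definition. Finally $p\neq 5$ because $z(5)=5$ is odd and cannot equal $2n$, so $p\nmid\Delta=5$; hence $p$ is a genuine primitive divisor of $L_n$.

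With the claim in hand the proof splits at $n=7$. For $n\ge 7$ one has $2n\ge 14\ge 13$, so Lemma \ref{lem2:Car} supplies a primitive divisor $p$ of $F_{2n}$, i.e.\ a prime with $z(p)=2n$, and the claim produces a primitive divisor of $L_n$. The remaining cases $n\in\{1,2,3,4,5,6\}$ I treat by direct inspection of $L_1=1,\ L_2=3,\ L_3=4,\ L_4=7,\ L_5=11,\ L_6=18$: for $n=2,3,4,5$ the primes $3,2,7,11$ respectively are checked to be primitive (none divides an earlier $L_m$ with $1<m<n$, and none equals $5$), while $L_6=2\cdot 3^2$ has no primitive divisor precisely because $2\mid L_3$ and $3\mid L_2$, which is exactly the asserted exception $n=6$. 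The degenerate value $L_1=1$ carries no prime and is the usual vacuous boundary case.

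The main obstacle is conceptual rather than computational: Carmichael's and the Bilu--Hanrot--Voutier theorems are stated for first-kind sequences $u_n$, whereas $L_n=v_n$ is the companion sequence, so one cannot invoke them on $\{L_n\}$ directly. Everything hinges on the translation via $F_{2n}=F_nL_n$ and on verifying that the primitive divisors of $F_{2n}$ survive division by $F_n$ and do not reappear earlier in $\{L_n\}$, which is the content of the claim above. A secondary point requiring care is the small even indices $2n\in\{2,6,12\}$ where $F_{2n}$ itself fails to have a primitive divisor (corresponding to $n=1,3,6$); only $n=6$ turns out to be a genuine exception for $\{L_n\}$, the others being recovered by the direct computation.
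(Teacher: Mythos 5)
Your argument is correct, but it is worth noting that the paper does not prove this lemma at all: it is simply quoted from Carmichael \cite{Car} as a known fact. What you have supplied is a genuine, self-contained reduction of the companion-sequence case to the first-kind case, and the reduction is sound. The key step --- that a prime $p$ with $z(p)=2n$ must divide $L_n=F_{2n}/F_n$ but cannot divide any earlier $L_m$ because $p\mid L_m$ would force $2n=z(p)\mid 2m$ and hence $n\mid m$ --- is exactly the right bridge, and your exclusion of $p=5$ (so that $p\nmid\Delta$) closes the definition of primitivity. Two small points in your favour: you invoke Lemma \ref{lem2:Car} only for indices $2n\ge 14$, which sidesteps the fact that the paper's statement of that lemma is slightly too generous (as $F_{12}=144$ has no primitive divisor, the correct threshold is $n\ge 13$); and you correctly isolate $n\in\{1,3,6\}$, corresponding to $2n\in\{2,6,12\}$ where $F_{2n}$ itself has no primitive divisor, observing that only $n=6$ survives as a genuine exception for $\{L_n\}$ after direct inspection of $L_1,\ldots,L_6$. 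What your route buys, compared with the bare citation in the paper, is that the Lucas-companion statement becomes a corollary of the Fibonacci statement the paper already records, rather than an independent appeal to Carmichael; the cost is that it silently uses the divisibility $p\mid F_k\Rightarrow z(p)\mid k$, which the paper does state, so nothing is missing.
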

\medskip

%
\medskip

Moreover, any primitive prime $p$ such that $p\mid u_n$, satisfies the congruence of the following theorem.

\begin{theorem}
\label{eq11:primitive}
If  $p$ is a primitive divisor of a Lucas sequences $u_n$, 
then
$$p\equiv \pm 1 \pmod{n}.$$
In fact $\(\frac{\Delta}{p}\)=\pm 1$. Further, when $u_n=F_n$, then

\begin{center}
If $p\equiv 1 \pmod{5}$ then $p\equiv 1 \pmod{n}$ 
\medskip

If $p\not\equiv 1 \pmod{5}$ then $p\equiv -1 \pmod{n}$
\end{center}

\end{theorem}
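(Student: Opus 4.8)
The plan is to read off both conclusions directly from the two propositions on the order of appearance $z(p)$ stated earlier in this section, together with the defining property of a primitive divisor. First I would unwind the definition: since $p$ is a primitive divisor of $u_n$, by hypothesis $p\nmid\Delta$, and $z(p)=n$, because $n$ is the minimal index $k$ with $p\mid u_k$ (we have $p\mid u_n$ but $p\nmid u_m$ for all $1<m<n$). The claim $\left(\frac{\Delta}{p}\right)=\pm 1$ is then immediate, since the Legendre symbol $\left(\frac{\Delta}{p}\right)$ vanishes only when $p\mid\Delta$, which is excluded.

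Next, to obtain $p\equiv\pm 1\pmod n$, I would invoke the proposition asserting that $p\nmid\Delta$ forces $p\equiv\pm 1\pmod{z(p)}$. Substituting $z(p)=n$ gives the congruence at once, which settles the general statement.

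For the Fibonacci refinement I would specialize to $\Delta=5$ and set $e=\left(\frac{5}{p}\right)$; since $\alpha=(1+\sqrt 5)/2\notin\mathbb{Q}$, the relevant proposition yields $p\mid u_{p-e}=F_{p-e}$. Combined with the bullet that $p\mid u_k$ implies $z(p)\mid k$, this gives $n=z(p)\mid(p-e)$, hence $p\equiv e\pmod n$. It then remains to translate $e$ into a condition on $p$ modulo $5$: by Euler's criterion (equivalently quadratic reciprocity, using $5\equiv 1\pmod 4$), one has $e=1$ precisely when $p\equiv\pm 1\pmod 5$ and $e=-1$ otherwise, which pins down the sign in $p\equiv\pm 1\pmod n$.

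The argument is short because the heavy lifting was done in the two $z(p)$-propositions; the only point requiring care is the Fibonacci case, where one must verify $\alpha\notin\mathbb{Q}$ so that the third bullet (rather than the rational-root bullet) applies, and then correctly identify the quadratic residues modulo $5$ that make $e=1$. This last translation is the subtle step, since the dichotomy is genuinely governed by $p\equiv\pm 1\pmod 5$ rather than by $p\equiv 1\pmod 5$ alone, so I would state it in that sharper form and reconcile it with the theorem's phrasing.
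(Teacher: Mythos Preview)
The paper does not actually prove this theorem; it is stated in the preliminaries chapter as a background fact, with no argument supplied. Your derivation from the two preceding propositions on $z(p)$ is exactly the intended justification and is correct: unwinding the definition of primitive divisor gives $p\nmid\Delta$ and $z(p)=n$, whence $\left(\frac{\Delta}{p}\right)=\pm 1$ and, by the second proposition, $p\equiv\pm 1\pmod{z(p)}=\pm 1\pmod n$. For the Fibonacci refinement your use of $p\mid F_{p-e}$ with $e=\left(\frac{5}{p}\right)$ and then $n=z(p)\mid p-e$ is also correct.

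You are right to flag the phrasing of the Fibonacci dichotomy. The true split is governed by $\left(\frac{5}{p}\right)$, i.e.\ by whether $p\equiv\pm 1\pmod 5$ (giving $p\equiv 1\pmod n$) versus $p\equiv\pm 2\pmod 5$ (giving $p\equiv -1\pmod n$). The theorem as written in the paper misstates this as ``$p\equiv 1\pmod 5$'' versus ``$p\not\equiv 1\pmod 5$''; in fact the paper itself uses the correct quadratic-residue version in its later applications (e.g.\ in Chapter~5 it repeatedly invokes ``$\left(\frac{5}{p_i}\right)=\pm 1$'' to decide the sign of $p_i\pmod n$). So your reconciliation is the right one: state it in terms of $\left(\frac{5}{p}\right)$ and note that the printed condition is a slip.
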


We recall a result of McDaniel on the prime factors of the Pell sequence $\{P_n\}_{n\geq 0}.$
\begin{lemma}[McDaniel \cite{MD}]
\label{lem1:Mac}
Let $\{P_n\}_{n\geq 0}$ be the sequence of Pell numbers. Then $P_n$ has a prime factor $q\equiv 1\pmod 4$
for all $n>14$.
\end{lemma}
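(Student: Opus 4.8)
The plan is to reduce the statement to two cases determined by the odd part of $n$, using the Pell--Pell-Lucas identity $Q_n^2-8P_n^2=4(-1)^n$ (Lemma~\ref{lem1:PQ}) to control the residues modulo $4$ of the odd prime divisors of $P_n$. The elementary observation I would isolate first is this: if $m$ is \emph{odd} and $q$ is an odd prime with $q\mid P_m$, then reducing the identity modulo $q$ gives $Q_m^2\equiv 4(-1)^m\equiv -4\pmod q$, so that $-4$, and hence $-1$, is a quadratic residue modulo $q$; the classical characterization of $-1$ as a square then forces $q\equiv 1\pmod 4$. Since $\nu_2(P_m)=\nu_2(m)=0$ for $m$ odd (Lemma~\ref{lem1:orderof2}), the integer $P_m$ is odd, so for odd $m\ge 3$ \emph{every} prime factor of $P_m>1$ is $\equiv 1\pmod 4$.

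With this in hand I would write $n=2^a b$ with $b$ odd and split into cases. If $b\ge 3$, then $b\mid n$ gives $P_b\mid P_n$ (Theorem~\ref{eq1:binet1}), and since $P_b\ge P_3=5>1$ is odd, the observation above produces a prime factor $q\equiv 1\pmod 4$ of $P_b$, which therefore divides $P_n$; this disposes of every $n$ whose odd part exceeds $1$, with no restriction needed on the size of $n$. The only remaining case is $b=1$, i.e.\ $n=2^a$ a pure power of $2$. Here the congruence argument degenerates, since for even indices the identity only yields $Q_n^2\equiv 4\pmod q$, which imposes no condition on $q$; so a genuinely different input is required, and this is where the hypothesis $n>14$ enters.

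The main obstacle is precisely this power-of-two case, and I expect to clear it by combining the divisibility $P_d\mid P_n$ for $d\mid n$ with one explicit base computation. Since $n>14$ together with $n=2^a$ forces $a\ge 4$, we have $8\mid n$, hence $P_8\mid P_n$; and a direct evaluation gives $P_8=408=2^3\cdot 3\cdot 17$ with $17\equiv 1\pmod 4$, so that $17\mid P_n$. This finishes the argument. As a final sanity check I would verify the genuinely exceptional small indices $n=1,2,4$, where $P_n\in\{1,2,12\}$ indeed has no prime factor $\equiv 1\pmod 4$, confirming that some lower threshold is unavoidable and that nothing below the stated range is being claimed.
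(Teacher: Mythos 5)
Your argument is correct, and it is worth noting that the thesis itself gives no proof of this statement at all: it is quoted as a known result of McDaniel \cite{MD}, whose original derivation (in a paper about Pell numbers of the form $kx^2$) proceeds quite differently. What you have produced is a self-contained elementary proof using only ingredients already present in the text, namely the identity $Q_m^2-8P_m^2=4(-1)^m$ of Lemma \ref{lem1:PQ}, the $2$-adic valuation $\nu_2(P_m)=\nu_2(m)$ of Lemma \ref{lem1:orderof2}, and the divisibility $P_d\mid P_n$ for $d\mid n$ from Theorem \ref{eq1:binet1}. The key step --- that every odd prime $q$ dividing $P_m$ with $m$ odd satisfies $Q_m^2\equiv-4\pmod q$, hence $q\equiv1\pmod4$ --- is exactly the trick the thesis uses later (in the odd-$n$ case of Chapter 3 and in the proof that no Pell number is Lehmer), so your proof harmonizes well with the surrounding material. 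Your case split on the odd part $b$ of $n$ is clean: for $b\ge3$ you get a prime $q\equiv1\pmod4$ dividing $P_b\mid P_n$, and for $n=2^a>14$ the divisor $P_8=408=2^3\cdot3\cdot17$ supplies $17\mid P_n$. In fact your argument proves more than is claimed, since it shows the only exceptional indices are $n\in\{1,2,4\}$ (e.g.\ $P_8$ and $P_{14}=2\cdot13^2\cdot239$ already have the required prime factor), whereas the lemma only asserts the conclusion for $n>14$; this is a harmless strengthening. The one presentational point to tighten is that you invoke $a\ge4$ but only ever use $8\mid n$, i.e.\ $a\ge3$.
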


It is known that $P_n$ has a primitive divisor for all $n\ge 2$ (see \cite{Car} or \cite{bilu}). Write $P_{z(p)}=p^{e_p} m_p$, where $m_p$ is coprime to $p$. It is known that if $p^k\mid P_n$ for some $k>e_p$, then $pz(p)\mid n$. In particular,
\begin{equation}
\label{eq1:nupvaluationofPn}
\nu_p(P_n)\le e_p\quad {\text{\rm whenever}}\quad p\nmid n.
\end{equation}
We need a bound on $e_p$. We have the following result.

\begin{lemma}
\label{lem1:zofp}
The inequality
\begin{equation}
\label{eq1:eofp}
e_p\le \frac{(p+1)\log \alpha}{2\log p}.
\end{equation}
holds for all primes $p$.
\end{lemma}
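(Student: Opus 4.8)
The plan is to reduce the inequality to a bound on the rank of apparition $z(p)$ and then to exploit the law of apparition for the Pell sequence. First I would record the size estimate coming from Binet's formula \eqref{binet1}: with $\alpha=1+\sqrt2$ one has $\beta=-1/\alpha$, so $P_n=(\alpha^n-\beta^n)/(2\sqrt2)<\alpha^{n-1}$ for every $n\ge1$. Since $p^{e_p}\mid P_{z(p)}$ by the very definition of $e_p$, this gives $p^{e_p}\le P_{z(p)}<\alpha^{z(p)-1}$, and taking logarithms turns the desired conclusion into the purely combinatorial statement $z(p)-1\le (p+1)/2$. Everything is thereby reduced to controlling the rank of apparition.

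Next I would invoke the apparition law. For odd $p$ the Pell discriminant is $\Delta=8$ and $\alpha\notin\mathbb{Q}$, so the proposition on $z(p)$ in Section~\ref{sec3} yields $z(p)\mid p-\left(\frac{8}{p}\right)$, whence $z(p)\le p+1$; the prime $p=2$ is immediate since $z(2)=2$ and $e_2=\nu_2(P_2)=1$. If $z(p)\le (p+1)/2$, then $z(p)-1<(p+1)/2$ and the bound follows at once, with room to spare. The only alternative is $z(p)>(p+1)/2$, and here the divisibility $z(p)\mid p-\left(\frac{8}{p}\right)\in\{p-1,p+1\}$ forces $z(p)$ to be the whole of $p-1$ or $p+1$, because the second largest divisor of $p\pm1$ never exceeds $(p+1)/2$. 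Thus one is left with exactly the two boundary values $z(p)\in\{p-1,p+1\}$ (the tiny primes $p\in\{2,3\}$ being checked by hand, where $e_p=1$).

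The crux, and the step I expect to be the genuine obstacle, is precisely these boundary cases: there the size estimate delivers only $e_p\le p\log\alpha/\log p$, a full factor of two short of the target. Here I would aim to show $e_p=1$, i.e. $p^2\nmid P_{p\pm1}$, which suffices since $(p+1)\log\alpha/(2\log p)\ge1$ for every prime $p$. To squeeze the $p$-part down I would bring in the primitive divisor theorem (Theorem~\ref{eq11:primitive}): $P_{z(p)}$ carries primitive prime factors $q\equiv\pm1\pmod{z(p)}$, which are large (at least about $2z(p)-1$) and coprime to $p$, forcing the cofactor $m_p$ to be sizeable and hence $e_p$ to be small. The delicate point is that a single such factor only removes $O(\log p)$ from the exponent budget, whereas halving it would require stripping off $p$-free factors of total size about $\alpha^{p/2}$; ruling out an anomalously large $p$-adic valuation of $P_{p\pm1}$ is a Wall--Sun--Sun-type non-vanishing statement. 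I would therefore concentrate the real work on these two residual cases, verifying the finitely many small primes directly and arguing that the slack in the inequality absorbs the generic situation, while the remainder of the argument is routine bookkeeping with Binet's formula and the apparition law.
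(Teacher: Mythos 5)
Your reduction and case analysis are fine as far as they go, but the proposal does not close the one case that matters, and you say so yourself: when $z(p)=p\pm 1$ your size estimate $p^{e_p}\le P_{z(p)}<\alpha^{z(p)-1}$ only gives $e_p\le p\log\alpha/\log p$, a factor of two worse than the claim, and your proposed remedy --- proving $e_p=1$ in these boundary cases via primitive divisors --- is exactly a Wall--Sun--Sun-type nonvanishing statement that is not known and cannot be extracted from the primitive divisor theorem (a primitive prime factor of $P_{z(p)}$ gives a nontrivial cofactor $m_p$, but that only shaves $O(\log p)$ off the exponent budget, not the required $\tfrac{p}{2}\log\alpha$). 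Note also that $e_p=1$ is not even universally true: the thesis records $e_{13}=e_{31}=2$, so any strategy whose endpoint is ``show $e_p=1$'' has to be abandoned, not merely patched.

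The missing idea is elementary and sidesteps all of this. Since $p$ is odd and $z(p)\mid p+\varepsilon$ for some $\varepsilon\in\{\pm 1\}$, one has $p^{e_p}\mid P_{z(p)}\mid P_{p+\varepsilon}$, and by Theorem \ref{eq1:binet1} the index $p+\varepsilon$ is even, so
$$
P_{p+\varepsilon}=P_{(p+\varepsilon)/2}\,Q_{(p+\varepsilon)/2}.
$$
By Lemma \ref{lem1:PQ}, $Q_n^2-8P_n^2=\pm 4$, so an odd prime cannot divide both factors; hence the entire power $p^{e_p}$ divides a single one of $P_{(p+\varepsilon)/2}$ or $Q_{(p+\varepsilon)/2}$, each of which is at most $\alpha^{(p+1)/2}$ by \eqref{eq1:sizePn} and \eqref{binet2} (using $2\mid Q_n$ in the second case). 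Taking logarithms gives $e_p\le (p+1)\log\alpha/(2\log p)$ directly, with no need to know $z(p)$ exactly and no arithmetic input about $\nu_p(P_{p\pm 1})$ beyond this halving of the index. You should replace the final paragraph of your argument with this factorization step; the rest of your bookkeeping (the case $p=2$, the reduction via Binet, the apparition law) then becomes superfluous except for the divisibility $z(p)\mid p+\varepsilon$.
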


\begin{proof}
Since $e_2=1$, the inequality holds for the prime $2$. Assume that $p$ is odd. Then $z(p)\mid p+\varepsilon$ for some $\varepsilon\in \{\pm 1\}$. Furthermore, by Theorem \ref{eq1:binet1} we have
$$
p^{e_p}\mid P_{z(p)}\mid P_{p+\varepsilon}=P_{(p+\varepsilon)/2}Q_{(p+\varepsilon)/2}.
$$
By Lemma \ref{lem1:PQ}, it follows easily that $p$ cannot divide both $P_n$ and $Q_n$ for $n=(p+\varepsilon)/2$ since otherwise $p$ will also divide
$$
Q_n^2-8P_n^2=\pm 4,
$$
which is a contradiction since $p$ is odd. Hence, $p^{e_p}$ divides one of $P_{(p+\varepsilon)/2}$ or $Q_{(p+\varepsilon)/2}$. If $p^{e_p}$ divides $P_{(p+\varepsilon)/2}$, we have, by \eqref{eq1:sizePn}, that
$$
p^{e_p}\le P_{(p+\varepsilon)/2}\le P_{(p+1)/2}<\alpha^{(p+1)/2},
$$
which leads to the desired inequality \eqref{eq1:eofp} upon taking logarithms of both sides. In case $p^{e_p}$ divides $Q_{(p+\varepsilon)/2}$, we use the fact that $Q_{(p+\varepsilon)/2}$ is even by Lemma \ref{lem1:orderof2} (i). Hence, $p^{e_p}$ divides $Q_{(p+\varepsilon)/2}/2$, therefore, by formula \eqref{binet2}, we have
$$
p^{e_p}\le \frac{Q_{(p+\varepsilon)/2}}{2}\le \frac{Q_{(p+1)/2}}{2}<\frac{\alpha^{(p+1)/2}+1}{2}<\alpha^{(p+1)/2},
$$
which leads again to the desired conclusion by taking logarithms of both sides.
\end{proof}


Before we end our discussion on preliminaries of this thesis, it will be helpful to recall some basic definitions and results from Diophantine approximations which will be very  useful for Chapter \textcolor{red}{5}.  We will also introduce others tools as continued fractions in Section \textcolor{red}{\ref{sec22}}. 

\section{Linear forms in logarithms}
\label{sec21}
\subsection{Algebraic Numbers}
\begin{definition}
A complex (or real) number $\eta$ is an algebraic number if it is the root of a polynomial 
$$f(x)=a_nx^n + \ldots + a_1x + a_0$$
with integers coefficients $a_n\neq 0 \quad \hbox{for all $n$}$.
\end{definition}

\medskip

\medskip

Let $\eta$ be an algebraic number of degree $d$ over $\mathbb{Q}$ with minimal primitive polynomial over the integers
$$
f(X) = a_0 \prod_{i=1}^{d}(X-\eta^{(i)}) \in \mathbb{Z}[X],
$$
where the leading coefficient $a_0$ is positive. The \textit{logarithmic height} of  $\eta$ is given by
$$
h(\eta) = \dfrac{1}{d}\left(\log a_0 + \sum_{i=1}^{d}\log\max\{|\eta^{(i)}|,1\}\right).
$$ 
\medskip

\subsection{Linear forms in logarithms of rational numbers}
We first define what is meant by \textit{linear forms in logarithms}. We refer to the description given in \cite{Yan}.

Let $n$ be an integer. For $i=1,\ldots,n$,  let $x_i/y_i$ be a non zero rational number, $b_i$ be a positive integer and set $A_i:=max\{|x_i|,|y_i|,3\}$ and $B:=max\{b_i,\ldots,b_n,3\}$. We consider the quantity:
$$ \Lambda:=\(\frac{x_1}{y_1}\)^{b_1}\cdots \(\frac{x_n}{y_n}\)^{b_n} - 1,$$
which occurs naturally in many Diophantine equations. It's often easy to prove that $\Lambda\neq 0$ and to find an upper bound for it. For applications to Diophantine problems, it is important that not only the
above linear form is nonzero, but also that we have a strong enough lower bound for the absolute value of this linear form. Since $|\Lambda|\leq \frac{1}{2}$, the reason why it is called \textit{linear form in logarithms} is that,
$$ |\Lambda|\geq \frac{|\log(1 + \Lambda )|}{2}=\frac{1}{2}\Big|b_1\log \frac{x_1}{y_1} + \ldots + b_n\log \frac{x_n}{y_n}\Big|.$$
Assuming that $\Lambda$ is nonzero, one can state a trivial lower bound for $|\Lambda|$. Then,

$$ |\Lambda|\geq - \sum_{i=1}^{n}b_i\log |y_i|\geq -B\sum_{i=1}^{n}\log A_i.$$

The dependence on the $A_i$'s is very satisfactory, unlike the dependence on $B$. However,
to solve many Diophantine questions, we need a better dependence on $B$, even if the
dependence on the $A_i$'s is not the best possible. More generally, one can state analogous lower bounds when the $x_i/y_i$ are replaced by nonzero algebraic numbers $\eta_i$ , the real numbers $A_i$ being then expressed in terms of the logarithmic height of $\eta_i$.

\subsection*{Lower bounds for linear forms in logarithms of algebraic numbers} 

A lower bound for a nonzero expression of the form

$$\eta_1^{b_1}\ldots\eta_n^{b^n}-1,$$
where $\eta_1 , \ldots ,\eta_n$ are algebraic numbers and $b_1 ,\ldots,b_n$ are integers, is the same as a lower bound for a nonzero number of the form 

\begin{equation}
\label{eq:LF}
\sum_{i=1}^{n}b_i \log \eta_i,
\end{equation}
since $e^z-1\thicksim z$ for $z\rightarrow 0.$ The first nontrivial lower bounds were obtained by A.O. Gel'fond. His estimates were effective only for $n=2.$ Later, A. Schinzel deduced explicit Diophantine results using the approach introduced by A.O. Gel'fond. In $1968$, A. Baker succeeded to extend to any $n\geq 2$ the transcendence method used by A.O. Gel’fond for $n=2.$
\medskip

\begin{theorem}[A. Baker, $1975$] Let $\eta_1,\ldots, \eta_n$ be algebraic numbers from $\mathbb{C}$ different from $0, 1.$ Further, let $b_1,\ldots,b_n$ be rational integers such that
$$b_1 \log\eta_1 +\cdots + b_n \log\eta_n \neq 0.$$
Then
$$|b_1 \log\eta_1 +\cdots + b_n\log\eta_n |\geq (eB)^{-C},$$
where $B := \max(|b_1 |,\ldots, |b_n |)$ and $C$ is an effectively computable constant
depending only on $n$ and on $\eta_1,\ldots, \eta_n.$ 
\end{theorem}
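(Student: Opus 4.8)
The statement is A. Baker's theorem on lower bounds for linear forms in logarithms, a deep result of transcendence theory; the plan is to follow the classical Gel'fond--Baker method built on an auxiliary function, Siegel's lemma, and an extrapolation argument. Write $\Lambda = b_1\log\eta_1 + \cdots + b_n\log\eta_n$ and argue by contradiction: assume $\Lambda\neq 0$ but $|\Lambda| < (eB)^{-C}$ for a constant $C$ to be chosen large in terms of $n$, the degrees of the $\eta_i$, and their heights $h(\eta_i)$. After normalizing by setting $\beta_i = b_i/b_n$, the smallness of $\Lambda$ means that $\log\eta_n$ is extremely well approximated by $-\sum_{i<n}\beta_i\log\eta_i$, and it is precisely this near-linear-dependence that one exploits.

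First I would construct an auxiliary entire function
$$\Phi(z) = \sum_{\lambda_1=0}^{L}\cdots\sum_{\lambda_n=0}^{L} p(\lambda_1,\ldots,\lambda_n)\,\eta_1^{(\lambda_1+\lambda_n\beta_1)z}\cdots\eta_{n-1}^{(\lambda_{n-1}+\lambda_n\beta_{n-1})z}\,\eta_n^{\lambda_n z},$$
with unknown rational-integer coefficients $p(\lambda_1,\ldots,\lambda_n)$, not all zero. Imposing that $\Phi$ together with its derivatives up to a prescribed order vanish at the integers $z=1,2,\ldots,h$ yields a homogeneous linear system in the $p$'s with more unknowns than equations; Siegel's lemma (the arithmetic pigeonhole principle) then produces a nontrivial integer solution whose coefficients have controlled size. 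The exponents $\lambda_i+\lambda_n\beta_i$ are arranged so that, modulo the tiny error $\Lambda$, the values of the derivatives of $\Phi$ are algebraic numbers of bounded height and degree, to which Liouville-type size estimates apply.

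The heart of the argument is the extrapolation step. Using the maximum modulus principle (Schwarz's lemma applied to $\Phi$ as a function of one complex variable) together with the hypothesis that $|\Lambda|$ is minuscule, one shows by induction that the vanishing propagates: if $\Phi$ and its low-order derivatives vanish on $\{1,\ldots,h\}$, then $\Phi$ must be so small on a substantially larger range of integers that, being an algebraic number of bounded height, it is forced to vanish there too. Iterating this extrapolation makes $\Phi$ and its derivatives vanish at so many points that the only possibility is $p(\lambda_1,\ldots,\lambda_n)=0$ for all indices, contradicting the nontriviality from Siegel's lemma. This contradiction yields $|\Lambda|\ge (eB)^{-C}$.

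The main obstacle is the parameter bookkeeping in the extrapolation: one must choose $L$, $h$, the derivative order, and the final constant $C$ so that at every stage Siegel's lemma stays applicable, the growth of the heights of the derivative-values is dominated by the analytic smallness coming from $|\Lambda|$, and the fundamental (Liouville) inequality — a nonzero algebraic number of bounded height cannot be too small — can be invoked to upgrade ``small'' to ``zero''. Balancing the number of vanishing conditions against the available degrees of freedom is exactly Baker's technical advance over Gel'fond's two-variable case, and it is from this balancing that the effectivity of $C$ emerges.
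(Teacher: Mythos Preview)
The paper does not prove this theorem. It is stated in the preliminaries chapter as a classical result of Baker, followed immediately by the remark that ``Baker's result marked a rise of the area of effective resolution of the Diophantine equations of certain types\ldots\ We refer to a paper of Baker (see \cite{baker}) for an interesting survey on these results.'' The thesis then moves on to quote Matveev's explicit version, which is the tool actually used in the applications. So there is no proof in the paper to compare your proposal against.

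Your sketch is a faithful high-level outline of the Gel'fond--Baker auxiliary-function method (Siegel's lemma to build $\Phi$, Schwarz-type extrapolation to propagate zeros, Liouville to upgrade smallness to vanishing), which is indeed how Baker's theorem is proved. As a plan it is fine; just be aware that the thesis treats the result as a black box from the literature rather than something to be reproved.
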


Baker's result marked a rise of the area of effective resolution of the Diophantine equations of certain types, more precisely those that can be reduced to exponential equations. Many important generalizations and improvements of Baker's result have been obtained. We refer to a paper of Baker (see \cite{baker}) for an interesting survey on these results.
\medskip

From this theory, we recall results that we shall use in this thesis. We present a Baker type inequality with explicit constants which is easy to apply. Here, we give the result of Matveev \cite{matveev}.

\subsection{Matveev's Theorem}
\begin{theorem}[Matveev \cite{matveev}]\label{thm3:Matveev}
Let $\K$ be a number field of degree $D$ over ${\mathbb Q}$  $\eta_1, \ldots, \eta_t$ be positive
real numbers of ${\mathbb K}$, and $b_1, \ldots,  b_t$ rational integers. Put
$$
\Lambda = \eta_1^{b_1} \cdots \eta_t^{b_t}-1
\qquad
\text{and}
\qquad
B \geq \max\{|b_1|, \ldots ,|b_t|\}.
$$
Let $A_i \geq \max\{Dh(\eta_i), |\log \eta_i|, 0.16\}$ be real numbers, for
$i = 1, \ldots, t.$
Then, assuming that $\Lambda \not = 0$, we have
$$
|\Lambda| > \exp(-1.4 \times 30^{t+3} \times t^{4.5} \times D^2(1 + \log D)(1 + \log B)A_1 \cdots A_t).
$$
\end{theorem}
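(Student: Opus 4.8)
The plan is to establish this by the Gel'fond--Baker transcendence method, the standard engine behind all explicit lower bounds for linear forms in logarithms. Throughout, set $L = \sum_{i=1}^t b_i \log \eta_i$, so that $\Lambda = e^L - 1$ and a lower bound for $|L|$ (when $L \ne 0$) yields one for $|\Lambda|$ since $e^z - 1 \sim z$ near $0$. I would argue by contradiction: assume $|L|$ is smaller than the claimed bound and derive an impossibility.

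First I would normalize and reduce. One may assume $|b_t| = B$ is the largest exponent and rewrite $\log \eta_t \approx -(1/b_t)\sum_{i<t} b_i \log \eta_i$ using the smallness of $L$; this lets one work with $t-1$ effective variables. Working in $\K$ of degree $D$, one records the basic size estimates: the logarithmic height $h(\eta_i)$ controls $|\log \eta_i|$ as well as the archimedean and non-archimedean absolute values of powers of $\eta_i$, and these are exactly the quantities that the $A_i$ package.

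The heart of the argument is the auxiliary-function construction. Via \emph{Siegel's lemma} (a pigeonhole count producing a nontrivial integer solution of an underdetermined linear system of controlled height), I would build a not-identically-zero function of the shape $\Phi(z) = \sum_{\lambda} p(\lambda)\, \eta_1^{\lambda_1 z} \cdots \eta_{t-1}^{\lambda_{t-1} z}$ with integer coefficients $p(\lambda)$ of bounded size, arranged so that $\Phi$ together with its derivatives up to a prescribed order vanishes at the integer points $z = 0, 1, \ldots, K$. The smallness of $L$ is what forces these derivatives to be extremely small there, and since each such value is essentially an algebraic integer of $\K$, a Liouville-type inequality forces it to vanish outright. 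One then runs an \emph{extrapolation}: by the maximum-modulus principle one bounds $\Phi$ on a larger disc, concludes it is again small at further integer points, and deduces by the same size argument that $\Phi$ vanishes there too, enlarging the zero set. Iterating the extrapolation yields vanishing on a range so large that it is incompatible with $\Phi \not\equiv 0$, the incompatibility being made precise by a \emph{zero estimate}, i.e.\ the nonvanishing of a certain generalized Vandermonde determinant in the $\eta_i$; this is the point where the nondegeneracy hypothesis (the $\eta_i$ multiplicatively independent, equivalently $L \ne 0$) is genuinely used.

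The main obstacle, and Matveev's decisive contribution, is making every estimate fully explicit while keeping the dependence on the number of logarithms $t$ essentially polynomial (the factors $30^{t+3}$ and $t^{4.5}$) rather than the far worse dependence of the classical arguments. Concretely, the delicate points are (i) balancing the parameters $K$, the derivative order, and the coefficient size so that Siegel's lemma is solvable yet the extrapolation gains enough at each step, and (ii) proving the quantitative zero estimate with a clean constant; Matveev handles these by recasting the interpolation in terms of determinants and exploiting sharp multiplicity and multilinear-algebra bounds. I would not reproduce these optimizations, but would cite \cite{matveev} for the precise constants once the statement is reduced to the transcendence machinery sketched above.
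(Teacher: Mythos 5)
This statement is not proved in the thesis at all: it is quoted verbatim as Matveev's theorem and used as a black box, with the reference \cite{matveev} standing in for the proof. So there is no ``paper's own proof'' to compare against, and the honest assessment is whether your proposal constitutes an independent proof. It does not. What you have written is a correct and reasonably accurate outline of the Gel'fond--Baker machinery (auxiliary function via Siegel's lemma, smallness of the interpolation values forced by the smallness of $L$, Liouville-type inequality, extrapolation via the maximum-modulus principle, and a zero estimate to reach the contradiction), but the entire mathematical content of the theorem as stated is the explicit numerical bound $1.4\times 30^{t+3}\times t^{4.5}\times D^2(1+\log D)(1+\log B)A_1\cdots A_t$. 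Your sketch explicitly declines to carry out the parameter balancing and the quantitative zero estimate that produce these constants, and instead cites \cite{matveev} for them --- which is circular as a proof of the statement, since that is precisely what is being asserted.

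Two further points worth flagging. First, the passage from a lower bound on $|L|=\bigl|\sum b_i\log\eta_i\bigr|$ to one on $|\Lambda|=|e^L-1|$ is not automatic: one must handle the case where $|L|$ is large but $L$ is close to a nonzero multiple of $2\pi i$ (here avoided because the $\eta_i$ are assumed positive real, so $L$ is real --- you should say so). Second, your parenthetical that $L\ne 0$ is ``equivalently'' the multiplicative independence of the $\eta_i$ is wrong: $\Lambda\ne 0$ only says that the specific relation $\eta_1^{b_1}\cdots\eta_t^{b_t}=1$ fails, and Matveev's theorem applies perfectly well to multiplicatively dependent $\eta_i$. In the context of this thesis the correct course is exactly what the author does: state the result with attribution and do not attempt to reprove it.
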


%
%
%
%

\section{Continued Fractions}
\label{sec22}

We recall some definitions and properties of continued fractions. The material of this section is mainly from \cite{dio}. We also present the reduction method based on a lemma of Baker--Davenport \cite{BD}.

\begin{definition}
A finite continued fraction is an expression of the form

\begin{enumerate}

\item 
\begin{equation*}
b_0 + \cfrac{1}{b_1 + \cfrac{1}{b_2 + \cfrac{1}{\ddots+\cfrac{1}{b_n}}}}
\end{equation*}
where $b_0 \in \mathbb{R}$ and $b_i \in \mathbb{R}_{>0}$ for all $1 \leq i \leq n.$ We use the
notation $[b_0 , b_1 ,\ldots,b_n ]$ for the above expression.
\item The continued fraction $[b_0 , b_1 ,\ldots,b_n ]$ is called simple if~~  $b_0 , b_1 ,\ldots,b_n \in\mathbb{Z}.$
\item  The continued fraction $C_j= [b_0 , b_1 ,\ldots,b_j ]$ with $0 \leq j \leq n$ is called the
$j$-th convergent of $[b_0 , b_1 ,\ldots,b_n ].$
\end{enumerate}
\end{definition}

One can easily see that every simple continued fraction is a rational number. Conversely, using the Euclidean algorithm, every rational number can be represented as a simple continued fraction; however the expression is not unique. For example, the continued fraction of $\frac{1}{4}=[0,4]=[0,3,1].$ However, if $b_n>1$, then the representation of a rational number as a finite continued fraction is unique. Continued fractions are important in many branches of mathematics, and particularly in the theory of approximation to real numbers by rationals.

\begin{definition}
Let $(a_n )_{n\geq 0}$ be an infinite sequence of integers with $a_n>0$ for all $n \geq 1.$ The infinite continued fraction is defined as the limit of the finite continued fraction

$$[a_0 , a_1 , \ldots] := \lim_{n\rightarrow \infty} C_n.$$

Infinite continued fractions always represent irrational numbers. Conversely, every irrational number can be expanded in an infinite continued fraction.
\end{definition}
\medskip

\begin{example}
The most basic of all continued fractions is the one using all 1's:
 $$[1,1,1,1,....]= 1 +\cfrac{1}{1+\cfrac{1}{...}}.$$
If we put $x$ for the dots numbers then $x=[1,x]$. So $x = 1+\frac{1}{x}$ which is equivalent to $x^2 - x - 1= 0$. The quadratic formula gives us, $x=\frac{1+\sqrt{5}}{2} $, i.e., the \textit{golden ratio.}
\end{example}

\subsection{Some properties of continued fractions}

In this section, we use the following notations:

\medskip

\begin{eqnarray*}
\begin{aligned}[l]
p_0 &= a_0,\\
p_1 &= a_0a_1 + 1,\\
p_j &= a_jp_j + p_{j-1},
\end{aligned}
\qquad\qquad
\begin{aligned}[l]
q_0 &= 1,\\
q_1 &= a_1,\\
q_j &=a_jq_{j-1} + q_{j-2}.
\end{aligned}
\end{eqnarray*}
\medskip
The following theorem indicates that the convergents $C_j=p_j/q_j$ give the best approximations by rationals of the irrational number $\alpha$.
\medskip

\begin{theorem}(Convergents). 
\begin{enumerate}

\item Let $\alpha$ be an irrational number and let $C_j=p_j/q_j$ for $j\geq 0$ be the convergents of the continued fraction of $\alpha$. If $r,s \in \mathbb{Z}$ with $s>0$ and $k$ is a positive integer such that 

$$|s\alpha - r| < |q_j \alpha - p_j |,$$

then $s \geq q_j+1.$
\item If $\alpha$ is irrational and $r/s$ is a rational number with $s > 0$ such that
$$  \Big| \alpha - \frac{r}{s}\Big| <\frac{1}{2s^2},$$ 

then $r/s$ is a convergent of $\alpha.$

\end{enumerate}
\end{theorem}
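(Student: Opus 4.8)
The plan is to treat the two assertions in order, with the first (the ``law of best approximation'') doing the real work and the second (Legendre's criterion) following from it by an elementary estimate. I read the conclusion of (1) as $s\ge q_{j+1}$.

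For (1), I would first record two standard consequences of the defining recurrences $p_j=a_jp_{j-1}+p_{j-2}$ and $q_j=a_jq_{j-1}+q_{j-2}$: the unimodular relation $p_{j+1}q_j-p_jq_{j+1}=(-1)^{j}$, and the fact that the errors $\delta_j:=q_j\alpha-p_j$ strictly alternate in sign while decreasing in absolute value, so that $\delta_j$ and $\delta_{j+1}$ always have opposite signs. The unimodularity says the integer matrix $\begin{pmatrix}p_j & p_{j+1}\\ q_j & q_{j+1}\end{pmatrix}$ is invertible over $\Z$, so I can write any pair $(r,s)$ uniquely as $r=up_j+vp_{j+1}$ and $s=uq_j+vq_{j+1}$ with $u,v\in\Z$; Cramer's rule gives $u,v$ explicitly in terms of $r,s$ and the sign $(-1)^{j}$.

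Next, arguing by contradiction, I would assume $0<s<q_{j+1}$ together with $|s\alpha-r|<|\delta_j|$ and derive a contradiction. From $s=uq_j+vq_{j+1}$ with $0<s<q_{j+1}$ and $q_j\le q_{j+1}$, a short case check forces $u$ and $v$ to be nonzero with opposite signs (for instance $u,v>0$ gives $s\ge q_j+q_{j+1}>q_{j+1}$, while $u=0$ or $u,v\le 0$ is incompatible with $0<s<q_{j+1}$). Since $s\alpha-r=u\delta_j+v\delta_{j+1}$ and $\delta_j,\delta_{j+1}$ have opposite signs, the two summands $u\delta_j$ and $v\delta_{j+1}$ have the \emph{same} sign, so there is no cancellation and $|s\alpha-r|=|u|\,|\delta_j|+|v|\,|\delta_{j+1}|\ge|\delta_j|$, contradicting the hypothesis. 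Hence $s\ge q_{j+1}$.

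For (2), I would assume without loss of generality that $\gcd(r,s)=1$ and choose the index $j$ with $q_j\le s<q_{j+1}$ (possible since the $q_j$ increase without bound). If $|s\alpha-r|<|\delta_j|$ held, then part (1) would force $s\ge q_{j+1}$, a contradiction; hence $|\delta_j|\le|s\alpha-r|$, which upon dividing by $q_j$ and using $|\alpha-r/s|<1/(2s^2)$ yields $|\alpha-p_j/q_j|<1/(2sq_j)$. Assuming $r/s\ne p_j/q_j$, the integer $rq_j-sp_j$ is nonzero, so $|r/s-p_j/q_j|\ge 1/(sq_j)$; combining this with $|r/s-p_j/q_j|\le|\alpha-r/s|+|\alpha-p_j/q_j|<1/(2s^2)+1/(2sq_j)$ gives $1/(sq_j)<1/(2s^2)+1/(2sq_j)$, i.e. $s<q_j$, contradicting $q_j\le s$. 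Therefore $r/s=p_j/q_j$ is the $j$-th convergent. The delicate point is entirely in part (1): getting the sign bookkeeping right so that the change of basis $(r,s)\mapsto(u,v)$ produces coefficients of opposite sign, which is precisely what turns the triangle inequality into the lower bound $|u|\,|\delta_j|+|v|\,|\delta_{j+1}|$. Everything else — the unimodular identity, the sign alternation of $\delta_j$, and the final estimate in part (2) — is routine manipulation of the convergent recurrences.
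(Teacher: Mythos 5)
Your proposal is correct, but note that the paper itself gives no proof of this theorem: it simply writes ``See \cite{dio}.'' So there is nothing in the paper to compare against; what you have written is the standard textbook argument (the best-approximation law via the unimodular change of basis, followed by Legendre's criterion), and it is sound. Your decision to read the conclusion of part (1) as $s\ge q_{j+1}$ rather than the literal $s\ge q_j+1$ is the right one --- the stronger form is what the deduction of part (2) actually requires, since you pick $j$ with $q_j\le s<q_{j+1}$ and need the full range $s<q_{j+1}$ to trigger the contrapositive.

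One small point to tighten in part (1): your case check does not actually force both $u$ and $v$ to be nonzero. The case $v=0$, $u\ge 1$ (so $s=uq_j$, which can well satisfy $0<s<q_{j+1}$) is not excluded by the size constraint on $s$ alone; it is excluded only by the approximation hypothesis, since then $|s\alpha-r|=|u|\,|\delta_j|\ge|\delta_j|$. The fix is cosmetic: state the concluding bound as $|s\alpha-r|=|u\delta_j+v\delta_{j+1}|\ge |u|\,|\delta_j|\ge|\delta_j|$, valid both when $v=0$ and when $u,v$ are nonzero of opposite signs (where the two terms reinforce rather than cancel). With that adjustment the argument is complete; part (2) as you present it is routine and correct.
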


\begin{proof}
See \cite{dio}.
\end{proof}

\begin{theorem}[Legendre's Theorem] If $\alpha$ is an irrational number and $p/q$ is a
rational number in lowest terms, $q > 0$, such that

$$  \Big| \alpha - \frac{p}{q}\Big| <\frac{1}{2q^2},$$ 
then $p/q$ is a convergent of the continued fraction of $\alpha$.
\end{theorem}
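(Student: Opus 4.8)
The plan is to deduce Legendre's Theorem from the best-approximation property of convergents recorded in part~(1) of the preceding Theorem (Convergents), rather than re-proving everything from scratch; indeed the statement is almost identical to part~(2) of that theorem, the only substantive difference being the explicit ``lowest terms'' hypothesis, which merely guarantees that the convergent we produce is exactly $p/q$. So I would either cite part~(2) directly or, to keep the argument self-contained, run the following short contradiction proof.

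First I would locate $q$ in the increasing ladder of convergent denominators. Since the sequence $\{q_j\}$ is (eventually strictly) increasing and unbounded, there is an index $j$ with $q_j \le q < q_{j+1}$. Applying the contrapositive of the best-approximation inequality ``$|s\alpha - r| < |q_j\alpha - p_j|$ implies $s \ge q_{j+1}$'' to the pair $(r,s)=(p,q)$, the bound $q < q_{j+1}$ forces
$$|q\alpha - p| \ge |q_j\alpha - p_j|.$$

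Next I would argue by contradiction, assuming $p/q \ne p_j/q_j$. On the one hand, since $pq_j - p_j q$ is a nonzero integer,
$$\left|\frac{p}{q} - \frac{p_j}{q_j}\right| = \frac{|pq_j - p_j q|}{qq_j} \ge \frac{1}{qq_j}.$$
On the other hand, the triangle inequality together with the hypothesis $|\alpha - p/q| < 1/(2q^2)$ (equivalently $|q\alpha - p| < 1/(2q)$) and the estimate $|\alpha - p_j/q_j| = |q_j\alpha - p_j|/q_j \le |q\alpha - p|/q_j < 1/(2qq_j)$ gives
$$\left|\frac{p}{q} - \frac{p_j}{q_j}\right| < \frac{1}{2q^2} + \frac{1}{2qq_j}.$$
Combining the two displays yields $1/(qq_j) < 1/(2q^2) + 1/(2qq_j)$, i.e.\ $1/(2qq_j) < 1/(2q^2)$, whence $q < q_j$, contradicting $q_j \le q$. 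Therefore $p/q = p_j/q_j$ is a convergent.

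The only delicate point---the main obstacle---is the bookkeeping at the very start of the ladder, where $q_0 = 1$ and $q_1 = a_1$ need not be strictly ordered (they coincide when $a_1 = 1$); I would dispose of these small cases by hand and note that from the recurrence $q_j = a_j q_{j-1} + q_{j-2}$ with each $a_j \ge 1$ the denominators strictly increase for $j \ge 1$, so the index $j$ with $q_j \le q < q_{j+1}$ always exists. Everything else is the routine two-sided estimate above, and I would also remark that the ``lowest terms'' assumption is not needed for the conclusion that $p/q$ is a convergent, only to identify the representation uniquely.
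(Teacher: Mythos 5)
Your argument is correct, and it is the standard one. For the record, the paper itself offers no proof of Legendre's Theorem: it is stated immediately after the ``Convergents'' theorem, whose part (2) is the same assertion and whose proof is deferred to a reference, so there is nothing in the text to compare against step by step. Your deduction supplies exactly what is missing: you locate $j$ with $q_j\le q<q_{j+1}$, invoke the contrapositive of the best-approximation property in part (1) to get $|q\alpha-p|\ge|q_j\alpha-p_j|$, and then the two-sided estimate $\frac{1}{qq_j}\le\bigl|\frac{p}{q}-\frac{p_j}{q_j}\bigr|<\frac{1}{2q^2}+\frac{1}{2qq_j}$ forces $q<q_j$, a contradiction unless $p/q=p_j/q_j$. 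All the inequalities check out (in particular $|q\alpha-p|<1/(2q)$ gives $|\alpha-p_j/q_j|<1/(2qq_j)$), the existence of the index $j$ is handled, and your closing remark that the lowest-terms hypothesis serves only to identify $p=p_j$, $q=q_j$ (since $p_jq_{j-1}-p_{j-1}q_j=\pm1$ makes each convergent already reduced) is accurate. One minor point: the paper's statement of part (1) reads ``$s\geq q_j+1$'' with a stray quantifier ``$k$''; your reading of this as $s\ge q_{j+1}$ is the correct interpretation of the best-approximation theorem and is what the argument requires.
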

Legendre's Theorem is an important result in the study of  continued fractions, because it tells us that good approximations of irrational numbers by rational numbers are given by its convergents.

\subsection{The Baker--Davenport Lemma}

In $1998$, Dujella and Peth\H o in \cite[Lemma 5$(a)$]{DP} gave a version of the reduction method based on a lemma of Baker--Davenport lemma \cite{BD}.  The next lemma from \cite{BL1}, gave a variation of their  result. This will be our key tool used to reduce the upper bounds on the variable $n$ in Chapter \textcolor{red}{5}.

\begin{lemma} \label{reduce}
Let $M$ be a positive integer, let $p/q$ be a convergent of the continued fraction of the irrational $\gamma$ such that $q>6M$, and let $A,B,\mu$ be some real numbers with $A>0$ and $B>1$. Let $\epsilon:=||\mu q||-M||\gamma q||$, where $||\cdot||$ denotes the distance from the nearest integer. If $\epsilon >0$, then there is no solution to the inequality

\begin{equation}
\label{eq:DP}
0<u\gamma-v+\mu<AB^{-w},
\end{equation}
in positive integers $u,v$ and $w$ with
$$
u\leq M \quad\text{and}\quad w\geq \frac{\log(Aq/\epsilon)}{\log B}.
$$
\end{lemma}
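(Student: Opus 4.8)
The plan is to prove this reduction lemma by a contradiction argument built on the best-approximation properties of continued fraction convergents, specifically part (1) of the Convergents Theorem stated earlier in the excerpt. Suppose, toward a contradiction, that there is a solution $(u,v,w)$ in positive integers to the inequality \eqref{eq:DP} with $u\le M$ and $w\ge \log(Aq/\epsilon)/\log B$. The starting point is to multiply the inequality \eqref{eq:DP} through by $q$ and rewrite everything in terms of distances to nearest integers, exploiting that $p/q$ is a convergent with $q>6M$, so that $q\gamma$ is extremely close to the integer $p$.

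First I would record the key quantitative facts. Since $p/q$ is a convergent of $\gamma$, the quantity $\|\gamma q\|$ equals $|q\gamma-p|$ and is small; by the theory of convergents $q\|\gamma q\|<1$. I would then consider the integer combination $uq\gamma - vq + \mu q$ obtained by scaling \eqref{eq:DP}. The idea is to relate $u\gamma - v + \mu$ to $\|\mu q\|$ via the triangle inequality: writing $u(q\gamma - p) + (up - vq) + \mu q$, the middle term $up-vq$ is an integer, so the fractional part of $uq\gamma - vq + \mu q$ is controlled by $\|\mu q\|$ on one side and by $u\|\gamma q\|\le M\|\gamma q\|$ on the other. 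This is precisely where the hypothesis $\epsilon = \|\mu q\| - M\|\gamma q\| > 0$ enters: it guarantees that $\|(uq\gamma - vq + \mu q)\|$ is bounded below by a positive quantity, roughly $\epsilon$.

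Next I would combine this lower bound with the upper bound coming from \eqref{eq:DP}. Multiplying the inequality $0 < u\gamma - v + \mu < AB^{-w}$ by $q$ gives an upper bound $qAB^{-w}$ on $q(u\gamma - v + \mu)$, and since $q(u\gamma-v+\mu)$ differs from the distance-to-nearest-integer quantity only by the integer $up-vq$ adjusted by the $u(q\gamma-p)$ term, I can sandwich $\epsilon$ (the lower bound) against $qAB^{-w}$ (the upper bound). This yields $\epsilon \le qAB^{-w}$, which rearranges to $B^{w}\le qA/\epsilon$, i.e. $w \le \log(Aq/\epsilon)/\log B$. Combined with the standing assumption $w\ge \log(Aq/\epsilon)/\log B$, this forces equality and, upon a careful treatment of the strict versus non-strict inequalities (the positivity in \eqref{eq:DP} is strict), produces the desired contradiction, showing no such solution exists.

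The main obstacle I anticipate is the bookkeeping in the triangle-inequality step: I must carefully separate the integer part $up-vq$ from the small real contributions $u(q\gamma-p)$ and $\mu q$, and verify that $\|\mu q\| - M\|\gamma q\|$ really does bound the relevant fractional quantity from below rather than some nearby expression. In particular, one must ensure that $u\|\gamma q\|$ does not accidentally push the combination across an integer boundary in a way that destroys the lower bound; the condition $q>6M$ (guaranteeing $M\|\gamma q\| < M/q < 1/6$) is what keeps all these small terms safely away from $1/2$, so that the distance-to-nearest-integer estimates behave additively. Getting these inequalities to line up cleanly, and handling the strictness correctly so that $w$ at exactly the threshold is excluded, is the delicate part; the rest is routine manipulation.
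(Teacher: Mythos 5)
Your proposal is correct and follows essentially the same route as the paper's proof: multiply \eqref{eq:DP} by $q$, split $q(u\gamma - v + \mu)$ as $u(q\gamma - p) + (up - vq) + q\mu$ with $up-vq$ an integer, and apply the reverse triangle inequality to get the lower bound $\|q\mu\| - M\|q\gamma\| = \epsilon$ against the upper bound $qAB^{-w}$, forcing $w < \log(Aq/\epsilon)/\log B$. The only cosmetic difference is that the paper's chain keeps the first inequality strict throughout, so no delicate equality case actually arises.
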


\begin{proof}
We proceed as in the proof of Lemma 5 in \cite{DP}. In fact, we assume that $0\leq u\leq M$. Multiplying the relation \eqref{eq:DP} by keeping in mind that $||q\gamma||=|p-q\gamma|$ (because $p/q$ is a convergent of $\gamma$), we then have that

\begin{eqnarray*}
qAB^{-w} &>& q\mu - qv + qu\gamma = |q\mu - qv + qu\gamma | \\
&=& |q\mu -(qv - up) -u(p -q\gamma)|\\
&\geq & |q\mu -(qv -up)| - u|p - q\gamma |\\
&\geq & ||q\mu|| - u||q\gamma || \\
&\geq &||q\mu|| - M ||q\gamma || = \epsilon ,
\end{eqnarray*}
leading to $$w < \frac{\log(Aq/\epsilon)}{\log B}.$$
\end{proof}


\chapter{Pell numbers whose Euler function is a Pell number}

In this chapter, we find all the members of the Pell sequence whose Euler's function is also a member of the Pell sequence. We prove that the only solutions are $1$ and $2$. The material of this chapter is the main result in \cite{FL1}.

\section{Introduction}

%
In this chapter, we have the following result.
\begin{theorem}
\label{thm1:2}
The only solutions in positive integers $(n,m)$ of the equation
\begin{equation}
\label{eq1:pb}
\varphi(P_n)=P_m
\end{equation}
are $(n,m)=(1,1),(2,1).$
\end{theorem}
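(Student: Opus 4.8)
The plan is to assume $n \ge 3$ (the cases $n \in \{1,2\}$ give exactly the two listed solutions by direct computation) and to derive a contradiction for all large $n$, after checking the finitely many small values explicitly. Since $\varphi(P_n) < P_n$ for $n \ge 3$, any solution satisfies $P_m < P_n$, hence $m < n$; and the classical lower bound $\varphi(N) \gg N/\log\log N$ together with the size estimates $\alpha^{k-2} \le P_k \le \alpha^{k-1}$ forces $P_m$ to stay close to $P_n$, so that $n-m$ is $O(\log\log n)$. This confines attention to solutions in which $m$ is very near $n$.

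The engine of the argument is the identity
\begin{equation*}
\frac{P_n}{P_m} \;=\; \frac{P_n}{\varphi(P_n)} \;=\; \prod_{p\mid P_n}\frac{p}{p-1},
\end{equation*}
combined with $\alpha^{n-m-2} < P_n/P_m < \alpha^{n-m+2}$. Taking logarithms, $(n-m)\log\alpha$ must agree with $\sum_{p\mid P_n}(-\log(1-1/p))$ up to a bounded error; since this sum is dominated by the small primes dividing $P_n$, it severely restricts which primes may occur and how many there can be. Here I would use the apparition function $z(p)$ and the congruence $p\equiv\pm1\pmod{z(p)}$ valid for $p\nmid\Delta=8$: every prime factor of $P_n$ has $z(p)\mid n$, and a primitive prime of $P_d$ satisfies $p \ge d-1$, so a given small prime divides $P_n$ only when $n$ is divisible by the correspondingly small index. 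The existence of primitive divisors for every $P_d$ with $d\ge 2$ then supplies enough large prime factors to keep the product under control.

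The second, decisive ingredient is a $2$-adic valuation comparison. Using $\nu_2(P_k)=\nu_2(k)$ from Lemma \ref{lem1:orderof2}, the equation gives $\nu_2(m)=\nu_2(\varphi(P_n))$, while the factorization of $\varphi$ yields
\begin{equation*}
\nu_2(\varphi(P_n)) \;=\; \max\{\nu_2(n)-1,\,0\} \;+\; \sum_{p\mid P_n,\; p\ \mathrm{odd}}\nu_2(p-1).
\end{equation*}
Each odd prime contributes at least $1$, and by McDaniel's Lemma \ref{lem1:Mac} every $P_n$ with $n>14$ carries a prime factor $q\equiv1\pmod 4$, contributing at least $2$. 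Since $m<n$ forces $\nu_2(m)<\log_2 n$, this bounds the number of distinct odd prime factors of $P_n$ and pins down the parity of $m$ and $n$, letting me split into cases according to $n\bmod 4$ and invoke the factorizations of $P_n-1$ in Lemma \ref{lem1:orderof2}(iii) in the subcase where $P_n$ is prime (so that $\varphi(P_n)=P_n-1$ must itself be a Pell number).

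Finally, I would convert the accumulated constraints on $n-m$, on $\omega(P_n)$, and on the admissible primes into an explicit upper bound for $n$ via a linear-forms-in-logarithms estimate (Theorem \ref{thm3:Matveev}), and clear the remaining range with the Baker--Davenport reduction of Lemma \ref{reduce}. The main obstacle I anticipate is the middle step: reconciling the multiplicative shape of $\varphi(P_n)$ with the rigidity of landing exactly on a Pell value $P_m$. The product $\prod_{p\mid P_n}p/(p-1)$ is controlled only when one bounds both the number of prime factors and their sizes simultaneously, and making the interaction between the $2$-adic data and the primitive-divisor congruences tight enough to exclude every large $n$ is where the genuine care is required.
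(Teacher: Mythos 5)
Your opening reductions match the paper's: the computational check of small $n$, the observation $m<n$, the bound $n-m=O(\log\log\log n)$ coming from $\varphi(N)\gg N/\log\log N$ and $\alpha^{k-2}\le P_k\le\alpha^{k-1}$, the product identity $P_n/P_m=\prod_{p\mid P_n}p/(p-1)$, the use of $z(p)$ with $p\equiv\pm1\pmod{z(p)}$, primitive divisors, and the $2$-adic count $\nu_2(m)=\nu_2(\varphi(P_n))\ge \omega(P_n)-1+[\text{McDaniel}]$. But the plan has a decisive gap in the middle. The $2$-adic comparison only yields the \emph{upper} bound $\omega(P_n)\le \nu_2(m)+O(1)\le \log_2 n$; it does not by itself ``bound the number of distinct odd prime factors'' in a way that produces a contradiction. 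To contradict it you need a \emph{lower} bound on $\omega(P_n)$ that grows faster than $\log_2 n$, and this is only automatic when $n$ is odd (there every prime factor of $P_n$ is $\equiv 1\pmod 4$, so $4^{\omega(P_n)}\mid m$, which halves the available exponent). For $n$ even --- which is where essentially all of the paper's work lies --- the paper must first prove $3^k>n/6$ with $k=\omega(P_n)$ by an inductive bounding of the successive prime factors ($q_{i+1}<(2\alpha^{2.1}k\log\log n)u_i^2$ with $u_i=q_1\cdots q_i$), then write $n=2^sn_1$, show $s\le 3$, and bound the least odd prime factor $r_1$ of $n_1$ below $10^6$ using the sums $S_d=\sum_{z(p)=d}1/(p-1)$, the Montgomery--Vaughan large sieve, and the exponents $e_r$ of $r$ in $P_{z(r)}$ (with a special argument for $r\in\{13,31\}$, where $e_r=2$, via the primitive prime factors of $Q_{13}$ and $Q_{31}$). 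None of this machinery, nor any substitute for it, appears in your sketch; the case ``$P_n$ prime'' that you single out is the trivial one, and the $P_n-1$ factorizations are irrelevant once $P_n$ is composite, since then $\varphi(P_n)\ne P_n-1$.

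The proposed finish is also not viable: there is no usable linear form in logarithms here. The relation $\varphi(P_n)=P_m$ gives $\bigl|\prod_{p\mid P_n}(1-1/p)\cdot\alpha^{\,n-m}-1\bigr|$ small, but the product runs over an unbounded number of primes of unbounded height, and Matveev's bound degrades in both the number of terms $t$ and the heights $A_i$, so no bound on $n$ results. (Contrast the repdigit equations of Chapter 5, where the right-hand side $d(10^m-1)/9$ involves only the fixed numbers $10$ and $d/9$, which is why Matveev and Baker--Davenport apply there.) The paper's proof of this theorem uses no linear forms in logarithms and no reduction step at all; it is an elementary and sieve-theoretic argument, and that elementary middle section is precisely what your proposal is missing.
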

For the proof, we begin by following the method from \cite{G}, but we add to it some ingredients from \cite{L}.

\section{Preliminary results}

Let $(\alpha,\beta)=(1+{\sqrt{2}},1-{\sqrt{2}})$ be the roots of the characteristic equation $x^2-2x-1=0$ of the Pell sequence $\{P_n\}_{n\ge 0}$. Formula \eqref{eq1:binet} implies easily that the inequalities
\begin{equation}
\label{eq1:sizePn}
\alpha^{n-2}\le P_n\le  \alpha^{n-1}
\end{equation}
hold for all positive integers $n$.


\medskip

We need some inequalities from the prime number theory. The following inequalities (i), (ii) and (iii) are inequalities (3.13), (3.29) and (3.41) in \cite{RS}, while (iv) is Theorem 13 from \cite{MNR}.

\begin{lemma}
\label{lem1:RS}
Let $p_1<p_2<\cdots$ be the sequence of all prime numbers. We have:
\begin{itemize}
\item[(i)] The inequality
$$
p_n<n(\log n+\log\log n)
$$
holds for all $n\ge 6$.
\item[(ii)]
The inequality
$$
\prod_{p\le x} \left(1+\frac{1}{p-1}\right)<1.79\log x\left(1+\frac{1}{2(\log x)^2}\right)
$$
holds for all $x\ge 286$.
\item[(iii)]
The inequality
$$
\varphi(n)>\frac{n}{1.79 \log\log n+2.5/\log\log n}
$$
holds for all $n\ge 3$.
\item[(iv)] The inequality
$$
\omega(n)<\frac{\log n}{\log \log n-1.1714}
$$
holds for all $n\ge 26$.
\end{itemize}
\end{lemma}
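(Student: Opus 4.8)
The plan is to recognize that all four inequalities are explicit, effective forms of classical results in analytic prime number theory, so that the ``proof'' consists in quoting the precise statements from the literature rather than re-deriving the underlying machinery. Indeed, as indicated in the text, inequalities (i), (ii) and (iii) are exactly the estimates (3.13), (3.29) and (3.41) of Rosser and Schoenfeld \cite{RS}, whose work supplies fully explicit bounds for Chebyshev's functions $\theta(x)$, $\psi(x)$, for $\pi(x)$, and for the Mertens products; inequality (iv) is Theorem 13 of \cite{MNR}. Since the present work only \emph{uses} these as black-box tools, it suffices to transcribe them and verify the stated ranges of validity.

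For completeness I would indicate the analytic content behind each estimate. Inequality (i) is the explicit upper bound for the $n$-th prime: starting from an effective lower bound of the shape $\pi(x) > x/\log x$ and inverting it by partial summation, one obtains $p_n < n(\log n + \log\log n)$ once $n$ is sufficiently large, after which the threshold $n\ge 6$ is confirmed by direct inspection of the small primes. Inequality (ii) is a form of Mertens' third theorem with an explicit error term: rewriting $1 + 1/(p-1) = (1-1/p)^{-1}$ gives
$$
\prod_{p\le x}\left(1+\frac{1}{p-1}\right) = \prod_{p\le x}\left(1-\frac1p\right)^{-1},
$$
and the asymptotic $\prod_{p\le x}(1-1/p)^{-1}\sim e^{\gamma}\log x$ together with $e^{\gamma}=1.781\ldots<1.79$ yields the claimed majorant, the correction factor $1+1/(2(\log x)^2)$ absorbing the explicit error valid for $x\ge 286$.

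Inequality (iii) would then follow from (ii) by the multiplicativity of $\varphi$ established earlier: since
$$
\frac{n}{\varphi(n)} = \prod_{p\mid n}\left(1-\frac1p\right)^{-1}\le \prod_{p\le P}\left(1-\frac1p\right)^{-1},
$$
where $P$ denotes the largest prime dividing $n$, one feeds the product bound (ii) into the right-hand side and estimates the resulting $\log P$ in terms of $\log\log n$ to reach the stated lower bound for $\varphi(n)$. Inequality (iv) rests on the primorial minorant: if $\omega(n)=k$ then $n\ge p_1p_2\cdots p_k$, hence $\log n\ge \theta(p_k)$; combining an explicit lower bound for $\theta$ with the bound (i) for $p_k$ and solving for $k$ produces $\omega(n)<\log n/(\log\log n-1.1714)$ in the range $n\ge 26$.

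The only genuine obstacle is the production of the \emph{explicit} constants $1.79$, $2.5$, $1.1714$ and the exact thresholds $6$, $286$, $3$, $26$. Obtaining these numerically sharp values is precisely the hard part, and it requires the full strength of the Rosser--Schoenfeld explicit estimates for $\theta$, $\psi$ and $\pi$; I would therefore not reprove them here but invoke them directly from \cite{RS} and \cite{MNR}, which is exactly the strategy adopted in the statement of the lemma.
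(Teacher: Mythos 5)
Your proposal matches the paper exactly: the paper offers no proof of this lemma at all, simply recording that (i)--(iii) are inequalities (3.13), (3.29) and (3.41) of Rosser--Schoenfeld \cite{RS} and that (iv) is Theorem 13 of \cite{MNR}, which is precisely the citation strategy you adopt. One caution about your ``for completeness'' sketch of (iii): the step
$\prod_{p\mid n}(1-1/p)^{-1}\le \prod_{p\le P}(1-1/p)^{-1}$ with $P$ the largest prime factor of $n$ is true but useless, since $P$ can be as large as $n$ itself (e.g.\ $n$ prime), making the right-hand side of order $\log n$ rather than $\log\log n$; the correct reduction compares the prime divisors of $n$ with the first $k=\omega(n)$ primes, so that $n\ge p_1\cdots p_k=e^{\theta(p_k)}$ forces $p_k\ll\log n$ and hence $\log p_k\ll\log\log n$. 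Since you explicitly invoke the references rather than rely on that sketch, this does not affect the validity of your proof, but the sketch as written would not go through.
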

\medskip

For a positive integer $n$, we put ${\mathcal P}_n=\{p: z(p)=n\}$.
We need the following result.
\begin{lemma}
\label{lem1:sumSn}
Put
$$
S_n:=\sum_{p\in {\mathcal P}_n} \frac{1}{p-1}.
$$
For $n>2$, we have
\begin{equation}
\label{eq1:Sn}
S_n<\min\left\{\frac{2\log n}{n},\frac{4+4\log\log n}{\varphi(n)}\right\}.
\end{equation}
\end{lemma}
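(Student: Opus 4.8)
The plan is to exploit two structural facts about the primes $p$ with $z(p)=n$: that each is congruent to $\pm 1$ modulo $n$, and that their product divides $P_n$ and is therefore at most $\alpha^{n-1}$. For $n>2$ every $p\in{\mathcal P}_n$ is odd (since $z(2)=2$) and coprime to $\Delta=8$, so the Proposition on the order of appearance gives $p\equiv\pm 1\pmod n$; in particular $p\ge n-1$. Moreover the elements of ${\mathcal P}_n$ are distinct prime divisors of $P_n$, so $\prod_{p\in{\mathcal P}_n}p\mid P_n$ and hence $\sum_{p\in{\mathcal P}_n}\log p\le\log P_n<(n-1)\log\alpha<n$, the last step using $\log\alpha<1$. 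These two inputs drive both halves of the inequality, but by genuinely different mechanisms.

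For the first bound I would enumerate ${\mathcal P}_n=\{p_1<\cdots<p_k\}$. The positive integers congruent to $\pm 1\pmod n$ are $n-1,\,n+1,\,2n-1,\,2n+1,\dots$, whose $i$-th smallest is at least $in/2$; since ${\mathcal P}_n$ is a subset of the primes among these, $p_i\ge in/2$. Thus $\frac{1}{p_i-1}\le\frac{2}{in-2}\le\frac{2}{i(n-2)}$, giving $S_n\le\frac{2}{n-2}\sum_{i=1}^k\frac1i<\frac{2(1+\log k)}{n-2}$. The product estimate $\prod_{i=1}^k (in/2)\le\alpha^{n-1}$, via $\log(k!)\ge k\log k-k$, forces $k<n/(\log(n/2)-1)$, so $\log k<\log n-\log\log n+O(1)$; inserting this yields $S_n<\frac{2\log n}{n}$ for all large $n$, with the finitely many small $n$ checked directly from the factorizations of $P_n$.

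The second bound is the main obstacle, because the denominator $\varphi(n)$ together with the slower-growing $\log\log n$ (rather than $\log n$) cannot come from the elementary term-by-term estimates above: producing them requires using that \emph{most} integers $\equiv\pm1\pmod n$ fail to be prime. My plan is to split ${\mathcal P}_n$ at a polynomial threshold, say $p\le n^2$ and $p>n^2$. For the large primes, $\prod p\mid P_n$ shows there are at most $O(n/\log n)$ of them, each exceeding $n^2$, so their contribution to $S_n$ is $O\!\left(\frac{1}{n\log n}\right)$ and is absorbed into the constant. For the small primes I would replace $\frac{1}{p-1}$ by $\frac1p$ at the negligible cost of $\sum_{p\ge n-1}\frac{1}{p(p-1)}=O(1/n)$, and then apply the Brun--Titchmarsh bound for $\pi(x;n,\pm1)$ together with partial summation to obtain, for each of the two admissible residue classes, an estimate of the shape $\frac{2\log\log(n^2)}{\varphi(n)}+O\!\left(\frac{1}{\varphi(n)}\right)$. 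Summing the two classes and using $\log\log(n^2)=\log\log n+\log 2$ then gives $S_n<\frac{4+4\log\log n}{\varphi(n)}$ after adjusting the constants, once more verifying the small values of $n$ by hand. The delicate point throughout this second part is securing the Brun--Titchmarsh/partial-summation estimate with constants uniform in $n$ over the range $p\le n^2$, where the modulus $n$ is only polynomially smaller than the cutoff.
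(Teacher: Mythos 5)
Your overall strategy is the paper's own: the first bound comes from pairing the congruence $p\equiv\pm1\pmod n$ with $\prod_{p\in{\mathcal P}_n}p\le P_n<\alpha^{n-1}$ and a harmonic sum, and the second from splitting ${\mathcal P}_n$ by size, using the Montgomery--Vaughan (Brun--Titchmarsh) inequality plus partial summation in the middle range and the cardinality bound $\ell_n<n\log\alpha/\log n$ for the primes above $n^2$. The first half of your argument is sound and essentially identical to the paper's, which lists the candidates in pairs $n\ell\pm1$ and bounds $S_n\le\frac2n\sum_{\ell\le\ell_n}\frac1\ell+O(1/n)$ before inserting the bound on $\ell_n$.

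The gap is in the second bound, and it is not where you flagged it. The estimate $\pi(t;n,\pm1)<\frac{2t}{\varphi(n)\log(t/n)}$ is perfectly serviceable at the upper cutoff $t=n^2$, where $\log(t/n)=\log n$; the modulus being only polynomially smaller than the cutoff causes no loss. What fails is the \emph{lower} end of your partial summation: the smallest members of ${\mathcal P}_n$ can be as small as $n\pm1$, and $\int\frac{dt}{t\log(t/n)}$ diverges as $t$ decreases to $n$, so the Brun--Titchmarsh/partial-summation estimate cannot be run over all of $(n,n^2]$ and does not yield the $\frac{2\log\log(n^2)}{\varphi(n)}+O(1/\varphi(n))$ shape you claim per residue class. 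The split must be three-way, not two-way: one first peels off the primes below a fixed multiple of $n$ --- the paper uses $p<3n$, where the only candidates are $n\pm1$, $2n\pm1$, $3n-1$, contributing at most roughly $\frac{10.1}{3n}$ for even $n$ and $\frac{7.1}{3n}$ for odd $n$ --- and only then integrates from $3n$ upward, where $\log(t/n)$ is bounded away from $0$. With that repair your computation goes through, though the final constant $4$ in the numerator still requires the paper's bookkeeping (absorbing the $O(1/n)$ leftovers via $\varphi(n)\le n/2$ for even $n$, and the smaller enumeration bound for odd $n$), which your "after adjusting the constants" elides.
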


\begin{proof}
Since $n>2$, it follows that every prime factor $p\in {\mathcal P}_n$ is odd and satisfies the congruence $p\equiv \pm 1\pmod n$. Further, putting $\ell_n:=\#{\mathcal P}_n$, we have
$$
(n-1)^{\ell_n}\le \prod_{p\in {\mathcal P}_n} p\le P_n<\alpha^{n-1}
$$
(by inequality \eqref{eq1:sizePn}), giving
\begin{equation}
\label{eq1:S}
\ell_n\le \frac{(n-1)\log \alpha}{\log(n-1)}.
\end{equation}
Thus, the inequality
\begin{equation}
\label{eq1:ellofn}
\ell_n<\frac{n\log \alpha}{\log n}
\end{equation}
holds for all $n\ge 3$, since it follows from \eqref{eq1:S} for $n\ge 4$ via the fact that the function $x\mapsto x/\log x$ is increasing for $x\ge 3$, while for $n=3$ it can be checked directly. To prove the first bound,
we use \eqref{eq1:ellofn} to deduce that
\begin{eqnarray}
\label{eq1:last}
S_n & \le & \sum_{1\le \ell\le \ell_n} \left(\frac{1}{n\ell-2} +\frac{1}{n\ell}\right)\nonumber\\
& \le &  \frac{2}{n}\sum_{1\le \ell \le \ell_n} \frac{1}{\ell}+\sum_{m\ge n} \left(\frac{1}{m-2}-\frac{1}{m}\right)\nonumber\\
& \le & \frac{2}{n} \left(\int_{1}^{\ell_n} \frac{dt}{t} +1\right)+\frac{1}{n-2}+\frac{1}{n-1}\nonumber\\
& \le & \frac{2}{n}\left(\log \ell_n+1+\frac{n}{n-2}\right)\nonumber\\
& \le & \frac{2}{n} \log\left( n\left(\frac{(\log \alpha) e^{2+2/(n-2)}}{\log n}\right)\right).
\end{eqnarray}
Since the inequality
$$
\log n>(\log \alpha) e^{2+2/(n-2)}
$$
holds for all $n\ge 800$, \eqref{eq1:last} implies that
$$
S_n<\frac{2\log n}{n}\quad {\text{\rm for}}\quad n\ge 800.
$$
The remaining range for $n$ can be checked on an individual basis.
For the second bound on $S_n$, we follow the argument from \cite{L} and split the primes in ${\mathcal P}_n$ in three groups:
\begin{itemize}
\item[(i)] $p<3n$;
\item[(ii)] $p\in (3n,n^2)$;
\item[(iii)] $p>n^2$;
\end{itemize}
We have
\begin{equation}
\label{eq1:T1}
T_1=\sum_{\substack{p\in {\mathcal P}_n\\ p<3n}} \frac{1}{p-1}
\le \left\{
\begin{matrix}
{\displaystyle{\frac{1}{n-2}+\frac{1}{n}+\frac{1}{2n-2}+\frac{1}{2n}+\frac{1}{3n-2} }} & < &
{\displaystyle{\frac{10.1}{3n}}},& n\equiv 0\pmod 2,\\
{\displaystyle{\frac{1}{2n-2}+\frac{1}{2n}}} & < & {\displaystyle{\frac{7.1}{3n}}}, & n\equiv 1\pmod 2,\\
\end{matrix}\right.
\end{equation}
where the last inequalities above hold for all $n\ge 84$. For the remaining primes in ${\mathcal P}_n$, we have
\begin{equation}
\label{eq1:T2T3}
\sum_{\substack{p\in {\mathcal P}_n\\ p>3n}} \frac{1}{p-1}<
\sum_{\substack{p\in {\mathcal P}_n\\ p>3n}} \frac{1}{p}+\sum_{m\ge 3n+1} \left(\frac{1}{m-1}-\frac{1}{m}\right)=T_2+T_3+\frac{1}{3n},
\end{equation}
where $T_2$ and $T_3$ denote the sums of the reciprocals of the primes in ${\mathcal P}_n$ satisfying (ii) and (iii), respectively. The sum $T_2$ was estimated in \cite{L} using the large sieve inequality of Montgomery and Vaughan \cite{MV} which asserts that
\begin{equation}
\label{eq11:11}
\pi(x;d,1)<\frac{2x}{\varphi(d)\log(x/d)}\quad {\text{\rm for~all}}\quad x>d\ge 2,
\end{equation}
where $\pi(x;d,1)$ stands for the number of primes $p\le x$ with $p\equiv 1\pmod d,$ and the bound on it is
\begin{equation}
\label{eq1:T2}
T_2=\sum_{3n<p<n^2} \frac{1}{p}<\frac{4}{\varphi(n)\log n}+\frac{4\log\log n}{\varphi(n)}<\frac{1}{\varphi(n)}+\frac{4\log\log n}{\varphi(n)},
\end{equation}
where the last inequality holds for $n\ge 55$. Finally, for $T_3$, we use the estimate \eqref{eq1:ellofn} on $\ell_n$ to deduce that
\begin{equation}
\label{eq1:T3}
T_3< \frac{\ell_n}{n^2}<\frac{\log \alpha}{n\log n}<\frac{0.9}{3n},
\end{equation}
where the last bound holds for all $n\ge 19$. To summarize, for $n\ge 84$, we have, by \eqref{eq1:T1}, \eqref{eq1:T2T3}, \eqref{eq1:T2} and \eqref{eq1:T3},
$$
S_n<\frac{10.1}{3n}+\frac{1}{3n}+\frac{0.9}{3n}+\frac{1}{\varphi(n)}+\frac{4\log\log n}{\varphi(n)}<\frac{4}{n}+\frac{1}{\varphi(n)}+\frac{4\log\log n}{\varphi(n)}\le \frac{3+4\log\log n}{\varphi(n)}
$$
for $n$ even, which is stronger $~~$ that the desired inequality. Here, we $~~$ used that $\varphi(n)\le n/2$ for even $n$. For odd $n$, we use the same argument except that the first fraction $10.1/(3n)$ on the right--hand side above gets replaced by $7.1/(3n)$ (by \eqref{eq1:T1}), and we only have $\varphi(n)\le n$ for odd $n$.
This was for $n\ge 84$. For $n\in [3,83]$, the desired inequality can be checked on an individual basis.
\end{proof}


\section{Proof of Theorem \ref{thm1:2}}
\subsection*{A bird's eye view of the proof}

In this section, we explain the plan of attack for the proof Theorem \ref{thm1:2}.  We assume $n>2$. We put $k$ for the number of distinct prime factors of $P_n$ and $\ell=n-m$. 
We first show that $2^k\mid m$ and that any possible solution must be large. This only uses the fact that $p-1\mid \phi(P_n)=P_m$ for all prime factors $p$ of $P_n$, and all such primes with at most one exception are odd. 
We show that $k\ge 416$ and $n>m\ge 2^{416}$. This is Lemma \ref{lem1:1}. We next  bound $\ell$ in terms of $n$ by showing that $\ell<\log\log\log n/\log \alpha+1.1$  (Lemma \ref{lem1:2}). Next we show that $k$ is large, by proving that 
$3^k>n/6$ (Lemma \ref{lem1:3}). When $n$ is odd, then $q\equiv 1\pmod{4}$ for all prime factor $q$ of $P_n$. This implies that  $4^k\mid m$. Thus, $3^k>n/6$ and $n>m\ge 4^k$, a contradiction in our range for $n$. This is done in Subsection \ref{Subsec:odd}. 
When $n$ is even, we write $n=2^s n_1$ with an odd integer $n_1$ and bound $s$ and the smallest prime factor $r_1$ of $n_1$. We first show that $s\le 3$, that if $n_1$ and $m$ have a common divisor larger than $1$, then  $r_1\in \{3,5,7\}$ (Lemma \ref{lem1:4}). A lot of effort is spent into finding a small bound on $r_1$. As we saw, $r_1\le 7$ if $n_1$ and $m$ are not coprime. When $n_1$ and $m$ are coprime, we succeed in proving that $r_1<10^6$. Putting $e_r$ for the exponent of $r$ in the factorization of $P_{z(r)}$, it turns out that our argument works well when $e_r=1$ and we get a contradiction, but when $e_r=2$, then we need some additional information about the prime factors of $Q_r$. It is always the case  that $e_r=1$ for all primes $r<10^6$, except for $r\in \{13,31\}$ for which $e_r=2$, but, lucky for us, both $Q_{13}$ and $Q_{31}$ have two suitable prime factors each which allows us to obtain a contradiction. Our efforts in obtaining $r_1<10^6$ involve quite a complicated 
argument  (roughly the entire argument after Lemma \ref{lem1:4} until the end), which we believe it is justified by the existence of the mighty
prime $r_1=1546463$, for which $e_{r_1}=2$. Should we have only obtained say $r_1<1.6\times 10^6$, we would have had to say something nontrivial about the prime factors of $Q_{15467463}$, a nuisance which we succeeded in avoiding simply by proving that $r_1$ cannot get that large!

\subsection*{Some lower bounds on $m$ and $\omega(P_n)$}

Firstly, by a computational search we get that when $n\le 100$, the only solutions are obtained at $n=1,~2$. So, from now on $n>100$. We write
\begin{equation}
\label{eq1:1}
P_n=q_1^{\alpha_1}\ldots q_k^{\alpha_k},
\end{equation}
where $q_1<\cdots <q_k$  are primes and $\alpha_1,\ldots,\alpha_k$ are positive integers. Clearly, $m<n$.

Lemma \ref{lem1:Mac} on page $17$ applies, therefore
$$
4\mid q-1\mid \varphi(P_n)\mid P_m,
$$
thus $4\mid m$ by Lemma \ref{lem1:orderof2}. Further,
it follows from \cite{FL}, that $\varphi(P_n)\geq P_{\varphi(n)}.$ Hence, $m\ge \varphi(n)$.
Thus,
\begin{equation}
\label{eq1:low}
m\geq \varphi(n)\geq \frac{n}{1.79\log\log n + 2.5/\log\log n},
\end{equation}
by Lemma \ref{lem1:RS} (iii). The function
$$
x\mapsto \frac{x}{1.79 \log\log x+2.5/\log\log x}
$$
is increasing for $x\ge 100$. Since $n\ge 100$, inequality \eqref{eq1:low} together with the fact that $4\mid m$, show that $m\ge 24$.

Let $\ell=n-m$. Since $m$ is even, then $\beta^m>0$, and
\begin{equation}
\label{eq1:10tominus40}
\frac{P_n}{P_m}=\frac{\alpha^n-\beta^n}{\alpha^m -\beta^m}> \frac{\alpha^n-\beta^n}{\alpha^m}\ge \alpha^\ell -\frac{1}{\alpha^{m+n}} > \alpha^\ell - 10^{-40},
\end{equation}
where we used the fact that
$$
\frac{1}{\alpha^{m+n}}\le \frac{1}{\alpha^{124}}<10^{-40}.
$$
We now are ready to provide a large lower bound on $n$. We distinguish the following cases.

\medskip

\textbf{\small{Case 1}}: {\it $n$ is odd}.

\medskip

Here, we have $\ell\geq 1$. So,
$$
\frac{P_n}{P_m}> \alpha - 10^{-40}>2.4142.
$$
Since $n$ is odd, then $P_n$ is divisible only by primes $p$ with $z(p)$ being odd. There are precisely $2907$ among the first $10000$ primes with this property. We set them as
$$
\mathcal{F}_1=\{5, 13, 29, 37,53, 61, 101, 109, \ldots,104597, 104677, 104693, 104701, 104717\}.
$$
Since
$$
\prod_{p\in {\mathcal F}_1}\left(1-\frac{1}{p}\right)^{-1}<1.963<2.4142<\frac{P_n}{P_m}=\prod_{i=1}^k \left(1-\frac{1}{q_i}\right)^{-1},
$$
we get that $k>2907$. Since $2^k\mid \varphi(P_n)\mid P_m$, we get,
by Lemma \ref{lem1:orderof2}, that
\begin{equation}
\label{eq1:Case1}
n>m>2^{2907}.
\end{equation}

\textbf{\small{Case 2}}: $n\equiv 2\pmod 4$.
\medskip

Since both $m$ and $n$ are even, we get $\ell\geq 2.$ Thus,
\begin{equation}
\label{eq1:lowBoundCase2}
\frac{P_n}{P_m}> \alpha^2 - 10^{-40} > 5.8284.
\end{equation}
If $q$ is a factor of $P_n$, as in Case 1, we have that $4\nmid z(p).$  There are precisely $5815$ among the first $10000$ primes with this property. We set them again as
$$
\mathcal{F}_2=\{2, 5, 7, 13,23, 29, 31, 37, 41,47, 53, 61, \ldots, 104693, 104701, 104711, 104717\}.
$$
Writing $p_i$ as the $i$th prime number in  $\mathcal{F}_2$, a computation with Mathematica shows that
\begin{eqnarray*}
\prod_{i=1}^{415}\left(1-\frac{1}{p_i}\right)^{-1} & = & 5.82753\ldots\\
\prod_{i=1}^{416} \left(1-\frac{1}{p_i}\right)^{-1} & = & 5.82861\ldots,
\end{eqnarray*}
which via inequality \eqref{eq1:lowBoundCase2} shows that $k\ge 416$. Of the $k$ prime factors of $P_n$, we have that only $k-1$ of them are odd ($q_1=2$ because $n$ is even), but one of those is congruent to $1$ modulo $4$ by McDaniel's result (Lemma 9, page 17). Hence, $2^k\mid \varphi(P_n)\mid P_m$, which shows, via Lemma \ref{lem1:orderof2}, that
\begin{equation}
\label{eq1:Case2}
n>m\ge 2^{416}.
\end{equation}

\textbf{\small{Case 3}}: $4 \mid n$.
\medskip

In this case, since both $m$ and $n$ are multiples of $4$, we get that $\ell\geq 4$. Therefore,
$$
\frac{P_n}{P_m}> \alpha^4 - 10^{-40} > 33.97.
$$
Letting $p_1<p_2<\cdots$ be the sequence of all primes, we have that
$$
\prod_{i=1}^{2000}\left(1-\frac{1}{p_i}\right)^{-1}<17.41\ldots<33.97<\frac{P_n}{P_m}=
\prod_{i=1}^k \left(1-\frac{1}{q_i}\right),
$$
showing that $k>2000$. Since $2^{k}\mid \varphi(P_n)=P_m$, we get
 \begin{equation}
\label{eq1:Case3}
n> m\geq 2^{2000}.
\end{equation}
To summarize, from \eqref{eq1:Case1}, \eqref{eq1:Case2} and \eqref{eq1:Case3}, we get the following results.

\begin{lemma}
\label{lem1:1}
If $n>2$, then
\begin{enumerate}
\item $2^k\mid m$;
\item $k\ge 416$;
\item $n>m\ge 2^{416}$.
\end{enumerate}
\end{lemma}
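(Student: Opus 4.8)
The plan is to prove the three claims in sequence, establishing the divisibility statement first, since the power-of-two bound on $m$ is precisely what converts a lower bound on $k$ into the lower bound on $n$. Writing $P_n = q_1^{\alpha_1}\cdots q_k^{\alpha_k}$ as in \eqref{eq1:1}, the formula for $\varphi$ in terms of the prime factorization gives $\varphi(P_n) = \prod_{i=1}^k q_i^{\alpha_i-1}(q_i-1)$, and the crucial point is to count the power of $2$ this carries. Every odd prime $q_i$ contributes at least one factor of $2$ through $q_i - 1$. When $n$ is odd, $P_n$ is odd by Lemma \ref{lem1:orderof2}, so all $k$ primes are odd and $2^k \mid \varphi(P_n)$ immediately. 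When $n$ is even, $q_1 = 2$ leaves only $k-1$ odd primes, but McDaniel's Lemma \ref{lem1:Mac} supplies a prime factor $q \equiv 1 \pmod 4$ of $P_n$ (valid since $n>14$, which is covered by the preliminary reduction to $n>100$), and the extra factor of $2$ in $q-1$ restores $2^k \mid \varphi(P_n) = P_m$. Since $\nu_2(P_m) = \nu_2(m)$ by Lemma \ref{lem1:orderof2}, this yields $2^k \mid m$, which is claim (1).

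For the lower bound on $k$, I would pass to the multiplicative ratio $P_n/P_m = \prod_{i=1}^k (1 - 1/q_i)^{-1}$ (using $\varphi(P_n) = P_m$) and compare it with the exponential estimate $P_n/P_m > \alpha^{\ell} - 10^{-40}$, where $\ell = n - m$, obtained from the Binet formula and \eqref{eq1:sizePn}. The argument then splits according to $n \bmod 4$, each case combining a lower bound on $\ell$ (from the parity and divisibility of $m$ and $n$) with the structural constraint $z(q_i) \mid n$ on which primes can divide $P_n$. If $n$ is odd then $\ell \ge 1$ and every $q_i$ has odd order of appearance; if $n \equiv 2 \pmod 4$ then $\ell \ge 2$ and $4 \nmid z(q_i)$; if $4 \mid n$ then $\ell \ge 4$ with no extra constraint. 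In each case $\prod_{i=1}^k(1-1/q_i)^{-1}$ is largest when the $q_i$ are the smallest admissible primes, so comparing partial products of $\prod_p (1-1/p)^{-1}$ over the sorted lists of admissible primes against $\alpha^{\ell} - 10^{-40}$ forces $k$ to be large: $k > 2907$ for $n$ odd, $k \ge 416$ for $n \equiv 2 \pmod 4$, and $k > 2000$ for $4 \mid n$.

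Finally I would combine the two halves: in each case $2^k \mid m$ together with $m < n$ gives $n > m \ge 2^k$, so the three cases yield $n > m \ge 2^{2907}$, $2^{416}$, and $2^{2000}$ respectively. The weakest of these is the case $n \equiv 2 \pmod 4$, so uniformly $k \ge 416$ and $n > m \ge 2^{416}$, establishing claims (2) and (3). The main obstacle I anticipate is the even case of claim (1): extracting a \emph{full} $2^k$ rather than $2^{k-1}$ relies on the non-elementary input that $P_n$ has a prime factor $\equiv 1 \pmod 4$, and one must confirm the threshold hypothesis of Lemma \ref{lem1:Mac} is met. Beyond that, the remaining work is the numerical verification of the partial-product inequalities over the admissible prime lists $\mathcal{F}_1$ and $\mathcal{F}_2$, which is routine but must be carried out with enough primes to clear the targets $\alpha^{\ell} - 10^{-40}$.
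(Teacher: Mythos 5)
Your proposal is correct and follows essentially the same route as the paper: McDaniel's lemma to recover the full power $2^k$ in the even case, $\nu_2(P_m)=\nu_2(m)$ to transfer it to $m$, and the comparison of $\prod_{i=1}^k(1-1/q_i)^{-1}$ against $\alpha^{\ell}-10^{-40}$ over the admissible prime lists in the three residue classes of $n$ modulo $4$. The only cosmetic difference is that you establish the divisibility $2^k\mid m$ up front, while the paper derives it inside each case; the numerical targets and conclusions are identical.
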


\subsection*{\small Computing a bound for $\ell$ in term of $n$}

From the previous section, we have seen that $k\geq 416$. Since $n>m\ge 2^k$, we have
\begin{equation}
\label{eq1:k}
k<k(n):=\frac{\log n}{\log 2}.
\end{equation}
Let $p_i$ be the $i$th prime number. Lemma \ref{lem1:RS} shows that
$$
p_k\le p_{\lfloor k(n)\rfloor}\le k(n)(\log k(n) +\log\log k(n)):=q(n).
$$
We then have, using Lemma \ref{lem1:RS} (ii), that
$$
\frac{P_m}{P_n}=\prod_{i=1}^{k} \left(1-\frac{1}{q_i}\right)\geq \prod_{2\leq p\leq q(n)}\left(1-\frac{1}{p}\right)>\frac{1}{1.79\log q(n)(1+1/(2(\log q(n))^2))}.
$$
Inequality (ii) of Lemma \ref{lem1:RS} requires that $x\ge 286$, which holds for us with $x=q(n)$ because $k(n)\ge 416$. Hence, we get
$$
1.79\log q(n)\left(1+  \frac{1}{(2(\log q(n))^2)}\right)>\frac{P_n}{P_m}>\alpha^{\ell}-10^{-40}>\alpha^{\ell}
\left(1-\frac{1}{10^{40}}\right).
$$
Since $k\ge 416$, we have $q(n)> 3256$. Hence, we get
$$\log q(n) \left(1.79\left(1-\frac{1}{10^{40}}\right)^{-1}
\left(1+\frac{1}{2(\log(3256))^2}\right)\right)>\alpha^\ell,
$$
which yields, after taking logarithms, to
\begin{equation}
\label{eq1:5}
\ell \le \frac{\log\log q(n)}{\log \alpha}+0.67.
\end{equation}
The inequality
\begin{equation}
\label{eq1:1.5}
q(n)<(\log n)^{1.45}
\end{equation}
holds in our range for $n$ (in fact, it holds for all $n>10^{83}$, which is our case since for us $n>2^{416}>10^{125}$). Inserting inequality \eqref{eq1:1.5} into \eqref{eq1:5}, we get
$$
\ell<\frac{\log\log (\log n)^{1.45}}{\log \alpha}+0.67<\frac{\log\log\log n}{\log \alpha}+1.1.
$$
Thus, we proved the following result.

\begin{lemma}
\label{lem1:2}
If $n>2$, then
\begin{equation}
\ell < \frac{\log\log\log n}{\log \alpha}+1.1.
\end{equation}
\end{lemma}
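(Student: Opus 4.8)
The plan is to compare two different estimates for the ratio $P_n/P_m$. On the one hand, since $\varphi(P_n)=P_m$ and $P_n=\prod_{i=1}^k q_i^{\alpha_i}$, the product formula for the Euler function gives $P_m/P_n=\prod_{i=1}^k(1-1/q_i)$, so that $P_n/P_m$ equals the product of $(1-1/q_i)^{-1}$ over the distinct primes dividing $P_n$. On the other hand, using the Binet formula together with the fact that $m$ is even (so that $\beta^m>0$), I would derive the lower bound $P_n/P_m>\alpha^\ell-10^{-40}$ already recorded in \eqref{eq1:10tominus40}. Matching these two expressions reduces the problem to bounding $\prod_{i=1}^k(1-1/q_i)^{-1}$ from above.

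To bound that product, I would first control the size of the primes $q_i$. Since a product of $(1-1/q_i)^{-1}$ over $k$ distinct primes is largest when the primes are as small as possible, I can majorize it by the product over \emph{all} primes up to the $k$-th prime $p_k$. From Lemma \ref{lem1:1} we have $n>m\ge 2^k$, hence $k<k(n):=\log n/\log 2$ as in \eqref{eq1:k}, and Lemma \ref{lem1:RS}(i) then yields an explicit bound $p_k\le q(n):=k(n)(\log k(n)+\log\log k(n))$. Next I would invoke the Mertens-type estimate of Lemma \ref{lem1:RS}(ii) to get $\prod_{p\le q(n)}(1-1/p)^{-1}<1.79\log q(n)\,(1+1/(2(\log q(n))^2))$; the hypothesis $q(n)\ge 286$ required there is comfortably met since $k\ge 416$.

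Combining the two sides gives $\alpha^\ell(1-10^{-40})<1.79\log q(n)\,(1+1/(2(\log q(n))^2))$. Taking logarithms and absorbing the numerical factors (the $1.79$, the correction term, and the $10^{-40}$ slack) into a single additive constant produces $\ell\le \log\log q(n)/\log\alpha+0.67$, which is \eqref{eq1:5}. The last step is to simplify $q(n)$: in our range $n>2^{416}$ one has $q(n)<(\log n)^{1.45}$, as in \eqref{eq1:1.5}, so that $\log\log q(n)<\log\log\log n+\mathrm{const}$; inserting this estimate yields the claimed bound $\ell<\log\log\log n/\log\alpha+1.1$.

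I expect the main obstacle to be the bookkeeping of constants rather than any conceptual difficulty. Each auxiliary inequality must be checked to hold throughout the range $n>2^{416}>10^{125}$ guaranteed by Lemma \ref{lem1:1}: the Mertens bound needs $q(n)\ge 286$, the estimate $q(n)<(\log n)^{1.45}$ needs $n$ beyond its validity threshold, and the replacement of $\alpha^\ell-10^{-40}$ by $\alpha^\ell(1-10^{-40})$ needs $m+n$ large enough. The delicate part is then to confirm that the accumulated numerical slack from all these steps still fits comfortably under the clean final constant $1.1$.
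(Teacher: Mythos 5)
Your proposal is correct and follows essentially the same route as the paper: the same comparison of $P_n/P_m=\prod_i(1-1/q_i)^{-1}$ against the Binet lower bound $\alpha^\ell-10^{-40}$, the same majorization by all primes up to $q(n)$ via Lemma \ref{lem1:RS}(i)--(ii) using $k<\log n/\log 2$ from Lemma \ref{lem1:1}, the same intermediate bound \eqref{eq1:5}, and the same final substitution $q(n)<(\log n)^{1.45}$. No gaps.
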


\subsubsection*{\small Bounding the primes $q_i$ for $i=1,\ldots,k$}

Write
\begin{equation}
\label{eq1:51}
P_n= q_1\cdots q_k B,\quad {\text{\rm where}}\quad B=q_1^{\alpha_1-1} \cdots q_k^{\alpha_k-1}.
\end{equation}
Clearly, $B\mid \varphi(P_n)$, therefore $B\mid P_m$. Since also $B\mid P_n$, we have, by Theorem \ref{eq1:binet1}, that $B\mid \gcd(P_n,P_m)=P_{\gcd(n,m)}\mid P_{\ell}$ where the last relation follows again  by Theorem \ref{eq1:binet1} because $\gcd(n,m)\mid \ell.$ Using the inequality \eqref{eq1:sizePn} and Lemma \ref{lem1:2}, we get
\begin{equation}
\label{eq1:6}
B\leq P_{n-m}\leq \alpha^{n-m-1}\leq \alpha^{0.1}\log\log n.
\end{equation}
We now use the inductive argument from Section 3.3 in \cite{G} in order to find a bound for the primes $q_i$ for all $i=1,\ldots,k$. We write
$$
\prod_{i=1}^{k} \left(1-\frac{1}{q_i}\right)=\frac{\varphi(P_n)}{P_n}=\frac{P_m}{P_n}.
$$
Therefore,
$$ 1-
\prod_{i=1}^{k} \left(1-\frac{1}{q_i}\right)=1-\frac{P_m}{P_n}=\frac{P_n-P_m}{P_n} \ge \frac{P_n-P_{n-1}}{P_n}>\frac{P_{n-1}}{P_n}.
$$
Using the inequality
\begin{equation}
\label{eq1:7}
 1 -(1-x_1)\cdots(1-x_s) \leq x_1 + \cdots + x_s\quad {\text{\rm valid for all}}\quad x_i \in [0,1]~ {\text{\rm for}}~ i= 1,\ldots,s,
\end{equation}
we get,
therefore,
\begin{equation}
\label{eq1:8}
q_1< k \left(\frac{P_{n}}{P_{n-1}}\right) < 3k.
\end{equation}
Using an inductive argument on the index $i$ for $i\in \{1,\ldots,k\}$ , we now show that if we put
$$
u_i:=\prod_{j=1}^{i} q_j,
$$
then
\begin{equation}
\label{eq1:9}
u_i< \(2\alpha^{2.1} k \log\log n\)^{(3^i -1)/2}.
\end{equation}
\medskip
For $i=1$, we get 

$$ q_1 < \(2\alpha^{2.117}(\log\log n)k\) $$
which is implies by the inequality (\ref{eq1:8}) and the fact that $n>3\cdot 10^{150}$, we have that $\(2\alpha^{2.117}(\log\log n)\)> 61 >6$. We assume now that for $i\in \{1,\ldots,k-1\}$ the inequality (\ref{eq1:9}) is satisfied and let us prove it for $k$ by replacing $i$ by $i+1$. We have,

$$\prod_{j=i+1}^{k} \(1-\frac{1}{q_i}\) = \frac{q_1\cdots q_i}{(q_1-1)\cdots (q_i-1)}\cdot\frac{q_m}{q_n}=\frac{q_1\cdots q_i}{(q_1-1)\cdots (q_i-1)}\cdot\frac{\alpha^m-\beta^m}{\alpha^n -\beta^n}, $$
which we write as 

\begin{eqnarray*}
 1-\prod_{j=i+1}^{k} \(1-\frac{1}{q_i}\) &=& 1- \frac{q_1\cdots q_i}{(q_1-1)\cdots (q_i-1)}\cdot\frac{\alpha^m-\beta^m}{\alpha^n -\beta^n} \\
&=& \frac{\alpha^m((q_1-1) \cdots(q_i-1)\alpha^{n-m}  - q_1\cdots q_i)}{(q_1-1)\cdots(q_i-1)(\alpha^n -\beta^n)}\\
&+& \frac{\beta^m (q_1\cdots q_i   -\beta^{n-m}(q_1-1) \cdots(q_i-1))}{(q_1-1)\cdots(q_i-1)(\alpha^n -\beta^n)}\\
&=:& V+W
\end{eqnarray*}
with 

$$V:=\frac{\alpha^m((q_1-1) \cdots(q_i-1)\alpha^{n-m}  - q_1\cdots q_i)}{(q_1-1)\cdots(q_i-1)(\alpha^n -\beta^n)} ,$$
and
$$ W:=\frac{\beta^m (q_1\cdots q_i   -\beta^{n-m}(q_1-1) \cdots(q_i-1))}{(q_1-1)\cdots(q_i-1)(\alpha^n -\beta^n)}.$$

Since $m$ is even, then $|\beta|<1$. Therefore $W\geq 0 $. Further, since $n-m=\ell>0$,  and $\beta= - \alpha^{-1},$ it follows that $VW\neq 0$. Suppose that $V<0$. Then, 

\begin{eqnarray*}
1- \prod_{j=i+1}^{k} \(1-\frac{1}{q_i}\)&<& W < \frac{2q_1\cdots q_i}{ \alpha^m (q_1-1)\cdots(q_i-1)(\alpha^n-\beta^n)} \\
&<& \frac{2P_n}{\phi(P_n)(\alpha^m-\beta^m)(\alpha^n-\beta^n)} = \frac{1}{4P_m^2}.
\end{eqnarray*}

Since the denominator of the positive rational integer on the left hand side of the above inequality divides $q_{i+1}\cdots q_k \mid P_n$, it follows that this number is at least as large as $1/P_n$. Hence, 

$$\frac{1}{P_n} < \frac{1}{4P_m^2}, \quad \quad \hbox{ which gives   $P_m^2 < \frac{1}{4}P_n.$}$$

Since the inequalities $\alpha^{s-2}\leq P_s \leq \alpha^{s-1}$ hold for all $s\geq 2$, we get that, 

 $$ \alpha^{2m-4}\leq P_m^2 < \frac{1}{4}P_n \leq \frac{1}{4}\alpha^{n-1} $$

therefore, 

$$2m< 3 + \frac{\log(1/4)}{\log \alpha} + n. $$

Using Lemma \ref{lem1:2}, we have that 

$$m > n- 1.117 - \frac{\log\log \log n}{\log \alpha}. $$
 
Combining these inequalities, we get 

$$n<5.234 + \frac{\log(1/4)}{\log \alpha} + \frac{2\log\log \log n}{\log \alpha} < 3.67 + \frac{2\log\log \log n}{\log \alpha} $$
 which is not possible in our range of $n$. Hence,  $V>0$. Since also $W>0$, we get that 
 
$$1- \prod_{j=i+1}^{k} \(1-\frac{1}{q_i}\) > V. $$

Now, note that

$$((q_1-1)\cdots (q_i-1)\alpha^{n-m} - q_1\cdots q_i )((q_1-1)\cdots (q_i-1)\beta^{n-m} - q_1\cdots q_i )$$
is non zero integer since $\beta$ and  $\alpha$ are conjugate, therefore, 
$$\Big|((q_1-1)\cdots (q_i-1)\alpha^{n-m} - q_1\cdots q_i )((q_1-1)\cdots (q_i-1)\beta^{n-m} - q_1\cdots q_i )\Big|\geq 1.$$
Since we certainly have 
$$\Big|((q_1-1)\cdots (q_i-1)\beta^{n-m} - q_1\cdots q_i )\Big|<2q_1\cdots q_i$$ 
and $$((q_1-1)\cdots (q_i-1)\alpha^{n-m} - q_1\cdots q_i )>0$$ because $V>0$, we get that

 $$ ((q_1-1)\cdots (q_i-1)\alpha^{n-m} - q_1\cdots q_i )>\frac{1}{2q_1\cdots q_i}.$$
 
 Hence,
 
 $$1- \prod_{j=i+1}^{k} \(1-\frac{1}{q_i}\)>X>\frac{\alpha^m}{2(q_1\cdots q_i)^2(\alpha^n-\beta^n)}>\frac{\alpha^m-\beta^m}{2u_i^2(\alpha^n-\beta^n)}=\frac{P_m}{2u_i^2P_n}, $$
which combined with (\ref{eq1:7}) lead to

 $$ \frac{P_m}{2u_i^2P_n} <1- \prod_{j=i+1}^{k} \(1-\frac{1}{q_i}\) \leq \sum_{i=1}^{k} \frac{1}{q_i} < \frac{k}{q_{i+1}}.$$

Thus, $$q_{i+1}<2u_i^2k\frac{P_n}{P_m} .$$

However, $$ \frac{P_n}{P_m}<\alpha^{n-m+1} < \alpha^{2.117} \log \log n.$$

By Lemma \ref{lem1:2}. Hence,

$$q_{i+1} < (2\alpha^{2.117}k\log\log n)u_i^2.$$

Multiplying both sides by  $u_i$, we get that, 

$$u_{i+1}<(2\alpha^{2.117}k\log\log n)u_i^3.$$

Using the assumption hypothesis, we get that,

$$u_{i+1}<(2\alpha^{2.117}k\log\log n)^{1+3(3^i-1)/2)}= (2\alpha^{2.117}k\log\log n)^{(3^{i+1}-1)/2},$$

This ends the proof by induction of the estimate (\ref{eq1:9}). 
\medskip

\medskip


In particular,
$$
q_1\cdots q_k = u_k <(2\alpha^{2.1}k\log\log n)^{(3^k - 1)/2},
$$
which together with formulae \eqref{eq1:5} and \eqref{eq1:6} gives
$$
P_n=q_1\cdots q_k B<(2\alpha^{2.1}k\log\log n)^{1+(3^k-1)/2}= (2\alpha^{2.1}k\log\log n)^{(3^k+1)/2}.
$$
Since $P_n > \alpha^{n-2}$ by inequality \eqref{eq1:sizePn}, we have that
$$
(n-2)\log\alpha < \frac{(3^k+1)}{2}\log(2\alpha^{2.1}k\log\log n).
$$
Since $k<\log n/\log 2$ (see \eqref{eq1:k}), we get
$$
3^k>(n-2)\left(\frac{2\log\alpha}{\log(2\alpha^{2.1}(\log n)(\log\log n) (\log 2)^{-1})}\right)-1>0.17(n-2)-1>\frac{n}{6},
$$
where the last two inequalities above hold because $n>2^{416}$.

So, we proved the following result.

\begin{lemma}
\label{lem1:3}
We have
$$
3^k>n/6.
$$
\end{lemma}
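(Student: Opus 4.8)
The plan is to pit the upper bound on $P_n$ coming from its prime factorization against the lower bound $P_n>\alpha^{n-2}$ supplied by \eqref{eq1:sizePn}, and then to solve for $3^k$. The essential input is the triple-exponential estimate \eqref{eq1:9}, already established by the induction preceding the lemma: taking $i=k$ there bounds the radical $u_k=q_1\cdots q_k$ by $\left(2\alpha^{2.1}k\log\log n\right)^{(3^k-1)/2}$. Everything else is a matter of combining the inequalities we have in hand.

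First I would recall the factorization $P_n=q_1\cdots q_k\,B$ from \eqref{eq1:51} and multiply the bound \eqref{eq1:9} by the bound $B\le\alpha^{0.1}\log\log n$ from \eqref{eq1:6}. Since the exponent $(3^k-1)/2$ already dominates, absorbing $B$ costs only one extra factor of the base, giving
$$
P_n=q_1\cdots q_k\,B<\left(2\alpha^{2.1}k\log\log n\right)^{(3^k+1)/2}.
$$
Taking logarithms and inserting $P_n>\alpha^{n-2}$ then yields
$$
(n-2)\log\alpha<\frac{3^k+1}{2}\,\log\!\left(2\alpha^{2.1}k\log\log n\right).
$$

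To make the right-hand logarithm explicit in $n$, I would feed in $k<\log n/\log 2$ from \eqref{eq1:k}, replacing $k$ by $\log n/\log 2$ inside the $\log$. Rearranging to isolate $3^k$ produces a clean inequality whose numerator is constant and whose denominator grows only like $\log\log n$:
$$
3^k>(n-2)\left(\frac{2\log\alpha}{\log\!\left(2\alpha^{2.1}(\log n)(\log\log n)(\log 2)^{-1}\right)}\right)-1.
$$

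The last step, and the only delicate one, is the numerical verification that in our range $n>2^{416}$ the parenthesized coefficient stays above $0.17$ and that $0.17(n-2)-1>n/6$. The point is that the denominator depends on $n$ only through the very slowly increasing quantity $\log\log n$, so the coefficient decays imperceptibly; checking it at the threshold $n=2^{416}$ (using $\log n>416\log 2$ together with a crude upper bound for $\log\log n$) shows the coefficient is comfortably above $1/6$, and monotonicity in $n$ disposes of the remaining range. I do not expect any conceptual obstacle here, since the real work — the control of $u_k$ via the induction \eqref{eq1:9} — is already done; the thing to watch is simply that the $\log\log n$ decay of the coefficient never pushes it below $1/6$ across the whole interval.
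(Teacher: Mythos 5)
Your argument reproduces the paper's proof of this lemma essentially line for line: take $i=k$ in \eqref{eq1:9}, multiply by the bound on $B$ from \eqref{eq1:6} to get $P_n<(2\alpha^{2.1}k\log\log n)^{(3^k+1)/2}$, compare with $P_n>\alpha^{n-2}$ from \eqref{eq1:sizePn}, substitute $k<\log n/\log 2$ from \eqref{eq1:k}, and finish with the numerical estimate $0.17(n-2)-1>n/6$. So the approach is the same, and the real work is indeed already contained in the induction establishing \eqref{eq1:9}.

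One caution about the only step you added on your own: you claim that checking the coefficient $2\log\alpha/\log\bigl(2\alpha^{2.1}(\log n)(\log\log n)(\log 2)^{-1}\bigr)$ at the threshold $n=2^{416}$ suffices and that ``monotonicity in $n$ disposes of the remaining range.'' The monotonicity runs the wrong way: this coefficient is \emph{decreasing} in $n$ (the denominator grows like $\log\log n$), so a verification at $n=2^{416}$ gives no information for larger $n$, and there is no ``crude upper bound for $\log\log n$'' to invoke since $n$ is unbounded at this stage. In fact the coefficient falls below $1/6$ once $\log\log n+\log\log\log n$ exceeds roughly $7.7$, i.e.\ for $n$ beyond about $e^{365}$. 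The paper is equally terse here (it simply asserts the last two inequalities ``hold because $n>2^{416}$''), so you have faithfully mirrored the published argument; but if you want the displayed chain to prove the lemma for \emph{all} $n$ in the admissible range, the final comparison needs more care than a single evaluation at the threshold — for instance, one should either carry the $n$-dependent coefficient $C(n)$ forward into the places where the lemma is applied, or check directly that $(n-2)C(n)-1>n/6$ on the whole interval actually needed.
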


\subsection*{The case when $n$ is odd}
\label{Subsec:odd}

Let $q$ be any prime factor of $P_n$. Reducing relation
\begin{equation}
\label{eq1:nodd}
Q_n^2 - 8P_n^2 = 4 (-1)^n
\end{equation}
of Lemma \ref{lem1:PQ} modulo $q$, we get $Q_n^2\equiv -4\pmod q$. Since $q$ is odd, (because $n$ is odd), then  $q\equiv 1\pmod 4$. This is satisfied by all prime factors
$q$ of $P_n$. Hence,
$$
4^k\mid \prod_{i=1}^k (q_i-1)\mid \varphi(P_n)\mid P_m,
$$
which, by Lemma \ref{lem1:orderof2} (ii), gives $4^k\mid m$. Thus,
$$
n>m\geq 4^{k},
$$
inequality which together with Lemma \ref{lem1:3} gives
$$
n>\left(3^k\right)^{\log 4/\log 3}>\left(\frac{n}{6}\right)^{\log 4/\log 3},
$$
so
$$
n<6^{\log 4/\log(4/3)}<5621,
$$
in contradiction with Lemma \ref{lem1:1}.

\subsection*{\small Bounding $n$}

From now on, $n$ is even. We write it as
$$
n=2^s r_1^{\lambda_1}\cdots r_t^{\lambda_t}=:2^s n_1,
$$
where $s\ge 1$, $t\ge 0$ and $3\le r_1<\cdots<r_t$ are odd primes. Thus, by inequality \eqref{eq1:10tominus40}, we have
$$
\alpha^{\ell}\left(1-\frac{1}{10^{40}}\right)<\alpha^{\ell}-\frac{1}{10^{40}}<\frac{P_n}{\varphi(P_n)}=\prod_{p\mid P_n} \left(1+\frac{1}{p-1}\right)=
2 \prod_{\substack{d\ge 3\\ d\mid n}} \prod_{p\in {\mathcal P}_d} \left(1+\frac{1}{p-1}\right),
$$
and taking logarithms we get
\begin{eqnarray}
\label{eq1:veryuseful}
\ell\log \alpha-\frac{1}{10^{39}} & < & \log\left(\alpha^{\ell} \left(1-\frac{1}{10^{40}}\right)\right)\nonumber\\
& < & \log 2+\sum_{\substack{d\ge 3\\ d\mid n}} \sum_{p\in {\mathcal P}_d} \log\left(1+\frac{1}{p-1}\right)\nonumber\\
& < & \log 2+\sum_{\substack{d\ge 3\\ d\mid n}} S_d.
\end{eqnarray}
In the above, we used the inequality $\log(1-x)>-10 x$ which is valid for all $x\in (0,1/2)$ with $x=1/10^{40}$ and that $x\geq\log(1+x)$ for $x\in \mathbb{R}$ with $x=p$ for all $p\in {\mathcal P}_d$ and all divisors $d\mid n$ with $d\ge 3$.

Let us deduce that the case $t=0$ is impossible. Indeed, if this were so, then $n$ is a power of $2$ and so, by Lemma \ref{lem1:1}, both $m$ and $n$ are divisible by $2^{416}$. Thus, $\ell\ge 2^{416}$. Inserting this into \eqref{eq1:veryuseful}, and using Lemma \ref{lem1:sumSn}, we get
$$
2^{416}\log \alpha-\frac{1}{10^{39}}<\sum_{a\ge 1} \frac{2\log(2^a)}{2^a}=4\log 2,
$$
a contradiction. 

Thus, $t\ge 1$ so $n_1>1$. We now put
$$
{\mathcal I}:=\{i: r_i\mid m\}\quad {\text{\rm and}}\quad {\mathcal J}=\{1,\ldots,t\}\backslash {\mathcal I}.
$$
We put
$$
M=\prod_{i\in {\mathcal I}} r_i.
$$
We also let $j$ be minimal in ${\mathcal J}$. We split the sum appearing in \eqref{eq1:veryuseful} in two parts:
$$
\sum_{d\mid n} S_d=L_1+L_2,
$$
where
$$
L_1:=\sum_{\substack{d\mid n\\ r\mid d\Rightarrow r\mid 2M}} S_d\quad {\text{\rm and}}\quad L_2:=\sum_{\substack{d\mid n\\ r_u\mid d~{\text{\rm for~some}}~ u\in {\mathcal J}}} S_d.
$$
To bound $L_1$, we note that all divisors involved divide $n'$, where
$$
n'=2^s\prod_{i\in {\mathcal I}}^j r_i^{\lambda_i}.
$$
Using Lemmas \ref{lem1:useful} and \ref{lem1:sumSn}, we get
\begin{eqnarray}
\label{eq1:S1}
L_1 & \le & 2\sum_{d\mid n'} \frac{\log d}{d}\nonumber\\
& < & 2\left(\sum_{r\mid n'} \frac{\log r}{r-1}\right)\left(\frac{n'}{\varphi(n')}\right)\nonumber\\
& = & 2\left(\sum_{r\mid 2M} \frac{\log r}{r-1}\right) \left(\frac{2M}{\varphi(2M)}\right).
\end{eqnarray}
We now bound $L_2$. If ${\mathcal J}=\emptyset$, then $L_2=0$ and there is nothing to bound. So, assume that ${\mathcal J}\ne \emptyset$. We argue as follows. Note that since $s\ge 1$, by Theorem \ref{eq1:binet1}, we have
$$
P_n=P_{n_1} Q_{n_1} Q_{2n_1}\cdots Q_{2^{s-1} n_1}.
$$
Let $q$ be any odd prime factor of $Q_{n_1}$. By reducing the relation of Lemma \ref{lem1:PQ} modulo $q$ and using the fact that $n_1$ and $q$ are both odd, we get
$2P_{n_1}^2\equiv 1\pmod q$, therefore ${\displaystyle{\left(\frac{2}{q}\right)=1}}$. Hence, $z(q)\mid q-1$ for such primes $q$. Now let $d$ be any divisor of $n_1$ which is a multiple of $r_{j}$. The number of them is $\tau(n_1/r_{j})$. For each such $d$, there is a primitive prime factor $q_d$ of $Q_d\mid Q_{n_1}$. Thus, $r_{j}\mid d\mid q_d-1$. This shows that
\begin{equation}
\label{eq1:x}
\nu_{r_{j}}(\varphi(P_n))\ge \nu_{r_{j}} (\varphi(Q_{n_1}))\ge \tau(n_1/r_{j})\ge \tau(n_1)/2,
\end{equation}
where the last inequality follows from the fact that
$$
\frac{\tau(n_1/r_{j})}{\tau(n_1)}=\frac{\lambda_{j}}{\lambda_{j}+1}\ge \frac{1}{2}.
$$
Since $r_{j}$ does not divide $m$, it follows from \eqref{eq1:nupvaluationofPn} that
\begin{equation}
\label{eq1:xx}
\nu_{r_{j}} (P_m)\le e_{r_{j}}.
\end{equation}
Hence, \eqref{eq1:x}, \eqref{eq1:xx} and \eqref{eq1:pb} imply that
\begin{equation}
\label{eq1:taun1}
\tau(n_1)\le 2e_{r_j}.
\end{equation}
Invoking Lemma \ref{lem1:zofp}, we get
\begin{equation}
\label{eq1:tau}
\tau(n_1)\le \frac{(r_{j}+1)\log \alpha}{\log r_{j}}.
\end{equation}
Now every divisor $d$ participating in $L_2$ is of the form $d=2^a d_1$, where $0\le a\le s$ and $d_1$ is a divisor of $n_1$ divisible by $r_u$ for some $u\in {\mathcal J}$. Thus,
\begin{equation}
\label{eq1:tt}
L_2\le \tau(n_1) \min\left\{ \sum_{\substack{0\le a\le s\\  d_1\mid n_1\\ r_u\mid d_1~{\text{\rm for~some}}~ u\in {\mathcal J}}} S_{2^ ad_1}\right\}:=g(n_1,s,r_1).
\end{equation}
In particular, $d_1\ge 3$ and since the function $x\mapsto \log x/x$ is decreasing for $x\ge 3$, we have that
\begin{equation}
\label{eq1:S2}
g(n_1,s,r_1)\le 2\tau(n_1) \sum_{0\le a\le s} \frac{\log(2^a r_{j})}{2^a r_{j}}.
\end{equation}
Putting also $s_1:=\min\{s,416\}$, we get, by Lemma \ref{lem1:1}, that $2^{s_1}\mid \ell$. Thus, inserting this as well as \eqref{eq1:S1} and \eqref{eq1:S2} all into \eqref{eq1:veryuseful}, we get
\begin{equation}
\label{eq1:main2}
\ell \log \alpha-\frac{1}{10^{39}}<2\left(\sum_{r\mid 2M} \frac{\log r}{r-1}\right) \left(\frac{2M}{\varphi(2M)}\right)+g(n_1,s,r_1).
\end{equation}
Since
\begin{equation}
\label{eq1:3***}
\sum_{0\le a\le s}
\frac{\log(2^a r_{j})}{2^a r_{j}}<\frac{4\log 2+2\log r_{j}}{r_{j}},
\end{equation}
inequalities \eqref{eq1:3***}, \eqref{eq1:tau} and \eqref{eq1:S2} give us that
$$
g(n_1,s,r_1)\le 2\left(1+\frac{1}{r_j}\right)\left(2+\frac{4\log 2}{\log r_j}\right)\log \alpha:=g(r_j).
$$
The function $g(x)$ is decreasing for $x\ge 3$.  Thus, $g(r_j)\le g(3)< 10.64$.
For a positive integer $N$ put
\begin{equation}
\label{eq1:f(M)}
f(N):=N\log \alpha-\frac{1}{10^{39}}- 2\left(\sum_{r\mid N} \frac{\log r}{r-1}\right) \left(\frac{N}{\varphi(N)}\right).
\end{equation}
Then inequality \eqref{eq1:main2} implies that both inequalities
\begin{eqnarray}
\label{eq1:200}
f(\ell)<g(r_j),\nonumber\\
(\ell-M)\log \alpha+f(M)<g(r_j)
\end{eqnarray}
hold. Assuming that $\ell\ge 26$, we get, by Lemma \ref{lem1:RS}, that
\begin{eqnarray*}
\ell \log \alpha-\frac{1}{10^{39}}-2(\log 2) \frac{(1.79 \log\log \ell+2.5/\log\log \ell)\log \ell}{\log\log \ell-1.1714}\le 10.64.
\end{eqnarray*}
Mathematica confirmed that the above inequality implies $\ell\le 500$. Another calculation with Mathematica
showed that the inequality
\begin{equation}
\label{eq1:111}
f(\ell)<10.64
\end{equation}
for even values of $\ell\in [1,500]\cap {\mathbb Z}$ implies that $\ell\in [2,18]$. The minimum of the function
$f(2N)$ for $N\in [1,250]\cap {\mathbb Z}$ is at $N=3$ and $f(6)>-2.12$. For the remaining positive integers $N$, we have $f(2N)>0$. Hence, inequality \eqref{eq1:200} implies
$$
(2^{s_1}-2)\log \alpha<10.64\quad {\text{\rm and}}\quad (2^{s_1}-2) 3\log \alpha<10.64+2.12=12.76,
$$
according to whether $M\ne 3$ or $M=3$, and either one of the above inequalities implies that $s_1\le 3$. Thus, $s=s_1\in \{1,2,3\}$.
Since $2M\mid \ell$, $2M$ is square free and $\ell\le 18$, we have that $M\in \{1,3,5,7\}$. Assume $M>1$ and let $i$ be such that $M=r_i$. Let us show that $\lambda_i=1$. Indeed, if $\lambda_i\ge 2$, then $$
199\mid Q_9\mid P_n,\quad 29201\mid P_{25}\mid P_n,\quad 1471\mid Q_{49}\mid P_n,
$$
and $3^2\mid 199-1,~5^2\mid 29201-1,~ 7^2\mid 1471-1$. Thus, we get that
$3^2,~ 5^2,~7^2$ divide $\varphi(P_n)=P_m$, showing that $3^2,~5^2,~7^2$ divide $\ell$. Since $\ell\le 18$, only the case $\ell=18$ is possible.  In this case, $r_j\ge 5$, and inequality \eqref{eq1:200} gives
$$
8.4<f(18)\le g(5)<7.9,
$$
a contradiction. Let us record what we have deduced so far.

\begin{lemma}
\label{lem1:4}
If $n>2$ is even, then $s\in \{1,2,3\}$. Further, if ${\mathcal I}\ne\emptyset$, then ${\mathcal I}=\{i\}$, $r_i\in \{3,5,7\}$ and $\lambda_i=1$.
\end{lemma}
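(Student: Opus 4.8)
The plan is to read off everything from the two inequalities recorded in \eqref{eq1:200}, together with the bound $g(r_j)\le g(3)<10.64$ already established, so that what remains is a short chain of finite verifications. Two structural facts will be used throughout: since $n$ is even and $4\mid m$ (Lemma \ref{lem1:1}), the difference $\ell=n-m$ is even; and $M\mid\ell$, because every $r_i$ with $i\in\mathcal{I}$ divides both $n$ and $m$ and hence divides $\ell$, so that in fact $2M\mid\ell$ as $M$ is odd.

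First I would bound $\ell$. The first inequality in \eqref{eq1:200} reads $f(\ell)<g(r_j)\le 10.64$. For $\ell\ge 26$ I would make this explicit by bounding $\ell/\varphi(\ell)$ with Lemma \ref{lem1:RS}(iii) and $\sum_{r\mid\ell}\frac{\log r}{r-1}$ by $(\log 2)\,\omega(\ell)$ and then $\omega(\ell)$ by Lemma \ref{lem1:RS}(iv); substituting into \eqref{eq1:f(M)} turns $f(\ell)<10.64$ into a one-variable inequality in $\ell$ that fails once $\ell$ is large, forcing $\ell\le 500$. A direct evaluation of $f$ at the even integers of $[1,500]$ then narrows this to $\ell\in[2,18]$, so $\ell\le 18$.

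Next I would bound $s$. By Lemma \ref{lem1:1} we have $2^{s_1}\mid\ell$ with $s_1=\min\{s,416\}$, and since $\gcd(2^{s_1},M)=1$ this combines with $M\mid\ell$ to give $2^{s_1}M\mid\ell$, hence $\ell\ge 2^{s_1}M$. Feeding this into the second inequality of \eqref{eq1:200} and using that $f(2N)$ is positive for the relevant integers $N$ except at $N=3$, where $f(6)>-2.12$, I obtain $(2^{s_1}-2)\log\alpha<10.64$ when $M\ne 3$ and $(2^{s_1}-2)\,3\log\alpha<12.76$ when $M=3$; each forces $s_1\le 3$, so $s=s_1\in\{1,2,3\}$. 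Then $2M\mid\ell\le 18$ with $M$ odd and squarefree leaves only $M\in\{1,3,5,7\}$, so if $\mathcal{I}\ne\emptyset$ then $\mathcal{I}=\{i\}$ is a singleton with $r_i=M\in\{3,5,7\}$.

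It remains to show $\lambda_i=1$, which I would do by contradiction. If $\lambda_i\ge 2$ then $r_i^2\mid n$, and from the factorisation $P_n=P_{n_1}Q_{n_1}Q_{2n_1}\cdots$ one exhibits an explicit prime factor of $P_n$ whose predecessor carries a factor $r_i^2$: for $r_i=3,5,7$ these are $199\mid Q_9$, $29201\mid P_{25}$, $1471\mid Q_{49}$, with $3^2\mid 198$, $5^2\mid 29200$, $7^2\mid 1470$, so that $r_i^2\mid p-1\mid\varphi(P_n)=P_m$ for such a prime $p$. Tracking the exact $r_i$-adic valuation then upgrades this to $r_i^2\mid\ell$; for $r_i\in\{5,7\}$ this contradicts $\ell\le 18$ outright, while for $r_i=3$ only $\ell=18$ survives, and there $r_j\ge 5$ makes the first inequality of \eqref{eq1:200} yield the impossible chain $8.4<f(18)\le g(5)<7.9$. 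The step I expect to be the main obstacle is precisely this implication $r_i^2\mid P_m\Rightarrow r_i^2\mid\ell$: one cannot read it off from divisibility alone, but must combine the order of apparition $z(r_i)$, the primitive exponent $e_{r_i}$, the valuation bound \eqref{eq1:nupvaluationofPn}, and $r_i^2\mid n$ (via Lemma \ref{lem1:orderof2}) to pin down $\nu_{r_i}(m)$ and hence $\nu_{r_i}(\ell)$. By contrast the two calibrations of $f$—that $f(\ell)<10.64$ forces $\ell\le 18$, and the sign pattern of $f(2N)$—are routine once the relevant one-variable functions are shown to be monotone.
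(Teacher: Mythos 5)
Your proposal follows the paper's own proof essentially step for step: the same use of the two inequalities in \eqref{eq1:200} with $g(r_j)\le g(3)<10.64$, the same two-stage reduction $\ell\le 500$ then $\ell\in[2,18]$ via Lemma \ref{lem1:RS}, the same sign analysis of $f(2N)$ (with $f(6)>-2.12$) to force $s_1\le 3$ and $M\in\{1,3,5,7\}$, and the same primes $199$, $29201$, $1471$ with the endgame $8.4<f(18)\le g(5)<7.9$. The one step you single out as the main obstacle, passing from $r_i^2\mid\varphi(P_n)=P_m$ to $r_i^2\mid\ell$, is exactly the point the paper itself asserts without detail (it needs the full $r_i$-adic valuation bookkeeping through $z(r_i)$ and $e_{r_i}$, not just the single exhibited prime), so your plan is faithful to the published argument, including its thinnest spot.
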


We now deal with ${\mathcal J}$. For this, we return to \eqref{eq1:veryuseful} and use the better inequality namely
$$
2^s M\log \alpha-\frac{1}{10^{39}}\le \ell \log \alpha-\frac{1}{10^{39}}\le
\log \left(\frac{P_n}{\varphi(P_n)}\right)\le \sum_{d\mid 2^s M} \sum_{p\in {\mathcal P}_d} \log\left(1+\frac{1}{p-1}\right)+L_2,
$$
so
\begin{equation}
\label{eq1:300}
L_2\ge 2^s M\log \alpha-\frac{1}{10^{39}}-\sum_{d\mid 2^s M} \sum_{p\in {\mathcal P}_d} \log\left(1+\frac{1}{p-1}\right).
\end{equation}
In the right--hand side above, $M\in \{1,3,5,7\}$ and $s\in \{1,2,3\}$. The values of the right--hand side above are in fact
$$
h(u):=u\log \alpha-\frac{1}{10^{39}} -\log(P_u/\varphi(P_u))
$$
for $u=2^sM\in \{2,4,6,8,10,12,14,20,24,28,40,56\}$. Computing we get:
$$
h(u)\ge H_{s,M} \left(\frac{M}{\varphi(M)}\right)\quad {\text{\rm for}}\quad M\in \{1,3,5,7\},\quad s\in \{1,2,3\}, 
$$
where
$$
H_{1,1}>1.069,\quad H_{1,M}>2.81\quad {\text{\rm for}}\quad M>1,\quad H_{2,M}>2.426,\quad H_{3,M}>5.8917.
$$
We now exploit the relation
\begin{equation}
\label{eq1:L2}
H_{s,M} \left(\frac{M}{\varphi(M)}\right)<L_2.
\end{equation}
Our goal is to prove that $r_1<10^6$. Assume this is not so. We use the bound
$$
L_2<\sum_{\substack{d\mid n\\ r_u\mid d~{\text{\rm for~sume}}~u\in {\mathcal J}}} \frac{4+4\log\log d}{\varphi(d)}
$$
of Lemma \ref{lem1:sumSn}. Each divisor $d$ participating in $L_2$ is of the form $2^a d_1$, where $a\in [0,s]\cap {\mathbb Z}$ and $d_1$ is a multiple of a prime at least as large as $r_j$. Thus,
$$
\frac{4+4\log\log d}{\varphi(d)}\le \frac{4+4\log\log 8d_1}{\varphi(2^a) \varphi(d_1)}\quad {\text{\rm for}}\quad a\in \{0,1,\ldots,s\},
$$
and
$$
\frac{d_1}{\varphi(d_1)}\le \frac{n_1}{\varphi(n_1)}\le \frac{M}{\varphi(M)}\left(1+\frac{1}{r_j-1}\right)^{\omega(n_1)}.
$$
Using \eqref{eq1:tau}, we get
$$
2^{\omega(n_1)}\le \tau(n_1)\le \frac{(r_j+1)\log \alpha}{\log r_j}<r_j,
$$
where the last inequality holds because $r_j$ is large. Thus,
\begin{equation}
\label{eq1:omega}
\omega(n_1)<\frac{\log r_j}{\log 2}<2\log r_j.
\end{equation}
Hence,
\begin{eqnarray}
\label{eq1:totient}
\frac{n_1}{\varphi(n_1)}&\le& \frac{M}{\varphi(M)}\left(1+\frac{1}{r_j-1}\right)^{\omega(n_1)}\nonumber\\
&<& \frac{M}{\varphi(M)}\left(1+\frac{1}{r_j-1}\right)^{2\log r_j}\nonumber\\
&<&\frac{M}{\varphi(M)}\exp\left(\frac{2\log r_j}{r_j-1}\right)<\frac{M}{\varphi(M)}\left(1+\frac{4\log r_j}{r_j-1}\right),
\end{eqnarray}
where we used the inequalities  $1+x<e^x$, valid for all real numbers $x$, as well as $e^x<1+2x$ which is valid for $x\in (0,1/2)$ with $x=2\log r_j/(r_j-1)$ which belongs to $(0,1/2)$ because $r_j$ is large.  Thus, the inequality
$$
\frac{4+4\log\log d}{\varphi(d)}\le \left(\frac{4+4\log\log 8d_1}{d_1}\right)\left(1+
\frac{4\log r_j}{r_j-1}\right) \left(\frac{1}{\varphi(2^a)}\right) \frac{M}{\varphi(M)}
$$
holds for $d=2^a d_1$ participating in $L_2$. The function $x\mapsto (4+4\log\log(8x))/x$ is decreasing for $x\ge 3$. Hence,
\begin{equation}
\label{eq1:imp11}
L_2\le \left(\frac{4+4\log\log(8 r_j)}{r_j}\right) \tau(n_1) \left(1+\frac{4\log r_j}{r_j-1}\right)\left(\sum_{0\le a\le s} \frac{1}{\varphi(2^a)}\right) \left(\frac{M}{\varphi(M)}\right).
\end{equation}
Inserting inequality \eqref{eq1:tau} into \eqref{eq1:imp11} and using \eqref{eq1:L2}, we get
\begin{equation}
\label{eq1:r1}
\log r_j<4\left(1+\frac{1}{r_j}\right)\left(1+\frac{4\log r_j}{r_j-1}\right)(1+\log\log (8 r_j))(\log\alpha)\left( \frac{G_s}{H_{s,M}}\right),
\end{equation}
where
$$
G_s=\sum_{0\le a\le s} \frac{1}{\varphi(2^a)}.
$$
For $s=2,~3$, inequality \eqref{eq1:r1} implies $r_j<900,000$ and $r_j<300$, respectively. For $s=1$ and $M>1$, inequality \eqref{eq1:r1}  implies $r_j<5000$. When $M=1$ and $s=1$, we get $n=2n_1$. Here, inequality \eqref{eq1:r1} implies that $r_1<8\times 10^{12}$. This is too big,  so we use the bound
$$
S_d<\frac{2\log d}{d}
$$
of Lemma \ref{lem1:sumSn} instead for the divisors $d$ of participating in $L_2$, which in this case are all the divisors of $n$ larger than $2$. We deduce that
$$
1.06<L_2<2\sum_{\substack{d\mid 2n_1\\ d>2}} \frac{\log d}{d}<4\sum_{d_1\mid n_1} \frac{\log d_1}{d_1}.
$$
Since all the divisors $d>2$ of $n$ are either of the form $d_1$ or $2d_1$ for some divisor $d_1\ge 3$ of $n_1$, and the function $x\mapsto x/\log x$  is increasing for $x\ge 3$, hence the last inequality above follows immediately.
Using Lemma \ref{lem1:useful} and inequalities \eqref{eq1:omega} and \eqref{eq1:totient}, we get
\begin{eqnarray*}
1.06 & < & 4\left(\sum_{r\mid n_1} \frac{\log r}{r-1}\right) \left(\frac{n_1}{\varphi(n_1)}\right)<\left(\frac{4\log r_1}{r_1-1}\right) \omega(n_1) \left(1+\frac{4\log r_1}{r_1-1}\right)\\
& < & \left(\frac{4\log r_1}{r_1-1}\right)\left(2\log r_1\right)\left(1+\frac{4\log r_1}{r_1-1}\right),
\end{eqnarray*}
which gives $r_1<159$. So, in all cases, $r_j<10^6$. Here, we checked that $e_{r}=1$ for all such $r$ except $r\in \{13,31\}$ for which $e_{r}=2$. If $e_{r_j}=1$, we then get $\tau(n_1/r_j)\le 1$, so $n_1=r_j$. Thus, $n\le 8\cdot 10^6$, in contradiction with Lemma \ref{lem1:1}. Assume now that $r_j\in \{13,31\}$. Say $r_j=13$. In this case, $79$ and $599$ divide $Q_{13}$ which divides $P_n$, therefore $13^2\mid (79-1)(599-1)\mid \varphi(P_n)=P_m$. Thus, if there is some other prime factor $r'$ of $n_1/13$, then $13r'\mid n_1$, and there is a primitive prime $q$ of $Q_{13r'}$ such that $q\equiv 1\pmod {13 r'}$. In particular, $13\mid q-1$. Thus, $\nu_{13}(\varphi(P_n))\ge 3$, showing that $13^3\mid P_m$. Hence, $13\mid m$, therefore $13\mid M$, a contradiction. A similar contradiction is obtained if $r_j=31$ since $Q_{31}$ has two primitive prime factors namely $424577$ and $865087$ so $31\mid M$. This finishes the proof.


\chapter{On The Equation $\varphi(X^m-1)=X^n-1$}
In this chapter, we study all positive integer solutions $(m,n)$ of the Diophantine equation of the form  $\varphi(X^m-1)=X^n-1$. In Section \textcolor{red}{\ref{sec13}}, we first present the proof of the case when $X=5$. In Section \textcolor{red}{\ref{sec:1}}, we give the complete proof of the equation from the title, which is the main result in \cite{BL}.

\section{Introduction}
\label{sec111}
As we mentioned in the Introduction, Problem $10626$ from the {\it American Mathematical Monthly} \cite{A} asks to find all positive integer solutions $(m,n)$ of the Diophantine equation
\begin{equation}
\label{eq11:1}
\varphi(5^m-1)=5^n-1.
\end{equation}

In \cite{Lu4}, it was shown that if $b\ge 2$ is a fixed integer, 
then the equation
\begin{equation}
\label{eq11:b}
\varphi\left(x\frac{b^m-1}{b-1}\right)=y\frac{b^n-1}{b-1}\qquad x,y\in \{1,\ldots,b-1\}
\end{equation}
has only finitely many positive integer solutions $(x,y,m,n)$. That is, there are only finitely many repdigits in base $b$ whose Euler function is also a repdigit in base $b$. Taking $b=5$, it follows that equation \eqref{eq11:1} should have only finitely many positive integer solutions $(m,n)$. 

The main objective is to bound the value of $k=m-n$. Firstly, we make explicit the arguments from \cite{Lu4} together with some specific features which we deduce from the factorizations of $X^k-1$ for small values of $k$. 
\medskip

In the process of bounding $k$, we use two analytic inequalities i.e., inequality \eqref{eq11:11} and the approximation $(iii)$ of Lemma \ref{lem1:RS}.




\section{On the Equation $\varphi(5^m-1)=5^n-1$}
\label{sec13}

In this section we show that the equation of the form  $\varphi(5^m-1)=5^n-1$ has no positive integer solutions $(m,n)$, where $\varphi$ is the Euler function.  Here we follow \cite{FLT}.
\medskip

\begin{theorem}
\label{thm:1} 
The Diophantine equation 
\begin{equation}
\label{eq:1}
\phi(5^m-1)=5^n-1.
\end{equation}
has no positive integer solution $(m,n)$.
\end{theorem}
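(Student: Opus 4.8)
The plan is to analyze the $2$-adic valuation on both sides of the equation $\varphi(5^m-1)=5^n-1$. The key observation is that the right-hand side $5^n-1$ is \emph{exactly} divisible by a small power of $2$, whereas the left-hand side $\varphi(5^m-1)$ tends to be divisible by a large power of $2$, because $5^m-1$ has many distinct odd prime factors, each contributing a factor $p-1$ (itself even) to the totient. First I would compute $\nu_2(5^n-1)$ precisely: since $5\equiv 1\pmod 4$, the lifting-the-exponent lemma gives $\nu_2(5^n-1)=\nu_2(5-1)+\nu_2(n)=2+\nu_2(n)$. So the power of $2$ dividing the right-hand side grows only logarithmically in $n$, governed by $\nu_2(n)$.

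Next I would bound $\nu_2(\varphi(5^m-1))$ from below. Write $5^m-1=\prod_{i=1}^k q_i^{a_i}$ with $q_1<\cdots<q_k$ the distinct prime factors. Then $\nu_2(\varphi(5^m-1))=\sum_i \nu_2(q_i-1)+\sum_i (a_i-1)\nu_2(q_i)\ge \sum_i \nu_2(q_i-1)\ge k-1$, since each odd $q_i$ contributes at least $1$ (only $q_1=2$, if present, fails to contribute via $q_i-1$, but $2\nmid 5^m-1$ so in fact all $q_i$ are odd and we get $\ge k$). Thus the main inequality to establish is that $5^m-1$ has many distinct prime factors, i.e. a good lower bound on $k=\omega(5^m-1)$. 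Combining $\nu_2$ on both sides of \eqref{eq:1} forces $k\le 2+\nu_2(n)$, and since $5^n-1=\varphi(5^m-1)<5^m-1<5^m$ gives $n<m$, one gets $\nu_2(n)<\log m/\log 2$, so $k$ is bounded by roughly $\log m$.

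The hard part will be deriving a \emph{contradiction} from the bound $k=\omega(5^m-1)\le 2+\nu_2(n)$, which requires showing $\omega(5^m-1)$ grows faster than $\log m$. The plan is to show that $5^m-1$ must have more prime factors than the $2$-adic analysis allows. One route is to exploit primitive divisors (Theorem on the Primitive Divisor Theorem, page references in the excerpt): for each divisor $d\mid m$ with $d>1$, the value $5^d-1$ has a primitive prime factor $q_d$ with $5$ of multiplicative order exactly $d$ modulo $q_d$, hence $d\mid q_d-1$ and these primes are distinct for distinct $d$. This yields $\omega(5^m-1)\ge \tau(m)-1$, and moreover each such prime with $d$ even contributes extra factors of $2$ to the totient. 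More importantly, one shows $5^n-1=\varphi(5^m-1)$ combined with $\varphi(5^m-1)/(5^m-1)=\prod_i(1-1/q_i)$ forces $k=m-n$ (using $\eqref{eqi:5}$-style notation $k=m-n$) to be small, while simultaneously the abundance of prime factors pushes $2^k\mid 5^n-1$ beyond what $\nu_2(5^n-1)=2+\nu_2(n)$ permits.

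Concretely, I expect the decisive estimate to be a comparison $\prod_{i=1}^k(1-1/q_i)^{-1}=(5^m-1)/(5^n-1)\ge 5^{m-n}/2$, which since the left side grows like a product over the actual prime factors forces $k$ to be large (at least several thousand, by the same Mertens-type product estimates used in Lemma \ref{lem1:sumSn} and Lemma \ref{lem1:RS}(ii)), while the $2$-adic bound forces $k\le 2+\nu_2(n)\le 2+\log n/\log 2$. Since $m-n\ge 1$ forces the ratio to exceed a fixed constant greater than $1$, one pins down that $k$ must exceed a large explicit threshold, yet $\nu_2(n)$ cannot be that large unless $n$ (hence $m$) is astronomically large; a final linear-forms-in-logarithms bound via Matveev's Theorem \ref{thm3:Matveev} on the equation caps $m$, and a direct check of the small remaining range eliminates all candidates. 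The main obstacle is organizing these two opposing growth rates so that the window for $(m,n)$ closes completely, and verifying the primitive-divisor contributions to $\nu_2(\varphi(5^m-1))$ are genuinely independent.
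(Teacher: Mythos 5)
Your $2$-adic bookkeeping is broadly sound (apart from the slip that $2\nmid 5^m-1$; in fact $\nu_2(5^m-1)=2+\nu_2(m)\ge 2$ always), and the product identity $\prod_{p\mid 5^m-1}\left(1+\frac{1}{p-1}\right)=(5^m-1)/(5^n-1)>5^{m-n}$ is exactly the paper's starting point. But both mechanisms you propose for reaching a contradiction fail. First, there is no way to prove that $\omega(5^m-1)$ grows faster than $\log m$: primitive divisors only give $\omega(5^m-1)\gtrsim\tau(m)$, and $\tau(m)=2$ when $m$ is prime, so the constraint $\omega(5^m-1)\le 2+\nu_2(n)$ is not contradictory on its own. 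Second, the product inequality does not force ``several thousand'' prime factors: by Mertens, a handful of small primes already makes $\prod(1-1/p)^{-1}$ exceed $5$, so largeness of $\omega(5^m-1)$ only follows once one knows the primes dividing $5^m-1$ are themselves large. That is the heart of the paper's argument and it is absent from your plan: since $5\nmid 5^n-1$, no prime factor $p$ of $5^m-1$ can satisfy $p\equiv 1\pmod 5$ (Lemma \ref{lem:2}), and combining this with the Cunningham-project factorizations of $5^q-1$ forces $\nu_2(m)\le 4$ and every odd prime divisor of $m$ to lie in $\{17,41,71,103,223,257\}\cup\{q>512\}$. Only then are the primes $p$ with $z(p)=d$ large (being $\equiv 1\pmod d$ with $d$ having only large odd prime factors) and few (via $\omega_d<d\log 5/\log(d+1)$ and the Montgomery--Vaughan sieve), which yields the quantitative bound of Lemma \ref{lem:3}.

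Your endgame is also not workable. Matveev's theorem has no purchase on $\varphi(5^m-1)=5^n-1$ because the totient destroys the exponential structure, and even a bound of the shape $m<e^{e^{8000}}$ (as in Theorem \ref{thm2:1}) leaves a range that cannot be ``directly checked.'' The paper instead closes the window arithmetically: it shows $k=m-n=2$ by iterating the bound of Lemma \ref{lem:3} together with the fact (Lemma \ref{lem:4}, resting on Ljunggren's solution of $(x^n-1)/(x-1)=\square$) that every prime $q<10^4$ dividing $m$ also divides $n$ and hence $k$; and it separately shows $k>2$ by a primitive-divisor count that forces the smallest prime factor $q_1$ of $m$ to be below $300$ and hence to divide $k$. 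These two conclusions contradict each other. If you want to salvage your sketch, the indispensable missing idea is the mod-$5$ obstruction on the prime factors of $5^m-1$; the $2$-adic valuation alone cannot carry the proof.
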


\section*{The proof of Theorem \ref{thm:1}}

For the proof, we make explicit the arguments from \cite{Lu4} together with some specific features which we deduce from the factorizations of $5^k-1$ for small values of $k$. Write
\begin{equation}
\label{eq:2}
5^m-1=2^{\alpha} p_1^{\alpha_1}\cdots p_r^{\alpha_r}.
\end{equation}
Thus,
\begin{equation}
\label{eq:3}
\phi(5^m-1)=2^{\alpha-1}p_1^{\alpha_1-1}(p_1-1)\cdots p_r^{\alpha_r-1}(p_r-1).
\end{equation}

We achieve the proof of Theorem \ref{thm:1} as a sequence of lemmas. The first one is known but we give a proof of it for the convenience of the reader.

\begin{lemma}
\label{lem:1}
In equation \eqref{eq:1}, $m$ and $n$ are not coprime.
\end{lemma}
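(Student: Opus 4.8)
The plan is to argue by contradiction: assume $\gcd(m,n)=1$ and derive an impossible divisibility. The starting point is the elementary identity $\gcd(5^a-1,5^b-1)=5^{\gcd(a,b)}-1$, which together with the hypothesis $\varphi(5^m-1)=5^n-1$ gives
$$
\gcd\bigl(5^m-1,\varphi(5^m-1)\bigr)=\gcd(5^m-1,5^n-1)=5^{\gcd(m,n)}-1=4 .
$$
Writing $N=5^m-1$, the whole argument then reduces to exploiting the very restrictive condition $\gcd(N,\varphi(N))=4$, i.e. that $N$ and $\varphi(N)$ share only the factor $4$.

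First I would extract the $2$-adic information. By the lifting-the-exponent formula $\nu_2(5^m-1)=2+\nu_2(m)$, so $\nu_2(N)=\alpha=2+\nu_2(m)$, while from \eqref{eq:3} one has $\nu_2(\varphi(N))=(\alpha-1)+\sum_{i=1}^r\nu_2(p_i-1)\ge \alpha-1+r$. Since $\nu_2(\gcd(N,\varphi(N)))=2$, an even $m$ would force both $\nu_2(N)\ge 3$ and $\nu_2(\varphi(N))\ge 3$ (using that $N$ has at least one odd prime factor for $m\ge 2$), contradicting $\min=2$; hence $m$ is odd and $\alpha=2$. Next I would read off the odd part: if some $p_j^2\mid N$ then $p_j\mid\varphi(N)$ and hence $p_j\mid\gcd(N,\varphi(N))=4$, impossible, so the odd part of $N$ is squarefree and $N=4\,p_1\cdots p_r$; likewise $p_i\not\equiv 1\pmod{p_j}$ for all $i,j$, for otherwise $p_j\mid p_i-1\mid\varphi(N)$. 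Finally, reducing modulo $5$ gives $\varphi(N)=5^n-1\equiv -1\pmod 5$, so $5\nmid\varphi(N)$, which forces every prime factor $p$ of $N$ to satisfy $p\not\equiv 1\pmod 5$.

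To close, I would bring in the order of $5$ modulo $N$. Since this order is exactly $m$, Euler's theorem gives $m\mid\varphi(N)=5^n-1$, i.e. $5^n\equiv 1\pmod m$; combined with $\gcd(m,n)=1$ this already yields $\gcd(m,5^m-1)=1$, because any common prime $q$ would have $\operatorname{ord}_q(5)\mid\gcd(m,n)=1$ while $q$ is odd. The main obstacle is turning the accumulated constraints — $m$ odd, $N$ squarefree, every prime factor $\not\equiv 1\pmod 5$, and no $p_i\equiv 1\pmod{p_j}$ — into an outright contradiction. The natural tool is the primitive divisor (Zsygmondy) theorem applied to the Lucas sequence $u_k=(5^k-1)/4=u_k(6,-5)$: a primitive prime factor $p$ of $5^m-1$ satisfies $m\mid p-1$, and I expect the contradiction to come from showing that the primitive part of $5^m-1$ must contain a prime $\equiv 1\pmod 5$ (equivalently that some small $d\mid m$ forces such a factor) or must repeat a prime, violating squarefreeness. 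Controlling all primitive prime factors simultaneously under the congruence restriction modulo $5$ is the delicate point, and is presumably where the "known" argument does its real work.
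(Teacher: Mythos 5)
Your preliminary reductions are correct and essentially reproduce the paper's setup, in places more cleanly: from $\gcd(5^m-1,\varphi(5^m-1))=5^{\gcd(m,n)}-1=4$ you correctly deduce that $m$ is odd, that $\alpha=2$, that the odd part of $N=5^m-1$ is squarefree (so $N=4p_1\cdots p_r$), and that no $p_i$ is $\equiv 1\pmod 5$ (since $5\nmid 5^n-1$). But the proof is not finished: you explicitly stop at ``the main obstacle,'' and the route you gesture at (Zsygmondy/primitive divisors, controlling the primitive part of $5^m-1$) is neither carried out nor what is actually needed. The digression through $\operatorname{ord}_N(5)=m$ and $\gcd(m,5^m-1)=1$ is true but leads nowhere.

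The missing step is an elementary quadratic-residue computation modulo $5$, and it is exactly the ``real work'' of the paper's proof. Since $m$ is odd, $5\cdot(5^{(m-1)/2})^2\equiv 1\pmod{p_i}$ for each $i$, so $\left(\frac{5}{p_i}\right)=1$ and hence $p_i\equiv\pm 1\pmod 5$; combined with your exclusion of $p_i\equiv 1\pmod 5$ this gives $p_i\equiv 4\pmod 5$ for all $i$. Reducing $5^m-1=4p_1\cdots p_r$ modulo $5$ yields $4\equiv 4^{\,1+r}\pmod 5$, so $r$ is even. Then reducing $\varphi(5^m-1)=2(p_1-1)\cdots(p_r-1)=5^n-1$ modulo $5$ gives $2\cdot 3^{r}\equiv 4\pmod 5$ with $3^{r}=(3^{r/2})^2$ a square, forcing $2$ to be a quadratic residue modulo $5$ — which it is not. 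Without this (or an equivalent) closing argument, the lemma is not proved.
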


\begin{proof}
Suppose that $\gcd(m,n)=1$. Assume first that $n$ is odd. Then $\nu_2(5^n-1)=2$. Applying the $\nu_2$ function in both sides of \eqref{eq:3} and comparing it with \eqref{eq:1}, we get
$$
(\alpha-1)+r\le (\alpha-1)+\sum_{i=1}^r \nu_2(p_i-1)=2.
$$
If $m$ is even, then $\alpha\ge 3$, and the above inequality shows that $\alpha=3,~r=0$, so $5^m-1=8$, false. Thus, $m$ is odd, so $\alpha=2$ and $r=1$. If $\alpha_1\ge 2$, then 
$$
p_1^{\alpha_1-1}\mid \gcd(\phi(5^m-1), 5^m-1)=\gcd(5^m-1,5^n-1)=5^{\gcd(m,n)}-1=4,
$$
is a contradiction. So, $\alpha_1=1,~5^m-1=4p_1$, and
$$
5^n-1=2(p_1-1)=\frac{5^m-1}{2}-2=\frac{5^m-5}{2},
$$
which is impossible. Thus, $n$ is even and since $\gcd(m,n)=1$, it follows that $m$ is odd so $\alpha=2$. Furthermore, a previous argument shows that in \eqref{eq:2} we have $\alpha_1=\cdots=\alpha_r=1$. 
Since $m$ is odd, we have that $5\cdot (5^{(m-1)/2})^2\equiv 1\pmod {p_i}$, therefore ${\displaystyle{\left(\frac{5}{p_i}\right)=1}}$ for $i=1,\ldots,r$. Hence, $p_i\equiv 1,4\pmod 5$. If $p_i\equiv 1\pmod 5$, it follows that $5\mid \phi(5^m-1)=5^n-1$, a contradiction. Hence, $p_i\equiv 4\pmod 5$ for $i=1,\ldots,r$. Reducing now relation \eqref{eq:2} modulo $5$, we get
$$
4\equiv 4^{1+r}\pmod 5,\quad {\text{\rm therefore}}\quad r\equiv 0\pmod 2.
$$
Reducing now equation
$$
2(p_1-1)\cdots (p_r-1)=\phi(5^m-1)=5^n-1
$$
modulo $5$, we get
$$
2\cdot (3^{r/2})^2\equiv 4\pmod 5, \quad {\text{\rm therefore}}\quad \left(\frac{2}{5}\right)=1,
$$
a contradiction.
\end{proof}

\begin{lemma}
\label{lem:2}
If $(m,n)$ satisfies equation \eqref{eq:1}, then $m$ is not a multiple of any number $d$ such that $p\mid 5^d-1$ for some prime $p\equiv 1\pmod 5$.
\end{lemma}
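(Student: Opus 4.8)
The plan is to argue by contradiction, exploiting exactly the modulo $5$ obstruction that already appears at the end of the proof of Lemma \ref{lem:1}. Suppose that $(m,n)$ satisfies \eqref{eq:1} and that, contrary to the claim, there exists a divisor $d\mid m$ together with a prime $p\equiv 1\pmod 5$ such that $p\mid 5^d-1$. I want to derive a contradiction.

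First I would invoke the elementary divisibility $5^d-1\mid 5^m-1$, which holds whenever $d\mid m$. Combined with $p\mid 5^d-1$, this yields $p\mid 5^m-1$. Hence $p$ occurs among the primes in the factorization \eqref{eq:2}, say $p=p_i$ with $\alpha_i\ge 1$, and consequently $(p_i-1)\mid \varphi(5^m-1)$ by the formula \eqref{eq:3}.

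The decisive step is then the congruence condition on $p$. Since $p\equiv 1\pmod 5$, we have $5\mid p-1=p_i-1$, and therefore $5\mid \varphi(5^m-1)$. But the hypothesis \eqref{eq:1} gives $\varphi(5^m-1)=5^n-1\equiv -1\pmod 5$, so that $5\nmid 5^n-1$. This contradiction finishes the argument.

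There is essentially no genuine obstacle here: the only inputs are the standard implication $d\mid m\Rightarrow 5^d-1\mid 5^m-1$ and the observation that any prime $p\equiv 1\pmod 5$ dividing $5^m-1$ forces a factor of $5$ into $\varphi(5^m-1)$, which $5^n-1$ cannot accommodate. The lemma is really a clean packaging of this obstruction, isolated here so that it can be applied repeatedly in the remainder of the proof of Theorem \ref{thm:1}.
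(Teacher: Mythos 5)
Your proof is correct and follows essentially the same route as the paper: both arguments boil down to the fact that $p\mid 5^d-1\mid 5^m-1$ forces $5\mid p-1\mid\varphi(5^m-1)=5^n-1$, which is impossible. The only cosmetic difference is that you pass through the explicit factorization \eqref{eq:2}--\eqref{eq:3}, whereas the paper uses the divisibility $\varphi(5^d-1)\mid\varphi(5^m-1)$; the contradiction obtained is identical.
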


\begin{proof}
This is clear, because if $m$ is a multiple of a number $d$ such that $p\mid 5^d-1$ for some prime $p\equiv 1\pmod 5$, then $5\mid (p-1)\mid \phi(5^d-1)\mid \phi(5^m-1)=5^n-1$, which is false.
\end{proof}

Since $29423041$ is a prime dividing $5^{32}-1$, it follows that $\nu_2(m)\le 4$. From the Cunningham project tables \cite{Cun}, we deduced that if $q\le 512$ is an odd prime, then $5^q-1$ has a prime factor $p\equiv 1\pmod 5$
except for \newline $q\in  \{17,41,71,103,223,257\}$. So, if $q\mid m$ is odd, then 
\begin{equation}
\label{eq:Q}
q\in {\mathcal Q}:=\{17,41,71,103,223,257\}\cup \{q>512\}.
\end{equation}

\begin{lemma}
\label{lem:3}
The following inequality holds:
\begin{equation}
\label{eq:imp}
\log\left(\log\left(\frac{5^k e}{3.6}\right)\right)<20\sum_{q\mid m} \frac{\log\log q}{q}.
\end{equation}
\end{lemma}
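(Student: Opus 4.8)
The plan is to turn the equation into a multiplicative inequality bounding $5^k$ with $k=m-n$, group the prime divisors of $5^m-1$ by the multiplicative order of $5$ modulo each, and then collapse the resulting sum over divisors of $m$ into a sum over the prime divisors of $m$.

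First I would pass from \eqref{eq:1} to a lower bound for $(5^m-1)/(5^n-1)$. Dividing \eqref{eq:3} by \eqref{eq:2} gives $\frac{5^n-1}{5^m-1}=\prod_{p\mid 5^m-1}\bigl(1-\frac1p\bigr)$, and an elementary manipulation shows $\frac{5^m-1}{5^n-1}>5^k$. Hence
$$5^k<\prod_{p\mid 5^m-1}\left(1+\frac{1}{p-1}\right),\qquad\text{so}\qquad k\log 5<\sum_{p\mid 5^m-1}\frac{1}{p-1},$$
using $\log(1+x)<x$. I expect the constants $e$ and $3.6$ in the statement to emerge from this step together with the Mertens-type bound of Lemma \ref{lem1:RS}(ii) (note $3.6\approx 2\cdot 1.79$) once the two outer logarithms are taken at the end.

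Next I would sort the primes by order: for $p\mid 5^m-1$ put $d=\mathrm{ord}_p(5)$, so that $d\mid m$ and, for $d>1$, $p\equiv 1\pmod d$. Writing ${\mathcal P}_d=\{p:\mathrm{ord}_p(5)=d\}$ and $S_d=\sum_{p\in{\mathcal P}_d}\frac{1}{p-1}$ turns the last display into $k\log 5<\sum_{d\mid m}S_d$. To estimate $S_d$ I would argue as in Lemma \ref{lem1:sumSn}: $\prod_{p\in{\mathcal P}_d}p$ divides $5^d-1<5^d$ while each such prime exceeds $d$, so $|{\mathcal P}_d|<\frac{d\log 5}{\log d}$; since the primes of order $d$ have the shape $1+jd$, summing the harmonic series yields $S_d<\frac{2\log d}{d}$ for $d$ beyond a small threshold, the remaining small values (including ${\mathcal P}_1=\{2\}$ and ${\mathcal P}_2=\{3\}$) contributing only a bounded constant. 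Then Lemma \ref{lem1:useful} gives $\sum_{d\mid m}\frac{\log d}{d}<\bigl(\sum_{q\mid m}\frac{\log q}{q-1}\bigr)\frac{m}{\varphi(m)}$, so $\sum_{d\mid m}S_d$ is at most a constant plus $2\bigl(\sum_{q\mid m}\frac{\log q}{q-1}\bigr)\frac{m}{\varphi(m)}$.

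Finally I would take logarithms twice, obtaining $\log\log\!\bigl(\frac{5^ke}{3.6}\bigr)<\log\!\bigl(\sum_{d\mid m}S_d\bigr)$, and bound the right-hand side by splitting off $\log\frac{m}{\varphi(m)}=\sum_{q\mid m}\log\frac{q}{q-1}<\sum_{q\mid m}\frac{1}{q-1}$ and invoking the restrictions already in force (every odd prime factor of $m$ is at least $17$ and $\nu_2(m)\le 4$). Since $\log\log q\ge\log\log 17>1$ for each odd prime factor, every $\frac{1}{q-1}$ and every $\frac{\log q}{q-1}$ can be traded for a multiple of $\frac{\log\log q}{q}$, the factor $20$ absorbing the leftover constants. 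The step I expect to be hardest is exactly this last conversion — passing from the double logarithm of a sum over all divisors of $m$ to the single sum $20\sum_{q\mid m}\frac{\log\log q}{q}$ over prime divisors. Its delicate points are keeping $\log\log q$ positive (which is why the lower bound $q\ge 17$ on the odd prime factors, coming from Lemma \ref{lem:2} and the Cunningham-table computation, is indispensable) and checking that all accumulated constants are comfortably swallowed once multiplied through by $20$.
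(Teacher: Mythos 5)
Your first half matches the paper: the reduction to $5^k<\prod_{p\mid 5^m-1}(1+\tfrac{1}{p-1})$, the grouping by $d=z(p)\mid m$, the counting bound $\omega_d<\tfrac{d\log 5}{\log(d+1)}$, and the separation of the primes with $z(p)\mid 16$ (which, together with the $\tfrac1{p-1}\to\tfrac1p$ correction, is where $3.6$ actually comes from -- not from the constant $1.79$ of Lemma \ref{lem1:RS}(ii)). The genuine gap is exactly at the step you flag as hardest. After applying Lemma \ref{lem1:useful} you are left needing to bound $\log\bigl(C+2\bigl(\sum_{q\mid m}\tfrac{\log q}{q-1}\bigr)\tfrac{m}{\varphi(m)}\bigr)$ by $20\sum_{q\mid m}\tfrac{\log\log q}{q}$. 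The piece $\log\tfrac{m}{\varphi(m)}<\sum_{q\mid m}\tfrac{1}{q-1}$ does split over primes and can be traded as you say, but the piece $\log\bigl(\sum_{q\mid m}\tfrac{\log q}{q-1}\bigr)$ cannot: it is the logarithm of a sum, and a logarithm does not distribute over the terms of a sum. The literal trade you propose, $\tfrac{\log q}{q-1}\le c\,\tfrac{\log\log q}{q}$, is false for any fixed $c$ because the ratio $\tfrac{q\log q}{(q-1)\log\log q}$ is unbounded (already at $q\approx 10^{100}$ it exceeds $40$). What is actually needed is a global comparison between $\log\bigl(\sum_{q\mid m}\tfrac{\log q}{q-1}\bigr)$ and $\sum_{q\mid m}\tfrac{\log\log q}{q}$, which requires its own Mertens-type argument over the admissible primes $q\ge 17$; you have not supplied it, and it is the crux of the lemma.

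The paper avoids this issue with a different device: it proves the sharper large-sieve bound $S_d<\tfrac{3\log\log d}{\varphi(d)}$ (splitting the primes of order $d$ into ranges $p<4d$, $4d\le p\le d^2$, $p>d^2$ and using the Montgomery--Vaughan inequality \eqref{eq11:11} with Abel summation on the middle range), then observes that $a\mapsto 8.7\log(2\log a)$ is sub-multiplicative on $\{a\ge 17\}$, so that
$$
\sum_{\substack{d_1\mid m,\ d_1>1\\ d_1~\mathrm{odd}}}\frac{8.7\log(2\log d_1)}{\varphi(d_1)}<\prod_{q\mid m}\Bigl(1+\sum_{i\ge 1}\frac{8.7\log(2\log q^i)}{\varphi(q^i)}\Bigr)-1.
$$
Taking the second logarithm then turns the product over primes into a sum over primes (this is also where the $e$ in the statement enters, via $\log(5^ke/3.6)=1+\log(5^k/3.6)$), and a final local estimate gives $\sum_{i\ge1}\tfrac{8.7\log(2\log q^i)}{\varphi(q^i)}<\tfrac{20\log\log q}{q}$ for $q\ge 17$. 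If you want to keep your route through Lemma \ref{lem1:useful} and the cruder bound $S_d<\tfrac{2\log d}{d}$, you must both prove the missing global comparison and check the constants carefully; for $m$ of the form $2^{\alpha_0}\cdot 17$ the resulting inequality is numerically very tight, so the loss from $\tfrac{2\log d}{d}$ versus $\tfrac{3\log\log d}{\varphi(d)}$ may not be affordable.
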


\begin{proof}
Write
$$
m=2^{\alpha_0}\prod_{i=1}^s q_i^{\alpha_i}\qquad q_i~{\text{\rm odd~prime}}~i=1,\ldots,s.
$$
Recall that $\alpha_0\le 4$. None of the values $m=1,2,4,8,16$ satisfies equation \eqref{eq:1} for some $n$, so $s\ge 1$. Put $k=m-n$. Note that $k\ge 2$ because $m$ and $n$ are not coprime by Lemma \ref{lem:1}.  
Then
\begin{equation}
\label{eq:4}
5^{k}<\frac{5^m-1}{5^n-1}=\frac{5^m-1}{\phi(5^m-1)}=\prod_{p\mid 5^m-1} \left(1+\frac{1}{p-1}\right).
\end{equation}
For each prime number $p\ne 5$, we write $z(p)$ for the order of appearance of $p$ in the Lucas sequence of general term $5^n-1$. That is, $z(p)$ is the order of $5$ modulo $p$. Clearly, if $p\mid 5^m-1$, then $z(p)=d$ for some divisor $d$ of $m$. Thus, we can rewrite inequality \eqref{eq:4} as
\begin{equation}
\label{eq:5}
5^k<\prod_{d\mid m} \prod_{z(p)=d} \left(1+\frac{1}{p-1}\right).
\end{equation}
If $p\mid m$ and $z(p)$ is a power of $2$, then $z(p)\mid 16$, therefore $p\mid 5^{16}-1$. Hence, 
$$
p\in {\mathcal P}=\{2,3,13,17,313, 11489\}.
$$
Thus,
\begin{equation}
\label{eq:6}
\prod_{z(p)\mid 16} \left(1+\frac{1}{p-1}\right)\le \prod_{p\in P} \left(1+\frac{1}{p-1}\right)<3.5.
\end{equation}
Inserting \eqref{eq:6} into \eqref{eq:5}, we get
\begin{equation}
\label{eq:7}
\frac{5^k}{3.5}<\prod_{\substack{d\mid m\\ P(d)>2}} \prod_{z(p)=d} \left(1+\frac{1}{p-1}\right).
\end{equation}
We take logarithms in inequality \eqref{eq:7} above and use the inequality $\log(1+x)<x$ valid for all real numbers $x$ to get
$$
\log\left(\frac{5^k}{3.5}\right)<\sum_{\substack{d\mid m\\ P(d)>2}} \sum_{z(p)=d} \frac{1}{p-1}.
$$
If $z(p)=d$, then $p\equiv 1\pmod d$. If $P(d)>2$, then since $d\mid m$, we get that every odd prime factor of $d$ is in ${\mathcal Q}$. In particular, it is at least $17$. Thus, 
$p>34$. Hence,
$$
\log\left(\frac{5^k}{3.5}\right)<\sum_{\substack{d\mid m\\ P(d)>2}} \sum_{z(p)=d} \frac{1}{p}+\sum_{p\ge 37} \frac{1}{p(p-1)}<\sum_{\substack{d\mid m\\ P(d)>2}} \sum_{z(p)=d} \frac{1}{p}+0.007.
$$
We thus get that
\begin{equation}
\label{eq:8}
\log\left(\frac{5^k}{3.6}\right)<\sum_{\substack{d\mid m\\ P(d)>2}} S_d,
\end{equation}
where
\begin{equation}
\label{eq:9}
S_d:=\sum_{z(p)=d} \frac{1}{p}.
\end{equation}
We need to bound $S_d$. For this, we first take
$$
{\mathcal P}_d=\{p: z(p)=d\}.
$$
Put $\omega_d:=\#{\mathcal P}_d$. Since $p\equiv 1\pmod d$ for all $p\in {\mathcal P}_d$, we  have that 
\begin{equation}
\label{eq:10}
(d+1)^{\omega_d}\le \prod_{p\in {\mathcal P}_d} p<5^d-1<5^d,\quad {\text{\rm therefore}}\quad \omega_d<\frac{d\log 5}{\log(d+1)}.
\end{equation}
We now use inequality \eqref{eq11:11}. Put ${\mathcal Q}_d:=\{p<4d: p\equiv 1\pmod d\}$. Clearly, ${\mathcal Q}_d\subset \{d+1,2d+1,3d+1\}$ and since $d\mid m$ and $3\not\in {\mathcal Q}$, it follows that $d$ is not a multiple of $3$. In particular, one of $d+1$ and $2d+1$ is a multiple of $3$, so that at most one of these two numbers can be a prime. 
We now split $S_d$ as follows:
\begin{equation}
\label{eq:12}
S_d \le  \sum_{\substack{p<4d\\ p\equiv 1\pmod d}} \frac{1}{p}+\sum_{\substack{4d\le p\le d^2\\ p\equiv 1\pmod d}} \frac{1}{p}+\sum_{\substack{p>d^2\\ z(p)=d}} \frac{1}{p}:=T_1+T_2+T_3.
\end{equation}
Clearly,
\begin{equation}
\label{eq:S1}
T_1=\sum_{p\in {\mathcal Q}_d}\frac{1}{p}.
\end{equation}
For $S_2$, we use estimate \eqref{eq11:11} and Abel's summation formula to get
\begin{eqnarray*}
T_2 & \le & \frac{\pi(x;d,1)}{x}\Big|_{x=4d}^{d^2}+\int_{4d}^{d^2} \frac{\pi(t;d,1)}{t^2} dt\\
& \le & \frac{2d^2}{d^2\phi(d)\log d}+\frac{2}{\phi(d)} \int_{4d}^{d^2} \frac{dt}{t\log(t/d)}\\
& \le & \frac{2}{\phi(d)\log d}+\frac{2}{\phi(d)} \log\log(t/d)\Big|_{t=4d}^{d^2}\\
& = & \frac{2\log\log d}{\phi(d)}+\frac{2}{\phi(d)} \left(\frac{1}{\log d}-\log\log 4\right).
\end{eqnarray*}
The expression $1/\log d-\log\log 4$ is negative for $d\ge 34$, so 
\begin{equation}
\label{eq:S2}
T_2<\frac{2\log\log d}{\phi(d)}\quad {\text{\rm for~all}}\quad d\ge 34.
\end{equation}
Inequality \eqref{eq:S2} holds for $d=17$ as well, since there
$$
T_2<S_{17}=\frac{1}{409}+\frac{1}{466344409}<0.003<0.13<\frac{2\log\log 17}{\phi(17)}.
$$
Hence, inequality \eqref{eq:S2} holds for all divisors $d$ of $m$ with $P(d)>2$. 

As for $T_3$, we have by \eqref{eq:10}, 
\begin{equation}
\label{eq:S3}
T_3<\frac{\omega_d}{d^2}<\frac{\log 5}{d\log(d+1)}.
\end{equation}
Hence, collecting \eqref{eq:S1}, \eqref{eq:S2} and \eqref{eq:S3}, we obtain
\begin{equation}
\label{eq:13}
S_d<\sum_{p\in {\mathcal Q}_d} \frac{1}{p}+\frac{2\log\log d}{\phi(d)}+\frac{\log 5}{d\log(d+1)}.
\end{equation}
We now show that
\begin{equation}
\label{eq:14}
S_d<\frac{3\log\log d}{\phi(d)}.
\end{equation}
Since $\phi(d)<d$ and at most one of $d+1$ and $2d+1$ is prime, we get, via \eqref{eq:13}, that
\begin{eqnarray*}
S_d & < & \frac{1}{d+1}+\frac{1}{3d+1}+\frac{2\log\log d}{\phi(d)}+\frac{\log 5}{d\log(d+1)}\\
& < & \frac{1}{\phi(d)}\left(\frac{4}{3}+2\log\log d+\frac{\log 5}{\log(d+1)}\right).
\end{eqnarray*}
So, in order to prove \eqref{eq:14}, it suffices that
$$
\frac{4}{3}+\frac{\log 5}{\log(d+1)}<2\log\log d,\quad {\text{\rm which~holds~for~all}}\quad d>200.
$$
The only possible divisors $d$ of $m$ with $P(d)>2$ (so, whose odd prime factors are in ${\mathcal Q}$), and with $d\le 200$ are 
\begin{equation}
\label{eq:15}
R:=\{17,34,41,68,71,82,103,136,142,164\}.
\end{equation}
We checked individually that for each of the values of $d$ in $R$ given by \eqref{eq:15}, inequality \eqref{eq:14} holds. 

Now we write $d=2^{\alpha_d} d_1$, where $\alpha_d\in \{0,1,2,3,4\}$ and $d_1$ is odd. Since $d_1\ge 17>2^{\alpha_d}$, we have that $d<d_1^2$. Hence, keeping $d_1$ fixed
and summing over $\alpha_d$, we have that
\begin{equation}
\label{eq:16}
\sum_{\alpha_d=0}^4 S_{2^{\alpha_d} d_1}<3\sum_{\alpha_d=0}^{4} \frac{\log(2\log d_1)}{\phi(d_1)}\left(1+1+\frac{1}{2}+\frac{1}{4}+\frac{1}{8}\right)<\frac{8.7\log(2\log d_1)}{\phi(d_1)}.
\end{equation}
Inserting inequalities \eqref{eq:14} and \eqref{eq:16} into \eqref{eq:8}, we get that
\begin{equation}
\label{eq:17}
\log\left(\frac{5^k}{3.6}\right)<\sum_{\substack{d_1\mid m\\ d_1>1\\ d_1~{\text{\rm odd}}}} \frac{8.7\log(2\log d_1)}{\phi(d_1)}.
\end{equation}
The function
$$
a\mapsto 8.7\log(2\log a)
$$
is sub--multiplicative when restricted to the set ${\mathcal A}=\{a\ge 17\}$. That is, the inequality 
$$
8.7\log(2\log(ab))\le 8.7\log(2\log a) \cdot 8.7\log(2\log b)\quad {\text{\rm holds if}}\quad \min\{a,b\}\ge 17.
$$
Indeed, to see why this is true, assume say that $a\le b$. Then $\log ab\le 2\log b$, so it is enough to show that
$$
8.7\log 2+8.7\log(2\log b)\le 8-7\log(2\log a)\cdot8.7\log(2\log b)
$$
which is equivalent to 
$$
8.7\log(2\log b)\left(8.7\log(2\log a)-1\right)>8.7\log 2,
$$
which is clear for $\min\{a,b\}\ge 17$.  It thus follows that
$$
\sum_{\substack{d_1\mid m\\ d_1>1\\ d_1~{\text{\rm odd}}}} \frac{8.7\log(2\log d_1)}{\phi(d_1)}<\prod_{q\mid m}\left(1+\sum_{i\ge 1} \frac{8.7\log(2\log q^i)}{\phi(q^i)}\right)-1.
$$
Inserting the above inequality into \eqref{eq:17}, taking logarithms and using the fact that $\log(1+x)<x$ for all real numbers $x$, we get
\begin{equation}
\label{eq:18}
\log\left(\log\left(\frac{5^k e}{3.6}\right)\right)<\sum_{q\mid m} \sum_{i\ge 1} \frac{8.7\log(2\log q^i)}{\phi(q^i)}.
\end{equation}
Next we show that 
\begin{equation}
\label{eq:19}
\sum_{i\ge 1} \frac{8.7\log(2\log(q^i))}{\phi(q^i)}<\frac{20\log\log q}{q}\quad {\text{\rm for}}\quad q\in {\mathcal Q}.
\end{equation}
We check that it holds for $q=17$. So, from now on, $q\ge 41$. Since
$$
\log(2\log q^i)=\log(2i)+\log\log q<(1+\log i)+\log\log q\le i+\log\log q,
$$
we have that
\begin{eqnarray*}
\sum_{i=1}^{\infty} \frac{\log(2\log(q^i))}{\phi(q^i)} & < & \sum_{i\ge 1} \frac{i}{q^{i-1}(q-1)}+\log\log q \sum_{i\ge 1} \frac{1}{q^{i-1}(q-1)}\\
& = & \frac{q^2}{(q-1)^3}+(\log\log q) \left(\frac{q}{(q-1)^2}\right)\\
& < & (\log\log q)\left(\frac{q^2}{(q-1)^3}+\frac{q}{(q-1)^2}\right)\\
& = & (\log\log q) \left(\frac{2q^2-q}{(q-1)^3}\right)
\end{eqnarray*}
because $\log\log q>1$. Thus, it suffices that 
$$
8.7 \left(\frac{2q^2-q}{(q-1)^3}\right)<\frac{20}{q},\quad {\text{\rm which~holds~for}}\quad q\ge 41.
$$
Hence, \eqref{eq:19} holds, therefore \eqref{eq:18} implies
\begin{equation}
\label{eq:20}
\log\left(\log\left(\frac{5^k e}{3.6}\right)\right)<20\sum_{q\mid m} \frac{\log\log q}{q},
\end{equation}
which is exactly \eqref{eq:imp}. This finishes the proof of the lemma.
\end{proof}

\begin{lemma}
\label{lem:4}
If $q<10^4$ and $q\mid m$, then $q\mid n$. 
\end{lemma}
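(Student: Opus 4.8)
The plan is to argue by contradiction. Suppose $q$ is a prime with $q<10^4$, $q\mid m$, but $q\nmid n$; I would treat odd $q$ first, since this is the substantive case. The whole argument is a comparison of the two expressions for the $q$-adic valuation of the common value $\nu_q(5^n-1)=\nu_q(\varphi(5^m-1))$ coming from equation \eqref{eq:1}, a lower bound produced by many small prime factors of $5^m-1$ and an upper bound produced by Lifting-the-Exponent.

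For the lower bound I would use the Primitive Divisor Theorem exactly as in the proof of Lemma \ref{lem:3}. Every divisor $d\mid m$ that is a multiple of $q$ satisfies $d\ge q\ge 2$, so $5^d-1$ has a primitive prime $p_d$, i.e. one with $z(p_d)=d$; such a prime satisfies $p_d\equiv 1\pmod d$, hence $q\mid p_d-1$. Distinct $d$ yield distinct primes $p_d$, each dividing $5^m-1$, so each contributes at least one factor $q$ to $\varphi(5^m-1)$. Writing $a=\nu_q(m)$ and $m=q^a m'$ with $q\nmid m'$, the number of such divisors $d$ is exactly $a\,\tau(m')$, so
\[
\nu_q(5^n-1)=\nu_q(\varphi(5^m-1))\ge a\,\tau(m').
\]
For the upper bound, note that the inequality just obtained shows $q\mid 5^n-1$, hence $z(q)\mid n$; since $q$ is odd and $q\nmid n$, Lifting-the-Exponent gives $\nu_q(5^n-1)=\nu_q(5^{z(q)}-1)+\nu_q(n/z(q))=\nu_q(5^{z(q)}-1)=:e_q$. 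The decisive computational input, and the reason the cutoff is exactly $10^4$, is that $e_q=1$ for every odd prime $q<10^4$; equivalently there is no base-$5$ Wieferich prime below $10^4$ (the smallest one is $20771$). Combining the two bounds yields $a\,\tau(m')\le 1$, which forces $a=1$ and $\tau(m')=1$, i.e. $m'=1$ and $m=q$.

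The argument is then closed by Lemma \ref{lem:1}: if $m=q$ is prime, then $\gcd(m,n)>1$ forces $q\mid n$, contradicting $q\nmid n$. I expect the main obstacle to be twofold. The conceptual heart is the finite verification that $e_q=1$ throughout the range, which is precisely what pins the threshold at $10^4$. The other delicate point is the boundary case where $m$ has very few divisors: there the $q$-adic count degenerates ($a\,\tau(m')=1$) and gives no contradiction by itself, so one genuinely needs the non-coprimality statement of Lemma \ref{lem:1} to finish. Finally, the prime $q=2$ falls outside the clean template, since $e_2=2$; I would handle it separately, using the a priori bound $\nu_2(m)\le 4$ together with the same input $\gcd(m,n)>1$.
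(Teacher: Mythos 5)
Your argument for odd $q$ is correct, but it takes a genuinely different route from the paper at the key step. The paper fixes the single divisor $d=q$ and proves that $(5^q-1)/4$ already has at least \emph{two} distinct prime factors, each $\equiv 1\pmod q$: it assumes $(5^q-1)/4=q_1^{\beta_1}$ is a prime power, deduces that $\beta_1$ is even by reducing modulo $5$, and then invokes Ljunggren's theorem on $(x^n-1)/(x-1)=\square$ to get a contradiction. This yields $q^2\mid\varphi(5^m-1)=5^n-1$ outright, with no case distinction on the shape of $m$, and the conclusion $q\mid n$ follows from $q\,\|\,5^{q-1}-1$. You instead count \emph{all} divisors $d$ of $m$ that are multiples of $q$ and harvest one primitive prime per divisor, getting $\nu_q(5^n-1)\ge a\tau(m')$ against the upper bound $e_q=1$; this forces $m=q$, which you then kill with Lemma \ref{lem:1}. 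Your route buys you independence from Ljunggren's theorem (the deepest external input in the paper's proof) at the price of an extra reliance on the non-coprimality lemma to close the degenerate case $m=q$; the paper's route is self-contained with respect to Lemma \ref{lem:1} but imports a nontrivial Diophantine result. Both proofs rest on the same computational fact that there is no base-$5$ Wieferich prime below $10^4$, which you correctly identify as the source of the cutoff. (Incidentally, the paper reuses your divisor-counting idea later, in Lemmas \ref{lem:5} and \ref{lem:6}.)

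One caveat: your treatment of $q=2$ as sketched does not go through. The bound $\nu_2(m)\le 4$ and the fact $\gcd(m,n)>1$ do not by themselves force $2\mid n$ (the common factor could be an odd prime). The correct one-line argument, which is what the paper does, is that $2\mid m$ gives $24\mid 5^2-1\mid 5^m-1$, hence $8=\varphi(24)\mid\varphi(5^m-1)=5^n-1$, whereas $\nu_2(5^n-1)=2$ for odd $n$; so $n$ must be even. Equivalently, in your valuation language: $\nu_2(5^m-1)=2+\nu_2(m)\ge 3$ for even $m$, and $5^m-1$ has an odd prime factor, so $\nu_2(\varphi(5^m-1))\ge 3>2$. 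With that replacement your proof is complete.
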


\begin{proof} This is clear for $q=2$, since then $24\mid 5^2-1\mid 5^m-1$, therefore $8=\phi(24)\mid \phi(5^m-1)=5^n-1$, so $n$ is even. Let now $q$ be odd. Consider the number
\begin{equation}
\label{eq:21}
\frac{5^q-1}{4}=q_1^{\beta_1}\cdots q_l^{\beta_l}.
\end{equation}
Assume that $l\ge 2$. Since $q_i\equiv 1\pmod q$ for $i=1,\ldots,l$, we have that $q^2\mid (q_1-1)\cdots (q_l-1)\mid \phi(5^m-1)=5^n-1$. Since $q\| 5^{q-1}-1$ 
for all odd $q<10^4$, we get that, $q\mid n$, as desired. So, it remains to show that $l\ge 2$ in \eqref{eq:21}. We do this by contradiction. Suppose that $l=1$. Since $q_1\equiv 4\pmod 5$, reducing equation \eqref{eq:21} modulo $5$ we get that
$$
1\equiv 4^{\beta_1}\pmod 5,
$$
so $\beta_1$ is even. Hence,
$$
\frac{5^n-1}{5-1}=\square.
$$
However, the equation
$$
\frac{x^n-1}{x-1}=\square
$$
for integers $x>1$ and $n>2$ has been solved by Ljunggren \cite{Lj} who showed that the only possibilities are $(x,n)=(3,5),~(7,4)$. This contradiction shows that $l\ge 2$ and finishes the proof of this lemma.
\end{proof}

\begin{remark.}
Apart from Ljunggren's result, the above proof was based on the computational fact that if $p<10^4$ is an odd prime, then $p\| 5^{p-1}-1$. In fact, the first prime failing this test is $q=20771$.
\end{remark.}

\begin{lemma}
\label{lem:5}
We have $k=2$.
\end{lemma}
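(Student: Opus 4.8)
The plan is to argue by contradiction: I assume $k\ge 3$ and extract a contradiction from the inequality of Lemma \ref{lem:3}. Note first that $k=m-n\ge 2$ is already known: since $\varphi(5^m-1)=5^n-1<5^m-1$ we have $m>n$, and by Lemma \ref{lem:1} the numbers $m,n$ share a common factor $g>1$, which divides $m-n=k$, so $k\ge g\ge 2$. Thus it suffices to rule out $k\ge 3$.

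The first step is to exploit the divisibility structure coupling $m$ and $k$. Because $n=m-k$, one has $\gcd(m,n)=\gcd(m,k)$, so Lemma \ref{lem:4} shows that every odd prime $q\mid m$ with $q<10^4$ must divide $k$. Setting $k_0:=\prod_{q\mid m,\ q\ \mathrm{odd},\ q<10^4}q$, we get $k_0\mid k$, and hence the lower bound $\log k\ge\log k_0=\sum_{q\mid m,\ q<10^4}\log q$. This is the key mechanism: small prime factors of $m$ are forced into $k$, so they inflate the left-hand side of Lemma \ref{lem:3}, which for $k\ge 3$ I would bound below by $\log\log(5^ke/3.6)=\log(1.6094k-0.2809)\ge\log k+\log(3/2)$.

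Next I would split the sum in Lemma \ref{lem:3} as $\sum_{q\mid m}\tfrac{\log\log q}{q}=P_{\mathrm{sm}}+P_{\mathrm{lg}}$, the two pieces running over the odd prime factors $q<10^4$ and $q\ge 10^4$. Since every odd prime factor of $m$ lies in ${\mathcal Q}$ and is therefore at least $17$, I can use the elementary per-prime inequality $q\log q>20\log\log q$, valid for all $q\ge 17$. Summing it over the small primes yields $\sum_{q\mid m,\ q<10^4}\log q\ge 20P_{\mathrm{sm}}$, so the previous step gives $\log k\ge 20P_{\mathrm{sm}}$. Feeding this into Lemma \ref{lem:3}, the small-prime contribution $20P_{\mathrm{sm}}$ is absorbed by the term $\log k$ on the left, and the inequality collapses to $\log(3/2)<20P_{\mathrm{lg}}$; in other words, any counterexample with $k\ge 3$ forces the large-prime tail $P_{\mathrm{lg}}=\sum_{q\mid m,\ q\ge 10^4}\tfrac{\log\log q}{q}$ to exceed a fixed positive constant (about $0.02$).

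The hard part is therefore to bound $P_{\mathrm{lg}}$ from above and show it falls below this threshold, which is where the genuine work lies. A single prime $q\ge 10^4$ contributes at most $(\log\log 10^4)/10^4$, so the only obstruction is the possibility that $m$ has a great many large prime factors. To dispose of this I would invoke the strong restriction furnished by Lemma \ref{lem:2}: every prime $q\mid m$ must have $5^q-1$ free of prime factors $\equiv 1\pmod 5$, i.e. $q\in{\mathcal Q}$, together with the count $\omega_d\le d\log 5/\log(d+1)$ from the proof of Lemma \ref{lem:3}, to control both the number and the size of the admissible large primes and force $P_{\mathrm{lg}}$ below the required constant. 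This yields the contradiction, excluding $k\ge 3$; combined with $k\ge 2$ it gives $k=2$. I expect the control of $P_{\mathrm{lg}}$ to be the main obstacle, since it is exactly the place where one must rule out $m$ being an enormous product of admissible large primes.
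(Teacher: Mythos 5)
Your overall strategy is genuinely different from the paper's: you absorb the contribution of the primes dividing $\gcd(m,n)$ into the $\log k$ on the left of Lemma \ref{lem:3} via the per-prime inequality $q\log q>20\log\log q$, and then try to kill the residual tail in one shot, whereas the paper keeps the full inequality and runs an iterative descent on $k$ (using $T_1\le \log\log k\sum_{q\mid k}1/q$, Lemma \ref{lem1:RS}(iii), and the explicit smallest elements $17,41,71,103,223$ of ${\mathcal Q}$ to force $k$ down from $e^{1008}$ to $2\times 10^8$ to $34$ to $11$ to $2$). The absorption idea is sound and the arithmetic of your reduction to $\log(3/2)<20P_{\mathrm{lg}}$ checks out for $k\ge 3$. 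However, there are two genuine gaps. First, your split is by size rather than by divisibility: a prime $q\ge 10^4$ dividing both $m$ and $n$ lands in $P_{\mathrm{lg}}$, where none of the tools you name controls it (Lemma \ref{lem:2} only restricts \emph{which} primes may divide $m$, not how many). Such primes should instead be absorbed into $\log k$ exactly as the small ones are (the inequality $q\log q>20\log\log q$ holds for them too); the correct partition is the paper's $U=\{q\mid n\}$ versus $V=\{q\nmid n\}$. This is fixable but, as written, your $P_{\mathrm{lg}}$ is not the quantity your tail argument could ever bound.

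The more serious gap is the tail bound itself. You need $\sum_{q\in V}\frac{\log\log q}{q}<\log(3/2)/20\approx 0.0203$, and neither Lemma \ref{lem:2} nor the count $\omega_d<d\log 5/\log(d+1)$ delivers this: $\omega_d$ counts prime factors of $5^m-1$ with order of apparition $d$, not prime factors of $m$, and there are enough admissible primes just above $10^4$ (each contributing about $2.2\times 10^{-4}$) that, absent a bound on $\#V$, the tail can easily exceed $0.02$. The missing ingredient is the paper's Primitive Divisor counting argument: if $r_1<\cdots<r_u$ are primes of $V$ in a dyadic interval $[2^t,2^{t+1})$, then the $2^{u-1}$ divisors $d$ of $r_1\cdots r_{u-1}$ give $2^{u-1}$ distinct primitive prime factors of the numbers $5^{dr_u}-1$, all congruent to $1\bmod r_u$, whence $2^{u-1}\le \nu_{r_u}(5^n-1)\le\nu_{r_u}(5^{r_u-1}-1)<r_u\log 5/\log r_u$ and $u\le t-1$. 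Note also that the threshold you need is stringent: the paper's own estimate of this tail is only $20T_2<1.4$, i.e.\ $T_2<0.07$, because it starts the dyadic summation at $t=9$; your plan survives only if you observe that Lemma \ref{lem:4} forces every $q\in V$ to exceed $10^4>2^{13}$, so the sum may start at $t=13$, giving roughly $20T_2<0.14$ and $T_2<0.007<0.0203$. Without supplying both the counting mechanism and this sharpened starting point, the proof does not close.
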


\begin{proof} We split the odd prime factors $p$ of $m$ in two subsets
$$
U=\{q\mid n\}\quad {\text{\rm and}}\quad V=\{q\nmid n\}.
$$
By Lemma \ref{lem:3}, we have 
\begin{equation}
\label{eq:22}
\log\left(\log\left(\frac{5^k e}{3.6}\right)\right)\le 20\left(\sum_{q\in U} \frac{\log\log q}{q}+\sum_{p\in V} \frac{\log\log q}{q}\right):=20(T_1+T_2).
\end{equation}
We first bound $T_2$. By Lemma \ref{lem:4}, if $q\in V$, then $q>10^4$. In particular, $q>512$. Let $t\ge 9$, and put $I_t=[2^t,2^{t+1})\cap V$. Suppose that $r_1,\ldots,r_u$ are all the members of $I_t$. By the Primitive Divisor Theorem,  $5^{dr_u}-1$ has a primitive prime factor for all divisors $d$ of $r_1\cdots r_{u-1}$, and this prime is congruent to $1$ modulo 
$r_u$. Since the number $r_1\cdots r_{u-1}$ has $2^{u-1}$ divisors, we get that 
$$
2^{u-1}\le \nu_{r_u}(\phi(5^m-1))=\nu_{r_u}(5^n-1).
$$
Since $r_u\nmid n$, we get that 
$$
2^{u-1}\le \nu_{r_u}(5^{q_u-1}-1)<\frac{\log 5^{r_u}}{\log r_u}=\frac{r_u \log 5}{\log r_u}<\frac{2^{t+1}\log 5}{(t+1)\log 2}.
$$
The above inequality implies that $u\le t-1$, otherwise for $u\ge t$, we would get that
$$
2^{t-1}\le \frac{2^{t+1}\log 5}{(t+1)\log 2},\qquad {\text{\rm or}}\qquad 4\log 5\ge (t+1)\log 2\ge 10\log 2,
$$
a contradiction. This shows that $\#I_t\le t-1$ for all $t\ge 9$. Hence,
$$
20T_2\le \sum_{t\ge 9} \frac{20(t-1)\log\log 2^t}{2^t}<1.4.
$$
Hence, we get that
\begin{equation}
\label{eq:23}
\log\left(\log\left(\frac{5^k e}{3.6}\right)\right)<20\sum_{\substack{q\mid \gcd(m,k)\\ q>2}} \frac{\log\log q}{q}+1.4.
\end{equation}
We use \eqref{eq:23} to bound $k$ by better and better bounds. We start with 
$$
\log\left(\log\left(\frac{5^ke}{3.6}\right)\right)<20(\log\log k)\left(\sum_{\substack{p\mid k\\ q>2}} \frac{1}{q}\right)+1.4,
$$
which is implied by \eqref{eq:23}. Assume $k\ge 3$. We have
$$
\sum_{p\mid k}\frac{1}{p}<\sum_{d\mid k}\frac{1}{d}=\frac{\sigma(k)}{k}<\frac{k}{\phi(k)}<1.79\log\log k+\frac{2.5}{\log\log k}
$$
where the  last inequality above holds for all $k\ge 3$, except for $k=223092870$ by Lemma \ref{lem1:RS} $(iii)$. We thus get that
$$
\log k<\log(k\log 5+1-\log(3.6))<20(\log\log k)^2+51.4,
$$
which gives $\log k<1008$. Since 
$$
\sum_{17\le q\le  1051} \log q>1008>\log k,
$$
it follows that 
$$
T_1=\sum_{\substack{q\mid \gcd(m,k)\\ p>2}}\frac{\log \log q}{q}<\sum_{17\le q\le  1051} \frac{\log\log q}{q}<0.9.
$$
Hence, 
$$
\log\left(\log\left(\frac{5^k e}{3.6}\right)\right)<20\times 0.9+1.4;\quad {\text{\rm hence}}\quad k<2\times 10^8.
$$
By \eqref{eq:Q}, the first few possible odd prime factors of $n$ are $17,~41,~71,~103$ and $223$. Since 
$$
17\times 41\times 71\times 103\times 223>10^9>k,
$$
it follows that
$$
T_2\le \frac{\log\log 17}{17}+\frac{\log\log 41}{41}+\frac{\log\log 71}{71}+\frac{\log\log 103}{103}<0.13.
$$
Hence,
$$
\log\left(\log\left(\frac{5^k e}{3.6}\right)\right)<20\times 0.13+1.4=4;\quad {\text{\rm hence}}\quad k\le 34.
$$
If follows that $k$ can have at most one odd prime, so  
$$
T_2\le \frac{\log\log 17}{17}<0.07,
$$ 
therefore
$$
\log\left(\log\left(\frac{5^k e}{3.6}\right)\right)<20\times 0.07+1.4=2.8;\quad {\text{\rm hence}}\quad k\le 11.
$$
Thus, in fact $k$ has no odd prime factor, giving that $T_2=0$, so 
$$
\log\left(\log\left(\frac{5^k e}{3.6}\right)\right)<1.4,\quad {\text{\rm therefore}}\quad k\le 2.
$$
Since by Lemma \ref{lem:1}, $m$ and $n$ are not coprime, it follows that in fact $k\ge 2$, so $k=2$.
\end{proof}

\begin{lemma}
\label{lem:6}
We have $k>2$. 
\end{lemma}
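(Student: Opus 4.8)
Assume for contradiction that $k=2$, so that $m=n+2$ and \eqref{eq:1} reads $\varphi(5^m-1)=5^{m-2}-1$. The plan is to show that this equation forces $5^m-1$ to have simultaneously too few and too many prime factors. First I would pin down the parity. Since $m-n=2$, the integers $m,n$ have the same parity; if $m$ were odd then $\gcd(m,n)=\gcd(m,m-2)=\gcd(m,2)=1$, contradicting Lemma \ref{lem:1}. Hence $m,n$ are both even and $\gcd(m,n)=\gcd(m,2)=2$, so that $\gcd(5^m-1,5^n-1)=5^{\gcd(m,n)}-1=24$.

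Next I would read off the multiplicative shape of $5^m-1$. If $p$ is an odd prime with $p^a\parallel 5^m-1$ and $a\ge 2$, then $p^{a-1}$ divides both $5^m-1$ and $\varphi(5^m-1)=5^n-1$, hence divides $\gcd(5^m-1,5^n-1)=24$, forcing $p=3$ and $a=2$. By Lemma \ref{lem:4}, any odd prime $q<10^4$ dividing $m$ would divide $n$, hence divide $\gcd(m,n)=2$, which is impossible; so every odd prime factor of $m$ exceeds $10^4$, in particular $3\nmid m$ and $\nu_3(5^m-1)=1+\nu_3(m/2)=1$. A short $2$-adic count, using that $3\mid 5^m-1$ contributes $\nu_2(3-1)=1$, gives $\nu_2(n)\ge\nu_2(m)$ and then $\nu_2(m)=1$, so $\nu_2(5^m-1)=2+\nu_2(m)=3$. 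Writing $m=2m_1$ with $m_1$ odd and all prime factors $>10^4$, I obtain the clean form
$$5^m-1=2^3\cdot 3\cdot p_1\cdots p_r,$$
with $p_1<\cdots<p_r$ distinct primes, each $p_i>z(p_i)\ge r_0$, where $r_0$ is the least prime factor of $m_1$.

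Now come the two competing estimates. Since $\varphi(5^m-1)=2^3\prod_i(p_i-1)$, comparing $\nu_2$ with $\nu_2(5^{m-2}-1)=2+\nu_2(n)$ yields $\sum_i\nu_2(p_i-1)=\nu_2(n)-1$; as each $p_i$ is odd this gives $r\le\nu_2(n)-1\le\log_2 n$, i.e. only logarithmically many prime factors. On the other hand, substituting the factorisation into $\varphi(5^m-1)=5^{m-2}-1$ and using $5^m-1=25(5^{m-2}-1)+24$ produces the exact identity $3\bigl(\prod_i p_i-1\bigr)=25\prod_i(p_i-1)$, equivalently $\prod_i\frac{p_i}{p_i-1}=\frac{25}{3}+\frac{1}{\prod_i(p_i-1)}>\frac{25}{3}$. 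Because every $p_i>10^4$, the relation $\sum_i 1/(p_i-1)>\log(25/3)$ forces $r$ to be enormous (at least several tens of thousands).

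The hard part is to turn this tension into an outright contradiction, since by themselves the two bounds only give $m>2^{21000}$ and no upper bound on $m$ is at hand. I would finish as follows. In the core case $m_1=p$ prime, the divisors of $m=2p$ exceeding $2$ are $p$ and $2p$, so every $p_i$ has $z(p_i)\in\{p,2p\}$, whence $p_i\equiv 1\pmod p$ and $p_i>p$; then $\frac{25}{3}<\prod_i\frac{p_i}{p_i-1}<(1+1/p)^r<e^{r/p}$ forces $r>p\log(25/3)$, flatly contradicting $r\le\log_2 n<\log_2 m$. For composite $m_1$ I would run the same squeeze through the divisors $d\mid m$: grouping the $p_i$ according to $z(p_i)=d$, bounding each block by the sums $S_d$ of Lemma \ref{lem:3}, and counting primitive divisors from below, so that the necessary inequality $\sum_{d\mid m,\,d>2}S_d>\log(25/3)$ becomes unattainable once all prime factors of $m_1$ exceed $10^4$ while $r\le\nu_2(m_1-1)$. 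This sieve bookkeeping for general $m_1$ is the main obstacle; the rest is the routine structural extraction carried out above. Combining Lemma \ref{lem:5} ($k=2$) with the present conclusion ($k>2$) yields a contradiction, completing the proof of Theorem \ref{thm:1}.
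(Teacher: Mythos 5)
Your structural opening is correct and genuinely different from the paper's: the reduction to $\gcd(m,n)=2$, the conclusion that $5^m-1=2^3\cdot 3\cdot p_1\cdots p_r$ with the $p_i$ distinct and each $p_i-1$ divisible by a divisor of $m$ exceeding $10^4$, the exact identity $3\bigl(\prod_i p_i-1\bigr)=25\prod_i(p_i-1)$, and the resulting contradiction in the case $m_1=p$ prime are all sound. The gap is exactly where you flag it, and it is not mere bookkeeping: the two constraints you propose to play off in the general case --- every odd prime factor of $m$ exceeds $10^4$, and $r\le\nu_2(n)-1=\nu_2(m_1-1)$ --- need never collide. If $m_1$ has, say, a thousand distinct prime factors all just above $10^4$, then the provable upper bounds on $\sum_{d\mid m,\,d>2}S_d$ (of the shape $\prod_{q\mid m}(1+c\log\log q/q)-1$) genuinely exceed $\log(25/3)$, so no contradiction arises from that side; and on the other side $\nu_2(m_1-1)$ and $\tau(m_1)$ can both be astronomically large, so the $2$-adic count only forces $n\ge 2^{2^{1000}}$ rather than producing a contradiction, since no upper bound on $n$ is available. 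The missing ingredient is a bound on $\omega(m)$ \emph{in terms of the smallest odd prime $q_1$ of $m$ itself}: each of the $2^{s-1}$ divisors $d$ of $m/q_1^{\nu_{q_1}(m)}$ built from the other primes gives a primitive prime of $5^{dq_1}-1$ congruent to $1$ modulo $q_1$, so $q_1^{2^{s-1}}\mid\varphi(5^m-1)=5^n-1$; since $q_1\nmid n$ (else $q_1\mid k=2$), this forces $2^{s-1}\le\nu_{q_1}(5^{q_1-1}-1)<q_1\log 5/\log q_1$, hence $s\le 1+\log_2 q_1$, which is what makes $\sum_{d\mid m}S_d$ provably tiny when $q_1>10^4$.

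For comparison, the paper's proof of this lemma does none of the structural analysis of $5^m-1$: it takes the smallest odd prime $q_1\mid m$, runs precisely the $q_1$-adic primitive-divisor count above to get $2^{s-1}<q_1\log 5/\log q_1$, feeds this and $k=2$ into the already-established inequality of Lemma \ref{lem:3} to conclude $q_1<300$, and then contradicts Lemma \ref{lem:4}, which would force $q_1\mid n$ and hence $q_1\mid 2$. If you graft that $q_1$-adic count onto your argument, your route also closes, but at that point it essentially reproduces the paper's mechanism, with your sharper constant $\log(25/3)$ standing in for the paper's $\log\log(5^2e/3.6)$ and your careful factorization of $5^m-1$ playing no essential role.
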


\begin{proof}
Let $q_1$ be the smallest prime factor of $m$ which exists for if not $n\mid 16$, which is not possible. Let $q_1,\ldots,q_s$ be all the prime factors of $m$. 
For each divisor $d$ of $q_2\cdots q_{s-1}$, the number $5^{dq_1}-1$ has a primitive divisor which is congruent to $1$ modulo $q_1$. Since there are $2^{s-1}$ divisors 
of $q_2\cdots q_s$, we get that
$$
2^{s-1}\le \nu_{q_1}(\phi(5^m-1))=\nu_{q_1}(5^n-1)
$$
Since $q_1$ does not divide $n$ (otherwise it would divide $k=2$), we get that 
$$
2^{s-1}\le \nu_{q_1}\left(5^{q_1-1}-1\right)<\frac{\log 5^{q_1}}{\log q_1}=\frac{q_1\log 5}{\log q_1}<q_1.
$$
Hence,
$$
s<1+\frac{\log q_1}{q_1}.
$$
Lemmas \ref{lem:3} and \ref{lem:6} now show that
\begin{eqnarray*}
\log\left(\log\left(\frac{5^2 e}{3.6}\right)\right) & < & 20\sum_{\substack{q\mid m\\ q>2}} \frac{\log\log q}{q}< \frac{20 s \log\log q_1}{q_1}\\
& < & 20\left(1+\frac{\log q_1}{\log 2}\right) \frac{\log\log q_1}{q_1}.
\end{eqnarray*}
This gives $q_1<300$, so by Lemma \ref{lem:4}, we have $q_1\mid k$, which finishes the proof of this lemma.
\end{proof}

Obviously, Lemma \ref{lem:5} and \ref{lem:6} contradict each other, which completes the proof of the theorem.


\section{On the Equation $\varphi(X^m-1)=X^n-1$}
\label{sec:1}
In this section, we prove that the  equation of the form $\varphi(X^m-1)=X^n-1$ has only finitely many integers solutions $(m,n)$. Here we follow \cite{BL}.

\begin{theorem}
\label{thm2:1}
Each one of the two equations 
$$
\varphi(X^m-1)=X^n-1\qquad and\qquad \varphi\left(\frac{X^m-1}{X-1}\right)=\frac{X^n-1}{X-1}
$$
has only finitely many positive integer solutions $(X,m,n)$ with the exception $m=n=1$ case in which any positive integer $X$ leads to a solution of the second equation above. Aside from the above mentioned exceptions, all solutions have $X<e^{e^{8000}}$.
\end{theorem}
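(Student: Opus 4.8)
The plan is to treat both equations in parallel, since the analysis of $\varphi((X^m-1)/(X-1))=(X^n-1)/(X-1)$ runs along the same lines as that of $\varphi(X^m-1)=X^n-1$; I would carry out the argument for the first equation and indicate the changes at the end. Throughout, since $\varphi(N)<N$ for $N>1$, every solution has $n<m$, and I set $k:=m-n\ge 1$. The engine of the whole proof is the elementary inequality
\begin{equation*}
X^k<\frac{X^m-1}{X^n-1}=\frac{X^m-1}{\varphi(X^m-1)}=\prod_{p\mid X^m-1}\left(1+\frac{1}{p-1}\right),
\end{equation*}
which I would exploit in two complementary ways: to bound $k$ from above through a careful estimate of the right-hand product, and to bound $X^k$ in terms of $\log\log(X^m)$ via the lower bound for $\varphi$ in Lemma \ref{lem1:RS}(iii).

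For the first task I would, following \cite{Lu4}, attach to every prime $p$ not dividing $X$ its order $z(p)$ of $X$ modulo $p$, so that $p\mid X^m-1$ forces $z(p)=d$ for some divisor $d\mid m$, and group the product accordingly. Writing $S_d:=\sum_{z(p)=d}1/(p-1)$, taking logarithms turns the fundamental inequality into $k\log X<\sum_{d\mid m}S_d$. Each prime counted by $S_d$ satisfies $p\equiv 1\pmod d$ and $\prod_{z(p)=d}p\le X^d-1$; the latter gives the crude count $\#\{p:z(p)=d\}<\frac{d\log X}{\log(d+1)}$, which controls the large primes, while the Montgomery--Vaughan large sieve \eqref{eq11:11} controls the primes $p\equiv 1\pmod d$ of medium size and yields a bound of the shape $S_d\ll \frac{\log\log d}{\varphi(d)}+\frac{\log X}{d\log(d+1)}$. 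Summing over $d\mid m$ should then produce an upper bound on $k$.

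The second, more structural, ingredient is a primitive-divisor analysis. By the Primitive Divisor Theorem, for all but finitely many divisors $d\mid m$ the number $X^d-1$ has a primitive prime $p_d$ with $z(p_d)=d$ and $p_d\equiv 1\pmod d$. Fixing an odd prime $q\mid m$, each of the at least $\tau(m)/2$ divisors $d\mid m$ divisible by $q$ contributes such a $p_d\equiv 1\pmod q$, whence $\nu_q(X^n-1)=\nu_q(\varphi(X^m-1))\ge \tau(m)/2$. If $q\nmid n$ this cannot be reconciled with the elementary bound $\nu_q(X^n-1)<\frac{q\log X}{\log q}$ unless $\tau(m)$ (and hence $\omega(m)$) is small and $q$ is large; and if $q\mid n$ then, since $q\mid m$, we get $q\mid k$. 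Thus every prime factor of $m$ either divides the bounded quantity $k$ or is forced to be large, which caps both the number and the size of the prime factors of $m$.

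Finally I would combine the three estimates. The lower bound for $\varphi$ gives the clean inequality $X^k<e^{\gamma}\log\log(X^m)+o(1)$, bounding $X^k$ by $\log m+\log\log X$ up to constants; the sieve estimate caps $k$; and the primitive-divisor bound caps $\omega(m)$ and the exponents in $m$ in terms of the relevant prime and $\log X$. Feeding these into one another should decouple $m$ from $X$ and collapse everything to an inequality in $X$ alone of the form $\log\log X<8000$, i.e. $X<e^{e^{8000}}$; for each such $X$ the bounds on $k$ and on the prime factors of $m$ leave only finitely many pairs $(m,n)$. The same scheme handles the second equation with $(X^m-1)/(X-1)$ in place of $X^m-1$ --- here the degenerate case $m=n=1$ gives $\varphi(1)=1$ for every $X$, which is the stated infinite family --- and Ljunggren's theorem \cite{Lj} on $(X^n-1)/(X-1)=\square$ disposes of the auxiliary square cases exactly as in the proof for $X=5$. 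I expect the genuine obstacle to be precisely this last decoupling: the factor $\log X$ riding along in the large-prime counts $\#\{p:z(p)=d\}<d\log X/\log(d+1)$ threatens to absorb the $k\log X$ on the left, and only the splitting of the prime divisors of $m$ according to whether they divide $n$, together with the primitive-divisor valuation bound, keeps that factor under control.
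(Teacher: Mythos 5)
Your proposal follows essentially the same route as the paper's own proof: the product identity $X^k<\prod_{p\mid X^m-1}(1+1/(p-1))$ grouped by the order $z(p)=d$ for $d\mid m$, the Montgomery--Vaughan sieve bound on $S_d$ (with the extra $\log X$ in the cut-offs handled exactly as you anticipate), the primitive-divisor valuation argument to control the primes of $m$ not dividing $n$, and the final reduction of the surviving primes of $m$ to divisors of $k$, collapsing to $\log\log X<8000$. The only cosmetic difference is that the paper absorbs the small primes of $m\setminus n$ directly into a numerical sum rather than invoking Ljunggren in the general case, but this does not change the argument.
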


\subsection*{Proof of Theorem \ref{thm2:1}}

Here we follow the same approach as for the proof of Theorem \ref{thm:1}. Since most of the details are similar, we only sketch the argument.
\medskip

Since $\varphi(N)\le N$ with equality only when $N=1$, it follows easily that $m\ge n$ and equality occurs only when $m=n=1$, case in which only $X=2$  leads to a solution of the first equation while any positive integer $X$ leads to a solution of the second equation. From now on, we assume that $m>n\ge 1$. 
The next lemma gives an upper bound on $\frac{k}{2}\log X$.

\begin{lemma}
\label{lem2:3}
Assume that $X>e^{1000}$. Then the following inequality holds:
\begin{equation}
\label{eq11:imp}
\frac{k}{2}\log X<8\sum_{\substack{d\mid m\\ d\ge 5}} \frac{\log\log d}{\varphi(d)}.
\end{equation}
\end{lemma}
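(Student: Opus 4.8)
The plan is to follow the proof of Theorem~\ref{thm:1} (the case $X=5$) line for line in structure, carrying the dependence on $X$ explicitly instead of absorbing it into numerical constants. For either of the two equations the ratio that matters is the same: writing $N=X^m-1$ in the first case and $N=(X^m-1)/(X-1)$ in the second, one checks that $N/\varphi(N)=(X^m-1)/(X^n-1)>X^{k}$, since $X^m-1>X^{k}(X^n-1)$ is equivalent to $X^{k}>1$. For a prime $p\nmid X$ let $z(p)$ be the order of $X$ modulo $p$, so that $p\mid X^m-1$ if and only if $z(p)\mid m$. Grouping the prime factors of $N$ by the value of $z(p)$ then gives
$$X^{k}<\frac{N}{\varphi(N)}=\prod_{p\mid N}\left(1+\frac{1}{p-1}\right)=\prod_{d\mid m}\prod_{z(p)=d}\left(1+\frac{1}{p-1}\right),$$
and taking logarithms together with $\log(1+x)<x$ yields $k\log X<\sum_{d\mid m}S_d$, where $S_d:=\sum_{z(p)=d}1/(p-1)$. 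For the second equation the only change is that the term $d=1$ is essentially absent, which only helps, so I would treat both equations by the same computation.

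First I would peel off the small divisors $d\le 4$. Every prime with $z(p)\le 4$ divides $X^{12}-1$, so $\sum_{d\le 4}S_d\le\sum_{p\mid X^{12}-1}1/(p-1)$; since $\omega(X^{12}-1)\le 12\log X/\log 2$, this sum is $O(\log\log\log X)$, which is negligible against $\tfrac12\log X$ because $X>e^{1000}$. The heart of the matter is then to bound $S_d$ for $d\ge 5$ exactly as in Lemma~\ref{lem:3}. Here each prime with $z(p)=d$ satisfies $p\equiv 1\pmod d$ and $\prod_{z(p)=d}p\le X^d$, so $\omega_d:=\#\{p:z(p)=d\}<d\log X/\log(d+1)$. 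I split $S_d$ over three ranges: for $p<4d$ only $d+1,2d+1,3d+1$ can occur, contributing $O(1/d)$; for $4d\le p\le d^2$ I apply the Montgomery--Vaughan bound \eqref{eq11:11} and Abel summation, the upper limit $d^2$ being exactly what turns the integral $\int dt/\!\left(t\log(t/d)\right)$ into a $\log\log d$ and producing the main term $\tfrac{2\log\log d}{\varphi(d)}$; and for $p>d^2$ the primes are so few (their product is at most $X^d$ and each exceeds $d^2$, so there are fewer than $d\log X/(2\log d)$ of them) that $T_3<\tfrac{\log X}{2d\log d}$.

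Summing over the divisors $d\mid m$ with $d\ge 5$, the small-prime and large-sieve contributions $T_1+T_2$ total at most $8\sum_{d\ge 5}\tfrac{\log\log d}{\varphi(d)}$: the middle terms give $\approx 2\sum\tfrac{\log\log d}{\varphi(d)}$, and the $p<4d$ terms, being $O(1/d)$, also fit inside the factor $8$ because $\varphi(d)/d\le 1$ and $\log\log d\ge\log\log 5$. What is left to dispose of is the $d\le 4$ block together with the large-prime tail $\sum_{d\ge 5}T_3=\tfrac{\log X}{2}\sum_{d\mid m,\,d\ge 5}\tfrac{1}{d\log d}$, and showing that these two pieces are at most $\tfrac12 k\log X$ is precisely what produces the factor $\tfrac12$ in the statement.

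The main obstacle is this large-prime tail, because unlike the $X=5$ case it carries a factor $\log X$, so its absorption into half of $k\log X$ is not automatic. I would control it by observing that $\sum_{d\mid m}1/(d\log d)$ is governed by the smallest divisors of $m$ — the single-prime divisors alone already give $\sum_p 1/(p\log p)=O(1)$ — and by using the hypothesis $X>e^{1000}$ to dominate all constant and small-$d$ debris. Once the tail and the $d\le 4$ block are pushed below $\tfrac12 k\log X$, the surviving inequality is exactly $\tfrac{k}{2}\log X<8\sum_{d\mid m,\,d\ge 5}\tfrac{\log\log d}{\varphi(d)}$, as claimed; the companion equation $\varphi\!\left((X^m-1)/(X-1)\right)=(X^n-1)/(X-1)$ follows from the identical computation.
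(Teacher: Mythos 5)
Your overall strategy --- redoing Lemma \ref{lem:3} with the dependence on $X$ carried explicitly --- is the right one and matches the paper's up to the splitting of $S_d$, but there is a genuine gap at exactly the point you flag as ``the main obstacle'': the large--prime tail. Keeping the cutoff at $p>d^2$ gives $T_3<\omega_d/d^2<\tfrac{\log X}{2d\log d}$, and summing over divisors produces a term of size $\tfrac{\log X}{2}\sum_{d\mid m,\,d\ge 5}\tfrac{1}{d\log d}$. Absorbing this into $\tfrac{k}{2}\log X$ requires, in effect, $\sum_{d\mid m,\,d\ge 5}\tfrac{1}{d\log d}\le k$ uniformly in $m$, and since $k$ can equal $1$ this must hold with constant $1$. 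But the sum runs over \emph{all} divisors of $m$, not just the prime ones: it is comparable to $\sum_{d\mid m}1/d=\sigma(m)/m$, which grows like $\log\log m$ when $m$ has many small prime factors, so the needed inequality fails in general. The convergence of $\sum_p 1/(p\log p)$ controls only the prime divisors, and the hypothesis $X>e^{1000}$ is of no help here because the offending term itself carries the factor $\log X$.

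The paper's fix is to make the range splitting itself depend on $X$: the middle range is taken to be $4d\le p\le d^2\log X$ and the tail is $p>d^2\log X$. Then $T_3<\omega_d/(d^2\log X)<1/(d\log(d+1))$ --- the $\log X$ cancels against the bound $\omega_d<d\log X/\log(d+1)$ --- and the tail becomes harmless. The price is that Abel summation over the enlarged middle range, using \eqref{eq11:11}, yields $2\log\log(d\log X)/\varphi(d)$ rather than $2\log\log d/\varphi(d)$; the paper then splits into $d\ge\log X$ (where $\log\log(d\log X)\le\log 2+\log\log d$ and one obtains $4\log\log d/\varphi(d)$) and $5\le d<\log X$ (where the extra $2\log\log(\log X)^2/\varphi(d)$, summed over all $d<\log X$, joins the $d\le 4$ block inside the quantity $M(X)$, which is then shown to satisfy $M(X)<0.5\log X$). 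That last estimate, together with $k-0.5\ge k/2$, is what actually produces the factor $\tfrac12$ and the constant $8$ in the statement; your sketch does not supply a substitute for it.
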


\begin{proof} 
Observe that either one of the two equations leads to
\begin{equation}
\label{eq11:4}
X^{k}<\frac{X^m-1}{X^n-1}\le \frac{X^m-1}{\varphi(X^m-1)}=\prod_{p\mid X^m-1} \left(1+\frac{1}{p-1}\right).
\end{equation}
Following the argument from Lemma \ref{lem:3}, we get that
\begin{equation}
\label{eq11:5}
X^k<\prod_{d\mid m} \prod_{z(p)=d} \left(1+\frac{1}{p-1}\right).
\end{equation}
Note also that $X$ is odd, since if $X$ is even, then both numbers $X^n-1$ and $(X^n-1)/(X-1)=X^{n-1}+\cdots+X+1$ are odd, and the only positive integers $N$ such that $\varphi(N)$ is odd are $N=1,2$. However, none of the equations $X^m-1=1,2$ or $(X^m-1)/(X-1)=1,2$ has any positive integer solutions $X$ and $m>1$. 
In the right--hand side of \eqref{eq11:5}, we separate the cases $d\in \{1,2,3,4\}$. Note that since $X$ is odd, it follows that
\begin{equation}
\label{eq11:d1to4}
\prod_{\substack{z(p)\le 4}} p\le  \frac{1}{8} (X^4-1)(X^2+X+1)<X^6.
\end{equation}
For $x\ge 3$, put
\begin{equation}
\label{eq11:L}
L(x):=1.79 \log\log x+\frac{2.5}{\log\log x}.
\end{equation}
Lemma \ref{lem1:RS}$(iii)$ together with inequality  \eqref{eq11:d1to4} show that 
\begin{equation}
\label{eq11:2}
\prod_{z(p)\le 4} \left(1+\frac{1}{p-1}\right)\le L(X^6).
\end{equation}
We take logarithms in \eqref{eq11:5} use \eqref{eq11:2} as well as the inequality 
$\log(1+x)<x$ valid for all real numbers $x$ to get
$$
k\log X<\log L(X^6)+\sum_{\substack{d\mid m\\ d\ge 5}} \sum_{z(p)=d} \frac{1}{p-1}.
$$
If $z(p)=d$, then $p\equiv 1\pmod d$. Thus, $p\ge 7$ for $d\ge 5$. Hence,
\begin{eqnarray}
\label{eq11:important}
k\log X & < & \log L(X^6)+\sum_{\substack{d\mid m\\ d\ge 5}} \sum_{z(p)=d} \frac{1}{p}+\sum_{p\ge 7} \frac{1}{p(p-1)}\nonumber\\
& < & \log L(X^6)+0.06+\sum_{\substack{d\mid m\\ d\ge 5}} \sum_{z(p)=d} \frac{1}{p}\nonumber\\
& = & \log L(X^6)+0.06+\sum_{\substack{d\mid m\\ d\ge 5}} S_d.
\end{eqnarray}
We now proceed to bound $S_d$. Letting for a divisor $d$ of $m$ the notation ${\mathcal P}_d$ stand for the set of primitive prime factors of $X^m-1$, the argument from the proof of Lemma \ref{lem:3} gives

\begin{equation}
\label{eq11:12}
S_d \le  \sum_{p\in {\mathcal Q}_d} \frac{1}{p}+\sum_{\substack{4d\le p\le d^2\log X\\ p\equiv 1\pmod d\\ z(p)=d}} \frac{1}{p}+\sum_{\substack{p>d^2\log X\\ z(p)=d}} \frac{1}{p}:=T_1+T_2+T_3.
\end{equation}

For $T_2$, we use estimate \eqref{eq11:11} and Abel's summation formula as in the upper bound of inequality \eqref{eq:S2} to get
\begin{eqnarray*}
T_2 & \le & \frac{\pi(x;d,1)}{x}\Big|_{x=4d}^{d^2\log X}+\int_{4d}^{d^2\log X} \frac{\pi(t;d,1)}{t^2} dt\\
& \le & \frac{2d^2}{d^2\varphi(d)\log (d\log X)}+\frac{2}{\varphi(d)} \int_{4d}^{d^2\log X} \frac{dt}{t\log(t/d)}\\
& \le & \frac{2}{\varphi(d)\log (d\log X)}+\frac{2}{\varphi(d)} \log\log(t/d)\Big|_{t=4d}^{d^2\log X}\\
& = & \frac{2\log\log (d\log X)}{\varphi(d)}+\frac{2}{\varphi(d)} \left(\frac{1}{\log (d\log X)}-\log\log 4\right).
\end{eqnarray*}
The expression $1/\log (d\log X)-\log\log 4$ is negative for $d\ge 5$ and $X> e^{1000}$, so 
\begin{equation}
\label{eq11:S2}
T_2<\frac{2\log\log (d\log X)}{\varphi(d)}\quad {\text{\rm for~all}}\quad d\ge 5.
\end{equation}
For $T_3$, we have that
\begin{equation}
\label{eq11:S3}
T_3<\frac{\omega_d}{d^2\log X}<\frac{1}{d\log(d+1)}.
\end{equation}
Hence, collecting \eqref{eq:S1}, \eqref{eq11:S2} and \eqref{eq11:S3}, we get that
\begin{equation}
\label{eq11:13}
S_d<\sum_{p\in {\mathcal Q}_d} \frac{1}{p}+\frac{1}{d\log(d+1)}+\frac{2\log\log(d\log X)}{\varphi(d)}.
\end{equation}
We show that
\begin{equation}
\label{eq11:14}
S_d<\left\{\begin{matrix} \frac{4\log\log d}{\varphi(d)} & {\text{\rm if}} & \log X\le d.\\ 
\frac{4\log\log d}{\varphi(d)}+\frac{2\log\log(\log X)^2}{\varphi(d)} & {\text{\rm if}} & 5\le d<\log X.\\ \end{matrix}\right.
\end{equation}
We first deal with the range when $d\ge \log X$. In this case, by \eqref{eq11:13}, using the fact that $\log X\le d$, so 
$$
\log\log(d\log X)\le \log\log d^2=\log 2+\log\log d,
$$ 
as well as the fact that
\begin{equation}
\label{eq11:100}
\sum_{p\in {\mathcal Q}_d} \frac{1}{p}\le \frac{1}{d+1}+\frac{1}{2d+1}+\frac{1}{3d+1}< \frac{11}{6d},
\end{equation}
we have
$$
S_d  < \frac{\log\log d}{\varphi(d)} \left(\frac{11\varphi(d)}{6d\log\log d}+\frac{\varphi(d)}{d\log(d+1) \log\log d}+
2+\frac{2\log 2}{\log\log d}\right),
$$
and the factor in parenthesis in the right--hand side above is smaller than $4$ since $d\ge \log X>1000$ and $\varphi(d)/d<1$. Assume now that 
$5\le d<\log X$. Then using \eqref{eq11:13}, we get that
$$
S_d\le  \sum_{p\in {\mathcal Q}_d} \frac{1}{p}+\frac{1}{d\log(d+1)}+\frac{2\log\log (\log X)^2}{\varphi(d)}.
$$
It remains to check that the sum of the first two terms in the right above is at most $4(\log\log d)/\varphi(d)$. Using the fact that the first term is at most $11/(6d)$ (see \eqref{eq11:100}), we get that it suffices that
$$
\frac{11}{6d}+\frac{1}{d\log(d+1)}\le \frac{4\log\log d}{\varphi(d)},
$$
which is equivalent to
$$
\frac{11\varphi(d)}{6d}+\frac{\varphi(d)}{d\log(d+1)}\le 4\log\log d.
$$
Since $\varphi(d)/d<1$, for $d\ge 7$ the left-hand side above is smaller than the number $11/6+1/\log 8<2.4$, while the right--hand side is larger than $4\log\log 7>2.6$, so the above inequality holds for $d\ge 7$. Once checks that it also holds for $d=6$ (because $\varphi(6)=2$), while for $d=5$, the only prime in ${\mathcal Q}_5$ is $11$ and 
$$
\frac{1}{11}+\frac{1}{5\log 6}<\frac{4\log\log 5}{\varphi(5)},
$$
so the desired inequality holds for $d=5$ as well. This proves \eqref{eq11:14}.

Inserting \eqref{eq11:14} into \eqref{eq11:important}, we get 
$$
k\log X<\log L(X^6)+0.06+2\log\log (\log X)^2 \sum_{5\le d<\log X} \frac{1}{\varphi(d)}+\sum_{\substack{d\mid m\\ d\ge 5}} \frac{4\log\log d}{\varphi(d)}.
$$
For the first sum above, we use Lemma \ref{lem1:RS}$(iii)$  to conclude that
$$
\frac{d}{\varphi(d)}\le L(d)\le L(\log X),
$$
therefore
\begin{eqnarray*}
\sum_{5\le d<\log X} \frac{1}{\varphi(d)} & < & L(\log X) \sum_{5\le d<\log X} \frac{1}{d}< L(\log X)\int_4^{\log X} \frac{dt}{t}\\
& = & L(\log X)(\log\log X-\log\log 4).
\end{eqnarray*}
Thus, putting
$$
M(x):=\log L(x^6)+0.06+2\log\log (\log x)^2 L(\log x)(\log\log x-\log\log 4),
$$
we get that
\begin{equation}
\label{eq11:20}
k\log X<M(X)+\sum_{\substack{d\mid m\\ d\ge 5}} \frac{4\log\log d}{\varphi(d)}.
\end{equation}
One checks that if $\log X>150$ (which is our case), then 
$$
M(X)<0.5\log X.
$$
Since $k\ge 1$, we have $k-0.5\ge k/2$, therefore inequality \eqref{eq11:20} implies that
$$
\frac{k}{2}\log X<8\sum_{\substack{d\mid m\\ d\ge 5}} \frac{\log\log d}{\varphi(d)},
$$
which is the required result.
\end{proof}

\begin{lemma}
\label{lem2:4}
Assume $X>e^{1000}$. We have 
$$
\log\(k\log X\) \leq 60\sum_{\substack{q\mid m \\ q\geq 5}}\frac{\log\log q}{q} + 25.
$$
\end{lemma}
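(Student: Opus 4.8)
The plan is to feed the bound from Lemma \ref{lem2:3} into a divisor--to--prime factorization argument of exactly the same flavor as the one carried out in the proof of Lemma \ref{lem:3}. First I would record that Lemma \ref{lem2:3} gives
$$
k\log X < 16\sum_{\substack{d\mid m\\ d\ge 5}} \frac{\log\log d}{\varphi(d)},
$$
so that, writing $S$ for the sum on the right, it suffices to bound $\log S$ (together with the harmless additive constant $\log 16$) by a multiple of $\sum_{q\mid m,\, q\ge 5}(\log\log q)/q$. Since $k\ge 1$ and $X>e^{1000}$ we have $k\log X>1$, so the left--hand side $\log(k\log X)$ is well defined and positive.

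The core step is to replace the sum $S$ over divisors by a product over the primes $q\mid m$. Since $\varphi$ is multiplicative and $d\mapsto\log\log d$ is sub--multiplicative once $d$ is past a small threshold, the summand $(\log\log d)/\varphi(d)$ is, after the care described below, sub--multiplicative, and the usual expansion yields
$$
S\le \prod_{q\mid m}\left(1+\sum_{i\ge 1}\frac{\log\log(q^i)}{\varphi(q^i)}\right).
$$
I would then take logarithms and use $\log(1+x)<x$ to linearise the product, obtaining
$$
\log S<\sum_{q\mid m}\sum_{i\ge 1}\frac{\log\log(q^i)}{\varphi(q^i)}.
$$

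Next I would bound the inner sum over prime powers. Writing $\log\log(q^i)=\log(i\log q)\le i+\log\log q$ and summing the resulting geometric--type series $\sum_i (i+\log\log q)/(q^{i-1}(q-1))$ exactly as in the derivation of \eqref{eq:19}, one gets
$$
\sum_{i\ge 1}\frac{\log\log(q^i)}{\varphi(q^i)}\le \frac{c\,\log\log q}{q}\qquad(q\ge 5)
$$
for an absolute constant $c$ well below $60$. Substituting this back, peeling off the two primes $q=2,3$ (whose total contribution is an absolute constant, since there $\log\log(q^i)$ is either negative or bounded), and collecting $\log 16$ together with those finitely many leftover constants into the slack term $+25$, gives the claimed
$$
\log(k\log X)\le 60\sum_{\substack{q\mid m\\ q\ge 5}}\frac{\log\log q}{q}+25.
$$

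The step I expect to be the real obstacle is the sub--multiplicativity used to pass from $S$ to the Euler product: the inequality $\log\log(ab)\le(\log\log a)(\log\log b)$ fails for small arguments, and in particular the prime $2$ (where $\log\log 2<0$) breaks the naive factorisation. I would deal with this exactly as in Lemma \ref{lem:3}: first perform the summation over the power of $2$ dividing $d$ for each fixed odd part $d_1$, using $\log\log(2^a d_1)\le \log(a+1)+\log\log d_1$, which converts the $2$--part into two convergent constants and leaves a sum over odd $d_1$ to which a clean sub--multiplicativity argument (with a positive additive offset that absorbs the stray $\log 2$) does apply. The generosity of the constants $60$ and $25$ is precisely what lets all of these threshold corrections, together with the finitely many small divisors $5\le d<e^{e}$, be swept into the final inequality without sharp bookkeeping.
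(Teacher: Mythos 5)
Your proposal follows essentially the same route as the paper's proof: the bound of Lemma \ref{lem2:3} is expanded into an Euler product over the primes dividing $m$ exactly as in Lemma \ref{lem:3}, logarithms are taken via $\log(1+x)<x$, the prime-power sums for $q\ge 5$ are bounded by $60\log\log q/q$ as in \eqref{eq:19}, and the contribution of $q\in\{2,3\}$ together with the leftover multiplicative constants is absorbed into the $+25$. The submultiplicativity caveat you correctly flag is resolved in the paper by working with the weight $10\log(2\log d)$ (a large constant times an additively offset logarithm) instead of $\log\log d$, which is precisely the device you describe.
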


\begin{proof} 
Lemma \ref{lem2:3} implies that 
\begin{equation}
\label{eq11:17}
k\log X<\sum_{\substack{d\mid m\\ d\ge 5}} \frac{10\log(2\log d)}{\varphi(d)}.
\end{equation}
Following the argument in the proof of Lemma \ref{lem:3}, one get that
$$
\sum_{\substack{d\mid m\\ d\ge 5}} \frac{10\log(2\log d)}{\varphi(d)}<\prod_{q\mid m}\left(1+\sum_{i\ge 1} \frac{10\log(2\log q^i)}{\varphi(q^i)}\right)-1.
$$
Inserting the above inequality into \eqref{eq11:17}, taking logarithms and using the fact that $\log(1+x)<x$ for all real numbers $x$, we get
\begin{equation}
\label{eq11:18}
\log\(k\log X\)<\sum_{q\mid m } \sum_{i\ge 1} \frac{10\log(2\log q^i)}{\varphi(q^i)}.
\end{equation}
Next, 
\begin{equation}
\label{yyy}
\sum_{\substack{i\geq 1\\ q\in \{2,3\}}} \frac{10\log(2\log (q^{i}))}{\varphi(q^{i})}<25.
\end{equation}
Hence, the inequality \eqref{eq:19} becomes
\begin{equation}
\label{eq11:19}
\sum_{i\ge 1} \frac{10 \log(2\log(q^i))}{\varphi(q^i)}<\frac{60\log\log q}{q}\quad {\text{\rm for}}
\quad q\ge 5.
\end{equation}
The desired inequality follows now from \eqref{yyy} and \eqref{eq11:19}.
\end{proof}

\begin{lemma}
\label{lem2:5}
We have $X<e^{e^{8000}}$.
\end{lemma}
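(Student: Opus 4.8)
The plan is to feed the estimate of Lemma \ref{lem2:4} into itself, exactly as was done for $X=5$ in Lemma \ref{lem:5}, after isolating the primes that divide $m$ but not $n$. We may assume $X>e^{1000}$, since the range $X\le e^{1000}$ is already well inside the claimed bound. Writing the sum on the right of Lemma \ref{lem2:4} as
$$
\sum_{\substack{q\mid m\\ q\ge 5}}\frac{\log\log q}{q}=\Sigma_U+\Sigma_V,\qquad
\Sigma_U:=\sum_{\substack{q\mid m,\ q\mid n\\ q\ge 5}}\frac{\log\log q}{q},\quad
\Sigma_V:=\sum_{\substack{q\mid m,\ q\nmid n\\ q\ge 5}}\frac{\log\log q}{q},
$$
I first observe that if $q\mid m$ and $q\mid n$ then $q\mid\gcd(m,n)=\gcd(k,n)\mid k$, so every prime counted by $\Sigma_U$ divides $k$; hence $\Sigma_U\le\sum_{q\mid k,\ q\ge 5}\frac{\log\log q}{q}$, a quantity controlled purely by $k$. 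Using $\sum_{q\mid k}1/q<k/\varphi(k)$ and Lemma \ref{lem1:RS}$(iii)$ this gives $\Sigma_U=O((\log\log k)^2)=o(\log k)$.

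The heart of the matter is to show that $\Sigma_V$ is negligible. I list the primes counted by $\Sigma_V$ as $r_1<r_2<\cdots<r_v$. For each index $j$ and each of the $2^{j-1}$ (squarefree) divisors $d$ of $r_1\cdots r_{j-1}$, the number $X^{dr_j}-1$ divides $X^m-1$ and, by the Primitive Divisor Theorem of Section \ref{sec4}, carries a primitive prime factor $p_d\equiv 1\pmod{r_j}$; these primes are pairwise distinct and each satisfies $r_j\mid p_d-1\mid\varphi(X^m-1)=X^n-1$. Therefore $2^{j-1}\le\nu_{r_j}(X^n-1)$. Since $r_j\nmid n$, lifting the exponent shows $\nu_{r_j}(X^n-1)=\nu_{r_j}(X^{z}-1)$ with $z=z(r_j)\le r_j-1$, so that
$$
2^{j-1}\le\nu_{r_j}(X^n-1)<\frac{r_j\log X}{\log r_j}.
$$
This is the key inequality: it forces the $r_j$ to grow geometrically once $2^{j-1}$ exceeds $\log X$. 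Concretely, I would organize the tail by dyadic blocks $I_t=[2^t,2^{t+1})\cap V$, bounding $\#I_t$ by the trivial prime count for $t$ below the crossover and by $\#I_t<t+2+\log_2\log X$ (which the displayed inequality yields) above it. Summing $\sum_t \#I_t\,(\log\log 2^{t+1})/2^t$ then produces a bound of the shape $\Sigma_V\le c\,(\log\log\log\log X)^2+O(1)$; in any case $\Sigma_V=o(\log\log X)$.

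With these two bounds, Lemma \ref{lem2:4} reads
$$
\log k+\log\log X\le\log(k\log X)\le 60\,\Sigma_U+60\,\Sigma_V+25,
$$
which I rewrite as $(\log k-60\,\Sigma_U)+(\log\log X-60\,\Sigma_V)\le 25$. Because $\Sigma_U=o(\log k)$, the first bracket tends to $+\infty$ with $k$ and so is bounded below by an absolute constant; because $\Sigma_V=o(\log\log X)$, the second bracket tends to $+\infty$ with $X$ and is likewise bounded below. Since their sum is at most $25$, each bracket is bounded above, forcing both $k$ and $X$ to be bounded. Tracking the constants — first pinning $k$ to an explicit value, after which $\Sigma_U$ is an absolute constant, and then solving $\log\log X-60\,\Sigma_V\le 25+C$ against $\Sigma_V=o(\log\log X)$ — yields $\log\log X<8000$, i.e. $X<e^{e^{8000}}$.

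The main obstacle is the bound on $\Sigma_V$. A crude dyadic count of the primes dividing $m$ but not $n$ only gives $\Sigma_V=O(\log\log X)$, which is worthless here because it has the same order as the term $\log\log X$ on the left-hand side; the whole point is to exploit the primitive-divisor inequality $2^{j-1}<r_j\log X/\log r_j$ to show that the relevant primes grow geometrically past roughly the $(\log_2\log X)$-th one, so that $\Sigma_V$ is genuinely $o(\log\log X)$. Once that is secured, the remaining work is only bookkeeping with the constants $60$, $25$ so as to land below the stated threshold.
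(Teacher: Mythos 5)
Your skeleton is the paper's: the same splitting of the sum in Lemma \ref{lem2:4} according to whether $q$ divides $n$ (hence $k$) or not, the same bound $\Sigma_U\le \log\log k\cdot\sum_{q\mid k}1/q<L(k)\log\log k$ via Lemma \ref{lem1:RS}$(iii)$, and the same primitive-divisor/dyadic-block estimate $\#I_t\le t+2+\log_2\log X$ for the primes dividing $m$ but not $n$. The one genuine deviation is your treatment of $\Sigma_V$: you push the crossover analysis to get $\Sigma_V=O\bigl((\log\log\log\log X)^2\bigr)+O(1)=o(\log\log X)$. That refinement is correct in principle, but it is more than is needed and more delicate to write down; the paper simply sums $\#I_t(\log\log 2^t)/2^t$ over all $t\ge 10$ to get $60T_2<100+0.35\log\log X$ and then moves the $0.35\log\log X$ to the left-hand side, dividing by $0.65$. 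Your ``both brackets are bounded below, their sum is at most $25$, hence each is bounded above'' argument is a clean reformulation of the paper's joint maximization over $k$ and is sound.

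The place where your write-up would actually fail to deliver the stated bound is the final bookkeeping. If, as your parenthetical suggests, you first pin $k$ to an explicit value and \emph{then} treat $60\Sigma_U$ as an absolute constant while discarding the $-\log k$ term, the numbers do not work: the bound on $k$ coming from $\log k\le 60\Sigma_U+O(1)\le 108(\log\log k)^2+O(1)$ is roughly $\log k\le 9.2\times 10^3$, whence $60\Sigma_U$ can be as large as about $9\times 10^3$, and $\log\log X\le 25+60\Sigma_U+60\Sigma_V$ lands above $9000$, not below $8000$. The cancellation between $+60\Sigma_U$ and $-\log k$ must be kept \emph{inside} the maximization over $k$: the paper bounds $\log\log X<179(\log\log k)^2-1.5\log k+450=N(\log\log k)$ with $N(x)=179x^2-1.5e^x+450$, whose maximum (at $x_0\approx 7.49$) is about $7807<8000$ — and there is not much room to spare. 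So either carry out your bracket argument literally, bounding the second bracket by $25+\max_k\bigl(60\Sigma_U(k)-\log k\bigr)$, or follow the paper and maximize $N$ directly; the two-stage version you describe overshoots the threshold.
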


\begin{proof}
By Lemma \ref{lem2:4}, we have 
\begin{eqnarray}
\label{eq11:22}
\log\(k\log X\) & < & 25 +60\left(\sum_{q\in U} \frac{\log\log q}{q}+\sum_{p\in V} \frac{\log\log q}{q}\right)\nonumber\\
& := & 25 + 60(T_1+T_2).
\end{eqnarray}
Clearly, if $q$ participates in $T_1$, then $q\ge 5$ divides both $m$ and $n$, so it is an odd prime factor of $k$. 

We next bound $T_2$ and we shall return to $T_1$ later. Following the argument in the proof of Lemma \ref{lem:4}, we get that
$$
60T_2\le 60\sum_{5\le q<1024} \frac{\log\log q}{q}+ 60\sum_{t\ge 10} \frac{(t+(\log\log X)/\log 2)\log\log 2^t}{2^t}.
$$
Since 
\begin{eqnarray*}
&& 60\sum_{5\le q<1024} \frac{\log\log q}{q}<96.74,\\
&& 60\sum_{t\ge 10} \frac{t\log\log 2^t}{2^t}<2.63,\\
&& 60\sum_{t\ge 10} \frac{\log\log 2^t}{2^t \log 2}<0.35,
\end{eqnarray*}
we get that
$$
60T_2< 100 + 0.35\log\log X.
$$
Hence, we get from \eqref{eq11:22}, that
\begin{equation}
\label{eq11:23}
\log\(k\log X\) 
\leq 60\(\sum_{\substack{q\mid \gcd(m,k)\\ q\geq 5}}\frac{\log\log q}{q}\)+ 0.35\log\log X + 
125,
\end{equation}
therefore
\begin{equation}
\label{eq11:24}
\log\log X<\frac{1}{0.65}\left(60T_1  -\log k + 125\right)<100 \log\log k-1.5\log k+200.
\end{equation}
If $k\in \{1,2\}$, then $T_1=0$, so $\log\log X<200$, or $X<e^{e^{200}}$.

Suppose now that $k\ge 3$. Then 
$$
T_1\le \log\log k\sum_{q\mid k} \frac{1}{q},
$$
which together with \eqref{eq11:24} gives
$$
\log\log X<100 \log\log k \sum_{q\mid k} \frac{1}{q}-1.5\log k + 200.
$$
However,
$$
\sum_{q\mid k}\frac{1}{q}<\sum_{d\mid k}\frac{1}{d}=\frac{\sigma(k)}{k}<\frac{k}{\varphi(k)}<L(k),
$$
by Lemma \ref{lem1:RS}$(iii)$. We thus get that
$$
\log\log X< 179(\log\log k)^2-1.5 \log k+450.
$$
Note that
$$
179 (\log\log k)^2 -1.5\log k+450=N(\log\log k),
$$
where $N(x):=179x^2 -1.5e^x+450$. The function $N(x)$ has a maximum at $x_0=7.48843\ldots$, and
 $N(x_0)<8000$, therefore 
$$
\log\log X<8000,
$$
giving $X<e^{e^{8000}}$, as desired.
\end{proof}

\begin{remark.}

A close analysis of our argument shows that $X<e^{e^{8000}}$ is in fact an upper bound for all positive integers $X$ arising from equations of the form
$$
\varphi\left(x\frac{X^m-1}{X-1}\right)=y\frac{X^n-1}{X-1}\qquad x,y\in \{1,2,\ldots,X\},
$$
assuming that $m\ge n+2$. Since $m\ge n$ must hold in the above equation, in order to infer whether equation \eqref{eq11:b} has only finitely or infinitely many solutions when $b\ge 2$ is a variable as well, it remains to only treat the cases $m=n$ and $m=n+1$. We leave the analysis of these cases as a future project.
\end{remark.}

\chapter{Repdigits and Lucas sequences}


\section{Introduction}
In this chapter, we are interested in finding all \textit{Repdigits} among members of some Lucas sequences. In Section \textcolor{red}{\ref{sec23}}, we study members of the Lucas sequence $\{L_n\}_{n\geq 0}$ whose Euler function is a repdigit. In Section \textcolor{red}{\ref{sec24}}, we find all the repdigits which are members of Pell or Pell-Lucas sequences. The main tools used to prove the results in this chapter are linear forms in logarithm \`a la Baker (Matveev Theorem), the Baker-Davenport reduction algorithm and the Primitive Divisor Theorem for members of Lucas sequences.

\section{Repdigits as Euler functions of Lucas numbers}
\label{sec23}
We prove in this section some results about the structure of all Lucas numbers whose Euler function is a repdigit in base $10$. For example, we show that if $L_n$ is such a Lucas number, then 
$n<10^{111}$ is of the form $p$ or $p^2$, where $p^3\mid 10^{p-1}-1$. We follow the material from \cite{JBLT}.

\medskip
As mentioned in the Introduction, here we look at the Diophantine equation 
\begin{equation}
\label{eq2:2}
\varphi(L_n)=d\left(\frac{10^m-1}{9}\right),\qquad d\in \{1,\ldots,9\}.
\end{equation}
We have the following result.
\begin{theorem}
\label{thm3:1}
Assume that $n>6$ is such that equation \eqref{eq2:2} holds with some $d$. Then:
\begin{itemize}
\item $d=8$;
\item $m$ is even;
\item $n=p$ or $p^2$, where $p^3\mid 10^{p-1}-1$.
\item $10^9<p<10^{111}$.
\end{itemize}
\end{theorem}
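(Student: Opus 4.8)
The plan is to study the equation $\varphi(L_n)=d(10^m-1)/9$ by exploiting the interplay between the multiplicative structure forced on $\varphi(L_n)$ and the additive/digit structure of the repdigit on the right. I assume throughout that $n>6$, so that $L_n$ has a primitive divisor by Lemma~\ref{lem1:Car}, and I write $L_n=q_1^{a_1}\cdots q_k^{a_k}$. The first step is to pin down the $2$-adic and $5$-adic behaviour. Since $\varphi(L_n)=d(10^m-1)/9$ with $m\ge 1$, the right-hand side is even, and comparing $\nu_2$ and $\nu_5$ on both sides should sharply constrain $d$. I would compute $\nu_5(d(10^m-1)/9)$: because $9\mid 10^m-1$ and $5\nmid (10^m-1)/9$ unless forced, a repdigit $\underbrace{dd\cdots d}_m$ is divisible by $5$ only when $d=5$, and then exactly once. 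On the other hand $5\mid \varphi(L_n)$ as soon as some prime factor $q\equiv1\pmod5$ divides $L_n$, or $25\mid L_n$; controlling this via the congruence properties in Theorem~\ref{eq11:primitive} and Lemma~\ref{eq1:fibluc} should eliminate every value of $d$ except $d=8$. This is where I expect to knock out $d\in\{1,2,3,4,5,6,7,9\}$.

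Once $d=8$ is isolated, the right-hand side is $8(10^m-1)/9=\underbrace{88\cdots8}_m$, whose $2$-adic valuation is $3$ and which carries no factor of $5$. The condition $5\nmid\varphi(L_n)$ is then the key structural restriction: no prime $q\mid L_n$ may satisfy $q\equiv1\pmod5$, and $25\nmid L_n$. I would translate this, using the order of appearance $z(q)$ and the congruence $q\equiv\pm1\pmod{z(q)}$ from Theorem~\ref{eq11:primitive}, into a statement that $z(q)$ avoids certain residues, and combine it with the divisibility $z(q)\mid n$ to force $n$ to have a very restricted prime factorization. Parallel to this, matching $\nu_2(\varphi(L_n))=3$ bounds the number of odd prime factors $q$ of $L_n$ with $q\equiv1\pmod4$ and hence (via primitive divisors of $L_d$ for $d\mid n$) bounds $\omega(n)$ and the admissible prime powers. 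The expected conclusion is that $n$ is a prime $p$ or a prime square $p^2$; proving that $m$ is even should come out of reducing the repdigit equation modulo a small prime (e.g. comparing squares modulo $5$ or $16$, as in the argument of Lemma~\ref{lem:1}) to see that $m$ odd leads to a quadratic-residue contradiction.

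The deepest part, and the main obstacle, will be extracting the Wieferich-type condition $p^3\mid 10^{p-1}-1$ and the numerical window $10^9<p<10^{111}$. For the divisibility condition I would argue as follows: writing $n=p$ (the $p^2$ case is analogous), the repdigit $8(10^m-1)/9$ equals $\varphi(L_p)$, and the prime $p$ must divide $\varphi(L_p)$ with high multiplicity because each divisor $d\mid p$ contributes a primitive prime $q_d\equiv1\pmod p$; counting these contributions forces $\nu_p\big(8(10^m-1)/9\big)=\nu_p(10^m-1)$ to be large, and since $\nu_p(10^m-1)=\nu_p(10^{z}-1)+\nu_p(m/z)$ by the lifting-the-exponent lemma (where $z$ is the multiplicative order of $10$ mod $p$), the requirement $p^2$ or $p^3\mid 10^{p-1}-1$ emerges. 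The upper bound $p<10^{111}$ I would obtain from a linear-forms-in-logarithms estimate: equation~\eqref{eq2:2} with $d=8$ gives $\varphi(L_n)=8(10^m-1)/9$, and combining the Binet approximation $L_n\approx\alpha^n$ with the lower bound $\varphi(L_n)\ge L_n/(c\log\log L_n)$ (Lemma~\ref{lem1:RS}(iii)) yields $m\asymp n\log\alpha/\log10$; then applying Matveev's theorem (Theorem~\ref{thm3:Matveev}) to the form $|\,m\log10-n\log\alpha+\log(\cdots)\,|$ bounds $n$, and the rareness of solutions to $p^3\mid10^{p-1}-1$ supplies the lower bound $p>10^9$ by direct computation over small primes. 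The genuinely hard step is controlling the error terms in the linear form and verifying that the sieve-theoretic counting of primitive prime factors is tight enough to force the cube (rather than merely the square) in $p^3\mid10^{p-1}-1$; this is where I would expect most of the technical effort to lie.
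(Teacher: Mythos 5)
Your outline correctly identifies several of the load-bearing ingredients of the actual proof: applying $\nu_2$ to both sides to get $\max\{0,\delta-1\}+\sum_i\nu_2(p_i-1)=\nu_2(d)\le 3$ and hence $\omega(L_n)\le 3$; the $5$-adic obstruction (since $L_n$ is never a multiple of $5$ and a repdigit $d(10^m-1)/9$ with $d\ne 5$ is prime to $5$, no prime factor of $L_n$ can be $\equiv 1\pmod 5$, and the identity $L_n^2-5F_n^2=\pm4$ then forces every prime factor to be $\equiv 4\pmod 5$); the counting of primitive prime divisors $q_d\equiv 1\pmod p$ over divisors $d\mid n$ to force $p^3\mid 10^m-1$ and then the lifting-the-exponent step giving $p^3\mid 10^{p-1}-1$ (otherwise $p\mid m$ and $2^p>L_p>\varphi(L_p)>10^{p-1}$, absurd); and Matveev's theorem for the upper bound on $n$ together with a computation for the lower bound on $p$. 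These all match the paper.

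However, there are two genuine gaps. First, your proposal never confronts the case $n=pq$ with $p<q$ distinct odd primes, and your sieve-theoretic counting cannot dispose of it: the three coprime factors $L_p$, $L_q$ and $L_{pq}/(L_pL_q)$ supply exactly three primitive prime slots, which is perfectly consistent with $\omega(L_n)=3$. The paper must eliminate this case by a separate and substantial argument — two applications of Matveev's theorem to the linear forms $\left|1-(8/9)\cdot 10^m\alpha^{-n}\right|$ and $\left|1-\bigl(8(L_p-1)/(9L_p)\bigr)\cdot 10^m\alpha^{-n}\right|$, followed by Baker--Davenport reduction to push $p,q<250$, and finally the observation that $(pq)^2\mid 10^m-1$ would force $p^2\mid 10^{p-1}-1$ or $q^2\mid 10^{q-1}-1$ for some prime in $[5,250]$, of which there are none. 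Without this your claim that $n\in\{p,p^2\}$ is unproved. Second, the valuation comparison you propose does not eliminate $d=4$: $\nu_2(4)=2$ is a priori admissible, and the paper kills $d=4$ only after proving that all $p_i\equiv 3\pmod 4$ (via $L_n^2-5F_n^2=-4$ and quadratic reciprocity), that the exponents $\alpha_1,\alpha_2$ are odd (using the Bugeaud--Mignotte--Siksek and McDaniel--Ribenboim square-class results to exclude $L_n$ or $L_n/L_{2^t}$ being a square), and then deriving $(10^m-1)/9\equiv 1\pmod 4$, which contradicts $(10^m-1)/9\equiv 3\pmod 4$ for $m\ge 2$. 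The same circle of ideas (oddness of exponents, $p_3\equiv 3\pmod 8$, and $10^m\equiv 1\pmod{p_3}$ from $p_3^2\mid L_n$) is what actually yields $m$ even; a bare reduction modulo $5$ or $16$ as you suggest will not produce it.
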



\section{The proof of Theorem \ref{thm3:1}}

\subsection{Method of the proof}
The main method for solving the Diophantine equation \eqref{eq2:2} consists
essentially of three parts: a transformation step, an application of the theory of
linear forms in logarithms of algebraic numbers, and a procedure for reducing upper
bounds. 
\medskip

Firstly, we transform  the equation \eqref{eq2:2} into a purely exponential equation
or inequality, i.e., a Diophantine equation or inequality where the unknowns are
in the exponents. 

Secondly, a straightforward use of the theory of linear form in logarithms gives a very large bound on $n$, which has been explicitly computed by using Matveev's theorem \cite{matveev}.

Thirdly, we use a Diophantine approximation algorithm, so-called the Baker-Davenport reduction method to reduce the bounds.

\subsection*{The exponent of $2$ on both sides of \eqref{eq2:2}}

Write
\begin{equation}
\label{eq2:3}
L_n=2^{\delta} p_1^{\alpha_1}\cdots p_r^{\alpha_r},
\end{equation}
where $\delta\ge 0,~r\ge 0$, $p_1<\ldots<p_r$ odd primes and $\alpha_i\geq 0$ for all $i=1,\ldots,r.$
Then
\begin{equation}
\label{eq2:Euler}
\varphi(L_n)=2^{\max\{0,\delta-1\}} p_1^{\alpha_1-1}(p_1-1)p_2^{\alpha_2-1}(p_2-1)\cdots p_r^{\alpha_r-1}(p_r-1).
\end{equation}
Applying the $\nu_2$ function on both sides of \eqref{eq2:2} and using \eqref{eq2:Euler}, we get
\begin{align} 
\label{eq2:4}
\max\{0,\delta-1\}+\sum_{i=1}^r \nu_2(p_i-1)=\nu_2(\varphi(L_n)) = \nu_2\left(d\left(\frac{10^m-1}{9}\right)\right) =  \nu_2(d).
\end{align}
Note that $\nu_2(d)\in \{0,1,2,3\}$. Note also that $r\le 3$ and since $L_n$ is never a multiple of $5$, we have that
\begin{equation}
\label{eq2:ineq}
\frac{\varphi(L_n)}{L_n}\ge \left(1-\frac{1}{2}\right)\left(1-\frac{1}{3}\right)\left(1-\frac{1}{7}\right)>\frac{1}{4},
\end{equation}
so $\varphi(L_n)>L_n/4$. This shows that if $n\ge 8$ satisfies equation \eqref{eq2:2}, then $\varphi(L_n)>L_8/4>10$, so $m\ge 2$. 

We will also use in the later stages of this section the Binet formula \eqref{binet2} with $(\alpha_1,\alpha_2):=(\alpha,\beta)=((1+{\sqrt{5}})/2,(1-{\sqrt{5}})/2)$. In particular,
\begin{equation}
\label{eq2:lll}
L_n-1=\alpha^n-(1-\beta^n)\le \alpha^n\qquad {\text{\rm for~all}}\qquad n\ge 0.
\end{equation}
Furthermore,
\begin{equation}
\label{eq2:llll}
\alpha^{n-1}\le L_n<\alpha^{n+1}\qquad {\text{\rm for~all}}\qquad n\ge 1.
\end{equation}

\subsection{The case of the digit $d\not\in \{4,8\}$}

 If $\nu_2(d)=0$, we get that $d\in \{1,3,5,7,9\}$,  $\varphi(L_n)$ is odd, so $L_n\in \{1,2\}$, therefore $n=0,~1$. If $\nu_2(d)=1$, we get  $d\in \{2,6\}$, and from \eqref{eq2:4} either $\delta=2$ and $r=0$, so $L_n=4$, therefore 
$n=3$, or $\delta\in \{0,1\}$, $r=1$ and $p_1\equiv 3\pmod 4$. Thus, $L_n=p_1^{\alpha_1}$ or $L_n=2p_1^{\alpha_1}$. Lemma \ref{lem1:BLMS} shows that $\alpha_1=1$ except for the case when $n=6$ when 
$L_6=2\times 3^2$. So, for $n\ne 6$, we get that $L_n=p_1$ or $2p_1$. Let us see that the second case is not possible. Assuming it is, we get $6\mid n$. Write $n=2^t \times 3 \times m$, where $t\ge 1$ and $m$ is odd. Clearly, $n\ne 6$. 

If $m>1$, then $L_{2^t 3 m}$ has a primitive divisor which does not divide the number $L_{2^t 3}$. Hence, $L_n=2p_1$ is not possible in this case. However, if $m=1$ then $t>1$, and both $L_{2^t}$ and $L_{2^t 3}$ have primitive divisors, so the equation $L_n=2p_1$ is not possible in this case either. So, the only possible case is $L_n=p_1$. Thus, we get
$$
\varphi(L_n)=L_n-1=d\left(\frac{10^m-1}{9}\right)\quad {\text{\rm and}}\quad d\in \{2,6\},
$$
so 
$$
L_n=d\left(\frac{10^m-1}{9}\right)+1\quad {\text{\rm and}}\quad d\in \{2,6\}.
$$
 When $d=2$, we get $L_n\equiv 3\pmod 5$. For the Lucas sequence $\{L_n\}_{n\ge 0}$, the value of the period modulo $5$ is $4$ . Furthermore, from $L_n\equiv 3\pmod 5$, we get that $n\equiv 2\pmod 4$. Thus, $n=2(2k+1)$ for some $k\ge 0$. 
 However, this is not possible for $k\ge 1$, since for $k=1$, we get that $n=6$ and $L_6=2\times 3^2$, while for $k>1$, we have that $L_n$ is divisible by both the primes $3$ and at least another prime, namely a primitive prime factor of $L_n$, so $L_n=p_1$ is not possible. Thus, $k=0$, so $n=2$.
 
 When $d=6$, we get that $L_n\equiv 2\pmod 5$. This shows that $4\mid n$. Write $n=2^t (2k+1)$ for some $t\ge 2$ and $k\ge 0$. 
 As before, if $k\ge 1$, then $L_n$ cannot be a prime since either $k=1$, so $3\mid n$, and then $L_n>2$ is even, or $k\ge 2$, and then 
  $L_n$ is divisible by at least two primes, namely the primitive prime factors of $L_{2^t}$ and of $L_n$. Thus, $n=2^t$. Assuming $m\ge 2$, and reducing both sides of the above formula
$$
L_{2^{t-1}}^2-2=L_{2^t}=6\left(\frac{10^m-1}{9}\right)+1
$$
modulo $8$, we get $7\equiv -5\pmod{8}$, which is not possible. This shows that $m=1$, so $t=2$, therefore $n=4$. 

To summarize, we have proved the following result.

\begin{lemma}
The equation 
\begin{equation*}
\varphi(L_n)=d\left(\frac{10^m-1}{9}\right),\qquad d\in \{1,\ldots,9\},
\end{equation*} 
has no solutions with $n>6$ if $d\not\in \{4,8\}$.
\end{lemma}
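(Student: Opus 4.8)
The plan is to organize the whole argument around the $2$-adic valuation of the digit $d$. Excluding $d\in\{4,8\}$ leaves exactly the digits with $\nu_2(d)=0$, namely $d\in\{1,3,5,7,9\}$, and those with $\nu_2(d)=1$, namely $d\in\{2,6\}$. First I would write $L_n=2^{\delta}p_1^{\alpha_1}\cdots p_r^{\alpha_r}$, compute $\varphi(L_n)$ from the product formula, and apply $\nu_2$ to both sides of \eqref{eq2:2}. The decisive observation is that the repunit $(10^m-1)/9$ is odd, so the right-hand side contributes exactly $\nu_2(d)$; this yields the valuation identity $\max\{0,\delta-1\}+\sum_{i=1}^{r}\nu_2(p_i-1)=\nu_2(d)$, from which every subsequent constraint flows.

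The case $\nu_2(d)=0$ is immediate: the identity forces $\varphi(L_n)$ to be odd, and since $\varphi(N)$ is odd only for $N\in\{1,2\}$, we would get $L_n\le 2$, hence $n\le 1$, contradicting $n>6$. The substantive case is $\nu_2(d)=1$. Since each odd prime $p_i$ satisfies $\nu_2(p_i-1)\ge 1$, the valuation identity leaves only two shapes: either $\delta=2$ and $r=0$, so $L_n=4$ and $n=3$ (below our range), or $\delta\in\{0,1\}$ with a single odd prime factor $p_1\equiv 3\pmod 4$, giving $L_n=p_1^{\alpha_1}$ or $L_n=2p_1^{\alpha_1}$. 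Here I would invoke the perfect-power result Lemma \ref{lem1:BLMS} to force $\alpha_1=1$ once $n>6$ (the lone exception $L_6=2\cdot 3^2$ lies outside our range), so $L_n$ is a prime or twice a prime.

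Eliminating these two remaining shapes is where the real work lies, and I expect the Primitive Divisor Theorem (Lemma \ref{lem1:Car}) to be the main tool as well as the main obstacle, because of the careful bookkeeping it requires. For $L_n=2p_1$, evenness forces $3\mid n$; writing $n=2^{t}\cdot 3\cdot m'$ and tracking the primitive prime factors of the nested terms $L_{2^{t}}$, $L_{2^{t}\cdot 3}$ and $L_n$ should show that $L_n$ acquires at least two distinct odd primes whenever $n>6$, contradicting the single-prime shape. For $L_n=p_1$ prime, I would use $\varphi(L_n)=L_n-1=d(10^m-1)/9$ together with the fact that the repunit is $\equiv 1\pmod 5$: for $d=2$ this pins $L_n\equiv 3\pmod 5$, hence $n\equiv 2\pmod 4$, while for $d=6$ it pins $L_n\equiv 2\pmod 5$, hence $4\mid n$.

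In each congruence class I would write $n=2^{t}(2k+1)$ and apply primitive divisors once more to show $L_n$ has more than one prime factor unless $n$ is small. The only borderline configuration, $n=2^{t}$ arising when $d=6$, I would dispatch by reducing the identity $L_{2^{t-1}}^{2}-2=6(10^m-1)/9+1$ modulo $8$; using that $m\ge 2$ when $n>6$, this congruence is inconsistent, so $t=2$ and $n=4$, again outside the range. The genuinely delicate point throughout is knowing \emph{precisely} when a stacked Lucas term $L_{2^{a}d'}$ contributes a genuinely new prime divisor, since it is exactly this that makes the single-prime conclusion forced by the valuation analysis untenable for every $n>6$; getting those primitivity bookkeeping details right is the crux of the whole lemma.
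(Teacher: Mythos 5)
Your proposal follows the paper's own proof essentially step for step: the same $2$-adic valuation identity $\max\{0,\delta-1\}+\sum_i\nu_2(p_i-1)=\nu_2(d)$ splitting off $d\in\{1,3,5,7,9\}$ at once, the same reduction via Lemma \ref{lem1:BLMS} to $L_n=p_1$ or $2p_1$, the same primitive-divisor bookkeeping for $n=2^t\cdot 3\cdot m'$, the same congruences modulo $5$ to pin $n\pmod 4$ for $d=2,6$, and the same reduction of $L_{2^{t-1}}^2-2$ modulo $8$ to kill the $n=2^t$ case. This is correct and matches the paper's argument.
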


\subsection{The case of $L_n$ even}

Next we treat the case $\delta>0$. It is well-known and easy to see by looking at the period of $\{L_n\}_{n\ge 0}$ modulo $8$ that $8\nmid L_n$ for any $n$. Hence, we only need to deal with the cases $\delta=1$ or $2$.

If $\delta=2$, then $3\mid n$ and $n$ is odd. Furthermore, relation \eqref{eq2:4} shows that $r\le 2$. Assume first that $n=3^t$. We check that $t=2,3$ are not convenient. For $t\ge 4$, we have  $L_9,~L_{27}$ and $L_{81}$ are divisors of $L_n$ and all have odd primitive divisors which are prime factors of $L_n$, contradicting the fact that $r\le 2$. Assume now that $n$ is a multiple of some prime $p\ge 5$. Then $L_p$ and $L_{3p}$ already have primitive prime factors, so $n=3p$, for if not, then $n>3p$, and $L_n$ would have (at least) one additional prime factor, namely a primitive prime factor of $L_n$. Thus, $n=3p$. Write
$$
L_n=L_{3p}=L_p(L_p^2+3).
$$
The two factors above are coprime, so, up to relabeling the prime factors of $L_n$, we may assume that $L_p=p_1^{\alpha_1}$ and $L_p^2+3=4p_2^{\alpha_2}$. Lemma \ref{lem1:BLMS} shows that $\alpha_1=1$. Further, since $p$ is odd,
we get that $L_p\equiv 1,4\pmod 5$, therefore the second relation above implies that $p_2^{\alpha_2}\equiv 1\pmod 5$. If $\alpha_2$ is odd, we then get that $p_2\equiv 1\pmod 5$. This leads to 
$5\mid (p_2-1)\mid \varphi(L_n)=d(10^m-1)/9$ with $d\in \{4,8\}$, which is a contradiction. Thus, $\alpha_2$ is even, showing that 
$$
L_p^2+3=\square,
$$
which is impossible. 

If $\delta=1$, then $6\mid n$. Assume first that $p\mid n$ for some prime $p>3$. Write $n=2^t \times 3\times m$. If $t\ge 2$, then $r\ge 4$, since $L_n$ is then a multiple of a primitive prime factor of $L_{2^t}$, a primitive prime factor of $L_{2^t 3}$, a primitive prime factor of $L_{2^t p}$ and a primitive prime factor of $L_{2^t 3p}$. So, $t=1$. Then $L_n$ is a multiple of $3$ and of the primitive prime factors of $L_{2p}$ and $L_{6p}$, showing that $n=6p$, for if not, then $n>6p$ and $L_n$ would have 
(at least) an additional prime factor, namely a primitive prime factor of $L_n$. Thus, with $n=6p$, we may write
$$
L_n=L_{6p}=L_{2p}(L_{2p}^2-3).
$$
Further, it is easy to see that up to relabeling the prime factors of $L_n$, we may assume that $p_1=3,~\alpha_1=2$, $L_{2p}=3p_2^{\alpha_2}$ and $L_{2p}^2-3=6p_3^{\alpha_3}$. Furthermore, since $r=3$, relation \eqref{eq2:4} tells us that 
$p_i\equiv 3\pmod 4$ for $i=2,3$. Reducing equation
$$
L_p^2+2=L_{2p}=3p_2^{\alpha_2}
$$
modulo $4$ we get $3\equiv 3 ^{\alpha_2+1}\pmod 4$, so $\alpha_2$ is even. We thus get $L_{2p}=3\square$, an equation which has no solutions by Lemma \ref{lem1:BLMS}. 

So, it remains to assume that $n=2^t\times 3^s$. 

Assume $s\ge 2$. If also $t\ge 2$, then $L_n$ is divisible by the primitive prime factors of $L_{2^t}$, $L_{2^t 3}$ and $L_{2^t 9}$. This shows that $n=2^t \times 9$ and we have
$$
L_n=L_{2^t 9}=L_{2^t} (L_{2^t}^2-3)(L_{2^t 3}^2-3).
$$
Up to relabeling the prime factors of $L_n$, we get $L_{2^t}=p_1^{\alpha_1}$, $L_{2^t}^2-3=2p_2^{\alpha_2}$, $L_{2^t 3}^2-3=p_3^{\alpha_3}$ and $p_i\equiv 3\pmod 4$ for $i=1,2,3$. Reducing the last relation modulo $4$, we get
$1\equiv 3^{\alpha_3}\pmod 4$, so $\alpha_3$ is even. We thus get $L_{2^t 3}^2-3=\square$, and this is false. Thus, $t=1$. By the existence of primitive divisors Lemma \ref{lem1:Car}, $s\in \{2,3\}$, so $n\in \{18,54\}$ and none leads to a solution. 

Assume next that $s=1$. Then $n=2^t\times 3$ and $t\ge 2$.
We write
$$
L_n=L_{2^t 3}=L_{2^t} (L_{2^t}^2-3).
$$
Assume first that there exist $i$ such that $p_i\equiv 1\pmod 4$. Then $r\le 2$ by \eqref{eq2:4}. It then follows that in fact $r=2$ and up to relabeling the primes we have $L_{2^t}=p_1^{\alpha_1}$ and $L_{2^t}^2-3=2p_2^{\alpha_2}$. 
Since $L_{2^t}=L_{2^{t-1}}^2-2$, we get that $L_{2^{t-1}}^2-2=p_1^{\alpha_1}$, which reduced modulo $4$ gives $3\equiv p_1^{\alpha_1}\pmod 4$, therefore $p_1\equiv 3\pmod 4$. As for the second relation, we get $(L_{2^t}^2-3)/2=p_2^{\alpha_2}$,
which reduced modulo $4$ also gives $3\equiv p_2^{\alpha_2}\pmod 4$, so also $3\equiv p_2\pmod 4$. This is impossible since $p_i\equiv 1\pmod 4$ for some $i\in \{1,\ldots,r\}$. Thus, $p_i\equiv 3\pmod 4$ for all $i\in \{1,\ldots,r\}$. Reducing relation
$$
L_{2^t 3}^2-5F_{2^t 3}^2=4
$$ modulo $p_i$, we get that ${\displaystyle{\left(\frac{-5}{p_i}\right)=-1}}$, and since $p_i\equiv 3\pmod 4$, we get that ${\displaystyle{\left(\frac{5}{p_i}\right)=-1}}$ for $i\in \{1,\ldots,r\}$. Since $p_i$ are also primitive prime factors for 
$L_{2^t}$ and/or $L_{2^t 3}$, respectively, we get that $p_i\equiv -1\pmod {2^t}$.

Suppose next that $r=2$. We then get $d=4$, 
$$
L_{2^{t-1}}^2-2=L_{2^t}=p_1^{\alpha_1}\qquad {\text{\rm and}}\qquad L_{2^t}^2-3=2p_2^{\alpha_2}.
$$
Reducing the above relations modulo $8$, we get that $\alpha_1,\alpha_2$ are odd. Thus, 
\begin{eqnarray*}
&& 4\left(\frac{10^m-1}{9}\right)=\varphi(L_n)=p_1^{\alpha_1-1}(p_1-1)p_2^{\alpha_2-1}(p_2-1)\\
& \equiv & (-1)^{\alpha_1-1}(-2) (-1)^{\alpha_2-1}(-2)\pmod {2^t}\equiv 4\pmod {2^t},
\end{eqnarray*}
giving 
$$
\frac{10^m-1}{9}\equiv 1\pmod {2^{t-2}}\qquad {\text{\rm therefore}}\qquad 10^m\equiv 10\pmod {2^{t-2}},
$$
so $t\le 3$ for $m\ge 2$. Thus, $n\in \{12, 24\}$, in these cases equation \eqref{eq2:2} has no solutions.

Assume next that $r=3$. We then get that $d=8$ and either
$$
L_{2^{t-1}}^2-2=L_{2^t}=p_1^{\alpha_1} p_2^{\alpha_2}\qquad {\text{\rm and}}\qquad L_{2^t}^2-3=2p_3^{\alpha_3},
$$
or
$$
L_{2^{t-1}}^2-2=L_{2^t}=p_1^{\alpha_1}\qquad {\text{\rm and}}\qquad L_{2^t}^2-3=2p_2^{\alpha_2}p_3^{\alpha_3}.
$$
Reducing the above relations modulo $8$ as we did before, we get that exactly one of $\alpha_1,\alpha_2,\alpha_3$ is even and  the other two are odd. Then
\begin{eqnarray*}
&& 8\left(\frac{10^m-1}{9}\right)=\varphi(L_n)=p_1^{\alpha_1-1}(p_1-1)p_2^{\alpha_2-1}(p_2-1) p_3^{\alpha_3-1}(p_3-1)\\
&\equiv& (-1)^{\alpha_1+\alpha_2+\alpha_3-3} (-2)^3\pmod {2^t}\equiv 8\pmod {2^t}
\end{eqnarray*}
giving 
$$
\frac{10^m-1}{9}\equiv 1\pmod {2^{\max\{0,t-3\}}}\quad {\text{\rm therefore}}\quad 10^m\equiv 10\pmod {2^{\max\{0,t-3\}}},
$$
which implies that $t\le 4$ for $m\ge 2$. The only new possibility is $n=48$, which does not fulfill \eqref{eq2:2}. 

So, we proved the following result.

\begin{lemma}
There is no $n>6$ with $L_n$ even such that the relation
\begin{equation*}
\varphi(L_n)=d\left(\frac{10^m-1}{9}\right),\qquad d\in \{1,\ldots,9\},
\end{equation*}
holds.
\end{lemma}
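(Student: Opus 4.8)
The plan is to use that $L_n$ is even precisely when $3\mid n$, together with the observation (from the period of $\{L_n\}$ modulo $8$) that $8\nmid L_n$, to reduce to the two cases $\delta=\nu_2(L_n)\in\{1,2\}$; these correspond respectively to $6\mid n$ and to $n$ being an odd multiple of $3$. In each case the $2$-adic identity \eqref{eq2:4}, together with $\nu_2(d)\le 3$ and $\nu_2(p_i-1)\ge 1$, immediately bounds the number $r$ of odd prime factors of $L_n$ (namely $r\le 2$ when $\delta=2$, since $1+r\le\nu_2(d)\le 3$, and $r\le 3$ when $\delta=1$). The driving engine is then the Primitive Divisor Theorem for Lucas numbers (Lemma \ref{lem1:Car}): since every $L_d$ with $d\mid n$, $d\neq 6$, contributes a prime dividing no earlier $L_{d'}$, a number $n$ with too many, or too large, divisors would force $L_n$ to have more than $r$ distinct prime factors. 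This pins $n$ down to a short list of shapes, which I then kill with elementary congruences and the square/near-square obstructions of Lemma \ref{lem1:BLMS}.

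\textbf{The case $\delta=2$ (so $3\mid n$, $n$ odd, $r\le 2$).} First I would dispose of $n=3^t$: the primitive primes of $L_9,L_{27},L_{81}$ already give $r\ge 3$ once $t\ge 4$, while $t\le 3$ is checked directly. If instead $n$ has a prime factor $p\ge 5$, then $L_p$ and $L_{3p}$ carry distinct primitive primes, forcing $n=3p$ (a larger $n$ would supply yet another primitive prime). Writing $L_{3p}=L_p(L_p^2+3)$ with the two factors coprime, Lemma \ref{lem1:BLMS} forces $L_p=p_1$; then $L_p^2+3=4p_2^{\alpha_2}$, and reducing modulo $5$ (using $L_p\equiv 1,4\pmod 5$) shows $\alpha_2$ must be even, whence $L_p^2+3=\square$, which is impossible.

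\textbf{The case $\delta=1$ (so $6\mid n$, $r\le 3$).} Here I would separate $n=2^t3^s$ from the case where $n$ is divisible by a prime $p>3$. In the latter a primitive-prime count shows $t=1$ and $n=6p$; from $L_{6p}=L_{2p}(L_{2p}^2-3)$ and $L_{2p}=L_p^2+2$ (Lemma \ref{eq1:fibluc}) one reads off $L_{2p}=3p_2^{\alpha_2}$ with $\alpha_2$ even (mod $4$), i.e. $L_{2p}=3\square$, impossible by Lemma \ref{lem1:BLMS}. For $n=2^t3^s$ the primitive-divisor bookkeeping confines $(t,s)$ to small cases; the substantive one is $n=2^t\cdot 3$ with $t\ge 2$, treated via $L_{2^t3}=L_{2^t}(L_{2^t}^2-3)$. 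There, mod-$4$ reductions show all $p_i\equiv 3\pmod 4$ (otherwise $r\le 2$ and a direct contradiction), the identity $L_{2^t3}^2-5F_{2^t3}^2=4$ (Lemma \ref{eq1:fibluc}) gives $\left(\frac{5}{p_i}\right)=-1$, and primitivity yields $p_i\equiv-1\pmod{2^t}$. Combining the exponent parities (from mod-$8$) with these congruences, an evaluation of $\varphi(L_n)\pmod{2^t}$ forces $10^m\equiv 10\pmod{2^{t-2}}$ (resp. modulo $2^{t-3}$ when $r=3$), so $t\le 3$ (resp. $t\le 4$), leaving only $n\in\{12,24,48\}$ to check by hand.

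\textbf{Main obstacle.} The delicate point is the final $n=2^t\cdot 3$ analysis: bounding $t$ rests on evaluating $\varphi(L_n)$ modulo a high power of $2$, which in turn requires pinning down both the parities of the exponents $\alpha_i$ (via reductions modulo $8$) and the fact that each odd prime factor satisfies $p_i\equiv-1\pmod{2^t}$ (via quadratic reciprocity through $L_n^2-5F_n^2=\pm4$ and primitivity). Assembling these ingredients correctly is what makes $t$ small and collapses everything to a finite check; the remainder, while lengthy, is routine case-splitting driven by the Primitive Divisor Theorem and the square obstruction of Lemma \ref{lem1:BLMS}.
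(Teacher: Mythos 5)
Your proposal is correct and follows essentially the same route as the paper: the same split into $\delta=\nu_2(L_n)\in\{1,2\}$ via the $2$-adic identity, the same primitive-divisor counts forcing $n\in\{3p,\,6p,\,2^t3^s\}$, the same factorizations $L_{3p}=L_p(L_p^2+3)$ and $L_{2^t3}=L_{2^t}(L_{2^t}^2-3)$ with the square obstructions of Lemma \ref{lem1:BLMS}, and the same final bound on $t$ from evaluating $\varphi(L_n)$ modulo $2^t$ using $p_i\equiv-1\pmod{2^t}$ and the exponent parities. No substantive differences.
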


\subsection{The case of $n$ even}

Next we look at the solutions of \eqref{eq2:2} with $n$ even. Write $n=2^t m$, where $t\ge 1$, $m$ is odd and coprime to $3$.

Assume first that there exists an $i$ such that $p_i\equiv 1\pmod 4$. Without loss of generality we assume that $p_1\equiv 1\pmod 4$.  It then follows from \eqref{eq2:4} that $r\le 2$, and that $r=1$ if $d=4$. So, if $d=4$, then $r=1$, $L_n=p_1^{\alpha_1}$, and by Lemma  \ref{lem1:BLMS}, we get that $\alpha_1=1$. In this case, by the existence of primitive divisors Lemma \ref{lem1:Car}, we get that $m=1$, otherwise $L_n$ would be divisible both by a primitive prime factor of $L_{2^t}$ as well as by a primitive prime factor 
of $L_{n}$. Hence, $L_{2^t}=p_1$, so
$$
L_{2^t}-1=\varphi(L_{2^t})=4\left(\frac{10^m-1}{9}\right),\quad {\text{\rm therefore}}\quad L_{2^t}\equiv 5\pmod {10}.
$$
Thus, $5\mid L_n$ and this is not possible for any $n$. Suppose now that $d=8$. If $t\ge 2$, then
$$
L_{n/2}^2-2=L_n
$$
and reducing the above relation modulo $p_1$, we get that ${\displaystyle{\left(\frac{2}{p_1}\right)=1}}$. Since $p_1\equiv 1\pmod 4$, we read that $p_1\equiv 1\pmod 8$. Relation \eqref{eq2:4} shows that $r=1$ so $L_n=p_1^{\alpha_1}$.
By Lemma  \ref{lem1:BLMS}, we get again that $\alpha_1=1$ and by the existence of primitive divisors Lemma \ref{lem1:Car}, we get that $m=1$. Thus, 
$$
L_{2^{t}}-1=\varphi(L_{2^t})=8\left(\frac{10^m-1}{9}\right),\quad {\text{\rm therefore}}\quad L_{2^t}\equiv 4\pmod 5,
$$
which is impossible for $t\ge 2$, since $L_{n}\equiv 2\pmod 5$ whenever $n$ is a multiple of $4$. This shows that $t=1$, so $m>1$. Let $p\ge 5$ be a prime factor of $n$. Then $L_n$ is divisible by $3$ and by the primitive prime factor
of $L_{2p}$, and since $r\le 2$, we get that $r=2$, and $n=2p$. Thus, $L_n=L_{2p}=3p_2^{\alpha_2}$, and, by Lemma \ref{lem1:BLMS}, we get that $\alpha_2=1$. Reducing the above relation modulo $5$, we get that
$3\equiv 3p_2\pmod 5$, so $p_2\equiv 1\pmod 5$, showing that $5\mid (p_2-1)\mid \varphi(L_n)=8(10^m-1)/9$, which is impossible.

This shows that  in fact we have $p_i\equiv 3\pmod 4$ for $i=1,\ldots,r$. Reducing relation $L_n^2-5F_n^2=4$ modulo $p_i$, we get that ${\displaystyle{\left(\frac{-5}{p_i}\right)=1}}$ for $i=1,\ldots,r$. Since 
we already know that ${\displaystyle{\left(\frac{-1}{p_i}\right)=-1}}$, we get that ${\displaystyle{\left(\frac{5}{p_i}\right)=-1}}$ for all $i=1,\ldots,r$. Since in fact $p_i$ is always a primitive divisor for $L_{2^t d_i}$ for some divisor 
$d_i$ of $m$, we get that $p_i\equiv -1\pmod {2^t}$. Reducing relation
$$
L_n=p_1^{\alpha_1}\cdots p_r^{\alpha_r}
$$
modulo $4$, we get $3\equiv 3^{\alpha_1+\cdots+\alpha_r}\pmod 4$, therefore $\alpha_1+\cdots+\alpha_r$ is odd. Next, reducing the relation
$$
\varphi(L_n)=p_1^{\alpha_1-1}(p_1-1)\cdots p_r^{\alpha_r-1}(p_r-1)
$$
modulo $2^t$, we get
$$
d\left(\frac{10^m-1}{9}\right)=\varphi(L_n)\equiv (-1)^{\alpha_1+\cdots+\alpha_r-r} (-2)^r\pmod {2^t}\equiv -2^r\pmod {2^t}.
$$
Since $r\in \{2,3\}$ and $d=2^r$, we get that 
$$
\frac{10^m-1}{9}\equiv -1\pmod {2^{\max\{0,t-r\}}},\quad {\text{\rm so}}\quad 10^m\equiv 8\pmod {2^{\max\{0,t-r\}}}.
$$
Thus, if $m\ge 4$, then $t\le 6$. Suppose that $m\ge 4$. Computing $L_{2^t}$ for $t\in \{5,6\}$, we get that $p\equiv 1\pmod 5$ for each prime factor $p$ of them. Thus, $5\mid (p-1)\mid \varphi(L_n)=d(10^m-1)/9$, which is impossible.
Hence, $t\in \{1,2,3,4\}$. We get the relations
\begin{equation}
\label{eq2:10}
L_{2^t m}=L_{2^t} p_1^{\alpha_1},\quad {\text{\rm or}}\quad L_{2^t m} =L_{2^t} p_2^{\alpha_2}p_3^{\alpha_3}\quad {\text{\rm and}}\quad t\in \{1,2,3,4\}.
\end{equation}
Assume that the  first relation in \eqref{eq2:10} holds for some $t\in \{1,2,3,4\}$. Reducing the first equation in \eqref{eq2:10} modulo $5$, we get  $L_{2^t}\equiv L_{2^t} p_1^{\alpha_1}\pmod 5$, therefore $p_1^{\alpha_1}\equiv 1\pmod 5$.
If $\alpha_1$ is odd, we then obtain  $p_1\equiv 1\pmod 5$; hence, $5\mid (p_1-1)\mid \varphi(L_n)=d(10^m-1)/9$ with $d\in \{4,8\}$, which is impossible. If $\alpha_1$ is even, we then get that $L_n/L_{2^t}=p_1^{\alpha_1}=\square
$, and this is impossible since $n\ne 2^t \times 3$ by Lemma \ref{lem1:RM}. Assume now that the second relation in  \eqref{eq2:10} holds for some $t\in \{2,3,4\}$. Reducing it modulo $5$, we get $L_{2^t}\equiv L_{2^t} p_2^{\alpha_2} p_3^{\alpha_3}\pmod 5$.
Hence, $p_2^{\alpha_2}p_3^{\alpha_3}\equiv 1\pmod 5$. Now
\begin{eqnarray*}
8\left(\frac{10^m-1}{9}\right) & = & \varphi(L_n)=(L_{2^t}-1)p_2^{\alpha_2-1} p_3^{\alpha_3-1}(p_2-1)(p_3-1)\\
& \equiv & \left(\frac{p_2-1}{p_2}\right)\left(\frac{p_3-1}{p_3}\right) \pmod 5,
\end{eqnarray*}
so
$$
\left(\frac{p_2-1}{p_2}\right)\left(\frac{p_3-1}{p_3}\right)\equiv 3\pmod 5.
$$
The above relation shows that $p_2$ and $p_3$ are distinct modulo $5$, because  otherwise the left--hand side above is a quadratic residue modulo $5$ while $3$ is not a quadratic residue modulo $5$. Thus, $\{p_2,p_3\}\equiv \{2,3\}\pmod 5$, and we get
$$
\left(\frac{2-1}{2}\right)\left(\frac{3-1}{3}\right)\equiv 3\pmod 5\quad {\text{\rm or}}\quad 1\equiv 3^2\pmod 5,
$$
a contradiction. Finally, assume that $t=1$ and that the right relation \eqref{eq2:10} holds. Reducing it modulo $4$, we get $3\equiv 3^{\alpha_2+\alpha_3}\pmod 4$, therefore $\alpha_2+\alpha_3$ is even.
If $\alpha_2$ is even, then so is $\alpha_3$, so we get that $L_{2m}=3\square$, which is false by Lemma \ref{lem1:RM}. Hence, $\alpha_2$ and $\alpha_3$ are both odd. Furthermore, since $m$ is odd and not a multiple of $3$, 
we get that $2m\equiv 2\pmod 4$ and $2m\equiv 2,4\pmod 6$, giving $2m\equiv 2,10\pmod {12}$. Looking at the values of $\{L_n\}_{n\ge 1}$ modulo $8$, we see that the period is $12$, and $L_2\equiv L_{10}\equiv 3\pmod 8$, showing that $L_{2m}\equiv 3\pmod 8$.
This shows that $p_2^{\alpha_2} p_3^{\alpha_3}\equiv 1\pmod 8$, and since $\alpha_2$ and $\alpha_3$ are odd, we get the congruence $p_2p_3\equiv 1\pmod 8$. This together with the fact that $p_i\equiv 3\pmod 4$ for $i=1,2$, 
implies that $p_2\equiv p_3\pmod 8$. Thus, $(p_2-1)/2$ and $(p_3-1)/2$ are congruent modulo $4$ so their product is $1$ modulo $4$. Now we write
\begin{eqnarray*}
\varphi(L_n) & = & (3-1) (p_2-1)p_2^{\alpha_2-1} (p_3-1)p_3^{\alpha_3-1}\\
& = & 8 \frac{(p_2-1)}{2} \frac{(p_3-1)}{2} p_2^{\alpha_2-1} p_3^{\alpha_3-1}=8M,
\end{eqnarray*}
where $M\equiv 1\pmod 4$. However, since in fact $M=(10^m-1)/9$, we get that $M\equiv 3\pmod 4$ for $m\ge 2$, a contradiction. So, we must have $m\le 3$, therefore $L_n<4000$, so $n\le 17$, and such values can be dealt with by hand.

Thus, we have proved the following result.

\begin{lemma}
There is no $n>6$ even such that relation 
\begin{equation*}
\varphi(L_n)=d\left(\frac{10^m-1}{9}\right),\qquad d\in \{1,\ldots,9\},
\end{equation*}
holds.
\end{lemma}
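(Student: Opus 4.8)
The plan is to combine the two reductions already established—namely that any solution with $n>6$ must have $d\in\{4,8\}$ and that $L_n$ must be odd—so that here we may assume $d\in\{4,8\}$, $3\nmid n$, and write $n=2^tm$ with $t\ge1$ and $m$ odd and coprime to $3$. Since $L_n$ is odd, in the factorization $L_n=p_1^{\alpha_1}\cdots p_r^{\alpha_r}$ every $p_i$ is an odd prime, and the $2$-adic identity \eqref{eq2:4} reads $\sum_{i=1}^r\nu_2(p_i-1)=\nu_2(d)\in\{2,3\}$. This is the arithmetic engine of the whole argument: it forces $r$ to be very small and ties the residues $p_i\bmod4$ to the digit $d$.

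First I would split on whether some prime factor satisfies $p_i\equiv1\pmod4$. If so, \eqref{eq2:4} immediately gives $r\le2$ (and $r=1$ when $d=4$). In the $d=4$ subcase one gets $L_n=p_1^{\alpha_1}$, whence $\alpha_1=1$ by Lemma~\ref{lem1:BLMS} and $m=1$ by the Primitive Divisor Theorem (Lemma~\ref{lem1:Car}), so $L_{2^t}\equiv5\pmod{10}$, contradicting the fact that $5\nmid L_n$. The $d=8$ subcase is similar but needs the extra observation that from $L_{n/2}^2-2=L_n$ one reads $\left(\tfrac2{p_1}\right)=1$, hence $p_1\equiv1\pmod8$; pushing the same primitive-divisor and Lemma~\ref{lem1:BLMS} machinery forces $t=1$, $n=2p$, and then a reduction modulo $5$ produces $5\mid\varphi(L_n)$, again impossible.

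The substantive case is when every $p_i\equiv3\pmod4$, so that $r=\nu_2(d)\in\{2,3\}$ and $d=2^r$. Here I would extract two congruence constraints: reducing $L_n^2-5F_n^2=4$ modulo each $p_i$ gives $\left(\tfrac5{p_i}\right)=-1$, while the primitivity of $p_i$ for some $L_{2^td_i}$ forces $p_i\equiv-1\pmod{2^t}$. Feeding these into $\varphi(L_n)=\prod p_i^{\alpha_i-1}(p_i-1)$ modulo $2^t$ yields $10^m\equiv8\pmod{2^{\max\{0,t-r\}}}$, which bounds $t\le6$ once $m\ge4$; computing $L_{2^t}$ for $t\in\{5,6\}$ and noting its prime factors are $\equiv1\pmod5$ then rules those out, leaving $t\in\{1,2,3,4\}$. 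At this point the factorization collapses to the two shapes in \eqref{eq2:10}, and each is killed by a short modular computation: reductions modulo $5$ force exponents to be even and hence $L_n/L_{2^t}$ or $L_{2m}/3$ a perfect square, excluded by the square-class Lemma~\ref{lem1:RM}; the residual $t=1$ configuration is eliminated by tracking $\varphi(L_n)\bmod4$ against $(10^m-1)/9\equiv3\pmod4$, after which only $m\le3$, i.e.\ $n\le17$, survives and is checked directly.

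I expect the main obstacle to be the bookkeeping in the all-$p_i\equiv3\pmod4$ case: one must simultaneously control the number of prime factors $r$, the $2$-adic exponent $t$, and the parities of the $\alpha_i$, and the elimination relies on a coordinated use of several independent congruences (modulo $4$, $5$, and $8$) together with the Legendre-symbol identity $\left(\tfrac5{p_i}\right)=-1$ and the square-class classification. Getting the modulo-$8$ period of $\{L_n\}$ and the quadratic-residue computations to close off every branch—especially the delicate $t=1$ subcase where $p_2\equiv p_3\pmod8$ must be forced—is where the real care is needed.
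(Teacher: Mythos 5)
Your proposal is correct and follows essentially the same route as the paper: the same split on whether some $p_i\equiv 1\pmod 4$, the same use of Lemmas \ref{lem1:BLMS}, \ref{lem1:Car} and \ref{lem1:RM}, the same congruences modulo $2^t$, $4$, $5$ and $8$ leading to $t\in\{1,2,3,4\}$, the two factorization shapes of \eqref{eq2:10}, and the final $M\equiv 3\pmod 4$ contradiction reducing to $n\le 17$. The only minor imprecision is that the second shape of \eqref{eq2:10} with $t\in\{2,3,4\}$ is actually eliminated by the modulo-$5$ computation forcing $\{p_2,p_3\}\equiv\{2,3\}\pmod 5$ and hence $1\equiv 3^2\pmod 5$, not by the square-class lemma, but this does not affect the soundness of the plan.
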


\subsection{$r=3,~d=8$ and $m$ is even}

From now on, $n>6$ is odd and $L_n$ is also odd. If $p\mid L_n$, with $p$ a prime number, therefore reducing the equation $L_n^2-5F_n^2=-4$ modulo $p$ we get that ${\displaystyle{\left(\frac{5}{p}\right)=1}}$. Thus, 
$p\equiv 1,4\pmod 5$. If $p\equiv 1\pmod 5$, then $5\mid (p-1)\mid \varphi(L_n)=d(10^m-1)/9$ with $d\in \{4,8\}$, a contradiction. Thus, $p_i\equiv 4\pmod 5$ for all $i=1,\ldots,r$.

We next show that $p_i\equiv 3\pmod 4$ for all $i=1,\ldots,r$. Assume that this is not so and suppose that $p_1\equiv 1\pmod 4$. If $r=1$, then $L_n=p_1^{\alpha_1}$ and by Lemma 
\ref{lem1:BLMS}, we have $\alpha_1=1$. So,
$$
L_n-1=\varphi(L_n)=d\left(\frac{10^m-1}{9}\right)\quad {\text{\rm so}}\quad L_n=d\left(\frac{10^m-1}{9}\right)+1.
$$
If $d=4$, then $L_n\equiv 5\pmod {10}$, so $5\mid L_n$, which is false. When $d=8$, we get that $n\equiv 3\pmod 4$ following the fact that $L_n\equiv 4\pmod 5$. However, we also have that $L_n\equiv 1\pmod 8$, showing that 
$n\equiv 1\pmod {12}$; in particular, $n\equiv 1\pmod 4$, a contradiction.

Assume now that $r=2$. Then $L_n=p_1^{\alpha_1} p_2^{\alpha_2}$ and $d=8$. Then
\begin{equation}
\label{eq2:11}
\varphi(L_n)=(p_1-1)p_1^{\alpha_1-1} (p_2-1) p_2^{\alpha_2-1} = 8\left(\frac{10^m-1}{9}\right).
\end{equation}
Reducing the above relation \eqref{eq2:11} modulo $5$ we get $4^{\alpha_1+\alpha_2-2} \times 3^2\equiv 3\pmod 5$, which is impossible since the left--hand side is a quadratic residue modulo $5$ while the right--hand side is not. 

Thus, $p_i\equiv 3\pmod 4$ for $i=1,\ldots,r$. Assume next that $r=2$. Then $L_n=p_1^{\alpha_1} p_2^{\alpha_2}$ and $d=4$. Then
\begin{equation}
\label{eq2:12}
\varphi(L_n)=(p_1-1) p_1^{\alpha_1-1} (p_2-1) p_2^{\alpha_2-1}=4\frac{10^m-1}{9}.
\end{equation}
Reducing the above relation \eqref{eq2:12} modulo $5$, we get $4^{\alpha_1+\alpha_2-2} \times 3^2\equiv 4\pmod 5$, therefore $4^{\alpha_1+\alpha_2-2}\equiv 1\pmod 5$. Thus, $\alpha_1+\alpha_2$ is even.
If $\alpha_1$ is even, so is $\alpha_2$, so $L_n=\square$, and this is false by Lemma \ref{lem1:BLMS}. Hence, $\alpha_2$ and $\alpha_3$ are both odd. It now follows that $L_n\equiv 3^{\alpha_1+\alpha_2}\pmod 4$, so 
$L_n\equiv 1\pmod 4$, therefore $n\equiv 1\pmod 6$, and also $L_n\equiv 4^{\alpha_1+\alpha_2}\pmod 5$, so $L_n\equiv 1\pmod 5$, showing that $n\equiv 1\pmod 4$. Hence, $n\equiv 1\pmod {12}$, showing that $L_n\equiv 1\pmod 8$.
Thus, $p_1^{\alpha_1} p_2^{\alpha_2}\equiv 1\pmod 8$, and since $\alpha_1$ and $\alpha_2$ are odd and $p_1^{\alpha_1-1}$ and $p_2^{\alpha_2-1}$ are congruent to $1$ modulo $8$ (as perfect squares), we therefore get that
$p_1 p_2\equiv 1\pmod 8$. Since also $p_1\equiv p_2\equiv 3\pmod 4$, we get that in fact $p_1\equiv p_2\pmod 8$. Thus, $(p_1-1)/2$ and $(p_2-1)/2$ are congruent modulo $4$ so their product is $1$ modulo $4$. Thus,
$$
\varphi(L_n)=4 \left(\frac{(p_1-1)}{2} \frac{(p_2-1)}{2}\right) p_1^{\alpha_1-1} p_2^{\alpha_2-1} =4M,
$$
where $M\equiv 1\pmod 4$. Since in fact we have $M=(10^m-1)/9$, we get that $M\equiv 3\pmod 4$ for $m\ge 2$, a contradiction.

Thus, $r=3$ and $d=8$.  To get that $m$ is even, we write $L_n=p_1^{\alpha_1} p_2^{\alpha_2} p_3^{\alpha_3}$. So, 
\begin{equation}
\label{eq2:13}
\varphi(L_n)=(p_1-1) p_1^{\alpha_1-1} (p_2-1) p_2^{\alpha_2-1} (p_3-1) p_3^{\alpha_3-1} = 8\left(\frac{10^m-1}{9}\right),
\end{equation}
Reducing equation \eqref{eq2:13} modulo $5$ we get $4^{\alpha_1+\alpha_2+\alpha_3-3} \times 3^3\equiv 3\pmod 5$, giving $4^{\alpha_1+\alpha_2+\alpha_3}\equiv 1\pmod 5$. Hence, $\alpha_1+\alpha_2+\alpha_3$ is even.
It is not possible that all $\alpha_i$ are even for $i=1,2,3$, since then we would get $L_n=\square$, which is not possible by Lemma \ref{lem1:BLMS}. Hence, exactly one of them is even, say $\alpha_3$ and the other two are odd. 
Then $L_n\equiv 3^{\alpha_1+\alpha_2+\alpha_3}\equiv 1\pmod 4$ and $L_n\equiv 4^{\alpha_1+\alpha_2+\alpha_3}\equiv 1\pmod 5$. Thus, $n\equiv 1\pmod 6$ and $n\equiv 1\pmod 4$, so $n\equiv 1\pmod {12}$.This 
shows that $L_n\equiv 1\pmod 8$. Since $p_1^{\alpha_1-1} p_2^{\alpha_2-1} p_3^{\alpha_3}$ is congruent to $1$ modulo $8$ (as a perfect square), we get that $p_1p_2 \equiv 1\pmod 8$. Thus, 
$p_1\equiv p_2\pmod 8$, so $(p_1-1)/2$ and $(p_2-1)/2$ are congruent modulo $4$ so their product is $1$. Then
\begin{equation}
\label{eq2:14}
\varphi(L_n)=8 \left(\frac{(p_1-1)}{2} \frac{(p_2-1)}{2}\right) \left(\frac{p_3(p_3-1)}{2}\right) p_1^{\alpha_1-1} p_2^{\alpha_2-1} p_3^{\alpha_3-2}=8M,
\end{equation}
where $M=(10^m-1)/9\equiv 3\pmod 4$. In the above product, all odd factors are congruent to $1$ modulo $4$ except possibly for $p_3(p_3-1)/2$. 
This shows that $p_3(p_3-1)/2\equiv 3\pmod 4$, which shows that $p_3\equiv 3\pmod 8$. Now since $p_3^2\mid L_n$, we get 
that $p_3\mid \varphi(L_n)=8(10^m-1)/9$. So, $10^m\equiv 1\pmod {p_3}$. Assuming that $m$ is odd, we would get 
$$
1=\left(\frac{10}{p_3}\right)=\left(\frac{2}{p_3}\right)\left(\frac{5}{p_3}\right)=-1,
$$
which is a contradiction. In the above, we used that $p_3\equiv 3\pmod 8$ and $p_3\equiv 4\pmod 5$ and quadratic reciprocity to conclude that ${\displaystyle{\left(\frac{2}{p_3}\right)=-1}}$ as well as ${\displaystyle{\left(\frac{5}{p_3}\right)=\left(\frac{p_3}{5}\right)=1}}$. 

So, we have showed the following result.

\begin{lemma}
\label{lem3:9}
If $n>6$ is a solution of the equation 
\begin{equation*}
\varphi(L_n)=d\left(\frac{10^m-1}{9}\right),\qquad d\in \{1,\ldots,9\},
\end{equation*}
then $n$ is odd, $L_n$ is odd, $r=3,~d=8$ and $m$ is even. Further, $L_n=p_1^{\alpha_1} p_2^{\alpha_2} p_3^{\alpha_3}$, where $p_i\equiv 3\pmod 4$ and $p_i\equiv 4\pmod 5$ for $i=1,2,3$, $p_1\equiv p_2\pmod 8$, $p_3\equiv 3\pmod 8$, $\alpha_1$ and $\alpha_2$ are odd and $\alpha_3$ is even.
\end{lemma}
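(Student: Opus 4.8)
The plan is to build on the three preceding lemmas of this subsection, which already eliminate every case with $n>6$ except those in which $n$ is odd, $L_n$ is odd and $d\in\{4,8\}$; I therefore start from these hypotheses and from the factorization $L_n=p_1^{\alpha_1}\cdots p_r^{\alpha_r}$ with all $p_i$ odd. The whole argument is then a cascade of congruence and quadratic-residue computations modulo $4$, $5$ and $8$, together with the periods of $\{L_n\}_{n\ge0}$ modulo these moduli, used to successively prune the admissible residues $p_i\bmod 4,5,8$, the value of $r$, the parities of the $\alpha_i$, and finally the parity of $m$. Throughout I would rely on Lemma~\ref{lem1:BLMS} to force prime-power values of $L_n$ to be primes and to forbid $L_n$ or a cofactor from being a perfect square, and on Lemma~\ref{lem1:Car} for the existence of primitive divisors.

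First I would fix the residues modulo $5$. Since $n$ is odd, Lemma~\ref{eq1:fibluc}(iv) gives $L_n^2-5F_n^2=-4$; reducing modulo a prime $p\mid L_n$ and using $\gcd(F_n,L_n)=1$ yields $\left(\frac{5}{p}\right)=1$, so $p\equiv\pm1\pmod5$. The residue $p\equiv1\pmod5$ is impossible, for it would give $5\mid(p-1)\mid\varphi(L_n)=d(10^m-1)/9$, whereas $d\in\{4,8\}$ and $(10^m-1)/9\equiv1\pmod5$ are both prime to $5$. Hence every $p_i\equiv4\pmod5$. Next I would show $p_i\equiv3\pmod4$ by contradiction: assuming some $p_1\equiv1\pmod4$, the $2$-adic relation \eqref{eq2:4} bounds $r$, and the cases $r=1$ and $r=2$ are each closed out by combining $L_n\bmod5$ (which constrains $n\bmod4$) with $L_n\bmod 8$ and $L_n\bmod{10}$, by reducing $\varphi(L_n)=d(10^m-1)/9$ modulo $5$, and by invoking Lemma~\ref{lem1:BLMS}; the mod-$5$ reduction is decisive because its left side is always a quadratic residue while its right side is not.

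Having forced $p_i\equiv3\pmod4$ for all $i$, I would eliminate the configuration $r=2,\ d=4$. Reducing \eqref{eq2:12} modulo $5$ shows $\alpha_1+\alpha_2$ is even; since not both exponents can be even (else $L_n=\square$, contradicting Lemma~\ref{lem1:BLMS}) both are odd, whence $n\equiv1\pmod{12}$ and $L_n\equiv1\pmod8$, forcing $p_1\equiv p_2\pmod8$. Then $(p_1-1)/2$ and $(p_2-1)/2$ are congruent modulo $4$, so $\varphi(L_n)=4M$ with $M\equiv1\pmod4$; but $M=(10^m-1)/9\equiv3\pmod4$ for $m\ge2$, a contradiction. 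This leaves exactly $r=3$ and $d=8$.

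Finally, with $L_n=p_1^{\alpha_1}p_2^{\alpha_2}p_3^{\alpha_3}$, reducing \eqref{eq2:13} modulo $5$ gives $\alpha_1+\alpha_2+\alpha_3$ even; exactly one exponent is even, say $\alpha_3$ (not all, else $L_n=\square$), and the other two are odd. As before one obtains $n\equiv1\pmod{12}$, $L_n\equiv1\pmod8$ and $p_1\equiv p_2\pmod8$, and the mod-$4$ bookkeeping in \eqref{eq2:14}, matching $M=(10^m-1)/9\equiv3\pmod4$, isolates the factor $p_3(p_3-1)/2\equiv3\pmod4$, which forces $p_3\equiv3\pmod8$. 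Since $\alpha_3$ is even we have $p_3^2\mid L_n$, so $p_3\mid\varphi(L_n)=8(10^m-1)/9$ and hence $10^m\equiv1\pmod{p_3}$; were $m$ odd this would make $10$ a quadratic residue, giving $\left(\frac{10}{p_3}\right)=1$ and contradicting $\left(\frac{2}{p_3}\right)\left(\frac{5}{p_3}\right)=-1$ (from $p_3\equiv3\pmod8$ and $p_3\equiv4\pmod5$). Thus $m$ is even, which completes all the asserted congruences. I expect the main obstacle to be precisely this tightly interlocked modular bookkeeping of the last two paragraphs — especially the mod-$4$ accounting that singles out $p_3(p_3-1)/2$ and pins $p_3\equiv3\pmod8$ — where the parities of the $\alpha_i$, the residues $p_i\bmod8$ and the period of $\{L_n\}$ must all be tracked at once and a single slip in a quadratic-residue sign collapses the argument.
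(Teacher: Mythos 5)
Your proposal is correct and follows essentially the same route as the paper: the mod~$5$ quadratic-residue argument via $L_n^2-5F_n^2=-4$ to pin $p_i\equiv 4\pmod 5$, the elimination of a prime $\equiv 1\pmod 4$ through the $2$-adic relation \eqref{eq2:4} and the mod~$5$ reduction of $\varphi(L_n)$, the exclusion of $(r,d)=(2,4)$ by the $M\equiv 1$ versus $M\equiv 3\pmod 4$ clash, and the final deduction that $m$ is even from $p_3\equiv 3\pmod 8$, $p_3\equiv 4\pmod 5$ and $\bigl(\tfrac{10}{p_3}\bigr)=-1$. All the key steps and lemma invocations match the paper's proof.
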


\subsection{$n\in \{p,p^2\}$ for some prime $p$ with $p^3\mid 10^{p-1}-1$}
\label{Jhon}

The factorizations of all Lucas numbers $L_n$ for $n\le 1000$ are known. We used them and Lemma \ref{lem3:9} and found no solution to equation \eqref{eq2:2} with $n\in [7,1000]$.

Let $p$ be a prime factor of $n$. Suppose first that $n=p^t$ for some positive integer $t$. If $t\ge 4$, then $L_n$ is divisible by at least four primes, namely primitive prime factors of $L_p,~L_{p^2},~L_{p^3}$ and $L_{p^4}$, respectively, which is false.
Suppose that $t=3$. Write
$$
L_n=L_p \left(\frac{L_{p^2}}{L_p}\right)\left(\frac{L_{p^3}}{L_{p^2}}\right).
$$
The three factors above are coprime, so they are $p_1^{\alpha_1},~p_2^{\alpha_2},~p_3^{\alpha_3}$ in some order. Since $\alpha_3$ is even, we get that one of $L_p,~L_{p^2}/L_p$ or $L_{p^3}/L_{p^2}$ is a square, which is false
by Lemmas \ref{lem1:BLMS} and \ref{lem1:RM}. Hence, $n\in \{p,~p^2\}$. All primes $p_1,~p_2,~p_3$ are quadratic residues modulo $5$. When $n=p$, they are primitive prime factors of $L_p$. When $n=p^2$, all of them are primitive prime factors 
of $L_p$ or $L_{p^2}$ with at least one of them being a primitive prime factor of $L_{p^2}$. Thus, $p_i\equiv 1\pmod p$ holds for all $i=1,2,3$ both in the case $n=p$ and $n=p^2$, and when $n=p^2$ at least one of the the above congruences holds modulo $p^2$. This shows that $p^3\mid (p_1-1)(p_2-1)(p_3-1)\mid \varphi(L_n)=8(10^m-1)/9$, so $p^3\mid 10^{m}-1$. When $n=p^2$, we in fact have $p^4\mid 10^m-1$. 
Assume now that $p^3\nmid 10^{p-1}-1$. Then the congruence $p^3\mid 10^{m}-1$ implies $p\mid m$, while the congruence $p^4\mid 10^m-1$ implies $p^2\mid m$. Hence, when $n=p$, we have
$$
2^p>L_p>\varphi(L_n)=8(10^m-1)/9>(10^p-1)/9>10^{p-1}
$$
which is false for any $p\ge 3$. Similarly, if $n=p^2$, then 
$$
2^{p^2}>L_{p^2}>\varphi(L_n)=8(10^m-1)/9>(10^{p^2}-1)/9>10^{p^2-1}
$$
which is false for any $p\ge 3$. So, indeed when $n$ is a power of a prime $p$, then the congruence $p^3\mid 10^{p-1}-1$ must hold. We record this as follows.

\begin{lemma}
If $n>6$ and $n=p^t$ is solution of he equation 
\begin{equation*}
\varphi(L_n)=d\left(\frac{10^m-1}{9}\right),\qquad d\in \{1,\ldots,9\},
\end{equation*}
with some $t\ge 1$ and $p$ prime, then $t\in \{1,2\}$ and $p^3\mid 10^{p-1}-1$.
\end{lemma}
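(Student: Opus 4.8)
The plan is to prove the claim that if $n>6$ and $n=p^t$ is a solution of \eqref{eq2:2}, then $t\in\{1,2\}$ and $p^3\mid 10^{p-1}-1$. I would structure the argument exactly as the paragraph preceding the statement indicates, relying heavily on Lemma \ref{lem3:9}, the Primitive Divisor Theorem (in the form of Carmichael's Lemma \ref{lem1:Car}), and the square-class results Lemmas \ref{lem1:BLMS} and \ref{lem1:RM}. By Lemma \ref{lem3:9} we already know $L_n=p_1^{\alpha_1}p_2^{\alpha_2}p_3^{\alpha_3}$ with exactly three distinct odd prime factors, so $r=3$ is fixed throughout, and $\alpha_3$ is even while $\alpha_1,\alpha_2$ are odd.

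First I would rule out large $t$. If $t\ge 4$, then by the Primitive Divisor Theorem each of $L_p,~L_{p^2},~L_{p^3},~L_{p^4}$ contributes a distinct primitive prime factor of $L_n$, forcing $\omega(L_n)\ge 4$, contradicting $r=3$. Next I would eliminate $t=3$ by writing the coprime factorization
$$
L_{p^3}=L_p\left(\frac{L_{p^2}}{L_p}\right)\left(\frac{L_{p^3}}{L_{p^2}}\right),
$$
whose three factors are pairwise coprime and hence must be $p_1^{\alpha_1},p_2^{\alpha_2},p_3^{\alpha_3}$ in some order. Since $\alpha_3$ is even, one of these three coprime factors is a perfect square; but $L_p=\square$ is impossible by Lemma \ref{lem1:BLMS}, and a quotient like $L_{p^2}/L_p$ being a square is excluded by the square-class analysis of Lemma \ref{lem1:RM}. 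This leaves only $t\in\{1,2\}$.

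It then remains to extract the divisibility $p^3\mid 10^{p-1}-1$. The key observation is that each $p_i$ is a primitive prime factor of $L_p$ (when $n=p$) or of $L_p$ or $L_{p^2}$ (when $n=p^2$), so $p\mid p_i-1$ for all $i$, and when $n=p^2$ at least one congruence holds modulo $p^2$. Multiplying, $p^3\mid (p_1-1)(p_2-1)(p_3-1)\mid\varphi(L_n)=8(10^m-1)/9$, hence $p^3\mid 10^m-1$ (and $p^4\mid 10^m-1$ when $n=p^2$). I would then argue by contradiction: if $p^3\nmid 10^{p-1}-1$, the multiplicative order of $10$ modulo $p^3$ does not divide $p-1$, which forces $p\mid m$ (and $p^2\mid m$ in the $n=p^2$ case). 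Feeding this lower bound on $m$ into the growth estimate $L_n<2^n$ against $\varphi(L_n)=8(10^m-1)/9>10^{n-1}$ yields, for $n=p$,
$$
2^{p}>L_p>\tfrac{8}{9}(10^m-1)>10^{p-1},
$$
which fails for every $p\ge 3$, and symmetrically $2^{p^2}>10^{p^2-1}$ fails in the $n=p^2$ case. This contradiction establishes $p^3\mid 10^{p-1}-1$.

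The main obstacle I anticipate is the careful bookkeeping in the $t=3$ elimination: one must be sure the three factors in the telescoping product are genuinely pairwise coprime (which uses $\gcd(L_a,L_b)$ properties for Lucas numbers together with $n$ odd) and that the square-class obstruction from Lemma \ref{lem1:RM} truly covers the quotient cases and not merely $L_p$ itself. The final contradiction step is robust because it only compares $2^n$ with $10^{n-1}$, so the real subtlety lies entirely in correctly converting ``$p^3\nmid 10^{p-1}-1$'' into the statement $p\mid m$ via the order of $10$ modulo $p^3$; everything after that is a clean size comparison.
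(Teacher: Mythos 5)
Your proposal is correct and follows essentially the same route as the paper: ruling out $t\ge 4$ by counting primitive divisors, eliminating $t=3$ via the coprime telescoping factorization of $L_{p^3}$ together with Lemmas \ref{lem1:BLMS} and \ref{lem1:RM}, and then deducing $p^3\mid 10^{p-1}-1$ from $p^3\mid(p_1-1)(p_2-1)(p_3-1)$ and the size comparison $2^p>10^{p-1}$. No substantive differences to report.
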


Suppose now that  $n$ is divisible by two distinct primes $p$ and $q$. By Lemma \ref{lem1:Car}, $L_p,~L_q$ and $L_{pq}$ each have primitive prime factors. This shows that $n=pq$, for if $n>pq$, then 
$L_n$ would have (at least) one additional prime factor, which is a contradiction. Assume $p<q$ and
$$
L_n=L_p L_q \left(\frac{L_{pq}}{L_p L_q}\right).
$$
Unless $q=L_p$, the three factors above are coprime. Say $q\ne L_p$. Then the three factors above are $p_1^{\alpha_1},~p_2^{\alpha_2}$ and $p_3^{\alpha_3}$ in some order. By Lemmas \ref{lem1:BLMS} and up to relabeling the primes $p_1$ and $p_2$, we may assume that $\alpha_1=\alpha_2=1$, so $L_p=p_1$, $L_q=p_2$ and $L_{pq}/(L_pL_q)=p_3^{\alpha_3}$. On the other hand, if $q=L_p$, then $q^2\| L_{pq}$. This shows then that up to relabeling the primes we may assume that $\alpha_2=1,~\alpha_3=2$, $L_p=p_3,~L_q=p_2$, $L_{pq}/(L_pL_q)=p_3 p_1^{\alpha_1}$. However, in this case $p_3\equiv 3\pmod 8$, showing that $p\equiv 5\pmod 8$. In particular, we also have
$p\equiv 1\pmod 4$, so $p_3=L_p\equiv 1\pmod 5$, and this is not possible. So, this case cannot appear.

Write $m=2m_0$. Then
$$
(p_1-1)(p_2-1)(p_3-1)p_3^{\alpha_3-1}=\varphi(L_n)=\frac{8(10^{m_0}-1)(10^{m_0}+1)}{9}.
$$
If $m_0$ is even, then $p_3^{\alpha_3-1}\mid 10^{m_0}-1$ because $p_3\equiv 3\pmod 4$, so $p_3$ cannot divide $10^{m_0}+1=(10^{m_0/2})^2+1$. If $m_0$ is odd, then $p_3^{\alpha_3-1}\mid 10^{m_0}+1$, because if not we would have that $p_3\mid 10^{m_0}-1$, so $10^{m_0}\equiv 1\pmod {p_3}$, and since $m_0$ is odd we would get ${\displaystyle{\left(\frac{10}{p_3}\right)=1}}$, which is false since ${\displaystyle{\left(\frac{2}{p_3}\right)=-1}}$ and ${\displaystyle{\left(\frac{5}{p_3}\right)=1}}$. Thus, we get, using \eqref{eq2:lll}, that
\begin{eqnarray}
\label{eq2:15}
\alpha^{p+q} p_3 & > & (L_p-1) (L_q-1) p_3=p_1p_2p_3>(p_1-1)(p_2-1)(p_3-1)\nonumber\\
& \ge &  \frac{8(10^{m_0}-1)}{9}>\frac{8}{10}\times 10^{m_0}.
\end{eqnarray}
On the other hand, by inequality \eqref{eq2:ineq}, we have
$$
10^m>\frac{8(10^m-1)}{9}=\varphi(L_n)>\frac{L_n}{4},
$$
 so that
 \begin{equation}
 \label{eq2:16}
 10^{m_0}>\frac{{\sqrt{L_n}}}{2}>\frac{\alpha^{pq/2-0.5}}{2},
 \end{equation}
 where we used the inequality \eqref{eq2:llll}. From \eqref{eq2:15} and \eqref{eq2:16}, we get
 $$
 p_3>\frac{8}{20 {\sqrt{\alpha}}} \alpha^{pq/2-p-q}=\frac{8}{20 \alpha^{4.5}} \alpha^{(p-2)(q-2)}>\frac{\alpha^{(p-2)(q-2)}}{25}.
 $$
 Once checks that the inequality
\begin{equation}
\label{eq2:A}
\frac{\alpha^{(p-2)(q-2)/2}}{25}>\alpha^{q+1}
\end{equation}
is valid for all pairs of primes $5\le p<q$ with $pq>100$. Indeed, the above inequality \eqref{eq2:A} is implied by
\begin{equation}
\label{eq2:B}
(p-2)(q-2)/2-(q+1)-7> 0,\quad {\text{\rm or}}\quad (q-2)(p-4)>20.
\end{equation}
If $p\ge 7$, then $q>p\ge 11$ and the above inequality \eqref{eq2:B} is clear, whereas if $p=5$, then $q\ge 23$ and the inequality \eqref{eq2:B} is again clear. 
 
 We thus get that
 $$
 p_3>\frac{\alpha^{(p-2)(q-2)}}{25}>\alpha^{q+1}>L_q=p_2>L_p=p_1.
 $$
 We exploit the two relations
 \begin{eqnarray}
 \label{eq2:imp1}
 0 & < & 1-\frac{\varphi(L_n)}{L_n} =  1-\left(1-\frac{1}{p_1}\right)\left(1-\frac{1}{p_2}\right)\left(1-\frac{1}{p_3}\right)<\frac{3}{p_1}<\frac{5}{\alpha^p};\nonumber\\
&& 1-\frac{(L_p-1)\varphi(L_n)}{L_p L_n}  =1-\left(1-\frac{1}{p_2}\right)\left(1-\frac{1}{p_3}\right) < \frac{2}{p_2}<\frac{4}{\alpha^q}.
\end{eqnarray}
In the above, we used the inequality \eqref{eq1:7}. Since $n$ is odd, we have $L_n=\alpha^n-\alpha^{-n}$. Then 
$$
1+\frac{2}{\alpha^{2n}}>\frac{1}{1-\alpha^{-2n}}>1,
$$
so
$$
\frac{1}{\alpha^n}+\frac{2}{\alpha^{3n}}>\frac{1}{L_n}>\frac{1}{\alpha^n},
$$
or 
\begin{equation}
\label{eq2:17}
\frac{8\times 10^m}{9\alpha^n} +\frac{16 \times 10^m}{9\alpha^{3n}}-\frac{8}{9L_n}>\frac{8(10^m-1)}{9L_n}=\frac{\varphi(L_n)}{L_n}>\frac{8\times 10^m}{9\alpha^n}-\frac{8}{9L_n}.
\end{equation}
The first inequality  \eqref{eq2:imp1} and \eqref{eq2:17} show that
\begin{equation}
\label{eq2:18}
\left|1-(8/9)\times 10^{m}\times \alpha^{-n}\right|<\frac{3}{p_1}+\frac{8}{9L_n}+\frac{16\times 10^m}{9\alpha^{3n}}.
\end{equation}
Now
$$
8\times 10^{m-1}<\frac{8(10^m-1)}{9}=\varphi(L_n)<L_n<\alpha^{n+1},\quad {\text{\rm so}}\quad 10^m<\frac{10\alpha}{8} \alpha^n,
$$
showing that 
$$
\frac{16 \times 10^m}{9\alpha^{3n}}<\frac{20 \alpha}{9\alpha^{2n}}<\frac{0.5}{\alpha^n}\quad {\text{\rm for}}\quad n>1000.
$$
Since also 
$$
\frac{8}{9L_n}<\frac{8\alpha}{9\alpha^n}<\frac{1.5}{\alpha^n},
$$
we see
$$
\frac{16\times 10^m}{9\alpha^{3n}}+\frac{8}{9L_n}<\frac{0.5}{\alpha^n}+\frac{1.5}{\alpha^n}<\frac{2}{\alpha^n}.
$$
Since also $p_1<L_n^{1/3}<\alpha^{(n+1)/3}$, we get that \eqref{eq2:18} becomes
\begin{equation}
\label{eq2:20}
\left|1-(8/9)\times 10^{m}\times \alpha^{-n}\right|<\frac{3}{p_1}+\frac{2}{\alpha^n}<\frac{4}{p_1}=\frac{4}{L_p}<\frac{4\alpha}{\alpha^p}<\frac{7}{\alpha^p},
\end{equation}
where the middle inequality is implied by $\alpha^n>2\alpha^{(n+1)/3}>13p_1$, which holds for $n>1000$.

The same argument based on \eqref{eq2:17} shows that 
\begin{equation}
\label{eq2:21}
\left|1-\left(\frac{8(L_p-1)}{9L_p}\right)\times 10^{m}\times \alpha^{-n}\right|<\frac{4}{\alpha^{q}}+\frac{2}{\alpha^n}<\frac{5}{\alpha^{q}}.
\end{equation}

We are in a situation to apply Theorem \ref{thm3:Matveev} to the left--hand sides of \eqref{eq2:20} and \eqref{eq2:21}. These expressions  are nonzero, since any one of these expressions being zero means $\alpha^n\in {\mathbb Q}$ 
for some positive integer $n$, which is false. We always take ${\mathbb K}={\mathbb Q}({\sqrt{5}})$ for which $D=2$. We take $t=3$, $\alpha_1=\alpha,~\alpha_2=10$, so we can take $A_1=\log\alpha=2h(\alpha_1)$ and $A_2=2\log 10$. 
For \eqref{eq2:20}, we take $\alpha_3=8/9$, and $A_3=2\log 9=2h(\alpha_3)$. For \eqref{eq2:21}, we take $\alpha_3=8(L_p-1)/9L_p$, so we can take $A_3=2p>h(\alpha_3)$. This last inequality holds because 
$h(\alpha_3)\le \log(9L_p)<(p+1)\log \alpha+\log 9<p$ for all $p\ge 7$, while for $p=5$ we have $h(\alpha_3)=\log 99<5$. We take $\alpha_1=-n,~\alpha_2=m,~\alpha_3=1$. Since
$$
2^n>L_n>\varphi(L_n)>8\times 10^{m-1}
$$
it follows that $n>m$. So, $B=n$. Now Theorem \ref{thm3:Matveev} implies that
$$
\exp\left(-1.4\times 30^6\times 3^{4.5}\times  2^2\times (1+\log 2)(1+\log n)   (\log \alpha)  (2\log 10) (2\log 9)\right),
$$
is a lower bound of the left--hand side of \eqref{eq2:20}, so inequality \eqref{eq2:20} implies
$$
p\log\alpha-\log 7<9.5\times 10^{12} (1+\log n),
$$
which implies 
\begin{equation}
\label{eq2:p}
p<2\times 10^{13} (1+\log n).
\end{equation}
Now Theorem \ref{thm3:Matveev} implies that the right--hand side of inequality \eqref{eq2:21} is at least as large as
$$
\exp\left(-1.4\times 30^6\times 3^{4.5} \times 2^2 \times (1+\log 2)(1+\log n) (\log \alpha) (2\log 10) (2p)\right)
$$
leading to
$$
q\log \alpha-\log 4<4.3 \times 10^{12} (1+\log n) p.
$$
Using \eqref{eq2:p}, we get
$$
q<9\times 10^{12} (1+\log n) p<2\times 10^{26} (1+\log n)^2.
$$
Using again \eqref{eq2:p}, we get
$$
n=pq<4\times 10^{39} (1+\log n)^2,
$$
leading to 
\begin{equation}
\label{eq2:boundforn}
n<5\times 10^{43}. 
\end{equation}
Now we need to reduce the bound. We return to \eqref{eq2:20}. Put
$$
\Lambda=m\log 10-n\log \alpha+\log(8/9).
$$
Then \eqref{eq2:20} implies that 
\begin{equation}
\label{eq2:25}
|e^{\Lambda}-1|<\frac{7}{\alpha^p}.
\end{equation}
Assuming $p\ge 7$, we get that the right--hand side of \eqref{eq2:25} is $<1/2$. Analyzing the cases $\Lambda>0$ and $\Lambda<0$ and by a use of the inequality $1+x<e^x$ which holds for all $x\in\mathbb{R}$, we get that
$$
|\Lambda|<\frac{14}{\alpha^p}.
$$
Assume say that $\Lambda>0$. Dividing across by $\log\alpha$, we get
$$
0< m \left(\frac{\log 10}{\log\alpha}\right)-n+\left(\frac{\log(8/9)}{\log\alpha}\right)<\frac{30}{\alpha^p}.
$$
We are now ready to apply Lemma \ref{reduce} with the obvious parameters
$$
\gamma=\frac{\log 10}{\log \alpha},\quad \mu=\frac{\log(8/9)}{\log\alpha},\quad A=30,\quad B=\alpha.
$$
Since $m<n$, we can take $M=10^{45}$ by \eqref{eq2:boundforn}. Applying Lemma \ref{reduce}, performing the calculations and treating also the case when $\Lambda<0$, we obtain $p<250$.   Now we go to inequality \eqref{eq2:21} and for $p\in [5,250]$, we consider 
$$
\Lambda_p=m\log 10-n\log\alpha+\log\left(\frac{8(L_p-1)}{9L_p}\right).
$$
Then inequality \eqref{eq2:21} becomes
\begin{equation}
\label{eq2:26}
\left|e^{\Lambda_p}-1\right|<\frac{5}{\alpha^q}.
\end{equation}
Since $q\ge 7$, the right--hand side is smaller than $1/2$. We thus obtain
$$
|\Lambda_p|<\frac{10}{\alpha^q}.
$$
We proceed in the same way as we proceeded with $\Lambda$ by applying Lemma \ref{reduce} to $\Lambda_p$ and distinguishing the cases in which $\Lambda_p>0$ and $\Lambda_p<0$, respectively. In all cases, we get that $q<250$. Thus, $5\le p<q<250$. Note however that we must have either $p^2\mid 10^{p-1}-1$ or $q^2\mid 10^{q-1}-1$. Indeed, the point is that since all three prime factors of $L_n$ are quadratic residues modulo $5$, and they are primitive prime factors of 
$L_p,~L_q$ and $L_{pq}$, respectively, it follows that 
$p_1\equiv 1\pmod {p}$, $p_2\equiv 1\pmod q$ and $p_3\equiv 1\pmod {pq}$. Thus, $(pq)^2\mid (p_1-1)(p_2-1)(p_3-1)\mid \varphi(L_n)=8(10^m-1)/9$, which in turn shows that $(pq)^2\mid 10^{m}-1$. 
Assume that neither $p^2\mid 10^{p-1}-1$ nor $q^2\mid 10^{q-1}-1$. Then relation $(pq)^2\mid 10^{m}-1$ implies that $pq\mid m$. Thus, $m\ge pq$, leading to
$$
2^{pq}>L_n>\varphi(L_n)=\frac{8(10^m-1)}{9}>10^{m-1}\ge 10^{pq-1},
$$
a contradiction. So, indeed either $p^2\mid 10^{p-1}-1$ or $q^2\mid 10^{q-1}-1$. However, a computation with Mathematica revealed that there is no prime $r$ such that $r^2\mid 10^{r-1}-1$ in the interval $[5,250]$.
In fact, the first such $r>3$ is $r=487$, but $L_{487}$ is not prime!

This contradiction shows that indeed when $n>6$, we cannot have $n=pq$. Hence, $n\in \{p,p^2\}$ and $p^3\mid 10^{p-1}-1$. We record this as follows.

\begin{lemma}
The equation 
\begin{equation*}
\varphi(L_n)=d\left(\frac{10^m-1}{9}\right),\qquad d\in \{1,\ldots,9\},
\end{equation*}
has no solution $n>6$ which is not of the form $n=p$ or $p^2$ for some prime $p$ such that $p^3\mid 10^{p-1}-1$.
\end{lemma}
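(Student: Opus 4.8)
The plan is to combine the structural information already extracted in Lemma~\ref{lem3:9} with a case analysis on the number of distinct prime factors of $n$, using the Primitive Divisor Theorem (Lemma~\ref{lem1:Car}) to translate divisors of $n$ into distinct prime factors of $L_n$. By Lemma~\ref{lem3:9} we may assume $n>6$ is odd, $L_n$ is odd, $d=8$, $m$ is even, and $L_n=p_1^{\alpha_1}p_2^{\alpha_2}p_3^{\alpha_3}$ has exactly $r=3$ distinct prime factors, each $\equiv 3\pmod 4$ and $\equiv 4\pmod 5$. The constraint to exploit throughout is precisely $r=3$: for every divisor $d\mid n$ for which $L_d$ admits a primitive divisor, that primitive prime is a fresh prime factor of $L_n$, so $n$ cannot have too many divisors.

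First I would dispose of the two extreme shapes of $n$. If $n=p^t$ is a prime power, then the earlier lemma (asserting $t\in\{1,2\}$ and $p^3\mid 10^{p-1}-1$) already yields exactly the desired conclusion; its proof counts the primitive divisors of $L_p,L_{p^2},L_{p^3},L_{p^4}$ to force $t\le 3$, then rules out $t=3$ by noting that among the three coprime factors $L_p$, $L_{p^2}/L_p$, $L_{p^3}/L_{p^2}$ one would have to be a perfect square, impossible by Lemmas~\ref{lem1:BLMS} and \ref{lem1:RM}. Symmetrically, if $n$ had three or more distinct prime factors, then choosing three of them $p<q<s$ produces primitive primes from $L_p$, $L_q$, $L_s$, $L_{pq}$, which already gives at least four distinct prime factors of $L_n$, contradicting $r=3$ outright. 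Hence the only case requiring genuine work is $n=pq$ with two distinct odd primes $p<q$.

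The heart of the argument is therefore $n=pq$. Here the divisors $1,p,q,pq$ produce, via Lemma~\ref{lem1:Car}, primitive primes of $L_p$, $L_q$, $L_{pq}$, which (since $r=3$) must be exactly $p_1,p_2,p_3$; writing $L_n=L_pL_q\bigl(L_{pq}/(L_pL_q)\bigr)$ and treating the exceptional subcase $q=L_p$ separately, Lemma~\ref{lem1:BLMS} forces $L_p=p_1$, $L_q=p_2$ and $L_{pq}/(L_pL_q)=p_3^{\alpha_3}$. A size comparison using the Binet bounds \eqref{eq2:lll}--\eqref{eq2:llll} together with \eqref{eq2:ineq} shows $p_3$ dominates, namely $p_3>\alpha^{(p-2)(q-2)}/25>L_q>L_p$, after which I would set up the two linear forms in logarithms
$$
\Lambda=m\log 10-n\log\alpha+\log(8/9)
\qquad\text{and}\qquad
\Lambda_p=m\log 10-n\log\alpha+\log\!\left(\frac{8(L_p-1)}{9L_p}\right),
$$
arising from the approximations $\varphi(L_n)=8(10^m-1)/9$ to $L_n$ and to $L_n/L_p$. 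Applying Matveev's Theorem~\ref{thm3:Matveev} to each bounds $p$ and then $q$ polynomially in $\log n$, giving an absolute bound such as $n<5\times 10^{43}$; the Baker--Davenport reduction of Lemma~\ref{reduce}, applied first to $\Lambda$ and then to $\Lambda_p$ for each surviving $p$, collapses this to $5\le p<q<250$.

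The final step, which I expect to be the main obstacle, is closing the small range without computing enormous Lucas numbers. Since $p_1\equiv 1\pmod p$, $p_2\equiv 1\pmod q$ and $p_3\equiv 1\pmod{pq}$ (as primitive divisors), one gets $(pq)^2\mid\varphi(L_n)=8(10^m-1)/9$, hence $(pq)^2\mid 10^m-1$. If neither $p^2\mid 10^{p-1}-1$ nor $q^2\mid 10^{q-1}-1$, this forces $pq\mid m$ and then the contradiction $2^{pq}>L_n>\varphi(L_n)>10^{pq-1}$. Thus a base-$10$ Wieferich-type congruence $r^2\mid 10^{r-1}-1$ must hold for some $r\in\{p,q\}\subset[5,250]$, and a direct computation (no such prime exists below $487$, and $L_{487}$ is not prime) eliminates every remaining possibility, completing the proof that $n\in\{p,p^2\}$ with $p^3\mid 10^{p-1}-1$. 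The delicate point is tuning the size estimates so that Matveev plus reduction land below $250$ while the surviving window is simultaneously guaranteed to contain no base-$10$ Wieferich prime.
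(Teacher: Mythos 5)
Your proposal is correct and follows essentially the same route as the paper: reduce via Lemma \ref{lem3:9} to $r=3$, handle prime powers by counting primitive divisors and the square obstruction, reduce the remaining case to $n=pq$, identify $L_p=p_1$, $L_q=p_2$, $L_{pq}/(L_pL_q)=p_3^{\alpha_3}$, bound $p$ and $q$ by Matveev plus Baker--Davenport reduction to $5\le p<q<250$, and close with the base-$10$ Wieferich congruence $(pq)^2\mid 10^m-1$ and the computational fact that no prime $r\in[5,250]$ satisfies $r^2\mid 10^{r-1}-1$. The only cosmetic difference is that you treat the case of three or more prime factors of $n$ as a separate subcase, where the paper folds it into the observation that $n>pq$ would force an extra primitive prime factor of $L_n$.
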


\subsection{Bounding $n$}

Finally, we bound $n$. We assume again that $n>1000$. Equation \eqref{eq2:3} becomes 
$$
L_n =p_1^{\alpha_1}p_2^{\alpha_2}p_3^{\alpha_3}. 
$$ 
Throughout this last section, we assume that $p_1<p_2<p_3$. First, we bound $p_1$, $p_2$ and $p_3$ in terms of $n$.
Using the first relation of  \eqref{eq2:imp1}, we have that 
\begin{equation}
\label{ep:bn1}
 0  < 1-\frac{\varphi(L_n)}{L_n} =  1-\left(1-\frac{1}{p_1}\right)\left(1-\frac{1}{p_2}\right)\left(1-\frac{1}{p_3}\right)<\frac{3}{p_1}.
\end{equation}
By the argument used when estimating \eqref{eq2:17}--\eqref{eq2:20}, we get that
\begin{equation}
\label{eq:bounp1}
 |1 -(8/9)\times 10^m \times\alpha^{-n}|<\frac{3}{p_1} + \frac{2}{\alpha^n}<\frac{4}{p_1},
\end{equation}
where the last inequality holds because $p_1\le L_n/(p_2p_3)<L_n/(7\times 11)<\alpha^n/2$. 

We apply Theorem \ref{thm3:Matveev} to the left-hand side of \eqref{eq:bounp1}  The expression there is nonzero by a previous argument. We  take again ${\mathbb K}={\mathbb Q}({\sqrt{5}})$ for which $D=2$. We take $t=3$, $\alpha_1=8/9,~\alpha_2=10$ and $\alpha_3=\alpha$. Thus, we can take $A_1=\log 9=2h(\alpha_1)$ , $A_2=2\log 10$ and $A_3=2\log \alpha=2h(\alpha_3)$. 
We also take $b_1=1,~b_2=m,~b_3=-n$. We already saw that $B=n$. Now Theorem \ref{thm3:Matveev} implies as before that 
$$
\exp\left(-1.4\times 30^6\times 3^{4.5}\times  2^2\times (1+\log 2)(1+\log n)  2^3 (\log \alpha)  (\log 10) (\log 9)\right),
$$
is at least a lower bound for the left--hand side of \eqref{eq:bounp1}, hence inequality \eqref{eq2:20} implies
$$
\log p_1-\log 4<1.89\times 10^{13} (1+\log n),
$$ 
Then we get 
\begin{equation}
\label{eq2:boundp1}
\log p_1 <1.9\times 10^{13}(1+\log n).
\end{equation}
We use the same argument to bound $p_2$. We have
 $$ 
 0 < 1 - \left(\frac{p_1 -1}{p_1}\right)\frac{\varphi(L_n)}{L_n}=\left(1-\frac{1}{p_2}\right)\left(1-\frac{1}{p_3}\right)< \frac{2}{p_2}. 
 $$
Thus, we get that:
\begin{equation}
\label{eq:bounp2}
\left|1-\left(\frac{8(p_1 -1)}{9p_1}\right)\times 10^m\alpha^{-n}\right|< \frac{2}{p_2}+\frac{2}{\alpha^n}<\frac{3}{p_2},
\end{equation}
where the last inequality follows again because $p_2\le L_n/(p_1p_3)<\alpha^n/2$. 
 
We apply Theorem \ref{thm3:Matveev} to the left--hand side of \eqref{eq:bounp2}.  We take $t=3$, $\alpha_1=8(p_1-1)/(9p_1)$,~$\alpha_2=10$ and $\alpha_3=\alpha$, so we take $A_1=2\log(9p_1)\ge 2h(\alpha_1)$, $A_2=2\log 10$ and $A_3=2\log \alpha$. Again $b_1=-1,~b_2=m,~b_3=-n$ and $B=n$. Now Theorem \ref{thm3:Matveev} implies that 
$$
\exp\left(-1.4\times 30^6\times 3^{4.5}\times  2^2\times (1+\log 2)(1+\log n) 2^3  (\log \alpha)  \log 10 \log (9p_1)\right).
$$
is a lower bound on the left--hand side of \eqref{eq:bounp2}. Using estimate \eqref{eq2:boundp1}, inequality \eqref{eq:bounp2} implies
\begin{equation}
\label{eq2:boundp2}
\log p_2 -\log 2< 1.8\times 10^{26}(1+\log n )^2.
\end{equation}
Using a similar argument, we get
\begin{equation}
\label{eq2:boundp3}
\log p_3-\log 2 <1.8\times 10^{39}(1+\log n )^3.
\end{equation}
Now we can bound $n$. Equation \eqref{eq2:3}, gives:
$$
\alpha^n + \beta^n =p_1^{\alpha_1}p_2^{\alpha_2}p_3^{\alpha_3}. 
$$
Thus,
\begin{equation}
\label{eq2:boundn}
 |p_1^{\alpha_1}p_2^{\alpha_2}p_3^{\alpha_3}\alpha^{-n} -1| = \frac{1}{\alpha^{2n}} 
\end{equation}
We can apply Theorem \ref{thm3:Matveev}, with $t=4$, $\alpha_1=p_1,~\alpha_2=p_2$, $\alpha_3=p_3$, and $\alpha_4= \alpha$. We take $A_1=2\log p_1=2h(\alpha_1)$, $A_2=2\log p_2$, $A_3=2\log p_3=2h(\alpha_3)$ and 
$A_4 =2\log \alpha$. We take $B=n$. Then Theorem \ref{thm3:Matveev} implies that 
$$
\exp\left(-1.4\times 30^7\times 4^{4.5}\times  2^2\times (1+\log 2)(1+\log n) 2^4   (\log \alpha) \prod_{i=1}^3 (\log p_i) \right).
$$
is a lower bound on the left--hand side of \eqref{eq2:boundn}. Using \eqref{eq2:boundn} and inequalities \eqref{eq2:boundp1}, \eqref{eq2:boundp2}, \eqref{eq2:boundp3}, we get
$$
n<8\times 10^{93}(1+\log n)^7,\quad {\text{\rm so}}\quad n<10^{111}.
$$
This gives the upper bound. As for the lower bound, a quick check with Mathematica revealed that the only primes $p<2\times 10^9$ such that $p^2\mid 10^{p-1}-1$ are 
$p\in \{3,487,56598313\}$ and none of these has in fact the stronger property that $p^3\mid 10^{p-1}-1$.



\section{Pell and Pell-Lucas Numbers With Only One Distinct Digit}
\label{sec24}

Here, we show that there are no Pell or Pell-Lucas numbers larger than $10$ with only one distinct digit.
\medskip

In this section, we do not use linear forms in logarithms, but show in an elementary way that $5$ and $6$ are respectively the largest Pell and Pell-Lucas numbers which has only one distinct digit in their decimal expansion. The method of the proofs is similar to the method from \cite{FL}, paper in which  the authors determined in an elementary way the largest repdigits in the Fibonacci and the Lucas sequences. We mention that the problem of determining the repdigits in the Fibonacci and Lucas sequence was revisited in \cite{BL},  where the authors determined all the repdigits in all generalized Fibonacci sequences $\{F_n^{(k)}\}_{n\ge 0}$, where this sequence starts with $k-1$ consecutive $0$'s followed by a $1$ and follows the recurrence $F_{n+k}^{(k)}=F_{n+k-1}^{(k)}+\cdots+F_n^{(k)}$ for all $n\ge 0$. However, for this generalization, the method used in \cite{BL} involved linear forms in logarithms.

\medskip

Our results are the following.

\begin{theorem}
\label{thm3:2}
If 
\begin{equation}
\label{eq22:P}
P_n=a\left(\frac{10^m-1}{9}\right) \quad {\text{\rm \it for some}}\quad a\in \{1,2,\ldots, 9\},
\end{equation}
then $n=0,1,2,3.$
\end{theorem}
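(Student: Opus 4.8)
\emph{Proof plan.} The plan is to settle the small cases by inspection and then run a finite modular sieve against the possible last two (and, if needed, last three) decimal digits of a repdigit. First I would record that $P_0=0$, $P_1=1$, $P_2=2$ and $P_3=5$ are each at most one digit long, so they trivially have a single distinct digit and account for the asserted solutions; since $\{P_n\}_{n\ge 0}$ is strictly increasing with $P_4=12$, every $n\ge 4$ satisfies $P_n\ge 12$, so a putative repdigit $P_n=a(10^m-1)/9$ in this range has $m\ge 2$ digits. Assume, for contradiction, that such a solution exists with $n\ge 4$.

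The first reduction is modulo $100$. A repdigit with $m\ge 2$ digits ends in the block $\overline{aa}$, hence is congruent to $11a\pmod{100}$, i.e. it lies in the set $\{11,22,33,44,55,66,77,88,99\}$. The sequence $\{P_n\bmod 100\}_{n\ge 0}$ is purely periodic; running the recurrence $P_{n+2}=2P_{n+1}+P_n$ shows that $(P_{60},P_{61})\equiv(0,1)\pmod{100}$, so the period is $60$, and listing $P_0,\dots,P_{59}$ modulo $100$ shows that the only residues falling in the target set are $P_{50}\equiv 22$ and $P_{56}\equiv 88\pmod{100}$. Thus any solution with $n\ge 4$ must satisfy $n\equiv 50\pmod{60}$ with $a=2$, or $n\equiv 56\pmod{60}$ with $a=8$.

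For these two classes I would next rule out $m=2$ and then sieve modulo $1000$. The case $m=2$ would force $P_n\in\{22,88\}$, which is impossible since no Pell number equals $22$ or $88$ (and $P_n\ge P_{50}$ anyway). Hence $m\ge 3$, so the last three digits of $P_n$ form the block $\overline{aaa}$ and $P_n\equiv 111a\pmod{1000}$; that is, $P_n\equiv 222\pmod{1000}$ when $n\equiv 50\pmod{60}$, and $P_n\equiv 888\pmod{1000}$ when $n\equiv 56\pmod{60}$. The sequence $\{P_n\bmod 1000\}$ is periodic with period $600$ (a multiple of $60$), so each surviving class modulo $60$ splits into ten classes modulo $600$. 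Evaluating $P_n\bmod 1000$ on these twenty values and checking that none equals $222$ or $888$ (for instance $P_{50}\equiv 522$ and $P_{56}\equiv 88\pmod{1000}$, neither of the required shape) yields the desired contradiction and completes the proof.

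The main obstacle is this last step: one must be certain that the single modulus $1000$ already eliminates \emph{every} residue class surviving the mod-$100$ sieve. This is a finite but genuine computation (twenty evaluations of $P_n\bmod 1000$ along arithmetic progressions), and the argument only closes because $222$ and $888$ happen never to occur there; should some class persist modulo $1000$, one would have to iterate the sieve modulo $10^4$, and so on. The periodicity facts ($60$ modulo $100$ and $600$ modulo $1000$), obtained by running the recurrence until the initial pair $(0,1)$ reappears, are what make the finite check legitimate.
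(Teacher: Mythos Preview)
Your approach is correct in principle and genuinely different from the paper's. The paper never works modulo $100$ or $1000$; instead it first computes $P_n$ for $n\le 20$, observes that $m\ge 7$ for $n\ge 21$, and then runs a case analysis on the digit $a\in\{1,\dots,9\}$ using the periods of $\{P_n\}$ modulo $16$, $5$, $3$, and $7$ separately. A key structural step in the paper is the observation that $m$ must be odd: if $m$ were even then $11\mid (10^m-1)/9\mid P_n$, forcing $12\mid n$, whence $P_{12}=13860\mid P_n$, impossible since the repdigit cannot be a multiple of $10$. After that, each digit $a$ is eliminated by combining two or three of the small moduli.

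Your route trades this structural reasoning for a single larger sieve: first mod $100$ (period $60$) to cut down to $n\equiv 50$ or $56\pmod{60}$ with $a\in\{2,8\}$, then mod $1000$ on the twenty surviving classes. The mod-$100$ filtering you describe is accurate (I checked the full period), and the period mod $1000$ is indeed $600=\operatorname{lcm}(8,300)$. What your version buys is uniformity and brevity of logic; what the paper's version buys is much smaller tables and a human-checkable argument that does not hinge on a twenty-entry computation modulo $1000$. Both are elementary, and neither needs linear forms in logarithms.

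The one caveat is the one you already flag: your proof closes only once the twenty residues $P_n\bmod 1000$ for $n\in\{50,110,\dots,590\}\cup\{56,116,\dots,596\}$ are actually tabulated and seen to avoid $222$ and $888$. You give two sample values but do not present the full list, so as written this is a plan rather than a finished proof. If you intend to submit this argument, include the table (or at least state clearly that a short machine check confirms it); otherwise the paper's small-moduli argument has the advantage of being fully written out.
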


\begin{theorem}
\label{thm3:3}
If 
\begin{equation}
\label{eq22:Q}
Q_n=a\left(\frac{10^m-1}{9}\right) \quad {\text{\rm \it for some}}\quad a\in \{1,2,\ldots,9\},
\end{equation}
then $n=0,1,2.$
\end{theorem}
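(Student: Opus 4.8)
The plan is to prove Theorem \ref{thm3:3} by an entirely elementary congruence sieve, exploiting the fact that a base-ten repdigit has a rigidly determined residue modulo each power of $2$ and of $5$, while $\{Q_n\}$ is periodic modulo any integer. First I would dispose of the one-digit case: if $m=1$ then $Q_n=a\le 9$, and since $Q_0=Q_1=2$, $Q_2=6$ and $Q_3=14>9$, the only such $n$ are $n=0,1,2$, exactly the claimed solutions. So from now on I assume, for contradiction, that \eqref{eq22:Q} holds with $m\ge 2$. The device that feeds the whole argument is to rewrite \eqref{eq22:Q} as $a\cdot 10^m=9Q_n+a$: whenever $2^{\,j}\mid 10^m$ (resp. $5^{\,j}\mid 10^m$) this yields $9Q_n\equiv -a \pmod{2^{\,j}}$ (resp. modulo $5^{\,j}$), so the high divisibility of $10^m$ by powers of $2$ and $5$ translates directly into strong congruences on $Q_n$.

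The heart of the argument is to choose two moduli whose periods force incompatible residue classes for $n$. By Lemma \ref{lem1:orderof2}(i) we have $\nu_2(Q_n)=1$, hence $Q_n\equiv 2\pmod 4$; reducing $a\cdot 10^m=9Q_n+a$ modulo $4$ (valid since $m\ge 2$) gives $Q_n+a\equiv 0\pmod 4$, whence $a\equiv 2\pmod 4$ and $a\in\{2,6\}$. To sharpen this I would run the recurrence modulo $16$, where it is immediate that the period is $4$ with $(Q_0,Q_1,Q_2,Q_3)\equiv(2,2,6,14)\pmod{16}$; in particular $Q_n\bmod 16\in\{2,6,14\}$ for every $n$. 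Assuming first $m\ge 4$, so that $16\mid 10^m$, the identity gives $Q_n\equiv 7a\pmod{16}$ (using $9^{-1}\equiv 9\equiv -7\pmod{16}$). Scanning $a\in\{1,\dots,9\}$, the value $7a\bmod 16$ lands in $\{2,6,14\}$ only for $a=2$, which yields $14$. This simultaneously kills $a=6$ (it would force $Q_n\equiv 10\pmod{16}$, impossible) and pins down $a=2$ together with $n\equiv 3\pmod 4$.

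To close the surviving case $a=2$, $m\ge 4$, I would reduce modulo $5$: since $5\mid 10^m$ the identity gives $Q_n\equiv a\equiv 2\pmod 5$, and computing $\{Q_n\bmod 5\}$ (period $12$, with the value $2$ occurring precisely at $n\equiv 0,1,5\pmod{12}$) forces $n\equiv 0,1,5\pmod{12}$. But each of these residues is $\equiv 0$ or $1\pmod 4$, contradicting $n\equiv 3\pmod 4$ from the previous step; this exhausts $m\ge 4$. The leftover cases $m\in\{2,3\}$ fall outside the modulo-$16$ reduction, so I would dispatch them directly: there $Q_n=a(10^m-1)/9\le 999$, whence $n\le 7$ because $Q_8=1154>999$, and inspecting $Q_0,\dots,Q_7=2,2,6,14,34,82,198,478$ reveals no repdigit with at least two digits. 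Combining the two contradictions shows $m\ge 2$ is impossible, which proves the theorem.

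I expect the only delicate point to be the choice of moduli: the proof closes exactly because the $2$-adic data (mod $16$) isolates $n\equiv 3\pmod 4$ while the $5$-adic data (mod $12$) permits only $n\equiv 0,1,5\pmod{12}$, and these progressions are disjoint. The main effort in writing this up will be verifying the stated periodicities and residue computations without slips, and ensuring that the small exponents $m\le 3$ (where fewer powers of $2$ divide $10^m$, so the mod-$16$ reduction is unavailable) are handled by the finite check rather than the sieve. The same scheme, with $\nu_2(P_n)=\nu_2(n)$ from Lemma \ref{lem1:orderof2}(ii) replacing $\nu_2(Q_n)=1$, should drive the companion statement Theorem \ref{thm3:2}.
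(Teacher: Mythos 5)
Your proof is correct and follows essentially the same strategy as the paper: an elementary congruence sieve that pins down $a\in\{2,6\}$ from the $2$-adic behaviour of $Q_n$ and then derives incompatible residue classes for $n$ from reductions modulo a power of $2$ and modulo $5$, using the periodicity of $\{Q_n\}$. Your only (harmless) variations are working modulo $16$ instead of the paper's modulo $8$ and modulo $3$ — which lets you eliminate $a=6$ in the same stroke — and disposing of the small cases by bounding $m\le 3$ rather than checking $n\le 20$; all your residue computations (period $4$ mod $16$ with values $\{2,6,14\}$, period $12$ mod $5$ with $Q_n\equiv 2$ exactly at $n\equiv 0,1,5\pmod{12}$) check out.
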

\medskip

\section*{\small Proof of Theorem \ref{thm3:2}}

We start by listing the periods of $\{P_n\}_{n\ge 0}$ modulo $16$, $5$, $3$ and $7$ since they are useful later
\begin{eqnarray}
\label{eq22:period}
&& 0,~1,~2,~5,~12,~13,~6,~9,~8,~9,~10,~13,~4,~5,~14,~1,~0,~1 \pmod {16}\nonumber\\
&& 0,~1,~2,~0,~2,~4,~0,~4,~3,~0,~3,~1,~0,~\pmod 5\nonumber\\
&& 0,~1,~2,~2,~0,~2,~1,~1,~0,~1\pmod 3\\
&& 0,~1,~2,~5,~5,~1,~0,~1\pmod 7.\nonumber
\end{eqnarray}
We also compute $P_n$ for  $n\in [1,20]$ and conclude that the only solutions in this interval correspond to $n=1,2,3$. From now, we suppose that $n\geq 21$. Hence,
$$
P_n \ge P_{21}=38613965 >10^7.
$$
Thus, $m\geq 7$. Now we distinguish several cases according to the value of $a$. 

\medskip

{\bf Case $a=5.$}

\medskip

Since $m\geq 7$, reducing equation \eqref{eq22:P} modulo $16$ we get
$$
P_n=5\left(\frac{10^m-1}{9}\right)\equiv 3 \pmod{16} .
$$
A quick look at the first line in \eqref{eq22:period} shows that there is no $n$ such that $P_n\equiv 3 \pmod {16}$. 

\medskip

From now on, $a\ne 5$. Before dealing with the remaining cases, let us prove that $m$ has to be odd. We assume by contradiction that this is not the case i.e., $m$ is even. Hence, $2\mid m$, therefore
$$
11\Big|\frac{10^2-1}{9}\Big| \frac{10^m-1}{9}\Big|P_n.
$$
Since, $11\mid P_n$, it follows that $12\mid n$. Hence, 
$$
2^2\cdot 3^2 \cdot 5\cdot 7 \cdot 11=13860=P_{12}\mid P_n = a\cdot\frac{10^m-1}{9},
$$
and the last divisibility is not possible since $a(10^m-1)/9$ cannot be a multiple of $10$.
Thus, $m$ is odd.

\medskip

We can now compute the others cases.

\medskip

{\bf Case $a=1$.}

\medskip

Reducing equation \eqref{eq22:P} modulo $16$, we get
$P_n\equiv 7 \pmod{16}$. A quick look at the first line of \eqref{eq22:period} shows that there is no $n$ such that $P_n\equiv 7\pmod {16}$. Thus, this case  is impossible.

\medskip

{\bf Case $a=2$.}

\medskip

Reducing equation \eqref{eq22:P} modulo $16$, we get
$$
P_n=2\left(\frac{10^m-1}{9}\right)\equiv 14\pmod{16}.
$$ 
A quick look at the first line of \eqref{eq22:period} gives $n\equiv 14\pmod {16}$. Reducing also equation \eqref{eq22:P} modulo $5$, we get $P_n\equiv 2\pmod 5$, and now line two of \eqref{eq22:period} gives $n\equiv 2,4\pmod {12}$. Since also $n\equiv 14\pmod {16}$, we get that $n\equiv 14\pmod {48}$. Thus, $n\equiv 6\pmod 8$, and now row 
three of \eqref{eq22:period} shows that $P_n\equiv 1\pmod 3$. Thus,
$$
2\left(\frac{10^{m}-1}{9}\right)\equiv 1\pmod 3.
$$
The left-hand side above is $2(10^{m-1}+10^{m-2}+\cdots+10+1)\equiv 2m\pmod 3$, so we get $2m\equiv 1\pmod 3$,  so $2\equiv m\pmod 3$, and since $m$ is odd we get $5\equiv m\pmod 6$.
Using also the occurrence $n\equiv 2\pmod 6$, we get from the last row of \eqref{eq22:period} that $P_n\equiv 2\pmod 7$. Thus,
$$
2\left(\frac{10^m-1}{9}\right)\equiv 2\pmod 7,
$$
leading to $10^m-1\equiv 9\pmod 7$, so $1\equiv 10^{m-1}\pmod 7$. This gives $6\mid m-1$, or $m\equiv 1\pmod 6$, contradicting the previous conclusion that $m\equiv 5\pmod 6$. 

\medskip

{\bf Case $a=3$.}

\medskip

In this case, we have that $3\mid P_n$, therefore $4\mid n$ by the third line of \eqref{eq22:period}. Further,
$$
P_n=3\left(\frac{10^m-1}{9}\right)\equiv 5 \pmod{16}.
$$
The first line of \eqref{eq22:period} shows that  $n\equiv 3,13\pmod {16}$, contradicting the fact that $4\mid n$. Therefore, this case cannot occur.

\medskip

{\bf Case $a=4.$}

\medskip

In this case $4\mid P_n$, which implies that $4\mid n$. Reducing equation \eqref{eq22:P} modulo $5$ we get that $P_n\equiv 4\pmod 5$. Row two of \eqref{eq22:period} shows that 
$n\equiv 7,5\pmod {12}$. This is a contradiction with fact that $4\mid n$. Therefore, this case is  not possible.

\medskip

{\bf Case $a=6$.}

\medskip

Here, $3\mid P_n$, therefore $4\mid n.$ Hence,
$$
12\mid P_n=6\left(\frac{10^m-1}{9}\right),
$$
which is not possible.

\medskip

{\bf Case $a=7$.}

\medskip

Here, we have that $7\mid P_n$, therefore $6\mid n$ by row four of \eqref{eq22:period}. Hence,
$$
70= P_6\mid P_n= 7\left(\frac{10^m-1}{9}\right),
$$
which is impossible.

\medskip

{\bf Case $a=8.$}

\medskip

We have that $8\mid P_n$, so $8\mid n$. Hence,
$$
8\cdot 3 \cdot 17=408=P_8\mid P_n = 8\left(\frac{10^m-1}{9}\right),
$$
implying $17\mid 10^m-1$. This last divisibility condition implies that $16\mid m$, contradicting the fact that $m$ is odd.

\medskip

{\bf Case $a=9$.}

\medskip
 
We have $9\mid P_n$, thus $12\mid n$. Hence,
$$13860=P_{12}\mid P_n=10^m-1,$$
a contradiction.

\medskip

This completes  the proof of Theorem \ref{thm3:2}.

\section*{\small The proof of Theorem \ref{thm3:3}}

We list the periods of $\{Q_n\}_{n\ge 0}$ modulo $8,~5$ and $3$ getting
\begin{eqnarray}
\label{eq22:period1}
&& 2,~2,~6,~6,~2,~2\pmod 8\nonumber\\
&& 2,~2,~1,~4,~4,~2,~3,~3,~4,~1,~1,~3,~2,~2\pmod 5\\
&& 2,~2,~0,~2,~1,~1,~0,~1,~2,~2\pmod 3\nonumber\\
\end{eqnarray}
We next compute the first values of $Q_n$ for $n\in [1,20]$  and we see that there is no solution $n>3$ in this range. Hence, from now on, 
$$
Q_n>Q_{21}=109216786>10^8,
$$
so $m\geq 9$. Further, since $Q_n$ is always even and the quotient $(10^m-1)/9$ is always odd, it follows that $a\in\{2,4,6,8\}.$ Further, from row one of \eqref{eq22:period1} we see that $Q_n$ is never divisible by $4$. Thus, $a\in \{2,6\}$.

\medskip

{\bf Case $a=2$.}

\medskip
Reducing equation \eqref{eq22:Q} modulo $8$, we get that 
$$
Q_n= 2\left(\frac{10^m-1}{9}\right) \equiv 6 \pmod{8}.
$$
Row one of \eqref{eq22:period1} shows that $n\equiv 2,3\pmod 4$. Reducing equation \eqref{eq22:Q} modulo $5$ we get that $Q_n\equiv 2\pmod 5$, and now row two of \eqref{eq22:period1} gives that $n\equiv 0,1,5\pmod {12}$, so in particular $n\equiv 0,1\pmod 4$. Thus, we get a contradiction. 

\medskip

{\bf Case $a=6.$}

\medskip
First $3\mid n$, so by row three of \eqref{eq22:period1}, we have that $n\equiv 2,6\pmod 8$. Next reducing \eqref{eq22:Q} modulo $8$ we get 
$$
Q_n=6\left(\frac{10^m-1}{9}\right)\equiv 2\pmod{8}.
$$
and by the first row of \eqref{eq22:period1} we get $n\equiv 0,1\pmod 4$. Thus, this case cannot appear.
\medskip

This finishes the proof of Theorem \ref{thm3:3}.

\chapter{On Lehmer's Conjecture}
In this chapter, we are interested in finding members of the Lucas sequence $\{L_n\}_{n\ge 0}$ and of the Pell sequence $\{P_n\}_{n\ge 0}$ which are \textit{Lehmer Numbers}. Namely, we study respectively in Sections \textcolor{red}{\ref{sec31}} and \textcolor{red}{\ref{sec32}} values of $n$ for which, the divisibility relations $\varphi(L_n)\mid L_n-1$ and $\varphi(P_n)\mid P_n-1$ hold and further such that $L_n$ and/or $P_n$ is composite. The contents of this chapter are respectively the papers \cite{FL} and \cite{FL02}.


\section{Lucas Numbers with the Lehmer property}
\label{sec31}
We are interested in this section on members of the Lucas sequence $\{L_n\}_{n\ge 0}$ which are Lehmer numbers. Here, we will use some relations among Fibonacci and Lucas numbers, that can be easily proved using the Binet formulas \eqref{binet1} and \eqref{binet2}.
%
%
Our result is the following:

\begin{theorem}
\label{thm4:1}
There is no Lehmer number in the Lucas sequence.
\end{theorem}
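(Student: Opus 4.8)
Suppose, for contradiction, that $L_n$ is a Lehmer number, so $L_n$ is composite and $\varphi(L_n)\mid L_n-1$. Since a Lehmer number is odd and squarefree (for if $p^2\mid L_n$ then $p\mid \varphi(L_n)\mid L_n-1$, forcing $p\mid 1$), I would first write $L_n=p_1p_2\cdots p_k$ with distinct odd primes and with $k=\omega(L_n)\ge 15$ by the record of Renze \cite{joh} quoted in the introduction. The divisibility $\varphi(L_n)\mid L_n-1$ gives in particular $2^k\mid \prod_{i=1}^k(p_i-1)=\varphi(L_n)\mid L_n-1$, and I would use the parity/periodicity facts about $L_n$ (from Lemma \ref{eq1:fibluc}) to bound $\nu_2(L_n-1)$. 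Indeed, from the identities (viii) of Lemma \ref{eq1:fibluc} expressing $L_n-1$ as a product of two Fibonacci or two Lucas numbers, one gets an explicit small upper bound on $\nu_2(L_n-1)$, which immediately caps $k$ and hence caps the number of admissible prime factors.

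\emph{Main engine: linear forms combined with the totient ratio.} The heart of the argument, adapted from Luca's Fibonacci paper \cite{L}, is to exploit $\varphi(L_n)\mid L_n-1$ quantitatively. First I would establish the elementary two-sided estimate
\begin{equation*}
\frac{L_n}{\varphi(L_n)}=\prod_{p\mid L_n}\left(1+\frac{1}{p-1}\right),
\end{equation*}
and note that $\varphi(L_n)\mid L_n-1$ forces $L_n/\varphi(L_n)$ to be a rational very close to an integer; more precisely $L_n-1=\varphi(L_n)\cdot t$ for a positive integer $t=L_n/\varphi(L_n)+O(1/\varphi(L_n))$, so $t<L_n/\varphi(L_n)<\prod_{p\mid L_n}(1+1/(p-1))$. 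Since each $p_i\equiv 1$ or $-1\pmod{z(p_i)}$ and each primitive prime factor of $L_d$ (for $d\mid n$) satisfies $p\equiv \pm1\pmod d$ by Theorem \ref{eq11:primitive}, the primes dividing $L_n$ are congruent to $\pm1$ modulo the corresponding divisors. I would then bound $L_n/\varphi(L_n)$ from above by a function of $n$ via the sum $\sum_{p\mid L_n}1/(p-1)$, grouping primes by their rank of apparition $z(p)=d$ and using that each $L_d$ contributes few prime factors (the Primitive Divisor Theorem, Lemma \ref{lem1:Car}, and the sieve bounds in the spirit of Lemma \ref{lem1:sumSn}). This produces an inequality of the shape $t<C\log\log n$, which, combined with $t\ge 2$ (since $L_n$ is composite so $\varphi(L_n)<L_n-1$) and the congruence constraints, severely restricts $n$.

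\emph{Closing the argument.} Having bounded the number and size of the prime factors, I would reduce to showing that no $n$ in the surviving finite range yields $\varphi(L_n)\mid L_n-1$ with $L_n$ composite. For this I would use the congruences $p_i\equiv\pm1\pmod{z(p_i)}$ together with $\nu_2$-bounds to derive that $n$ itself must be highly composite (each prime factor $d$ of $n$ forces, via a primitive prime of $L_d\equiv\pm1\pmod d$, a factor of $d$ or $d\pm1$ into $\varphi(L_n)$ hence into $L_n-1$), and then show these accumulated divisibility requirements on $L_n-1$ are incompatible with the explicit small value of $\nu_2(L_n-1)$ and with the factorization identities of Lemma \ref{eq1:fibluc}(viii). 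The main obstacle, I expect, will be the bookkeeping in the even-$n$ case, where $L_n-1$ factors as $L_{(n+1)/2}L_{(n-1)/2}$ only for $n\equiv3\pmod4$ while $n\equiv1\pmod4$ gives the $5F_{(n+1)/2}F_{(n-1)/2}$ form; handling both parities of $n$ modulo $4$ uniformly, and ruling out the sporadic small $n$ (such as $n=6$ with $L_6=18$) by direct inspection, is where the delicate casework lies, exactly paralleling—but simpler than—the Pell analysis carried out in Chapter~3.
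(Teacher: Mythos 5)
Your overall strategy is the right one (and is essentially the paper's: adapt Luca's Fibonacci--Lehmer argument), but two of your load-bearing claims do not hold, and the step that actually closes the proof is missing. First, it is not true that Lemma \ref{eq1:fibluc}(viii) gives ``an explicit small upper bound on $\nu_2(L_n-1)$ which immediately caps $k$''. The $2$-adic argument disposes only of $n$ even (where $L_n-1=L_{n/2}^2+1$ or $L_{n/2}^2-3$ is never a multiple of $4$) and of $n\equiv 3\pmod 4$ (where $L_n-1=L_{(n+1)/2}L_{(n-1)/2}$ and $\nu_2(L_m)\le 2$ for every $m$). In the surviving case $n\equiv 1\pmod 4$ one has $L_n-1=5F_{(n+1)/2}F_{(n-1)/2}$, and $\nu_2(F_{(n-1)/2})=\nu_2((n-1)/2)+2$ whenever $12\mid (n-1)/2$, which is unbounded; so no cap on $k$ follows, and this is exactly the case where all the work lies (your closing paragraph mislabels it as the ``even-$n$ case'' and proposes to settle it by $\nu_2$-bookkeeping, which cannot work). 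Relatedly, your endgame ``$t\ge 2$ versus $t<C\log\log n$'' is not a contradiction for any $n$, since $C\log\log n\to\infty$.

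Second, the inequality $\log 2\le\sum_{p\mid L_n}1/(p-1)$ (which does follow from $(L_n-1)/\varphi(L_n)\ge 2$) can only be contradicted if the right-hand side is forced to be small, and after grouping primes by rank of apparition your upper bound carries a factor $\tau(n)$ that you never control. The paper controls it by a $p_1$-adic argument absent from your sketch: every prime $p\mid L_n$ satisfies $\left(\frac{5}{p}\right)=1$ (reduce $L_n^2-5F_n^2=\pm 4$ modulo $p$), hence every primitive prime $p_d$ of $L_d$ for $d\mid n$ satisfies $p_d\equiv +1\pmod d$ --- the sign matters here; the $p_d\equiv -1\pmod d$ alternative you allow would give no divisor of $\varphi(L_n)$ --- so $p_1^{\tau(n/p_1)}\mid\varphi(L_n)\mid L_n-1\mid 5F_{(n-1)/2}F_{(n+1)/2}$, where $p_1$ is the least prime factor of $n$. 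Since $p_1\nmid (n\pm 1)/2$, this forces $\tau(n/p_1)\le e_{p_1}$, where $p_1^{e_{p_1}}\| F_{z(p_1)}$; the analytic estimate then yields $p_1<1800$, the McIntosh--Roettger computation gives $e_{p_1}=1$ for all primes below $10^{14}$, hence $n=p_1$ is prime and $\ge 97$, and finally $\log 2<0.9/p_1+2.2\log\log p_1/p_1$ fails. Without the Legendre-symbol sign determination, the valuation bound $\tau(n/p_1)\le e_{p_1}$, and the appeal to that computation, the argument does not close.
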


\begin{proof}
Assume that $L_n$ is Lehmer for some $n$.
Clearly, $L_n$ is odd and $\omega(L_n)\ge 15$ by the main result from \cite{joh}. The product of the first $15$ odd primes exceeds $1.6\times 10^{19}$, so $n\ge 92$. Furthermore,

\begin{equation}
\label{eq3:15}
2^{15}\mid 2^{\omega(L_n)}\mid \varphi(L_n)\mid L_n-1.
\end{equation}
\medskip

If $n$ is even, Lemma \ref{eq1:fibluc} $(iii)$ shows that $L_n-1=L_{n/2}^2+1$ or $L_{n/2}^2-3$ and numbers of the form $m^2+1$ or $m^2-3$ for some integer $m$ are never multiples of $4$, so divisibility \eqref{eq3:15} is impossible. If $n\equiv 3\pmod 8$, Lemma \ref{eq1:fibluc} (viii) and relation \eqref{eq3:15} show that $2^{15}\mid L_{(n+1)/2}L_{(n-1)/2}$. This is also impossible since no member of the Lucas sequence is a multiple of $8$, fact which can be easily proved by listing its first $14$ members modulo $8$:
$$
2,~1,~3,~4,~7,~3,~2,~5,~7,~4,~3,~7,~2,~1,
$$
and noting that we have already covered the full period of $\{L_m\}_{m\ge 0}$ modulo $8$ (of length $12$) without having reached any zero.

\medskip

So, we are left with the case when $n\equiv 1\pmod{4}.$

Let us write
$$
n=p_1^{\alpha_1}\cdots p_k^{\alpha_k},
$$
with $p_1<\cdots<p_k$ odd primes and $\alpha_1,\ldots,\alpha_k$ positive integers. If $p_1=3$, then $L_n$ is even, which is not the case. Thus, $p_1\geq 5$.

\medskip

Here, we use the argument from \cite{FL1}  to bound $p_1$. Since most of the details are similar, we only sketch the argument. For $p\mid L_n$, using relation (iv) of Lemma \ref{eq1:fibluc}, we get that $-5F_n^2\equiv -4\pmod p$. In particular, $\(\frac{5}{p}\)=1$, so by Quadratic Reciprocity also $p$ is a quadratic residue modulo $5$. Now let $d$ be any divisor of $n$ which is a multiple of $p_1$. By Lemma \ref{lem1:Car}, there exists a primitive prime $p_d\mid L_d$, such that $p_d\nmid L_{d_1}$ for all positive $d_1<d.$ Since $n$ is odd and $d\mid n$, we have $L_d\mid L_n$, therefore $p_d\mid L_n$. Since $p_d$ is primitive for $L_d$ and a quadratic residue modulo $5$, we have $p_d\equiv 1\pmod d$ (if $p$ were not a quadratic residue modulo $5$, then we would have had that $p_d\equiv -1\pmod 5$, which would be less useful for our problem). In particular,
\begin{equation}
\label{eq3:p1}
p_1\mid d\mid p_d-1\mid \varphi(L_n).
\end{equation}
Collecting the above divisibilities \eqref{eq3:p1} over all divisors $d$ of $n$ which are multiples of $p_1$ and using Lemma \ref{eq1:fibluc} $(viii)$, we have
\begin{equation}
\label{eq3:5}
p_1^{\tau(n/p_1)} \mid \varphi(L_n)\mid L_n-1\mid 5 F_{(n-1)/2}F_{(n+1)/2}.
\end{equation}
If $p_1=5$, then $5\mid n$, therefore $5\nmid F_{(n\pm 1)/2}$ because a Fibonacci number $F_m$ is a multiple of $5$ if and only if its index $m$ is a multiple of $5$. Thus, $\tau(n/p_1)=1$, so $n=p_1$, which is impossible since $n>92$.

Assume now that $p_1>5$. Since
$$
\gcd(F_{(n+1)/2}, F_{(n-1)/2})=F_{\gcd((n+1)/2,(n-1)/2)}=F_1=1,
$$
divisibility relation \eqref{eq3:5} shows that $p_1^{\tau(n/p_1)}$
divides $F_{(n+\varepsilon)/2}$ for some $\varepsilon\in \{\pm 1\}$. Let $z(p_1)$ be the order of appearance of $p_1$ in the Fibonacci sequence. Write
\begin{equation}
\label{eq3:Wall}
F_{z(p_1)}=p_1^{e_{p_1}} m_{p_1},
\end{equation}
where $m_{p_1}$ is coprime to $p_1$. If $p_1^t\mid F_k$ for some $t>e_{p_1}$, then necessarily $p_1\mid k$. Since for us $(n+\varepsilon)/2$ is not a multiple of $p_1$ (because $n$ is a multiple of $p_1$), we get that $\tau(n/p_1)\le e_{p_1}$. In particular, if $p_1=7$, then $e_{p_1}=1$, 
so $n=p_1$, which is false since $n>92$. So, $p_1\ge 11$. We now follow along the argument from \cite{FL1} to get that
\begin{equation}
\label{eq3:30}
\tau(n)\leq 2\tau(n/p_1)\leq \frac{(p_1+1)\log\alpha}{\log p_1}.
\end{equation}
Further, since $(L_n-1)/\varphi(L_n)$ is an integer larger than $1$, we have
\begin{equation}
\label{r:1}
2<\frac{L_n}{\varphi(L_n)}\leq \prod_{p\mid L_n}\(1+\frac{1}{p-1}\)<\exp\(\sum_{p\mid L_n}\frac{1}{p-1}\),
\end{equation}
or
\begin{equation}
\label{eq3:4}
\log 2 \leq \sum_{p \mid L_n}\frac{1}{p-1}.
\end{equation}
For a divisor $d$ of $n$, we note ${\mathcal P_d}$ the set of primitive prime factors of $L_d$. Then the argument from \cite{FL1} gives
\begin{equation}
\label{eq3:imp}
\sum_{p \in \mathcal{P}_d}\frac{1}{p-1} \leq \frac{0.9}{d} + \frac{2.2\log\log d}{d}.
\end{equation}
Since the function $x\mapsto (\log\log x)/x$ is decreasing for $x>10$ and all divisors $d>1$ of $n$ satisfy $d>10$, we have, using \eqref{eq3:30}, that
\begin{eqnarray}
\label{eq3:8}
\sum_{p\mid L_n}\frac{1}{p-1}&=& \sum_{d\mid n}\sum_{p\in \mathcal{P}_d}\frac{1}{p-1} \leq \sum_{\substack{d\mid n\\ d>1}}\(\frac{0.9}{d} + \frac{2.2\log\log d}{d}\)\\
&\leq & \(\frac{0.9}{p_1} + \frac{2.2\log\log p_1}{p_1}\)\tau(n) \nonumber \\
&\leq & (\log\alpha)\frac{(p_1+1)}{\log p_1}\cdot \(\frac{0.9}{p_1} + \frac{2.2\log\log p_1}{p_1}\),\nonumber
\end{eqnarray}
which together with inequality \eqref{eq3:4} leads to
\begin{equation}
\label{eq3:9}
\log p_1\leq \frac{(\log\alpha)}{\log 2}\left(\frac{p_1+1}{p_1}\right)(0.9 + 2.2\log\log p_1).
\end{equation}
The above inequality \eqref{eq3:9} implies $p_1<1800$.
Since $p_1<10^{14}$, a calculation of McIntosh and Roettger \cite{MC} shows that $e_{p_1}=1$. Thus, $\tau(n/p_1)=1$, therefore $n=p_1.$ Since $n\ge 92$, we have $p_1\ge 97$. Going back to the inequalities \eqref{eq3:4} and \eqref{eq3:imp}, we get
$$
\log 2<\frac{0.9}{p_1}+\frac{2.2 \log\log p_1}{p_1},
$$
which is false for $p_1\ge 97$. Thus, Theorem \eqref{thm4:1} is proved.
\end{proof}

\section{Pell Numbers with the Lehmer property}
\label{sec32}
In this section, we study the members of Pell sequence $\{P_n\}_{n\ge 0}$  which are Lehmer numbers. From relation \eqref{eq1:sizePn}, we have the following inequality:
\begin{equation}
\label{eq3:sizePn}
P_n\ge 2^{n/2}
\end{equation}
which hold for all $n\ge 2$. 


Here, we prove the following result.
\begin{theorem}
\label{thm4:2}
There is no Pell and Pell-Lucas numbers with the Lehmer property.
\end{theorem}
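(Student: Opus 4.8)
The plan is to mirror the structure of the just-completed proof for Lucas numbers (Theorem \ref{thm4:1}), since the Pell sequence shares the essential features: a Binet formula \eqref{binet1}, a primitive divisor theorem valid for all $n\ge 2$, and a useful factorization of $P_n-1$ given by Lemma \ref{lem1:orderof2}(iii). The argument will be a contradiction argument combining a divisibility obstruction modulo powers of $2$ with an upper bound on the smallest prime factor of $n$ coming from the inequality $2<P_n/\varphi(P_n)$. First I would assume $P_n$ (or $Q_n$) is a Lehmer number, so $\varphi(P_n)\mid P_n-1$ and $P_n$ is composite; since Lehmer numbers have many prime factors ($\omega\ge 15$ by \cite{joh}), I get $2^{15}\mid 2^{\omega(P_n)}\mid \varphi(P_n)\mid P_n-1$, which forces $n$ to be large and in particular $n\ge$ some explicit bound.

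Next I would dispose of the even and the ``wrong residue class'' cases using the $2$-adic structure. For $n$ even, Lemma \ref{lem1:orderof2}(i)--(ii) controls $\nu_2(P_n)$ and $\nu_2(Q_n)$, and the factorization \eqref{eq3:RelP} writes $P_n-1$ as a product of a $P$ and a $Q$ term; since $\nu_2(Q_n)=1$ always, and one checks the relevant factors cannot absorb a high power of $2$, the divisibility $2^{15}\mid P_n-1$ becomes impossible. This should also handle the case analysis according to $n\bmod 4$ via \eqref{eq3:RelP}, leaving only one residue class (the analogue of $n\equiv 1\pmod 4$ in the Lucas case) to treat by the main argument. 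I would also use that $P_n$ is odd exactly when $n$ is odd (Lemma \ref{lem1:orderof2}(ii)), and rule out small prime factors of $n$ just as $p_1\ge 5$ was obtained before.

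Then comes the core estimate. Writing $n=p_1^{\alpha_1}\cdots p_k^{\alpha_k}$ with $p_1$ the least (odd) prime factor, I would use that for each divisor $d$ of $n$ divisible by $p_1$ there is a primitive prime $p_d\mid P_d\mid P_n$ with $p_d\equiv 1\pmod d$ (McDaniel's Lemma \ref{lem1:Mac} guarantees primes $\equiv 1\pmod 4$, and the congruence $z(q)\mid q-1$ for suitable $q$ is already established in the proof of Lemma \ref{lem1:zofp}). Collecting these gives $p_1^{\tau(n/p_1)}\mid \varphi(P_n)\mid P_n-1$, and combined with \eqref{eq3:RelP} and the bound $e_{r}\le (p+1)\log\alpha/(2\log p)$ from Lemma \ref{lem1:zofp}, I obtain $\tau(n)\le 2\tau(n/p_1)\le (p_1+1)\log\alpha/\log p_1$. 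Feeding this together with the per-divisor estimate for $\sum_{p\in\mathcal P_d}1/(p-1)$ (exactly the mechanism of Lemma \ref{lem1:sumSn}) into $\log 2\le\sum_{p\mid P_n}1/(p-1)$ yields an inequality of the shape $\log p_1\le (\log\alpha/\log 2)(1+1/p_1)(c_1+c_2\log\log p_1)$, bounding $p_1$ by an absolute constant. A short computation then forces $\tau(n/p_1)=1$, hence $n=p_1$, after which the same inequality applied directly to $n=p_1$ fails for $p_1$ beyond a small threshold, giving the contradiction. The Pell--Lucas case for $Q_n$ runs in parallel, using $\nu_2(Q_n)=1$ and the companion relations.

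The main obstacle I expect is the exact $2$-adic bookkeeping in the even and residue-class cases: unlike the clean statement $8\nmid L_m$ used for Lucas numbers, here I must verify from Lemma \ref{lem1:orderof2} and \eqref{eq3:RelP} that no high power of $2$ can divide $P_n-1$ (respectively $Q_n-1$) in the surviving cases, which requires checking the parity structure of the $P$ and $Q$ factors carefully; a secondary difficulty is confirming, as in the Lucas proof via \cite{MC}, that $e_{p_1}=1$ for all $p_1$ below the derived bound so that $\tau(n/p_1)=1$ is forced, and ensuring the constants $c_1,c_2$ in the final inequality are small enough that the threshold on $p_1$ lies below the large lower bound on $n$ obtained in the first step.
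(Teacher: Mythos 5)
Your plan transplants the Lucas-number argument, but its core step fails for the Pell sequence. In the Lucas case the congruence $p_d\equiv 1\pmod d$ for a primitive prime $p_d$ of $L_d$ came from the fact that every prime factor $p$ of $L_n$ satisfies $\left(\frac{5}{p}\right)=1$, and $5$ is exactly the discriminant of the Fibonacci--Lucas pair, so $z(p)\mid p-1$. For Pell the discriminant is $8$, so a primitive prime $p$ of $P_d$ satisfies $d\mid p-\left(\frac{2}{p}\right)$; the only information you extract from $n$ odd (reducing $Q_n^2-8P_n^2=-4$ modulo $p$) is $p\equiv 1\pmod 4$, which determines $\left(\frac{-1}{p}\right)$ but says nothing about $\left(\frac{2}{p}\right)$. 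Concretely, $P_5=29\equiv 5\pmod 8$ is the primitive prime of $P_5$ and $29\equiv -1\pmod 5$, so $5\nmid 29-1$. Hence the divisibility $p_1^{\tau(n/p_1)}\mid\varphi(P_n)$, and with it the chain $\tau(n)\le 2\tau(n/p_1)\le (p_1+1)\log\alpha/\log p_1$ that drives your whole argument, cannot be established. (The device from Chapter 3 that forces $\left(\frac{2}{q}\right)=1$ applies only to primes dividing $Q_{n_1}$, which requires $n$ even --- precisely the case that is excluded here because a Lehmer number is odd.)

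The paper's proof avoids all of this and is much shorter. A Lehmer number is odd and squarefree, so $n$ is odd by $\nu_2(P_n)=\nu_2(n)$, and then every prime factor of $P_n$ is $\equiv 1\pmod 4$; with $K=\omega(P_n)\ge 15$ this gives $2^{2K}\mid\varphi(P_n)\mid P_n-1=P_{(n-\epsilon)/2}Q_{(n+\epsilon)/2}$. Since $\nu_2(Q_m)=1$ and $\nu_2(P_m)=\nu_2(m)$, it follows that $2^{2K-1}$ divides $(n-\epsilon)/2$, so $n$ is at least of order $2^{2K}$. Against this the paper plays Pomerance's bound $N<K^{2^K}$ for a Lehmer number $N$ with $K$ prime factors (Theorem 4 of \cite{CP}), which combined with $P_n\ge 2^{n/2}$ yields $2^K>n/(4\log\log n)$; the two estimates give $n^2<16(n+1)(\log\log n)^2$, hence $n<21$, a contradiction. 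If you wish to retain your framework you must find a substitute for the missing congruence; the cleanest repair is simply to adopt the Pomerance inequality, which renders the primitive-divisor bookkeeping, the sums $\sum_{p\mid P_n}1/(p-1)$, and the computation of $e_{p_1}$ unnecessary.
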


\begin{proof}

Let us recall that if $N$ has the Lehmer property, then $N$ has to be odd and square-free.
In particular, if $P_n$ has the Lehmer property for some positive integer $n$, then Lemma \ref{lem1:orderof2} (ii) shows that $n$ is odd.  One checks with the computer that there is 
no number $P_n$ with the Lehmer property with $n\leq 200$. So, we can assume that $n>200$. Put $K=\omega(P_n)\geq 15$. 

From relation \eqref{eq3:RelP}, we have that
$$P_n-1=P_{(n-\epsilon)/2} Q_{(n+\epsilon)/2} \quad \hbox{where}\quad \epsilon\in\{\pm 1\}.$$
By Theorem 4 in \cite{CP}, we have that $P_n<K^{2^K}.$ By \eqref{eq3:sizePn}, we have that $K^{2^K}>P_n>2^{n/2}.$ Thus,
\begin{equation}
\label{eq3:2}
2^K\log K>\frac{n\log 2}{2}>\frac{n}{3}.
\end{equation}
We now check that the above inequality implies that  
\begin{equation}
\label{eq3:3}
2^K>\frac{n}{4\log\log n}.
\end{equation}
Indeed, \eqref{eq3:3} follows immediately from \eqref{eq3:2} when $K<(4/3)\log\log n$. On the other hand, when $K\ge (4/3)\log\log n$, we have $K\ge (\log n)^{4/3}$, so 
$$
2^K\ge 2^{(\log n)^{4/3}}>n,
$$
which holds because $n>200$.  Then, the relation \eqref{eq3:3} holds.

Let $q$ be any prime factor of $P_n$. Reducing relation
\begin{equation}
\label{eq3:nodd}
Q_n^2 - 8P_n^2 = 4 (-1)^n
\end{equation}
of Lemma \ref{lem1:PQ} modulo $q$, we get $Q_n^2\equiv -4\pmod q$. Since $q$ is odd, (because $n$ is odd), we get that $q\equiv 1\pmod 4$. This is true for all prime factors
$q$ of $P_n$. Hence,
$$
2^{2K}\mid \varphi(P_n)\mid P_n-1=P_{(n-\epsilon)/2} Q_{(n+\epsilon)/2}.
$$
Since $Q_n$ is never divisible by $4$, we have that $2^{2K-1}\mid $ divides $P_{(n+1)/2}$ or $P_{(n-1)/2}$. Hence, $2^{2K-1}$  divides $(n+1)/2$ or $(n-1)/2$. Using relation \eqref{eq3:3}, we have that 
$$
\frac{n+1}{2}\geq 2^{2K-1} \geq \frac{1}{2}\(\frac{n}{4\log\log n}\)^2.
$$
This last inequality leads to 
$$
n^2<16(n+1)(\log\log n)^2,
$$
giving that $n<21$, a contradiction, which completes the proof of Theorem \ref{thm4:2}.  
\end{proof}



\end{document}